\newtheorem{theorem}{Theorem}[section]
\newtheorem{lemma}[theorem]{Lemma}
\newtheorem{proposition}[theorem]{Proposition}
\newtheorem{corollary}[theorem]{Corollary}
\newtheorem{conjecture}[theorem]{Conjecture}
\theoremstyle{remark}
\newtheorem{remark}[theorem]{Remark}
\theoremstyle{definition}
\numberwithin{equation}{section}
\renewcommand{\sc}{\scriptstyle}
\renewcommand{\leq}{\leqslant}
\renewcommand{\geq}{\geqslant}
\DeclareMathOperator{\Tr}{Tr}
\DeclareMathOperator{\Vir}{Vir}
\DeclareMathOperator{\profile}{profile}
\DeclareMathOperator{\AG}{AG}
\DeclareMathOperator{\GK}{GK}
\DeclareMathOperator{\gk}{gk}
\DeclareMathOperator{\GKt}{\gk}
\DeclareMathOperator{\sgn}{sgn}
\DeclareMathOperator{\eup}{e}
\newcommand{\ceil}[1]{\lceil#1\rceil}
\newcommand{\floor}[1]{\lfloor#1\rfloor}
\newcommand{\qbin}[2]{\genfrac{[}{]}{0pt}{}{#1}{#2}}
\newcommand{\qbinbig}[2]{\Bigg[\genfrac{}{}{0pt}{}{#1}{#2}\Bigg]}
\newcommand{\qhyp}[2]{\fourIdx{}{#1}{}{#2}\phi}
\newcommand{\qHyp}[5]{\fourIdx{}{#1}{}{#2}\phi
                      \bigg[\genfrac{}{}{0pt}{}{#3}{#4};#5\bigg]}
\newcommand{\abs}[1]{\vert#1\vert}
\newcommand{\la}{\lambda}
\newcommand{\nub}{\boldsymbol{\nu}}
\newcommand{\La}{\Lambda}
\newcommand{\ip}[2]{\langle#1,#2\rangle}
\newcommand{\bil}[2]{(#1\vert#2)}
\DeclareMathOperator{\lev}{lev}
\DeclareMathOperator{\ch}{ch}
\DeclareMathOperator{\mult}{mult}
\tikzset{sq/.store in=\sq, sq=sqrt(3)/2}
\tikzset{bsquare/.pic={
\filldraw[gray,fill=white,very thin] (0,0)--({\sq},-1/2)--({\sq},1/2)--(0,1)--cycle;}}
\tikzset{rsquare/.pic={
\filldraw[gray,fill=white,very thin] (0,0)--({sqrt(3)/2},1/2)--({sqrt(3)/2},3/2)--(0,1)--cycle;}} 
\tikzset{gsquare/.pic={
\filldraw[gray,fill=white!30,very thin] (0,0)--({sqrt(3)/2},-1/2)--({sqrt(3)},0)--({sqrt(3)/2},1/2)--cycle;}}
\tikzset{bluesquare/.pic={
\filldraw[gray,fill=blue!30,very thin] (0,0)--({\sq},-1/2)--({\sq},1/2)--(0,1)--cycle;}}
\tikzset{bluesquarew/.pic={
\filldraw[gray,fill=blue!0,very thin] (0,0)--({\sq},-1/2)--({\sq},1/2)--(0,1)--cycle;}}
\tikzset{redsquare/.pic={
\filldraw[gray,fill=red!30,very thin] (0,0)--({sqrt(3)/2},1/2)--({sqrt(3)/2},3/2)--(0,1)--cycle;}} 
\tikzset{redsquarew/.pic={
\filldraw[gray,fill=red!0,very thin] (0,0)--({sqrt(3)/2},1/2)--({sqrt(3)/2},3/2)--(0,1)--cycle;}}
\tikzset{greensquare/.pic={
\filldraw[gray,fill=green!30,very thin] (0,0)--({sqrt(3)/2},-1/2)--({sqrt(3)},0)--({sqrt(3)/2},1/2)--cycle;}}
\tikzset{greensquarew/.pic={
\filldraw[gray,fill=green!0,very thin] (0,0)--({sqrt(3)/2},-1/2)--({sqrt(3)},0)--({sqrt(3)/2},1/2)--cycle;}}
\begin{document}

\title{The $\mathrm{A}_2$ Andrews--Gordon identities and cylindric partitions}

\author{S.\ Ole Warnaar}
\address{School of Mathematics and Physics,
The University of Queensland, Brisbane, QLD 4072, Australia}
\email{o.warnaar@maths.uq.edu.au}

\thanks{Work supported by the Australian Research Council.}

\subjclass[2020]{05A15, 05A19, 11P84, 17B65, 33D15, 81R10}

\begin{abstract}
Inspired by a number of recent papers by Corteel, Dousse, Foda, Uncu and 
Welsh on cylindric partitions and Rogers--Ramanujan-type identities, 
we obtain the $\mathrm{A}_2$ (or $\mathrm{A}_2^{(1)}$) 
analogues of the celebrated Andrews--Gordon identities.
We further prove $q$-series identities that correspond to the
infinite-level limit of the Andrews--Gordon identities
for $\mathrm{A}_{r-1}$ (or $\mathrm{A}_{r-1}^{(1)}$) for arbitrary rank $r$.
Our results for $\mathrm{A}_2$ also lead to conjectural, manifestly 
positive, combinatorial formulas for the $2$-variable generating function 
of cylindric partitions of rank $3$ and level $d$, such that $d$ is not a
multiple of $3$.

\smallskip

\noindent\textbf{Keywords:} $\mathrm{A}_{r-1}^{(1)}$ branching functions, 
character formulas, cylindric partitions, Rogers--Ramanujan identities. 
\end{abstract}

\maketitle

\section{Introduction}

The Rogers--Ramanujan identities \cite{Rogers94,Rogers17,RR19}
\begin{subequations}\label{Eq_RR}
\begin{align}\label{Eq_RR1}
\sum_{n=0}^{\infty} \frac{q^{n^2}}{(1-q)\cdots(1-q^n)}&=
\prod_{n=0}^{\infty}\frac{1}{(1-q^{5n+1})(1-q^{5n+4})}
\intertext{and}
\sum_{n=0}^{\infty} \frac{q^{n^2+n}}{(1-q)\cdots(1-q^n)}&=
\prod_{n=0}^{\infty}\frac{1}{(1-q^{5n+2})(1-q^{5n+3})}
\label{Eq_RR2}
\end{align}
\end{subequations}
are widely regarded as two of the deepest and most beautiful $q$-series 
identities in all of mathematics.
They play an important role in the theory of partitions 
\cite{Andrews76,Schur17,MacMahon04}, arise as characters in the
representation theory of infinite dimensional Lie algebras and vertex 
operator algebras
\cite{FF93,LM78a,LM78b,LW81b,LW82,LW84,MP87},
and have appeared in numerous other branches of mathematics.
The reader is referred to the recent book by Sills \cite{Sills18} 
for a comprehensive account of the Rogers--Ramanujan identities. 

A partition $\la$ of $n$ is a weakly decreasing 
sequence $\la=(\la_1,\la_2,\dots)$ of nonnegative integers such that
$\abs{\la}:=\la_1+\la_2+\cdots=n$.
If $m_i=m_i(\la):=\abs{\{j\geq 1: \la_j=i\}}$ denotes the multiplicity of 
the parts of $\la$ equal to $i$, then $\abs{\la}=\sum_{i\geq 1}i m_i$.
Schur and MacMahon \cite{Schur17,MacMahon04} independently observed that
the Rogers--Ramanujan identities are equivalent to the following 
combinatorial statement about partitions.
For fixed $s\in\{1,2\}$, the number of partitions of $n$ such that consecutive 
parts differ by at least two and such that $m_1\leq s-1$ is equal to the 
number of partitions of $n$ such that all parts are congruent to 
$\pm (3-s)$ modulo $5$.
Here $s=2$ corresponds to \eqref{Eq_RR1} and $s=1$ to \eqref{Eq_RR2}.
Gordon \cite{Gordon61} generalised the combinatorial form of the 
Rogers--Ramanujan identities to arbitrary odd moduli $2k+1$,
proving that for $1\leq s\leq k$ the number of partitions of $n$ 
such that $m_i+m_{i+1}\leq k-1$ (for all $i\geq 1$) and $m_1\leq s-1$ is
equal to the number of partitions of $n$ into parts not congruent to
$0,\pm s$ modulo $2k+1$.
Subsequently, Andrews \cite{Andrews74} discovered the analytic counterpart
of Gordon's partition theorem, proving that 
\begin{equation}\label{Eq_AG}
\sum_{n_1\geq n_2\geq\cdots\geq n_{k-1}\geq 0}
\frac{q^{n_1^2+\dots+n_{k-1}^2+n_s+\cdots+n_{k-1}}}
{(q)_{n_1-n_2}\cdots (q)_{n_{k-2}-n_{k-1}}(q)_{n_{k-1}}} 
=\frac{(q^{2k+1};q^{2k+1})_{\infty}}{(q)_{\infty}}\,\theta(q^s;q^{2k+1}).
\end{equation}
Here $(a)_n=(a;q)_n:=(1-a)(1-aq)\cdots(1-aq^{n-1})$ and
$(a)_{\infty}=(a;q)_{\infty}:=(1-a)(1-aq)\cdots$ are
$q$-shifted factorials, and $\theta(a;q):=(a;q)_{\infty}(q/a;q)_{\infty}$
is a modified Jacobi theta function.
The identities \eqref{Eq_AG} are now commonly referred to as the
Andrews--Gordon identities.

Let $\AG_{k,s}(q)$ denote the $q$-series in \eqref{Eq_AG}, so that
$\AG_{k,s}(q)$ may denote either the left- or the right-hand side of
the identity.
Apart from Gordon's two partition theoretic interpretations, the series
$\AG_{k,s}(q)$ for $0\leq s\leq k$ have also been identified as characters 
(or specialised characters) of affine Lie algebras.
Here we mention two such identifications. (For further examples, see e.g.,
\cite{Misra89,Misra90}.)
Let $\mathfrak{g}$ be an affine Lie algebra of rank $\ell$ with
simple roots $\alpha_0,\dots,\alpha_{\ell}$, simple coroots 
$\alpha_0^{\vee},\dots,\alpha_{\ell}^{\vee}$ and fundamental
weights $\La_0,\dots,\La_{\ell}$ (so that
$\langle\alpha_i^{\vee},\La_j\rangle=\delta_{ij}$),
where we adopt the labelling of the Dynkin diagrams as in
Tables 1 and 2 of \cite[Chapter 4]{Kac90}.
Let $P_{+}^d$ denote the set of level-$d$ dominant integral weights
of $\mathfrak{g}$ and let $\ch L(\la)$ be the character of the standard
(or integrable highest weight) module $L(\la)$ of highest weight $\la$.
Then, up to a simple infinite product 
$F:=(-q;q)_{\infty}=1/(q;q^2)_{\infty}$ corresponding
to the principally specialised character of the fundamental representation
$L(\La_a)$ ($a=0,1$),
$\AG_{k,s}(q)$ is equal to the principally specialised characters of
the affine Lie algebra $\mathrm{A}_1^{(1)}$ \cite[Theorem 5.16]{LM78b}:
\[
\eup^{-\la} \ch L(\la)\big|_
{(\eup^{-\alpha_0},\eup^{-\alpha_1})\mapsto (q,q)}=
F \cdot
\AG_{k,s}(q),
\] 
where $\la=(2k-s)\La_0+(s-1)\La_1\in P_{+}^{2k-1}$.
For $k=2$ this led Lepowsky and Wilson to the discovery of the principal
Heisenberg subalgebra $\mathfrak{s}$ of $\mathrm{A}_1^{(1)}$, culminating 
in the first purely Lie theoretic proof of the Rogers--Ramanujan identities 
\cite{LW81a,LW82}. 
In particular, Lepowsky and Wilson showed that $\AG_{2,s}(q)$ is exactly 
the character of the vacuum space of highest-weight vectors 
of $L((4-s)\La_0+(s-1)\La_1)$ with respect to $\mathfrak{s}$.
This was subsequently extended to higher-level standard modules of 
$\mathrm{A}_1^{(1)}$ by Meurman and Primc \cite{MP87} to yield a 
Lie theoretic proof of all of the Andrews--Gordon identities 
(as well as their even moduli analogues).\footnote{A distinction
between the results of \cite{LW81a,LW82} and \cite{MP87} is that
in the latter work, the combinatorial instead of the analytic
form of the identities is obtained.}

Along a different route, Griffin, Ono and the author showed that a
suitable non-principal specialisation of the characters of the
$\mathrm{A}_2^{(2)}$ standard modules of level $2k-2$ also leads to
the Rogers--Ramanujan and Andrews--Gordon $q$-series.
Specifically,
\[
\eup^{-\la} \ch L(\la)\big|_
{(\eup^{-\alpha_0},\eup^{-\alpha_1})\mapsto (-1,q)}
=\AG_{k,s}(q),
\]
where $\la$ is parametrised as $\la=(2k-2s)\La_0+(s-1)\La_1\in P_{+}^{2k-2}$.
This was then shown to generalise to $\mathrm{A}_{2r}^{(2)}$ for 
arbitrary $r\geq 1$, resulting in higher-rank generalisations of the 
Rogers--Ramanujan and Andrews--Gordon identities.
For example, if $k$ is a positive integer and $\kappa:=2k+2r-1$, 
then \cite[Theorem 1.1]{GOW16}
\begin{align}\label{Eq_GOW}
&\eup^{-(k-1)\La_r} \ch L\big((k-1)\La_r\big)\big|_
{(\eup^{-\alpha_0},\eup^{-\alpha_1},\dots,\eup^{-\alpha_r})
\mapsto (-1,q,\dots,q)} \\[2mm]
&\quad\qquad=\sum_{\substack{\la \text{ even}\\[1pt] \la_1\leq 2k-2}}
q^{\abs{\la}/2} P_{\la}\big(1,q,q^2,\dots;q^{2r-1}\big)  \notag \\
&\quad\qquad=\frac{(q^{\kappa};q^{\kappa})_{\infty}^r}{(q)_{\infty}^r}\,
\prod_{i=1}^r  \theta\big(q^{i+k-1};q^{\kappa}\big)
\prod_{1\leq i<j\leq r} 
\theta\big(q^{j-i},q^{i+j-1};q^{\kappa}\big), \notag
\end{align}
where $P_{\la}(x_1,x_2,\dots;t)$ is a Hall--Littlewood symmetric function
in infinitely many variables \cite{Macdonald95}.
For $r=1$ the identity \eqref{Eq_GOW} simplifies to the $s=k$ instance
of \eqref{Eq_AG}.

Surprisingly, finding the higher-rank generalisation of the Andrews--Gordon 
for the seemingly simpler affine Lie algebra $\mathrm{A}_{r-1}^{(1)}$ 
remains an open problem.
Parametrising the dominant integral weights of $\mathrm{A}_{r-1}^{(1)}$
as
\begin{equation}\label{Eq_la-mu}
\la=(d-\mu_1+\mu_r)\La_0+(\mu_1-\mu_2)\La_1+\dots+(\mu_{r-1}-\mu_r)\La_{r-1}
\in P_{+}^d,
\end{equation}
where $\mu=(\mu_1,\dots,\mu_r)$ is a partition such that $\mu_1-\mu_r\leq d$,
the challenge is to find generalisations $\AG_{\la;r}(q)$ of the sum-side
of \eqref{Eq_AG} such that
\begin{align}
\label{Eq_higher-rank_AG}
\eup^{-\la} & \ch L(\la)\big|_{(\eup^{-\alpha_0},\dots,\eup^{-\alpha_{r-1}})
\mapsto (q,\dots,q)} \\
&=F\cdot \frac{(q^{d+r};q^{d+r})_{\infty}^{r-1}}{(q)_{\infty}^{r-1}} 
\prod_{1\leq i<j\leq r} 
\theta\big(q^{\mu_i-\mu_j+j-i};q^{d+r}\big) \notag \\
&=F\cdot \AG_{\la;r}(q), \notag
\end{align}
where now, for arbitrary $0\leq a\leq r-1$,
\[
F:=\eup^{-\La_a} \ch L(\La_a)
\big|_{(\eup^{-\alpha_0},\dots,\eup^{-\alpha_{r-1}})
\mapsto (q,\dots,q)}=
\frac{(q^r;q^r)_{\infty}}{(q;q)_{\infty}}.
\]
In \cite{ASW99} it was shown that the infinite product in 
\eqref{Eq_higher-rank_AG} (without the factor $F$) is the character
of a non-unitary $W_r$-module (with $W_2$ the Virasoro algebra) as
well as an $\mathrm{A}_{r-1}^{(1)}$ branching function, see
Section~\ref{Sec_Characters} for details.

Arguably the most powerful method for discovering and proving identities
of the Rogers--Ramanujan type is that of Bailey chains and lattices, 
see \cite{AAB87,Andrews84,Paule85,W01}. 
In \cite{ASW99} Andrews, Schilling and the author developed an
$\mathrm{A}_2$-analogue of the classical Bailey chain and applied this to the 
problem of finding Rogers--Ramanujan and Andrews--Gordon identities for 
$\mathrm{A}_2^{(1)}$.
Although this resulted in several infinite families of 
Rogers--Ramanujan-type identities, the $q$-series of \cite{ASW99} correspond 
to $\AG_{\la;3}(q)$ multiplied by an unwanted factor $1/(q)_{\infty}$, 
obscuring the fact that the coefficients of the former are all nonnegative.
Only for $d=2$ and $d=4$ could this unwanted factor be eliminated,
resulting in, for example,
\begin{align}\label{Eq_A2-mod5}
\AG_{\La_0+\La_1;3}(q)&=
\sum_{n=0}^{\infty} \frac{q^{n^2}}{(q)_n}=
\prod_{n=0}^{\infty}\frac{1}{(1-q^{5n+1})(1-q^{5n+4})}
\intertext{and}
\label{Eq_A2-mod7}
\AG_{2\La_0+\La_1+\La_2;3}(q)
&=\sum_{n,m=0}^{\infty} \frac{q^{n^2-nm+m^2}}{(q)_n}\,
\qbin{2n}{m} \\
&=\prod_{n=0}^{\infty}\frac{1}{(1-q^{7n+1})^2(1-q^{7n+3})(1-q^{7n+4})
(1-q^{7n+6})^2}, \notag
\end{align}
where $\qbin{n}{m}$ is a $q$-binomial coefficient, see \eqref{Eq_qbinomial}
below.
The first of these results is essentially trivial since, by level-rank 
duality, $\AG_{\La_0+\La_1;3}(q)=\AG_{2\La_0+\La_1;2}(q)$.
The second Rogers--Ramanujan identity \eqref{Eq_RR2} follows in a similar 
manner from $\AG_{2\La_0;3}(q)=\AG_{3\La_0;2}(q)$.
The identity \eqref{Eq_A2-mod7} is one of five\footnote{There are only four
characters for modulus $7$, but one of these admits two distinct double-sum
expressions.} 
modulus-$7$ identities for $\mathrm{A}_2^{(1)}$ 
known collectively as the $\mathrm{A}_2$ 
Rogers--Ramanujan identities, see \cite{ASW99,CW19,FFW08,W06}.

In a series of recent papers \cite{Corteel17,CDU20,CW19,FW16}
a new approach to the problem of finding manifestly positive
multisum expressions for $\AG_{\la;r}(q)$ has emerged, based on
cylindric partitions.
Cylindric partitions, first introduced by Gessel and Krattenthaler
in \cite{GK97}, are an affine analogue of plane partitions.
Using notation and terminology as defined in Section~\ref{Sec_Cylindric},
let $\GK_c(q)$ be the size (or norm) generating function of 
cylindric partitions of rank $r$ and profile
$c=(c_0,\dots,c_{r-1})$:
\[
\GK_c(q):=\sum_{\substack{\pi \\[1pt] \profile(\pi)=c}} q^{\abs{\pi}},
\]
where $\abs{\pi}$ denotes the size of $\pi$.
The first key observation, due to Foda and Welsh \cite{FW16} (see also
\cite{Tingley08}), is that Borodin's product formula \cite{Borodin07}
for cylindric partitions implies that
\begin{equation}\label{Eq_AG-GK}
\AG_{\la;r}(q)=(q)_{\infty} \GK_c(q).
\end{equation}
Here the entries of the profile $c$ are fixed in terms of $\lambda$ as
$c_i=\langle\alpha_i^{\vee},\la\rangle$, i.e., if $\lambda$ is parametrised
as in \eqref{Eq_la-mu}, then $c_0=d-\mu_1+\mu_r$ and $c_i=\mu_i-\mu_{i+1}$
for $1\leq i\leq r-1$.
As an important consequence of \eqref{Eq_AG-GK}, the combinatorics of
cylindric partitions can be utilised to compute $\AG_{\la;r}(q)$.
The second key observation, due to Corteel and Welsh \cite{CW19}, is that
by considering the more general two-variable generating function
$\GK_c(z,q)$, 
\[
\GK_c(z,q):=\sum_{\substack{\pi \\[1pt] \profile(\pi)=c}} z^{\max(\pi)}
q^{\abs{\pi}},
\]
where $\max(\pi)$ is the largest largest part of the 
cylindric partition $\pi$, a system of functional equations arises that 
fully determines $\GK_c(z,q)$ for fixed level $c_0+\cdots+c_{r-1}=d$.
Solving this system for arbitrary rank $r$ and level $d$ appears extremely
challenging, but in a recent paper Corteel, Dousse and Uncu \cite{CDU20}
managed to solve the case $r=3$ and $d=5$, giving rise to manifestly
positive multisum expression for all five characters.
For example \cite[Theorem 3.3]{CDU20},
\[
\GK_{(2,2,1)}(z,q)=\frac{1}{(zq)_{\infty}}
\sum_{n_1,n_2,n_3,n_4=0}^{\infty}
\frac{z^{n_1} q^{n_1^2+n_2^2+n_3^2+n_4^2-n_1n_2+n_2n_4}}
{(q)_{n_1}}\,\qbin{n_1}{n_2}\qbin{n_1}{n_4}\qbin{n_2}{n_3}, 
\]
which by \eqref{Eq_higher-rank_AG} and \eqref{Eq_AG-GK} implies 
the beautiful \cite[Theorem 1.6]{CDU20}
\begin{align}\label{Eq_A2-mod8}
\AG_{2\La_0+2\La_1+\La_2;3}(q)
&=\sum_{n_1,n_2,n_3,n_4=0}^{\infty}
\frac{q^{n_1^2+n_2^2+n_3^2+n_4^2-n_1n_2+n_2n_4}}
{(q)_{n_1}}\,\qbin{n_1}{n_2}\qbin{n_1}{n_4}\qbin{n_2}{n_3} \\
&=\prod_{n=0}^{\infty}\frac{1}{(1-q^{8n+1})^2(1-q^{8n+2})(1-q^{8n+4})^2
(1-q^{8n+6})(1-q^{8n+7})^2}. \notag
\end{align}

Motivated by the results of Corteel et al., we succeeded in finding
the analogues of the Andrews--Gordon identities for
$\mathrm{A}_2^{(1)}$ for all moduli not congruent to $0$ modulo $3$.
The simplest examples for each fixed modulus are given below,
where $\theta(a_1,\dots,a_k;q):=\theta(a_1;q)\cdots\theta(a_k;q)$.

\begin{theorem}[$\mathrm{A}_2$ Andrews--Gordon identities, I]
\label{Thm_A2-vac-bothmoduli}
For $k$ a positive integer, the following identity for 
$\AG_{k\La_0+k\La_1+(k-1)\La_2;3}(q)$ holds:
\begin{align}\label{Eq_A2-vac}
\sum_{\substack{n_1,\dots,n_k\geq 0 \\[1pt] m_1,\dots,m_{k-1}\geq 0}} 
&\frac{q^{n_k^2}}{(q)_{n_1}} 
\prod_{i=1}^{k-1} q^{n_i^2-n_im_i+m_i^2} \qbin{n_i}{n_{i+1}} 
\qbin{n_i-n_{i+1}+m_{i+1}}{m_i} \\[2mm]
&=\frac{(q^{3k+2};q^{3k+2})_{\infty}^2}{(q)_{\infty}^2}\,
\theta(q^k,q^{k+1},q^{k+1};q^{3k+2}), \notag
\end{align}
where $m_k:=2n_k$.
Similarly, for $\AG_{k\La_0+(k-1)\La_1+(k-1)\La_2;3}(q)$
there holds $\AG_{\La_0;3}(q)=1$ and
\begin{align}
\label{Eq_A2-vac-b}
\sum_{\substack{n_1,\dots,n_{k-1}\geq 0 \\[1pt] 
m_1,\dots,m_{k-1}\geq 0}} 
&\frac{q^{\sum_{i=1}^{k-1} (n_i^2-n_im_i+m_i^2)}}{(q)_{n_1}}\,
\qbin{2n_{k-1}}{m_{k-1}} 
\prod_{i=1}^{k-2} \qbin{n_i}{n_{i+1}}\qbin{n_i-n_{i+1}+m_{i+1}}{m_i} 
\\[2mm]
&=\frac{(q^{3k+1};q^{3k+1})_{\infty}^2}{(q)_{\infty}^2}\,
\theta(q^k,q^k,q^{k+1};q^{3k+1}) \notag
\end{align}
for $k\geq 2$.
\end{theorem}

The $k=1$ case of \eqref{Eq_A2-vac} is \eqref{Eq_A2-mod5},
the $k=2$ case of \eqref{Eq_A2-vac-b} is \eqref{Eq_A2-mod7}, but
the $k=2$ case of \eqref{Eq_A2-vac} does not give \eqref{Eq_A2-mod8}!
Instead we obtain the triple-sum identity
\begin{align}\label{Eq_A2-mod8-triplesum}
\sum_{n_1,m_1,n_2=0}^{\infty} & \frac{q^{n_1^2-n_1m_1+m_1^2+n_2^2}}{(q)_{n_1}}
\, \qbin{n_1}{n_2}\qbin{n_1+n_2}{m_1} \\
&=\prod_{n=0}^{\infty}\frac{1}{(1-q^{8n+1})^2(1-q^{8n+2})(1-q^{8n+4})^2
(1-q^{8n+6})(1-q^{8n+7})^2}. \notag
\end{align}
The first two moduli that have not appeared before are $10$ and $11$, 
for which we get
\begin{align*}
\sum_{n_1,m_1,n_2,m_2=0}^{\infty}&
\frac{q^{n_1^2-n_1m_1+m_1^2+n_2^2-n_2m_2+m_2^2}}{(q)_{n_1}}\,
\qbin{n_1}{n_2}\qbin{n_1-n_2+m_2}{m_1}\qbin{2n_2}{m_2} \\[2mm]
&=\prod_{n=0}^{\infty}\bigg(\frac{1}{(1-q^{10n+1})^2(1-q^{10n+2})^2
(1-q^{10n+4})(1-q^{10n+5})^2} \\
&\quad\qquad\qquad\qquad\qquad\times
\frac{1}{(1-q^{10n+6})(1-q^{10n+8})^2(1-q^{10n+9})^2}\bigg)
\end{align*}
and
\begin{align*}
\sum_{n_1,m_1,n_2,m_2,n_3=0}^{\infty}&
\frac{q^{n_1^2-n_1m_1+m_1^2+n_2^2-n_2m_2+m_2^2+n_3^2}}{(q)_{n_1}}\,
\qbin{n_1}{n_2}\qbin{n_1-n_2+m_2}{m_1}\qbin{n_2}{n_3}
\qbin{n_2+n_3}{m_2} \\[2mm]
&=\prod_{n=0}^{\infty}\bigg(\frac{1}{(1-q^{11n+1})^2(1-q^{11n+2})^2
(1-q^{11n+3})(1-q^{11n+5})^2} \\
&\quad\qquad\times
\frac{1}{(1-q^{11n+6})^2(1-q^{11n+8})(1-q^{11n+9})^2(1-q^{11n+10})^2}\bigg).
\end{align*}

\bigskip

The remainder of this paper is organised as follows.
In the next section we state our main results on $\mathrm{A}_2$
(or $\mathrm{A}_2^{(1)}$) Rogers--Ramanujan and Andrews--Gordon identities.
This section also contains conjectural, manifestly-positive multi-sum 
expressions for the two-variable generating function of cylindric 
partition of rank $3$.
These conjectures, if true, show that the statistic $\max$ on cylindric
partitions is compatible with our $\mathrm{A}_2^{(1)}$ character formulas,
suggesting that $\max$ should have a natural representation-theoretic
interpretation.
In Section~\ref{Sec_Cylindric} we review some of the basics of cylindric
partitions needed in the remainder of the paper, and in
Section~\ref{Sec_Characters} we give representation theoretic
interpretations of $\AG_{\la;r}(q)$.
In Section~\ref{Sec_hyper} a number of identities for basic hypergeometric
series are proved, laying the groundwork for Section~\ref{Sec_Mod8}, which
focuses on the modulus-$8$ case and provides a connection with the work of
Corteel, Dousse and Uncu.
In Section~\ref{Sec_Proofs} we prove our main results, including the
$\mathrm{A}_2$ Andrews--Gordon identities of
Theorem~\ref{Thm_A2-vac-bothmoduli}.
Finally, in Theorem~\ref{Thm_infinite} of Section~\ref{Sec_Ar} we 
present an identity for $\mathrm{A}_{r-1}$ corresponding to the
infinite-level limit of the as-yet-to-be found Andrews--Gordon identities
for $\mathrm{A}_{r-1}^{(1)}$ for $r\geq 4$.

\section{Main results and conjectures}\label{Sec_Main}

In this section we give a summary of the $\mathrm{A}_2$ Andrews--Gordon 
identities for $\AG_{\la;3}(q)$ as well as some closely related, manifestly 
positive multisum expressions for the two-variable generating function of 
cylindric partitions $\GK_{(c_0,c_1,c_2)}(z,q)$ for
$c_0+c_1+c_2\not\equiv 0\pmod{3}$.

\subsection{The modulus-$(3k+2)$ case}
The simplest $\mathrm{A}_2$ Andrews--Gordon identity for the 
modulus $3k+2$ is \eqref{Eq_A2-vac} of the introduction.
This result is complemented with several further identities, the
first of which corresponds to $\AG_{(3k-s)\La_0+(s-1)\La_1;3}(q)$.

\begin{conjecture}[$\mathrm{A}_2$ Andrews--Gordon identities, II]
\label{Con_A2-two}
For integers $k,s$ such that $1\leq s\leq k+1$,
\begin{align}\label{Eq_ks-con}
\sum_{\substack{n_1,\dots,n_k\geq 0 \\[1pt] m_1,\dots,m_{k-1}\geq 0}}
& \frac{q^{n_k^2+\sum_{i=s}^k n_i}}{(q)_{n_1}} 
\prod_{i=1}^{k-1} q^{n_i^2-n_im_i+m_i^2+m_i} \qbin{n_i}{n_{i+1}} 
\qbin{n_i-n_{i+1}+m_{i+1}}{m_i} \\
&=\frac{(q^{3k+2};q^{3k+2})_{\infty}^2}{(q)_{\infty}^2}\,
\theta(q,q^s,q^{s+1};q^{3k+2}), \notag
\end{align}
where $m_k:=2n_k$.
\end{conjecture}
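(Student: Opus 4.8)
The plan is to read the right-hand side of \eqref{Eq_ks-con} as the Andrews--Gordon series $\AG_{\la;3}(q)$ for the weight $\la=(3k-s)\La_0+(s-1)\La_1$ and then to generate the left-hand side by an iterated $\mathrm{A}_2$ Bailey chain, mirroring the proof of Theorem~\ref{Thm_A2-vac-bothmoduli}. Under the parametrisation \eqref{Eq_la-mu} this $\la$ has profile $c=(3k-s,\,s-1,\,0)$, level $d=3k-1$, and modulus $d+3=3k+2$; feeding $\mu=(s-1,0,0)$ into the product of \eqref{Eq_higher-rank_AG} reproduces exactly $\theta(q,q^s,q^{s+1};q^{3k+2})$ together with the prefactor $(q^{3k+2};q^{3k+2})_\infty^2/(q)_\infty^2$, so it remains to prove the analytic identity between the multisum and this product.

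The quadratic form $n_i^2-n_im_i+m_i^2$ is the clue: it equals $\tfrac12\mathbf{x}^{\mathsf T}C\mathbf{x}$ for the $\mathrm{A}_2$ Cartan matrix $C$, so each layer $(n_i,m_i)$ should arise from one application of an $\mathrm{A}_2$ Bailey lemma whose two-colour nature is recorded by the pair of $q$-binomials $\qbin{n_i}{n_{i+1}}$ and $\qbin{n_i-n_{i+1}+m_{i+1}}{m_i}$. I would first isolate the single-step transformation as a basic hypergeometric identity --- the natural home for which is Section~\ref{Sec_hyper} --- stating that prepending the factor $q^{n^2-nm+m^2}$ and these two $q$-binomials sends an $\mathrm{A}_2$ Bailey pair to another such pair. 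Iterating this $k-1$ times from an appropriate seed assembles the nested sum over $(n_1,m_1),\dots,(n_{k-1},m_{k-1})$, with the innermost rung supplying $q^{n_k^2}$ (and the convention $m_k:=2n_k$) and the outermost rung the single surviving denominator $(q)_{n_1}$.

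The parameter $s$, and with it the linear terms $q^{\sum_{i=s}^k n_i}$ and $\prod_i q^{m_i}$, I would introduce through a Bailey \emph{lattice} rather than a pure chain: a linear shift inserted at the $s$-th rung moves the base point of the iteration and --- exactly as the terms $n_s+\cdots+n_{k-1}$ do in the classical identity \eqref{Eq_AG} --- selects which theta factors, here $\theta(q^s;q^{3k+2})$ and $\theta(q^{s+1};q^{3k+2})$, survive. Once the chain is built and $s$ is fixed, the final summation against the seed's $\alpha$-sequence should collapse to a bilateral theta-type sum, which I would evaluate by the Jacobi triple product and the Macdonald (affine Weyl--Kac denominator) identity for $\mathrm{A}_2^{(1)}$; this is the step that manufactures the three theta factors and the powers $(q^{3k+2};q^{3k+2})_\infty^2/(q)_\infty^2$ of the product side.

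The crux, and presumably the reason this is stated only as a conjecture, is to find a single-step lemma whose iteration lands on the \emph{clean} character and not on $\AG_{\la;3}(q)/(q)_\infty$: the $\mathrm{A}_2$ Bailey chain of \cite{ASW99} famously carries exactly such a spurious factor $1/(q)_\infty$, so a genuinely new transformation (or a compensating summation) is required to absorb it. As an independent discovery-and-verification route one may instead refine the multisum by a variable $z$ tracking $\max(\pi)$, check that it solves the Corteel--Welsh functional equations for $\GK_{(3k-s,s-1,0)}(z,q)$, and then set $z=1$ using \eqref{Eq_AG-GK} and Borodin's product formula \cite{Borodin07}; proving that this $z$-deformation satisfies those functional equations for all $k$ and $s$ is itself the open point. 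Finally, the boundary cases $s=k+1$ (where $\sum_{i=s}^k n_i$ is empty) and small $k$, together with the constraint $m_k:=2n_k$, would each need to be checked separately against the general argument.
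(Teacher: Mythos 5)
Your identification of the product side is correct: the profile is $(3k-s,s-1,0)$, the level is $d=3k-1$, and $\mu=(s-1,0,0)$ in \eqref{Eq_higher-rank_AG} gives precisely $\theta(q,q^s,q^{s+1};q^{3k+2})$. But beyond that, your proposal is a plan rather than a proof, and its load-bearing step is exactly the one you leave unconstructed: the ``single-step $\mathrm{A}_2$ Bailey lemma whose iteration lands on the clean character.'' No such lemma is exhibited, so nothing is established. Note also that \eqref{Eq_ks-con} is stated in the paper as a conjecture: it is proved only for $s\in\{1,k,k+1\}$ (Theorem~\ref{Thm_threecases}), and conditionally for all $s$ assuming Conjecture~\ref{Con_missing}. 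A blind argument claiming the full range $1\leq s\leq k+1$ would therefore have to go beyond the paper, which your sketch does not do.

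More importantly, your diagnosis of where the difficulty sits is misplaced, and this is worth internalising because the paper's actual mechanism is quite different from new Bailey machinery. The paper does \emph{not} modify the $\mathrm{A}_2$ Bailey chain to avoid the spurious $1/(q)_{\infty}$. Instead it takes the completed identities of \cite{ASW99} --- which carry that factor, cf.\ the $(q)_{\infty}^3$ on the right of \eqref{Eq_missing} --- as black-box seeds, finitizes their sum sides as $F^{(a)}_{n_0,m_0;k,s,t}(z,q)$ in \eqref{Eq_F2}, and then transforms the sum side by Lemma~\ref{Lem_F-trafo}, a telescoped ${}_1\phi_1$-transformation obtained from Heine's transformation, yielding Proposition~\ref{Prop_finiteform2}. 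In the limit $n_0,m_0\to\infty$, the transformed sum \eqref{Eq_mineen} is manifestly $1/(q)_{\infty}$ times the multisum in \eqref{Eq_ks-con}, so the unwanted factor cancels identically against the $(q)_{\infty}^3$ of the seed --- the ``absorption'' you flag as the crux is in fact fully solved, for \emph{all} $s$ and $t$, by this post-hoc transformation of the sums. The genuine open point is elsewhere: the seed evaluation \eqref{Eq_missing} itself (Conjecture~\ref{Con_missing}), i.e.\ an ASW99-type identity with the linear term $\sum_{i=s}^k n_i$, which \cite{ASW99} supplies only for $s\in\{1,k,k+1\}$. Your lattice-insertion idea is aimed at producing that seed, but as stated it is only an analogy with the $\mathrm{A}_1$ Bailey lattice, and it is precisely the step nobody has carried out. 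The same applies to your alternative route via the Corteel--Welsh functional equations: the required $z$-deformation is Conjecture~\ref{Con_cylindric}, proved in the paper only for $k\leq 2$.
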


\begin{theorem}\label{Thm_threecases}
Conjecture~\ref{Con_A2-two} holds for $s\in\{1,k,k+1\}$.
\end{theorem}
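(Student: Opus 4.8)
The plan is to realise the $k$-fold sum on the left of \eqref{Eq_ks-con} as an iterate of the $\mathrm{A}_2$ Bailey chain of \cite{ASW99} and to prove, for each of the three values of $s$, the single base-case evaluation on which the whole identity then rests. The recurring block $q^{n_i^2-n_im_i+m_i^2}\qbin{n_i}{n_{i+1}}\qbin{n_i-n_{i+1}+m_{i+1}}{m_i}$ is exactly the insertion produced by one step of that chain, so reading \eqref{Eq_ks-con} from the innermost variable $n_k$ (with the closure $m_k:=2n_k$) outwards exhibits the left-hand side as $k-1$ successive applications of the Bailey lemma to a seed sum over $n_k$. Consequently the argument splits into two largely independent tasks: (i) identifying the infinite product that the iterated chain converges to, and (ii) evaluating the seed in closed form for $s\in\{1,k,k+1\}$.

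For task (i) I would expand the right-hand side by the Jacobi triple product. Writing $\theta(q,q^s,q^{s+1};q^{3k+2})$ as a product of bilateral theta series and combining it with the prefactor $(q^{3k+2};q^{3k+2})_\infty^2/(q)_\infty^2$ recovers the $\mathrm{A}_2^{(1)}$ theta form that the terminal step of the chain produces; this is the same mechanism that generates the infinite product on the right of \eqref{Eq_higher-rank_AG}, and the shape of the limit is uniform in $s$. The linear term $q^{\sum_{i=s}^k n_i}$ on the left is precisely the datum that selects the residue class governing the middle argument $q^s$ of the theta product, so that decreasing $s$ from $k+1$ to $1$ enlarges the summation window $\{s,\dots,k\}$ in step with the motion of $s$ modulo $3k+2$.

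The substance of the proof is task (ii). After the chain is stripped away one is left with a low-order sum over $n_k$ (carrying the tail $\sum_{i=s}^k n_i$ and the closure $m_k=2n_k$), and this is exactly the type of basic hypergeometric evaluation assembled in Section~\ref{Sec_hyper}. For the endpoint $s=k+1$ the tail is empty and the seed reduces to a balanced, hence summable, series; for $s=k$ only the variable $n_k$ enters the tail and one applies the corresponding single-parameter summation; for the other endpoint $s=1$ the full tail is present and a $q$-binomial-type evaluation closes the sum. In each case the resulting closed form initialises the chain correctly, and the normalisation is pinned down by the base case $k=1$ (which for $s=1$ returns \eqref{Eq_A2-mod5}).

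The main obstacle is expected to be precisely this seed evaluation together with the verification that it defines a genuine $\mathrm{A}_2$ Bailey pair: since the chain faithfully propagates whatever identity it is fed, the entire content of the theorem is concentrated in that one bottom-level summation. This is where $s$ being an endpoint, or adjacent to one, is indispensable — for intermediate $2\le s\le k-1$ the seed is an irreducibly two-parameter $\mathrm{A}_2$ series with no available closed-form summation, which is exactly the reason Conjecture~\ref{Con_A2-two} remains open in that range. A secondary point, routine but needing care because of the doubly-indexed $q$-binomials, is to justify the passage to the terminal product by bounding the tails of the nested sums uniformly in the chain length.
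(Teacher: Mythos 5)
Your proposal rests on a claim that is factually wrong and that hides the entire technical content of the theorem. You assert that the block $q^{n_i^2-n_im_i+m_i^2}\qbin{n_i}{n_{i+1}}\qbin{n_i-n_{i+1}+m_{i+1}}{m_i}$ in \eqref{Eq_ks-con} is ``exactly the insertion produced by one step'' of the $\mathrm{A}_2$ Bailey chain of \cite{ASW99}. It is not. The chain of \cite{ASW99} produces multisums of the shape \eqref{Eq_F2}, whose iterated block is $q^{n_i^2-\sigma_i n_im_i+m_i^2}\qbin{n_{i-1}}{n_i}\qbin{m_{i-1}}{m_i}$, i.e.\ two parallel binomial strings in the $n$'s and in the $m$'s separately, with the variables $n_i$ and $m_i$ entering symmetrically (this is precisely why, as the introduction notes, the \cite{ASW99} series carry an unwanted factor $1/(q)_{\infty}$ and are not manifestly positive in the Andrews--Gordon form). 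Bridging these two shapes is the actual new work in the paper's proof: the regularised $\qhyp{1}{1}$ transformation, in the $q$-binomial form \eqref{Eq_qbintrafo}, is iterated in Lemma~\ref{Lem_F-trafo} to prove Proposition~\ref{Prop_finiteform2}, which converts $F^{(a)}_{n_0,m_0;k,s,t}(z,q)$ of \eqref{Eq_F2} into the sum side of \eqref{Eq_ks-con} (the $a=-1$, $t=1$ case, after letting $n_0,m_0\to\infty$). Your proposal assumes this bridge for free, so it never connects the left-hand side of \eqref{Eq_ks-con} to anything provable; there is no known Bailey-type chain whose insertion step is the Andrews--Gordon block itself.

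You also misplace both the role of Section~\ref{Sec_hyper} and the reason the theorem is restricted to $s\in\{1,k,k+1\}$. Section~\ref{Sec_hyper} is not used in this proof at all; it serves the modulus-$8$ comparison with Corteel--Dousse--Uncu. And the restriction on $s$ does not come from a one-variable ``bottom-level seed'' that happens to be summable only at endpoints: what is needed is the full multisum-to-product evaluation \eqref{Eq_missing} (equivalently \eqref{Eq_Fks1}), and this is available in \cite{ASW99} exactly for $s\in\{1,k,k+1\}$ (Equations (5.22), (5.23), (5.28), (5.29) there, assembled as Proposition~\ref{Prop_missing}); for intermediate $s$ it is the open Conjecture~\ref{Con_missing}, an entire $\mathrm{A}_2$ Rogers--Ramanujan-type identity missing from \cite{ASW99}, not a failed single ${}_r\phi_s$ summation. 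Finally, your ``secondary point'' about bounding tails uniformly in the chain length is vacuous: for fixed $k$ the identity has finitely many summation variables, and the only limit taken is $n_0,m_0\to\infty$, which is routine termwise convergence of formal power series.
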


\begin{remark}\label{Rem_alt-sums}
The proofs of the various $\mathrm{A}_2$ Andrews--Gordon identities all
use the Rogers--Ramanujan-type identities of \cite{ASW99} as a seed.
In these seeds, the summation variables $n_i$ and $m_i$ play a more
symmetric role than in the $\mathrm{A}_2$ Andrews--Gordon identities.
If a seed has a summand that is near-symmetric in the sense that it is
invariant when $n_i$ and $m_i$ are interchanged for all $i$,
except for a linear factor in the exponent of $q$, then this
near-symmetry can be exploited to yield two different forms for the 
sum side of the corresponding $\mathrm{A}_2$ Andrews--Gordon identity.
What is more, by conjecturing some near-symmetric seeds missing from 
\cite{ASW99} (see Conjecture~\ref{Con_missing}) this extends to 
Conjecture~\ref{Con_A2-two} in full.
Hence this conjecture admits the companion 
\begin{align}\label{Eq_ks-alt}
\sum_{\substack{n_1,\dots,n_k\geq 0 \\[1pt] m_1,\dots,m_{k-1}\geq 0}} &
\frac{q^{n_k^2+n_k+\sum_{i=s}^{k-1} m_i}}{(q)_{n_1}} 
\prod_{i=1}^{k-1} q^{n_i^2-n_im_i+m_i^2+n_i}
\qbin{n_i}{n_{i+1}}\qbin{n_i-n_{i+1}+m_{i+1}+\delta_{i,s-1}}{m_i} \\
&=\frac{(q^{3k+2};q^{3k+2})_{\infty}^2}{(q)_{\infty}^2}\,
\theta(q,q^s,q^{s+1};q^{3k+2}) \notag
\end{align}
for $1\leq s\leq k$, and
\begin{align}\label{Eq_kk1-alt}
\sum_{\substack{n_1,\dots,n_k\geq 0 \\[1pt] m_1,\dots, m_{k-1}\geq 0}} &
\frac{q^{n_k^2}}{(q)_{n_1}} \prod_{i=1}^{k-1} q^{n_i^2-n_im_i+m_i^2+n_i}
\qbin{n_i+\delta_{i,k-1}}{n_{i+1}}
\qbin{n_i-n_{i+1}+m_{i+1}}{m_i} \\
&=\frac{(q^{3k+2};q^{3k+2})_{\infty}^2}{(q)_{\infty}^2}\,
\theta(q,q^{k+1},q^{k+2};q^{3k+2}) \notag
\end{align}
when $s=k+1$.
In both cases $m_k:=2n_k$ as before and $\delta_{i,j}$ is a Kronecker
delta.
\end{remark}

The identity in the next theorem has a more complicated right-hand side,
corresponding to the linear combination
$\sum_{i=1}^s q^{s-i}\AG_{(3k-2i+1)\La_0+(i-1)\La_1+(i-1)\La_2;3}(q)$.

\begin{theorem}[$\mathrm{A}_2$ Andrews--Gordon identities, III]
\label{Thm_A2-three}
For integers $k,s$ such that $1\leq s\leq k$,
\begin{align*}
\sum_{\substack{n_1,\dots,n_k\geq 0 \\[1pt] m_1,\dots,m_{k-1}\geq 0}} &
\frac{q^{n_k^2+\sum_{i=s}^k n_i+\sum_{i=s}^{k-1} m_i}}{(q)_{n_1}} 
\prod_{i=1}^{k-1} q^{n_i^2-n_im_i+m_i^2} \qbin{n_i}{n_{i+1}} 
\qbin{n_i-n_{i+1}+m_{i+1}+\delta_{i,s-1}}{m_i} \\
&=\frac{(q^{3k+2};q^{3k+2})_{\infty}^2}{(q)_{\infty}^2}
\sum_{i=1}^s q^{s-i} \,\theta(q^i,q^i,q^{2i};q^{3k+2}), \notag
\end{align*}
where $m_k:=2n_k$.
\end{theorem}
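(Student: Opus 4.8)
The plan is to induct on $s$, exploiting a transparent first-order recursion satisfied by the right-hand side. Writing $\theta_i:=\theta(q^i,q^i,q^{2i};q^{3k+2})$ and $T_s:=\sum_{i=1}^s q^{s-i}\theta_i$, one checks at once that $T_s=qT_{s-1}+\theta_s$ with $T_0=0$. It therefore suffices to (i) settle the base case $s=1$, and (ii) show that the sum side $S_s$ of the theorem obeys the matching recursion $S_s=qS_{s-1}+C_s$, where $C_s:=\frac{(q^{3k+2};q^{3k+2})_\infty^2}{(q)_\infty^2}\,\theta_s$ is the single symmetric-weight character $\AG_{(3k-2s+1)\La_0+(s-1)\La_1+(s-1)\La_2;3}(q)$ appearing in the linear combination.

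For the base case I would note that when $s=1$ the Kronecker delta $\delta_{i,0}$ vanishes for every $i\geq 1$, so the summand collapses to exactly the $s=1$ instance of the companion identity \eqref{Eq_ks-alt} (the exponents and the binomials $\qbin{n_i-n_{i+1}+m_{i+1}}{m_i}$ agree term by term). Its right-hand side is $\frac{(q^{3k+2};q^{3k+2})_\infty^2}{(q)_\infty^2}\,\theta(q,q,q^2;q^{3k+2})=\frac{(q^{3k+2};q^{3k+2})_\infty^2}{(q)_\infty^2}\,T_1$, and by Remark~\ref{Rem_alt-sums} this companion holds whenever Conjecture~\ref{Con_A2-two} does, hence in the already-proven case $s=1$ of Theorem~\ref{Thm_threecases}. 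This anchors the induction.

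To establish the sum-side recursion I would apply the $q$-Pascal rule $\qbin{N+1}{M}=\qbin{N}{M}+q^{N-M+1}\qbin{N}{M-1}$ to the unique binomial carrying the shift $\delta_{i,s-1}$, namely $\qbin{n_{s-1}-n_s+m_s+1}{m_{s-1}}$ at $i=s-1$. The unshifted term reproduces the summand with $\delta_{i,s-1}\mapsto 0$; after reindexing $m_{s-1}\mapsto m_{s-1}+1$ in the second term and absorbing the resulting powers of $q$ into the linear exponent $\sum_{i=s}^{k-1}m_i$, the two pieces should separate into $q\,S_{s-1}$ (the lower symmetric characters, with the extra factor $q$) and a residual multisum in which the $s$-boundary is now saturated. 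That residual is the sum side of the single symmetric-weight $\mathrm{A}_2$ Andrews–Gordon identity evaluating to $C_s$, obtained from the appropriate near-symmetric seed of \cite{ASW99} by iterating the $\mathrm{A}_2$ Bailey chain exactly as in the proofs of Theorems~\ref{Thm_threecases} and~\ref{Thm_A2-vac-bothmoduli}, with the product side handled by the basic-hypergeometric and theta manipulations of Section~\ref{Sec_hyper}.

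The hard part will be Step (ii): matching the linear exponents $\sum_{i=s}^{k}n_i+\sum_{i=s}^{k-1}m_i$ before and after the $q$-Pascal split and reindexing, and verifying that the residual is \emph{exactly} the single character $C_s$ rather than a more complicated remainder; in particular one must thread the boundary factor $1/(q)_{n_1}$, the diagonal contribution $q^{n_k^2}$ and the convention $m_k:=2n_k$ through the recursion, and confirm that moving the delta from position $s-1$ to $s-2$ interacts correctly with the expansion. An alternative route fixes $s$ and instead inducts on $k$ directly via the rank-two Bailey chain of \cite{ASW99}, which raises the modulus from $3k-1$ to $3k+2$ in one iteration; there the entire combination $\sum_{i=1}^s q^{s-i}\theta_i$ must be shown stable under a single Bailey step, with the Jacobi triple product supplying the product side. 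Either way, the identification of the right-hand side as a sum of genuine level-$(3k-1)$ characters guarantees positivity and provides a useful consistency check.
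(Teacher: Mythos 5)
Your reduction of the right-hand side to the recursion $T_s=qT_{s-1}+\theta_s$ is correct, and your base case is sound: for $s=1$ the sum side of Theorem~\ref{Thm_A2-three} coincides term by term with the $s=1$ case of \eqref{Eq_ks-alt}, which is proved in the paper (it follows from the $s=1$ case of Conjecture~\ref{Con_missing}, i.e.\ Proposition~\ref{Prop_missing}, together with the symmetry \eqref{Eq_F-symmetry} and Proposition~\ref{Prop_finiteform2}). The genuine gap is in your step (ii). First, the $q$-Pascal split of $\qbin{n_{s-1}-n_s+m_s+1}{m_{s-1}}$ does not produce $qS_{s-1}$: the unshifted term is the summand of $S_s$ with the delta deleted, whereas the summand of $qS_{s-1}$ carries the additional linear factors $q^{n_{s-1}+m_{s-1}}$ in the exponent \emph{and} a shifted binomial at position $i=s-2$; already for $k=s=2$ one checks directly that neither piece of the split equals $qS_1$, and the reindexing $m_{s-1}\mapsto m_{s-1}+1$ does not repair this. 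Second, and more fundamentally, your residual $C_s$ would have to be a manifestly positive multisum representation of the \emph{single} character $\frac{(q^{3k+2};q^{3k+2})_\infty^2}{(q)_\infty^2}\,\theta(q^s,q^s,q^{2s};q^{3k+2})$. No such identity exists in \cite{ASW99} or in this paper for general $k$ and $s\geq 2$: the $\mathrm{A}_2$ Bailey-chain seeds of \cite{ASW99} yield either products of the form $\theta(q,q^s,q^{s+1};\cdot)$ or precisely the linear combinations $\sum_{i=1}^s q^{s-i}\theta_i$, and the unavailability of single-character sum sides is the very reason the theorem is stated as a linear combination. (The one known exception is $k=2$, where Section~\ref{Sec_Mod8} produces a triple sum for $1/(q,q,q^3,q^3,q^5,q^5,q^7,q^7;q^8)_\infty$, but via cylindric-partition functional equations, not the Bailey chain, and with no extension to general $k$.) So your induction step presupposes an identity that is at least as hard as, and strictly stronger than, the theorem itself.

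For comparison, the paper's proof involves no induction on $s$ at all: it starts from \cite[Theorems 5.5 and 5.7]{ASW99}, which already assert $F^{(a)}_{k,s,s}(1;q)=\frac{(q^{3k+a+3};q^{3k+a+3})_\infty^2}{(q)_\infty^3}\sum_{i=1}^s q^{s-i}\theta(q^i,q^i,q^{2i};q^{3k+a+3})$, so the linear combination is inherited wholesale from the seed, and then rewrites the multisum $F^{(-1)}_{k,s,s}(1,q)$ into the form appearing in the theorem by means of Proposition~\ref{Prop_finiteform2} with $t=s$ and $z=1$ (whose proof is the iterated $q$-binomial transformation \eqref{Eq_qbintrafo} of Lemma~\ref{Lem_F-trafo}), finally letting $n_0,m_0\to\infty$. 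If you want to salvage your strategy, the missing link is not a recursion in $s$ but that change of summation variables connecting the ASW99-type multisums, with factors $\qbin{n_{i-1}}{n_i}\qbin{m_{i-1}}{m_i}$ and $1/(q)_{n_k+m_k+1}$, to the Andrews--Gordon-type multisums of the theorem.
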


\medskip

The above results give $3k+1$ distinct identities for modulus $3k+2$, except
when $k=1$, in which case the two Rogers--Ramanujan identities 
\eqref{Eq_RR1} and \eqref{Eq_RR2} arise.
In the modulus-$8$ case we have found another three results giving a total
of $11$ identities, including \eqref{Eq_A2-mod8-triplesum} 
of the introduction.
All of these will be discussed in detail in Section~\ref{Sec_Mod8}.

\medskip

Conjecturally, some of the above results can be extended to the two-variable
generating function for cylindric partitions of rank $3$.

\begin{conjecture}\label{Con_cylindric}
For a positive integer $k$, 
\[
\GK_{(k,k,k-1)}(z,q) 
=\frac{1}{(zq)_{\infty}}
\sum_{\substack{n_1,\dots,n_k\geq 0 \\[1pt] m_1,\dots,m_{k-1}\geq 0}}
\frac{z^{n_1} q^{n_k^2}}{(q)_{n_1}} 
\prod_{i=1}^{k-1} 
q^{n_i^2-n_im_i+m_i^2}\qbin{n_i}{n_{i+1}}\qbin{n_i-n_{i+1}+m_{i+1}}{m_i}
\]
and, for integers $k,s$ such that $1\leq s\leq k+1$,
\begin{align*}
&\GK_{(3k-s,s-1,0)}(z,q) \\
&\quad=\frac{1}{(zq)_{\infty}}
\sum_{\substack{n_1,\dots,n_k\geq 0 \\[1pt] m_1,\dots,m_{k-1}\geq 0}}
\frac{z^{n_1} q^{n_k^2+\sum_{i=s}^k n_i}}{(q)_{n_1}} 
\prod_{i=1}^{k-1} q^{n_i^2-n_im_i+m_i^2+m_i} \qbin{n_i}{n_{i+1}} 
\qbin{n_i-n_{i+1}+m_{i+1}}{m_i},
\end{align*}
where $m_k:=2n_k$ in both identities.
\end{conjecture}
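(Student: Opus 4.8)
\emph{The plan} is to prove both identities via the functional equations of Corteel and Welsh \cite{CW19}, which for fixed level $d$ form a system of $q$-difference equations relating the generating functions $\{\GK_c(z,q)\}$ of all profiles $c$ with $c_0+c_1+c_2=d$: each $\GK_c(z,q)$ is expressed as a linear combination, with coefficients polynomial in $z$ and $q$, of the $\GK_{c'}$ at neighbouring profiles $c'$ of the same level, with $z$ entering both directly and through the shift $z\mapsto zq$. Together with the initial condition $\GK_c(0,q)=1$ (only the empty partition has $\max=0$), this system has a unique solution as a power series in $z$. Both families sit at level $d=3k-1$: for $(k,k,k-1)$ one has $c_0+c_1+c_2=3k-1$, and for $(3k-s,s-1,0)$ likewise, consistent with the modulus $d+3=3k+2$ on the product sides of \eqref{Eq_A2-vac} and \eqref{Eq_ks-con}. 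Writing $R_c(z)$ for the conjectured right-hand sides (including the prefactor $1/(zq)_{\infty}$), the first step would be to confirm $R_c(0,q)=1$: at $z=0$ the prefactor is $1$ and $n_1=0$, whereupon the factors $\qbin{n_i}{n_{i+1}}$ cascade to $n_2=\cdots=n_k=0$ and, through $m_k=2n_k=0$ and the factors $\qbin{n_i-n_{i+1}+m_{i+1}}{m_i}$, to $m_1=\cdots=m_{k-1}=0$, leaving the single term $1$.

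The substantive step is to verify that the $R_c(z)$ satisfy the Corteel--Welsh recurrences. This I would attempt by the standard manipulations: splitting each multisum using the $q$-Pascal recurrence $\qbin{n}{m}=\qbin{n-1}{m}+q^{n-m}\qbin{n-1}{m-1}$ and its companion, shifting the summation indices $n_i,m_i$, and matching the resulting pieces against the shifted series $R_{c'}(zq)$ in the recurrence. A useful consistency check is the specialisation $z=1$: by \eqref{Eq_AG-GK} the first identity collapses to the already-established Theorem~\ref{Thm_A2-vac-bothmoduli}, while the second collapses to Conjecture~\ref{Con_A2-two}, proved for $s\in\{1,k,k+1\}$ in Theorem~\ref{Thm_threecases}. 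Thus the $z$-refined statements contain these identities as their $z=1$ specialisations, and any correct propagation of $z$ through the recurrences must reproduce them.

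The hard part, and the reason this remains conjectural, is that the Corteel--Welsh system does not close on the two given families alone. For $k=1$ the profiles $(1,1,0)$ and $(3-s,s-1,0)$ exhaust all level-$2$ profiles up to the cyclic symmetry of the profile, the system closes, and the identities reduce to Rogers--Ramanujan; but for $k\geq 2$ the recurrence at a profile such as $(k,k,k-1)$ involves $\GK_{c'}(z,q)$ for profiles $c'$ (for instance with all three entries positive and not of the form $(3k-s,s-1,0)$) for which the excerpt supplies no closed form. To run the uniqueness argument one would first need manifestly-positive multisum expressions for \emph{every} level-$(3k-1)$ profile---precisely the step carried out by Corteel, Dousse and Uncu for $r=3$, $d=5$ in obtaining all five characters. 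Absent such a complete set of conjectural formulas, an alternative would be to lift the $z=1$ proof directly, seeking a $z$-refined version of the $\mathrm{A}_2$ Bailey-chain machinery and the seeds of \cite{ASW99} and tracking the statistic $\max(\pi)$ recorded by $z$ through each iteration. Establishing that $\max$ propagates compatibly is exactly the representation-theoretic phenomenon flagged after the conjecture, and is where I expect the real difficulty to lie.
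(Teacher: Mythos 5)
The statement you were asked to prove is Conjecture~\ref{Con_cylindric}: the paper itself contains no proof of it, only of the cases $k=1$ and $k=2$ (Theorem~\ref{Thm_k12}), so your honest conclusion that the general statement cannot be closed is the right one, not a defect of your attempt. Moreover, your diagnosis of \emph{why} it resists proof matches the paper's situation exactly: the Corteel--Welsh system at level $3k-1$ couples the two conjectured families to all other profiles of that level, and without candidate expressions for every such profile the uniqueness argument has nothing to run on. (This is precisely the gap that the Kanade--Russell conjecture \cite{KR22}, cited in the paper's ``Note added'', is meant to fill by proposing multisums for the missing profiles.) Your preliminary checks --- the level count $d=3k-1$, the initial condition $R_c(0,q)=1$, and the $z=1$ specialisations to \eqref{Eq_A2-vac} and to Conjecture~\ref{Con_A2-two} --- are all correct.

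Where your outline and the paper part ways is in how the two known cases are actually settled. For $k=1$ the paper does not solve the rank-$3$ system head-on: it invokes level-rank duality \eqref{Eq_levelrank-c} to reduce to the rank-$2$ identity \eqref{Eq_RR-rank2-unbounded} of \cite{CDU20}, and separately proves a bounded analogue (Propositions~\ref{Prop_RRcase} and~\ref{Prop_RRcase-rank2}) whose proof \emph{is} a bounded version of your functional-equation scheme, closed off by an explicit polynomial identity. For $k=2$ the paper first assembles candidate triple sums for all seven level-$5$ profiles (Theorem~\ref{Thm_GK-mod8} and Proposition~\ref{Prop_remainingcases}); it proves the four of Theorem~\ref{Thm_GK-mod8} not by solving the system but by transforming them into the quadruple sums of Corteel--Dousse--Uncu (Theorem~\ref{Thm_CDU}, itself proved in \cite{CDU20} by computer algebra), via the new transformation formulas \eqref{Eq_ksum} and \eqref{Eq_ksum2}; only the remaining three profiles are then extracted from the functional equations \eqref{Eq_functional-eqn}. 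The paper remarks that verifying the full system against all seven expressions would give a computer-free proof --- that is literally your plan, but it can only be executed once formulas for the extra profiles are in hand, which is exactly the data a blind attempt could not supply.
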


\begin{theorem}\label{Thm_k12}
Conjecture~\ref{Con_cylindric} holds for $k=1$ and $k=2$. 
\end{theorem}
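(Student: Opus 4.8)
\medskip

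The plan is to exploit the Corteel--Welsh system of functional equations for the two-variable generating functions $\GK_c(z,q)$ at a fixed level $d=c_0+c_1+c_2$. For rank $3$ these equations, obtained in \cite{CW19} by decomposing a cylindric partition according to the value of $\max(\pi)$, express each $\GK_c(z,q)$ as an explicit $\mathbb{Z}[z,q]$-linear combination of the functions $\GK_{c'}(zq,q)$ with $c'$ ranging over the finitely many profiles of the same level $d$. Since replacing $z$ by $zq$ strictly increases the $z$-adic order of the unknowns, Corteel and Welsh prove that this system, together with the normalisation $\GK_c(0,q)=1$, has a unique solution in the ring of formal power series in $z$ over $\mathbb{Q}[[q]]$. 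Thus it suffices to produce candidate series for \emph{every} profile of the level in question, check that each reduces to $1$ at $z=0$, and verify the functional equations; uniqueness then identifies the candidates with the true generating functions and proves the conjectured formulas.

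For $k=1$ the level is $d=2$, and modulo the cyclic symmetry $\GK_{(c_0,c_1,c_2)}(z,q)=\GK_{(c_1,c_2,c_0)}(z,q)$ there are only the two profile classes $(2,0,0)$ and $(1,1,0)$, both of which are supplied by Conjecture~\ref{Con_cylindric}. Indeed, the $k=1$ specialisations give the single sums
\[
\GK_{(2,0,0)}(z,q)=\frac{1}{(zq)_{\infty}}\sum_{n\ge0}\frac{z^n q^{n^2+n}}{(q)_n},
\qquad
\GK_{(1,1,0)}(z,q)=\frac{1}{(zq)_{\infty}}\sum_{n\ge0}\frac{z^n q^{n^2}}{(q)_n}.
\]
First I would write out the two level-$2$ Corteel--Welsh equations explicitly, substitute these candidates, and reduce the resulting relations to recurrences for the parametrised Rogers--Ramanujan sums $\sum_{n}z^n q^{n^2+\epsilon n}/(q)_n$ with $\epsilon\in\{0,1\}$; these are elementary consequences of the $q$-Pascal recurrence and of $(q)_n=(1-q^n)(q)_{n-1}$. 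As both candidates clearly equal $1$ at $z=0$, uniqueness settles the case $k=1$.

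For $k=2$ the level is $d=5$, whose seven cyclic profile classes are $(5,0,0)$, $(4,1,0)$, $(4,0,1)$, $(3,2,0)$, $(3,0,2)$, $(3,1,1)$ and $(2,2,1)$. Conjecture~\ref{Con_cylindric} provides candidates only for the four classes $(5,0,0)$, $(4,1,0)$, $(3,2,0)$ (the $s=1,2,3$ cases) and $(2,2,1)$, so on its own the system is not closed. Here I would instead invoke the complete solution of the case $r=3$, $d=5$ by Corteel, Dousse and Uncu \cite{CDU20}, who proved manifestly positive multisum expressions for the generating functions $\GK_c(z,q)$. The theorem for $k=2$ then reduces to showing that, for each of the four conjectured profiles, the multisum of Conjecture~\ref{Con_cylindric} agrees with the corresponding expression of \cite{CDU20}. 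This is the crux and the main obstacle, since the two sides typically have different numbers of summation variables --- for example the triple sum for $\GK_{(2,2,1)}(z,q)$ must be matched against the quadruple sum of \cite[Theorem~3.3]{CDU20} --- so each identity is established by carrying out one summation in closed form. The requisite single-variable $q$-binomial summations are precisely those collected in Section~\ref{Sec_hyper}, and the connection with \cite{CDU20} developed in Section~\ref{Sec_Mod8} provides the framework for these reductions; once all four matchings are in place the proof is complete.
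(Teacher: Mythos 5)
Your proposal is correct in outline, and for $k=2$ it is essentially the paper's own proof: the $k=2$ case of Theorem~\ref{Thm_k12} is exactly Theorem~\ref{Thm_GK-mod8}, which the paper establishes by matching the four conjectured triple sums against the Corteel--Dousse--Uncu quadruple sums of Theorem~\ref{Thm_CDU}, via the stronger two-parameter identities of Proposition~\ref{Prop_threeisfour}. One caveat on your description of the mechanism: a single closed-form summation is \emph{not} enough. After the $q$-Chu--Vandermonde summation \eqref{Eq_qCV} removes one summation variable from the quadruple sum, the resulting triple sum still does not coincide with the conjectured one; the crux is the pair of transformation formulas \eqref{Eq_ksum} and \eqref{Eq_ksum2} of Corollary~\ref{Cor_qbinomials} --- equalities between two sums, neither in closed form, proved from Lemma~\ref{Lem_trafo} --- and, for the profile $(3,2,0)$, a further chain of $q$-binomial recurrences. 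So Section~\ref{Sec_hyper} supplies transformations rather than evaluations, but you are pointing at the right tools and the right target, namely reduction to \cite{CDU20} rather than closing the level-$5$ functional-equation system (which, as you correctly observe, the four conjectured profiles alone cannot do).

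For $k=1$ your route genuinely differs from the paper's. You verify the rank-$3$, level-$2$ Corteel--Welsh system directly for the two profile classes and invoke uniqueness of its solution; the paper instead applies level-rank duality \eqref{Eq_RR-rank3} to identify $\GK_{(1,1,0)}(z,q)=\GK_{(2,1)}(z,q)$ and $\GK_{(2,0,0)}(z,q)=\GK_{(3,0)}(z,q)$ and then quotes the rank-$2$, level-$3$ identity \eqref{Eq_RR-rank2-unbounded} of \cite{CDU20}. The two arguments are essentially the same computation in disguise: under duality your level-$2$ equations collapse to the classical Rogers--Ramanujan recurrence $F(z)=F(zq)+zqF(zq^2)$ for $F(z)=\sum_n z^nq^{n^2}/(q)_n$, which is exactly what underlies the cited rank-$2$ result, so your verification is sound. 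What the paper's treatment buys beyond either of these is a second, self-contained proof via the bounded (polynomial) analogue of Proposition~\ref{Prop_RRcase}, reduced by duality to Proposition~\ref{Prop_RRcase-rank2} and proved from \emph{bounded} functional equations; that refinement, which connects to Burge's doubly-bounded identities, is invisible to your unbounded verification.
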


Conjecture~\ref{Con_cylindric} is consistent with the following
conjecture of Corteel et al., see \cite[Conjecture 4.2]{CDU20}.
For a nonnegative integer $n$ and $c=(c_0,\dots,c_{r-1})$, define
\begin{equation}\label{Eq_Qnc}
Q_{n,c}(q):=
(q^{\ell};q^{\ell})_n\,[z^n] \Big((zq)_{\infty}\GK_c(z,q)\Big)
\in\mathbb{Z}[[q]],
\end{equation}
where, for $d:=c_0+\dots+c_{r-1}$, $\ell:=\gcd(d,r)$.

\begin{conjecture}\label{Con_CDU-Q}
Let $c=(c_0,c_1,c_2)$ and $d:=c_0+c_1+c_2$ such that 
$d\not\equiv 0 \pmod{3}$.
Then $Q_{n,(c_0,c_1,c_2)}(q)$ is a polynomial in $q$ with 
nonnegative coefficients.
Moreover,
\begin{equation}\label{Eq_CDU-Q}
Q_{n,(c_0,c_1,c_2)}(1)=\Big(\tfrac{1}{6}(d+1)(d+2)-1\Big)^n.
\end{equation}
\end{conjecture}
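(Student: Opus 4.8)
The plan is to reduce both parts of the statement to structural properties of the Corteel--Welsh functional equations for $\GK_c(z,q)$ at fixed level. Throughout fix $d=c_0+c_1+c_2\not\equiv 0\pmod 3$, so that $\ell=\gcd(d,3)=1$ and
\[
Q_{n,c}(q)=(q)_n\,[z^n]\bigl((zq)_\infty\,\GK_c(z,q)\bigr).
\]
By \cite{CW19} the finitely many series $\GK_c(z,q)$ with $c_0+c_1+c_2=d$ satisfy a closed linear system in which the dependence on $z$ is explicit. First I would multiply each equation by $(zq)_\infty$, extract the coefficient of $z^n$, and use the $q$-Pascal recurrence to bring the resulting relations into first-order transfer-matrix form
\[
\mathbf{Q}_n(q)=M(q)\,\mathbf{Q}_{n-1}(q),\qquad
\mathbf{Q}_n(q):=\bigl(Q_{n,c}(q)\bigr)_{c_0+c_1+c_2=d},
\]
with $M(q)$ a matrix indexed by the level-$d$ profiles and having entries in $\mathbb{Z}[q]$. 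The base case is immediate: since $(zq)_\infty$ equals $1$ at $z=0$ and only the all-zero array has $\max(\pi)=0$, one has $Q_{0,c}(q)=1$ for every $c$.

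For the evaluation \eqref{Eq_CDU-Q} I would specialise to $q=1$, where each $q$-binomial $\qbin{a}{b}$ collapses to $\binom{a}{b}$ and $M(1)$ becomes a constant nonnegative integer matrix. The crux is the lemma that every row of $M(1)$ sums to $\Lambda:=\tfrac16(d+1)(d+2)-1$. Since $d\not\equiv 0\pmod 3$, the cyclic rotation $(c_0,c_1,c_2)\mapsto(c_1,c_2,c_0)$ acts without fixed points on the $\tfrac12(d+1)(d+2)$ level-$d$ profiles, so there are exactly $\tfrac16(d+1)(d+2)$ orbits and $\Lambda=(\#\text{orbits})-1$; I expect the row-sum count to fall out of reading the transitions off the Corteel--Welsh equations, the $-1$ reflecting the $z$-independent contribution absorbed by the factor $(zq)_\infty$. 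Granting the lemma, constant row sums give $M(1)\mathbf{1}=\Lambda\mathbf{1}$, and the base case $\mathbf{Q}_0(1)=\mathbf{1}$ yields $\mathbf{Q}_n(1)=M(1)^n\mathbf{1}=\Lambda^n\mathbf{1}$, i.e. $Q_{n,c}(1)=\Lambda^n$ independently of $c$. As a check, Conjecture~\ref{Con_cylindric} for $(k,k,k-1)=(2,2,1)$ gives $Q_{n,(2,2,1)}(1)=\sum_{m,\,n_2}\binom{n}{n_2}\binom{n+n_2}{m}=2^n3^n=6^n$, matching $\Lambda=\tfrac16\cdot6\cdot7-1=6$.

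For the polynomiality-and-positivity assertion I would proceed in two stages. Polynomiality should follow from a denominator and degree bound: the Corteel--Welsh solution has $[z^n]\GK_c(z,q)\in(q)_n^{-1}\,\mathbb{Z}[q]$, so that $Q_{n,c}(q)\in\mathbb{Z}[q]$ after multiplication by $(q)_n$ and by $(zq)_\infty$. Nonnegativity is more delicate because $(zq)_\infty=\sum_{k\geq0}(-1)^kq^{\binom{k+1}{2}}z^k/(q)_k$ introduces signs that must cancel in $Q_{n,c}(q)$. The natural route is to establish the manifestly positive multisum representations of Conjecture~\ref{Con_cylindric} and its companions at level $d$; once these hold, $Q_{n,c}(q)$ equals an inner sum of products of powers of $q$ and $q$-binomials, hence visibly lies in $\mathbb{Z}_{\geq0}[q]$.

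The hard part will be this last step: a positive solution of the Corteel--Welsh system is presently known only for small level, so proving nonnegativity for all $d\not\equiv0\pmod3$ is the genuine obstacle, requiring either a provably positive closed form for $\GK_c(z,q)$ at every rank-$3$ level or a direct combinatorial model computing $Q_{n,c}(q)$. The $q=1$ identity \eqref{Eq_CDU-Q}, by contrast, is within reach through the row-sum lemma and the transfer-matrix recursion, independently of the multisum conjectures.
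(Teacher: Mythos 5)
First, a point of order: the statement you were asked to prove is a \emph{conjecture} in this paper (a restatement of \cite[Conjecture 4.2]{CDU20}), so there is no proof of the paper to compare yours against. What the paper records is that the polynomiality and the evaluation \eqref{Eq_CDU-Q} were proven by Welsh in unpublished work \cite{Welsh21}, while the positivity assertion remains wide open; the paper's own contribution is only that Conjecture~\ref{Con_cylindric} (proved for $k\leq 2$ in Theorem~\ref{Thm_k12}) yields manifestly positive formulas for $Q_{n,c}(q)$, and hence the full statement, for the special profiles $(k,k,k-1)$ and $(3k-s,s-1,0)$. Your proposal explicitly defers positivity (reasonably, since that is the open part), so the question is whether your plan would recover the two parts attributed to Welsh. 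As written, it would not: both rest on a structural lemma that is asserted rather than proved, and whose naive form fails.

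The gap is your transfer-matrix/row-sum step. Extract $[z^n]$ from the normalised Corteel--Welsh equations and multiply by $(q)_n$: because the equations mix a profile's generating function with others evaluated at the \emph{same} $z$-degree as well as at degree $n-1$, one obtains a system $A(q)\mathbf{Q}_n(q)=B(q)\mathbf{Q}_{n-1}(q)$ in which $A(q)$ is not the identity and every entry of $B(q)$ carries the factor $(q)_n/(q)_{n-1}=1-q^n$. For instance, the equation $\GKt_{(4,0,1)}(z,q)=\GKt_{(3,1,1)}(zq,q)+zq\,\GKt_{(4,1,0)}(zq^2,q)$ quoted in Section~\ref{Sec_Mod8} becomes
\begin{equation*}
Q_{n,(4,0,1)}(q)=q^n Q_{n,(3,1,1)}(q)+q^{2n-1}\bigl(1-q^n\bigr)Q_{n-1,(4,1,0)}(q).
\end{equation*}
Hence $B(1)=0$, and consequently $A(1)$ is singular (it annihilates the nonzero vector $\mathbf{Q}_n(1)$); setting $q=1$ in the system yields only homogeneous same-index relations such as $Q_{n,(4,0,1)}(1)=Q_{n,(3,1,1)}(1)$, which are consistent with all $Q_{n,c}(1)$ being equal but carry no information about the value or its growth in $n$. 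Your matrix $M(q)=A(q)^{-1}B(q)$ is therefore a $0/0$ object at $q=1$: its entries do \emph{not} visibly lie in $\mathbb{Z}[q]$ (solving the same-index part introduces the denominator $\det A(q)$, which is not a unit --- already at level $2$ one finds $\det A(q)=1-q^n$), and $M(1)$ cannot be read off by evaluating entries at $q=1$. That the cancellation of $\det A(q)$ against the $(1-q^n)$-content of $B(q)$ happens at every level, and that the resulting limit matrix is nonnegative with constant row sums $\Lambda=\tfrac16(d+1)(d+2)-1$, is exactly the nontrivial content of the claim; in your write-up it appears only as ``I expect the row-sum count to fall out.'' The same circularity affects your polynomiality argument: the assertion $[z^n]\bigl((zq)_\infty\GK_c(z,q)\bigr)\in (q)_n^{-1}\mathbb{Z}[q]$ does not follow from a ``denominator bound'' --- showing that the denominators $\det A(q)$ cancel \emph{is} the polynomiality theorem. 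Two smaller remarks: your orbit count (that $\Lambda$ is one less than the number of cyclic orbits of level-$d$ profiles, using $3\nmid d$) is correct but is numerology until tied to the equations; and your check $Q_{n,(2,2,1)}(1)=6^n$ is legitimate, since Theorem~\ref{Thm_k12} proves Conjecture~\ref{Con_cylindric} for $k=2$, but it covers a single profile.
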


The polynomiality and \eqref{Eq_CDU-Q} have both been proven by 
Welsh~\cite{Welsh21}, but the positivity part of the conjecture
remains wide open.
From Conjecture~\ref{Con_cylindric}, we immediately infer the following
manifestly positive representations for $d=3k-1$:
\[
Q_{n_1,(k,k,k-1)}(q)=
\sum_{\substack{n_2,\dots,n_k\geq 0 \\[1pt] m_1,\dots,m_{k-1}\geq 0}}
q^{n_k^2} \prod_{i=1}^{k-1} 
q^{n_i^2-n_im_i+m_i^2}\qbin{n_i}{n_{i+1}}\qbin{n_i-n_{i+1}+m_{i+1}}{m_i}
\]
and
\begin{align*}
&Q_{n_1,(3k-s,s-1,0)}(q) \\
&\qquad=\sum_{\substack{n_2,\dots,n_k\geq 0 \\[1pt] m_1,\dots,m_{k-1}\geq 0}}
q^{n_k^2+\sum_{i=s}^k n_i}
\prod_{i=1}^{k-1} q^{n_i^2-n_im_i+m_i^2+m_i} \qbin{n_i}{n_{i+1}} 
\qbin{n_i-n_{i+1}+m_{i+1}}{m_i},
\end{align*}
where $m_k:=2n_k$. 
For $q=1$ it is a standard exercise in binomial sums to show that this
implies
\[
Q_{n,(k,k,k-1)}(1)=Q_{n,(3k-s,s-1,0)}(1)=\Big(\tfrac{1}{2}k(3k+1)-1\Big)^n,
\]
in accordance with \eqref{Eq_CDU-Q}.

\subsection{The modulus-$(3k+1)$ case}

All the modulus-$(3k+2)$ identities have counterparts for modulus $3k+1$.
Our first result beyond \eqref{Eq_A2-vac-b} is a companion to
Conjecture~\ref{Con_A2-two} and gives Andrews--Gordon identities
for $\AG_{(3k-s-1)\La_0+(s-1)\La_1;3}(q)$.

\begin{conjecture}
[$\mathrm{A}_2$ Andrews--Gordon identities, II$'$]
\label{Con_A2-two-b}
For integers $k,s$ such that $k\geq 2$ and $1\leq s\leq k$,
\begin{align}\label{Eq_A2-two-b}
\sum_{\substack{n_1,\dots,n_{k-1}\geq 0 \\[1pt] m_1,\dots,m_{k-1}\geq 0}}
& \frac{q^{\sum_{i=1}^{k-1} (n_i^2-n_im_i+m_i^2+m_i)+
\sum_{i=s}^{k-1} n_i}}{(q)_{n_1}} \, \qbin{2n_{k-1}}{m_{k-1}}
\prod_{i=1}^{k-2} \qbin{n_i}{n_{i+1}} \qbin{n_i-n_{i+1}+m_{i+1}}{m_i} \\
&=\frac{(q^{3k+1};q^{3k+1})_{\infty}^2}{(q)_{\infty}^2}\,
\theta(q,q^s,q^{s+1};q^{3k+1}). \notag
\end{align}
\end{conjecture}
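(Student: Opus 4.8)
The plan is to treat \eqref{Eq_A2-two-b} as the modulus-$(3k+1)$ mirror of Conjecture~\ref{Con_A2-two} and to attack it with the same tools that yield the provable cases of Theorem~\ref{Thm_threecases}: the $\mathrm{A}_2$ Bailey chain and the Rogers--Ramanujan-type seeds of \cite{ASW99}. The first step is bookkeeping on the product side. Taking $\mu=(s-1,0,0)$ gives level $d=3k-2$ and profile $(3k-s-1,s-1,0)$, and the three factors $\theta(q^{\mu_i-\mu_j+j-i};q^{3k+1})$ in \eqref{Eq_higher-rank_AG} collapse to $\theta(q,q^s,q^{s+1};q^{3k+1})$; thus the right-hand side is $\AG_{(3k-s-1)\La_0+(s-1)\La_1;3}(q)$, which by the Foda--Welsh identity \eqref{Eq_AG-GK} equals $(q)_\infty\GK_{(3k-s-1,s-1,0)}(q)$. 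Proving \eqref{Eq_A2-two-b} is therefore the same as exhibiting a manifestly positive multisum for the size generating function of cylindric partitions of that profile.

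Next I would build the left-hand side from the bottom. The cap $\qbin{2n_{k-1}}{m_{k-1}}$ is the hallmark of the modulus-$(3k+1)$ (``even'') family, exactly as in \eqref{Eq_A2-vac-b}, and it is carried by the seed rather than produced by the chain; for $k=2$ the whole sum degenerates to one of the modulus-$7$ $\mathrm{A}_2$ Rogers--Ramanujan identities of \cite{ASW99,CW19,FFW08,W06}, and these supply the base of the induction. From such a seed in the variables $(n_{k-1},m_{k-1})$ I would apply the $\mathrm{A}_2$ Bailey chain $k-2$ times, each step prepending a fresh pair $(n_i,m_i)$ and multiplying the summand by $q^{n_i^2-n_im_i+m_i^2}$ together with $\qbin{n_i}{n_{i+1}}\qbin{n_i-n_{i+1}+m_{i+1}}{m_i}$, thereby reconstructing the product over $1\le i\le k-2$ and the factor $1/(q)_{n_1}$ in \eqref{Eq_A2-two-b}.

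The parameter $s$ is the subtle ingredient. The uniform linear term $\sum_{i=1}^{k-1}m_i$ is intrinsic to the chosen seed, whereas the partial sum $\sum_{i=s}^{k-1}n_i$ should be produced by a single Bailey \emph{lattice} step -- a change of base inserted at level $s$ -- after which every chain step above that level acquires the extra linear contribution $n_i$. Tracking this one shift through the tower is what moves the theta arguments to $\theta(q,q^s,q^{s+1};q^{3k+1})$ and is the direct analogue of how the $s$-dependent sums \eqref{Eq_ks-con} are obtained in the modulus-$(3k+2)$ setting. The resulting telescoped sum is then matched to the displayed theta quotient either by the Macdonald identity for $\mathrm{A}_2^{(1)}$ or, equivalently, through \eqref{Eq_AG-GK} and Borodin's product formula \cite{Borodin07}.

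The principal obstacle is the availability of the correct seed for every $s$ with $1\le s\le k$. As flagged in Remark~\ref{Rem_alt-sums}, the near-symmetric seeds of \cite{ASW99} needed for the intermediate values of $s$ are not all known, and a complete proof hinges on the missing seeds of Conjecture~\ref{Con_missing}. I therefore expect the Bailey-chain argument to settle \eqref{Eq_A2-two-b} unconditionally only at the extremal parameters, where the seed coincides with an established $\mathrm{A}_2$ Rogers--Ramanujan identity, which is precisely why the statement is recorded as a conjecture rather than a theorem.
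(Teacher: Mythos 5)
Your product-side bookkeeping is correct (the profile $(3k-s-1,s-1,0)$ has level $3k-2$, so $d+r=3k+1$ and the theta factors collapse to $\theta(q,q^s,q^{s+1};q^{3k+1})$), and your final diagnosis --- unconditional proof only where a seed from \cite{ASW99} exists, everything else resting on Conjecture~\ref{Con_missing} --- matches the paper's Theorem~\ref{Thm_threecases-b} and its conditional argument. But the mechanism you propose for getting from the seeds to \eqref{Eq_A2-two-b} has a genuine gap. You assume an $\mathrm{A}_2$ Bailey chain whose iteration step prepends a pair $(n_i,m_i)$ with exactly the factors $q^{n_i^2-n_im_i+m_i^2}\qbin{n_i}{n_{i+1}}\qbin{n_i-n_{i+1}+m_{i+1}}{m_i}$, together with an $\mathrm{A}_2$ Bailey lattice step that inserts $\sum_{i=s}^{k-1}n_i$. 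Neither is known to exist. The chain of \cite{ASW99} produces the \emph{symmetric} structure $\prod_i q^{n_i^2-\sigma_i n_im_i+m_i^2}\qbin{n_{i-1}}{n_i}\qbin{m_{i-1}}{m_i}$, in which the $n$'s and $m$'s form two separate staircases --- this is precisely what Remark~\ref{Rem_alt-sums} means when it says the variables play a more symmetric role in the seeds than in the Andrews--Gordon form. The mixed binomial $\qbin{n_i-n_{i+1}+m_{i+1}}{m_i}$ is not produced by any known Bailey-type iteration, so your chain step is an assumption rather than an argument; without it even the cases $s\in\{1,k-1,k\}$ do not follow, since the identities of \cite{ASW99} are not stated in the Andrews--Gordon form.

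What the paper actually does is not a chain iteration at all. It takes the full $k$-fold multisum identities of \cite{ASW99} in one piece, namely $F^{(1)}_{k,s,1}(1,q)=\text{product}$ as in \eqref{Eq_Fks1} (known for $s\in\{1,k,k+1\}$ by Proposition~\ref{Prop_missing}, and exactly Conjecture~\ref{Con_missing} otherwise), and then \emph{transforms the sum side}: Lemma~\ref{Lem_F-trafo}, proved by iterating the regularised $\qhyp{1}{1}$ transformation \eqref{Eq_qbintrafo}, converts the symmetric structure $\prod_i\qbin{m_{i-1}}{m_i}$ into the mixed structure $\prod_i\qbin{n_i-n_{i+1}+m_{i+1}}{m_i}$, giving Proposition~\ref{Prop_finiteform2}; the $a=1$, $t=1$, $z=1$ case of \eqref{Eq_mineen3} in the limit $n_0,m_0\to\infty$ is \eqref{Eq_A2-two-b} up to the shift $k\mapsto k+1$ in the indexing, and even the cap $\qbin{2n_{k-1}}{m_{k-1}}$ is generated by this transformation rather than carried along from a seed. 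In particular, the $s$-dependence is packaged entirely inside the seed identity $F^{(1)}_{k,s,1}$, not produced by a lattice insertion at level $s$. If you want to rescue your outline, the ingredient you must supply is precisely this sum-side transformation (or a proof that a mixed-form $\mathrm{A}_2$ chain exists), which is the paper's main technical novelty here.
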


Again we have a proof for three (or two when $k=2$) values of $s$.

\begin{theorem}\label{Thm_threecases-b}
Conjecture~\ref{Con_A2-two-b} holds for $s\in\{1,k-1,k\}$.
\end{theorem}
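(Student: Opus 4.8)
The plan is to establish all three cases by the same Bailey-chain construction that underlies the modulus-$(3k+2)$ identities of Theorem~\ref{Thm_threecases}, now adapted to level $3k-2$. Concretely, for $\la=(3k-s-1)\La_0+(s-1)\La_1$ the parametrisation \eqref{Eq_la-mu} gives $c=(3k-s-1,s-1,0)$, hence level $d=3k-2$ and modulus $d+3=3k+1$; taking $\mu=(s-1,0,0)$ in \eqref{Eq_higher-rank_AG} yields $\theta(q^s;q^{3k+1})$, $\theta(q^{s+1};q^{3k+1})$ and $\theta(q;q^{3k+1})$ from the pairs $(1,2),(1,3),(2,3)$, reproducing exactly the product $(q^{3k+1};q^{3k+1})_\infty^2/(q)_\infty^2\cdot\theta(q,q^s,q^{s+1};q^{3k+1})$ on the right of \eqref{Eq_A2-two-b}. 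So it suffices to show the left-hand multisum equals this character. The restriction to $s\in\{1,k-1,k\}$ is precisely the constraint that the seed $\mathrm{A}_2$ Rogers--Ramanujan identity needed at the bottom of the chain is one of those proved in \cite{ASW99}; intermediate values $2\le s\le k-2$ would instead call for the conjectural seeds of Conjecture~\ref{Con_missing}. When $k=2$ the labels $s=1$ and $s=k-1$ coincide and the two surviving identities are modulus-$7$ $\mathrm{A}_2$ Rogers--Ramanujan identities, which serve as the base of the induction.

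First I would read the chosen modulus-$7$ seed as an $\mathrm{A}_2$ Bailey pair in the innermost variables $(n_{k-1},m_{k-1})$; this fixes the terminal $q$-binomial $\qbin{2n_{k-1}}{m_{k-1}}$ and the prefactor $1/(q)_{n_1}$, and it is precisely this cap (rather than a further variable $n_k$ with $m_k=2n_k$) that separates the modulus-$(3k+1)$ family from the modulus-$(3k+2)$ family of Theorem~\ref{Thm_threecases}. I would then iterate the $\mathrm{A}_2$ Bailey chain of \cite{ASW99} a total of $k-2$ times, each pass appending one factor $q^{n_i^2-n_im_i+m_i^2}\qbin{n_i}{n_{i+1}}\qbin{n_i-n_{i+1}+m_{i+1}}{m_i}$ and so assembling the nested sum in \eqref{Eq_A2-two-b}. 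The uniform term $\sum_{i=1}^{k-1}m_i$ comes from a global $q$-shift carried by the seed, while the $s$-dependent term $\sum_{i=s}^{k-1}n_i$ is governed by how the charge is distributed along the chain: $s=k$ corresponds to an uncharged seed and uncharged chain, $s=k-1$ to a charged seed with uncharged chain steps, and $s=1$ to a charged seed together with a charge at every step. These three patterns are exactly the ones the chain delivers directly, which is the structural reason the same three (two when $k=2$) values of $s$ recur here as in Theorem~\ref{Thm_threecases}; an intermediate $s$ would require switching the charge on partway up the chain, that is, the as-yet-unavailable seeds of Conjecture~\ref{Con_missing}.

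The final step is to collapse the outermost summation against $1/(q)_{n_1}$ into the product side, for which I would use the limiting form of the $\mathrm{A}_2$ Bailey lemma from \cite{ASW99}; by the identification of the product above this yields the stated theta quotient, and independently \eqref{Eq_AG-GK} together with Borodin's formula confirms it. I expect the genuine labour to lie not in any single transformation but in the exponent bookkeeping: one must verify that after $k-2$ iterations the seed data propagate to give precisely $\sum_{i=1}^{k-1}(n_i^2-n_im_i+m_i^2+m_i)+\sum_{i=s}^{k-1}n_i$ with no spurious linear or cross terms, and that the three charge configurations are realisable from seeds actually established in \cite{ASW99}. The case $s=k-1$ looks most delicate, since it is the shortest nonempty charge and must be traced from a charged seed through an otherwise uncharged chain.
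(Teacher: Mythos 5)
Your overall strategy---use the identities actually proved in \cite{ASW99} as seeds, observe that only three (two when $k=2$) values of $s$ have available seeds while the intermediate values would need the missing identities of Conjecture~\ref{Con_missing}, and match the product side with the character---is the same logic the paper follows: after the shift $k\mapsto k+1$, the admissible values $s\in\{1,k-1,k\}$ correspond exactly to the cases $s\in\{1,k,k+1\}$ of Conjecture~\ref{Con_missing} established in Proposition~\ref{Prop_missing} by citation to \cite{ASW99}. However, there is a genuine gap at the core of your construction: you assert that each iteration of the $\mathrm{A}_2$ Bailey chain of \cite{ASW99} appends a factor $q^{n_i^2-n_im_i+m_i^2}\qbin{n_i}{n_{i+1}}\qbin{n_i-n_{i+1}+m_{i+1}}{m_i}$, i.e.\ that the chain directly assembles the sum side of \eqref{Eq_A2-two-b}. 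It does not. The ASW chain produces sums of the shape \eqref{Eq_F2}: two \emph{parallel} binomial strings $\qbin{n_{i-1}}{n_i}\qbin{m_{i-1}}{m_i}$ with denominator structure $(q)_{n_1}(q)_{m_1}(q)_{n_k+m_k+1}$, whose product sides carry $(q)_{\infty}^3$ rather than $(q)_{\infty}^2$---this is precisely the ``unwanted factor $1/(q)_{\infty}$'' discussed in the introduction. The coupled binomial $\qbin{n_i-n_{i+1}+m_{i+1}}{m_i}$, the single surviving denominator $(q)_{n_1}$, and the $(q)_{\infty}^2$ on the product side are not Bailey-chain output at all.

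The paper bridges exactly this gap with its central new ingredient: Lemma~\ref{Lem_F-trafo}, an iterated $\qhyp{1}{1}$ (Heine-type) transformation applied to the inner $m$-chain, packaged as Proposition~\ref{Prop_finiteform2}. Applied to $F^{(1)}_{n_0,m_0;k,s,1}(z,q)$ it produces the coupled binomials and recombines the surplus Durfee-type factors into the single prefactor $1/(q)_{m_0-n_1+m_1+\delta_{t,1}}$; letting $n_0,m_0\to\infty$ at $z=t=1$ and equating with the ASW evaluation \eqref{Eq_Fks1} (one factor of $(q)_{\infty}$ cancels) yields Conjecture~\ref{Con_A2-two-b} with $k\mapsto k+1$ for $s\in\{1,k,k+1\}$, which is Theorem~\ref{Thm_threecases-b}. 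Without this transformation, or something equivalent to it, your plan cannot produce the claimed summand: no amount of exponent bookkeeping along the chain will convert $\qbin{m_{i-1}}{m_i}$ into $\qbin{n_i-n_{i+1}+m_{i+1}}{m_i}$ or remove the extra $1/(q)_{\infty}$. A smaller slip: a seed placed at the innermost pair $(n_{k-1},m_{k-1})$ cannot ``fix the prefactor $1/(q)_{n_1}$,'' since $n_1$ is the outermost summation variable; in the paper that factor arises only in the limit $n_0\to\infty$ from the head binomial $\qbin{n_0}{n_1}$.
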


If Conjecture~\ref{Con_A2-two-b} is true, then for $1\leq s\leq k$,
\begin{align}\label{Eq_ks-alt-b}
\sum_{\substack{n_1,\dots,n_{k-1}\geq 0 \\[1pt] m_1,\dots,m_{k-1}\geq 0}} 
\bigg(&\frac{
q^{\sum_{i=1}^{k-1} (n_i^2-n_im_i+m_i^2+n_i)+\sum_{i=s}^{k-1} m_i}}
{(q)_{n_1}} \, \qbin{2n_{k-1}+\delta_{s,k}}{m_{k-1}} 
\\[-1mm]
& \times 
\prod_{i=1}^{k-2} \qbin{n_i}{n_{i+1}} 
\qbin{n_i-n_{i+1}+m_{i+1}+\delta_{i,s-1}}{m_i}\bigg) \notag \\[2mm]
&\quad=\frac{(q^{3k+1};q^{3k+1})_{\infty}^2}{(q)_{\infty}^2}\,
\theta(q,q^s,q^{s+1};q^{3k+1}) \notag
\end{align}
in analogy with \eqref{Eq_ks-alt} and \eqref{Eq_kk1-alt}.
Equations \eqref{Eq_A2-vac-b}, \eqref{Eq_A2-two-b} and \eqref{Eq_ks-alt-b}
for $k=2$ give the four $\mathrm{A}_2$ Rogers--Ramanujan 
identities of \cite[Theorem 5.6]{ASW99}.
The fifth $\mathrm{A}_2$ Rogers--Ramanujan, which was conjectured in
\cite{FFW08} and proved in \cite[Theorem 1.2]{CW19}, is missing from
our modulus-$(3k+1)$ generalisations.

The next identity for character sum 
$\sum_{i=1}^s q^{s-i}\AG_{(3k-2i)\La_0+(i-1)\La_1+(i-1)\La_2;3}(q)$
is the analogue of Theorem~\ref{Thm_A2-three}.

\begin{theorem}
[$\mathrm{A}_2$ Andrews--Gordon identities, III$'$]
\label{Thm_A2-three-b}
For integers $k,s$ such that $1\leq s\leq k-1$,
\begin{align*}
&\sum_{\substack{n_1,\dots,n_{k-1}\geq 0 \\[1pt] m_1,\dots,m_{k-1}\geq 0}}
\frac{q^{\sum_{i=1}^{k-1}(n_i^2-n_im_i+m_i^2)+
\sum_{i=s}^{k-1}(n_i+m_i)}}{(q)_{n_1}}\,\qbin{2n_{k-1}}{m_{k-1}}
\prod_{i=1}^{k-2} \qbin{n_i}{n_{i+1}} 
\qbin{n_i-n_{i+1}+m_{i+1}+\delta_{i,s-1}}{m_i} \\
&\qquad\qquad\quad=
\frac{(q^{3k+1};q^{3k+1})_{\infty}^2}{(q)_{\infty}^2}
\sum_{i=1}^s q^{s-i}\,\theta(q^i,q^i,q^{2i};q^{3k+1}).
\end{align*}
\end{theorem}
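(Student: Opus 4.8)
The plan is to follow the route common to every $\mathrm{A}_2$ Andrews--Gordon identity in the paper (cf.\ Remark~\ref{Rem_alt-sums}): build the left-hand multisum by iterating the $\mathrm{A}_2$ Bailey chain of \cite{ASW99} on a suitable seed, and read off the product side from \eqref{Eq_higher-rank_AG}. As a first step I would fix the right-hand side representation-theoretically. Writing $\la_i:=(3k-2i)\La_0+(i-1)\La_1+(i-1)\La_2$ and parametrising as in \eqref{Eq_la-mu}, one finds $d=3k-2$, modulus $d+3=3k+1$ and $\mu=(2i-2,i-1,0)$, so that \eqref{Eq_higher-rank_AG} gives
\[
\AG_{\la_i;3}(q)=\frac{(q^{3k+1};q^{3k+1})_{\infty}^2}{(q)_{\infty}^2}\,
\theta(q^i,q^i,q^{2i};q^{3k+1}).
\]
Hence the right-hand side of the theorem is exactly the character sum $\sum_{i=1}^{s}q^{s-i}\AG_{\la_i;3}(q)$ advertised before the statement, and the whole content of the theorem is the evaluation of the sum side.

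Because Theorem~\ref{Thm_A2-three-b} is the modulus-$(3k+1)$ analogue of Theorem~\ref{Thm_A2-three}, I would first run the argument in the modulus-$(3k+2)$ case and then transcribe it. The two families share the interior Bailey links $q^{n_i^2-n_im_i+m_i^2}\qbin{n_i}{n_{i+1}}\qbin{n_i-n_{i+1}+m_{i+1}+\delta_{i,s-1}}{m_i}$, in which $n_i^2-n_im_i+m_i^2$ is the $\mathrm{A}_2$ Cartan norm form that one step of the chain inserts; they differ only at the boundary. The modulus-$(3k+2)$ tower terminates with the extra variable $n_k$ and the factor $q^{n_k^2}$ (with $m_k:=2n_k$), whereas the modulus-$(3k+1)$ tower is capped by the symmetric factor $\qbin{2n_{k-1}}{m_{k-1}}$. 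Concretely I would start from the ASW seed carrying the $\qbin{2n}{m}$ cap and apply the chain $k-2$ times. The parameter $s$ enters only through the initialisation: the shift $\delta_{i,s-1}$ marks the single link at which the seed is grafted onto the chain, and the linear tail $\sum_{i=s}^{k-1}(n_i+m_i)$ records the weight accumulated below that link. Making this precise means fixing a finite (polynomial) version and verifying that a single Bailey step reproduces exactly the $i\mapsto i+1$ increment of both the norm form and the two binomials.

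The crux is the base evaluation: one must show that the seed --- the innermost sum, before any iteration --- equals $\sum_{i=1}^{s}q^{s-i}\theta(q^i,q^i,q^{2i};q^{3k+1})$, up to the prefactor $(q^{3k+1};q^{3k+1})_{\infty}^2/(q)_{\infty}^2$. I expect this to be the main obstacle. That a \emph{sum} of thetas appears, rather than a single one, is the hallmark of the character-sum identities such as Theorem~\ref{Thm_A2-three}: among the weights $\la_i$ only $i=k$ reproduces the module $k\La_0+(k-1)\La_1+(k-1)\La_2$ of \eqref{Eq_A2-vac-b}, and for $i<k$ the individual $\AG_{\la_i;3}(q)$ are not established individually, so it is precisely the telescoping combination that the chain can be expected to deliver. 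By Remark~\ref{Rem_alt-sums} the appropriate seed is an existing $\mathrm{A}_2$ Rogers--Ramanujan identity of \cite{ASW99}; the work is to pin it down and to carry out the $\mathrm{A}_2$ Jacobi-triple-product / Weyl--Kac-type evaluation that produces exactly $\sum_{i=1}^{s}q^{s-i}\theta(q^i,q^i,q^{2i};q^{3k+1})$, for which I would use the basic-hypergeometric lemmas assembled in Section~\ref{Sec_hyper}.

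Throughout I would use two consistency checks to guard the bookkeeping. First, the right-hand side obeys the telescoping recursion
\[
\mathrm{RHS}(s)=q\,\mathrm{RHS}(s-1)
+\frac{(q^{3k+1};q^{3k+1})_{\infty}^2}{(q)_{\infty}^2}\,
\theta(q^s,q^s,q^{2s};q^{3k+1}),
\]
whose base case $s=1$ has both sides \emph{identical} to the $s=1$ instance of \eqref{Eq_A2-two-b}, already proved in Theorem~\ref{Thm_threecases-b}; this anchors the endpoint of the induction. Second, I note that a naive sum-side induction via the $q$-Pascal rule $\qbin{N+1}{M}=\qbin{N}{M}+q^{N-M+1}\qbin{N}{M-1}$ applied at the link $i=s-1$ does not close, because the unshifted piece it produces carries the tail $\sum_{i=s}^{k-1}(n_i+m_i)$ with no delta shift and so leaves the family under consideration. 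This is exactly why I favour the seed/Bailey-chain route: it is cleaner to embed the identity in the wider family in which the delta position and the start of the linear tail are decoupled --- a family that \emph{is} closed under the recursion --- and to specialise only at the end.
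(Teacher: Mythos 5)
Your preliminary bookkeeping is sound: the right-hand side is indeed $\sum_{i=1}^s q^{s-i}\AG_{(3k-2i)\La_0+(i-1)\La_1+(i-1)\La_2;3}(q)$, the $s=1$ case does coincide with the proved $s=1$ instance of \eqref{Eq_A2-two-b}, and your instinct that one must work in a family where the position of the Kronecker delta and the start of the linear tail are decoupled is exactly what the paper does with its two-parameter family $F^{(a)}_{n_0,m_0;k,s,t}(z,q)$ of \eqref{Eq_F2}. But the engine of your plan does not work. The $\mathrm{A}_2$ Bailey chain of \cite{ASW99} does not insert the links $q^{n_i^2-n_im_i+m_i^2}\qbin{n_i}{n_{i+1}}\qbin{n_i-n_{i+1}+m_{i+1}+\delta_{i,s-1}}{m_i}$ that appear in the theorem: each step of that chain inserts the pair $\qbin{n_{i-1}}{n_i}\qbin{m_{i-1}}{m_i}$, as in \eqref{Eq_F2}, in which the $n$- and $m$-variables never mix inside a single binomial coefficient. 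Converting the chain's output into the Andrews--Gordon shape of the theorem --- with its mixed binomial $\qbin{n_i-n_{i+1}+m_{i+1}+\delta_{i,s-1}}{m_i}$ --- is precisely the new technical content of the paper: Lemma~\ref{Lem_F-trafo}, an iterated application of the $\qhyp{1}{1}$ transformation \eqref{Eq_qbintrafo} to the $m$-sums, packaged as Proposition~\ref{Prop_finiteform2}. Your hope of "verifying that a single Bailey step reproduces exactly the $i\mapsto i+1$ increment of both the norm form and the two binomials" would fail, and your proposal contains no substitute for this transformation.

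Your "crux" is also misplaced. The paper never proves a seed-to-theta evaluation at all; the identity
\begin{equation*}
F_{k,s,s}^{(1)}(1;q)
=\frac{(q^{3k+4};q^{3k+4})_{\infty}^2}{(q)_{\infty}^3}
\sum_{i=1}^s q^{s-i}\,\theta(q^i,q^i,q^{2i};q^{3k+4}),
\qquad 1\leq s\leq k,
\end{equation*}
is quoted verbatim from \cite[Theorems 5.5 \& 5.7]{ASW99}. This is a complete Rogers--Ramanujan-type identity (the full multisum equals the theta sum), not a Bailey-pair seed, and re-deriving it would mean redoing the Bailey-lemma analysis of \cite{ASW99}, for which you give no method. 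Moreover, the hypergeometric results of Section~\ref{Sec_hyper} (Lemma~\ref{Lem_trafo} and Corollary~\ref{Cor_qbinomials}) cannot supply this evaluation: in the paper they are used solely to relate the modulus-$8$ triple sums to the Corteel--Dousse--Uncu quadruple sums of Theorem~\ref{Thm_CDU}, and they play no role in the proof of Theorem~\ref{Thm_A2-three-b}. Once the two genuine inputs are in place, the paper's proof is short: take \eqref{Eq_mineen3} (the $a=1$ case of Proposition~\ref{Prop_finiteform2}) with $z=1$ and $t=s$, let $n_0,m_0\to\infty$, and equate the resulting multisum for $F^{(1)}_{k,s,s}(1,q)$ with the quoted evaluation; this yields the theorem with $k$ replaced by $k+1$, i.e., with modulus $3k+4$, and relabelling gives the statement. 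Your proposal is missing both of these inputs, so it does not constitute a proof.
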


The lifting of \eqref{Eq_A2-vac-b} and \eqref{Eq_A2-two-b} to the 
two-variable generating function for cylindric partitions results 
in our final conjecture.

\begin{conjecture}\label{Con_cylindric-b}
For $k$ a positive integer,
\begin{align*}
&\GK_{(k,k-1,k-1)}(z,q) \\
&\quad=\frac{1}{(zq)_{\infty}}
\sum_{\substack{n_1,\dots,n_{k-1}\geq 0 \\[1pt] m_1,\dots,m_{k-1}\geq 0}}
\frac{z^{n_1}q^{\sum_{i=1}^{k-1} (n_i^2-n_im_i+m_i^2)}}{(q)_{n_1}}\,
\qbin{2n_{k-1}}{m_{k-1}} 
\prod_{i=1}^{k-2} \qbin{n_i}{n_{i+1}}\qbin{n_i-n_{i+1}+m_{i+1}}{m_i} \\
\intertext{and, for integers integers $k,s$ such that $1\leq s\leq k$,}
&\GK_{(3k-s-1,s-1,0)}(z,q) \\
&\quad=\frac{1}{(zq)_{\infty}}
\sum_{\substack{n_1,\dots,n_{k-1}\geq 0 \\[1pt] m_1,\dots,m_{k-1}\geq 0}}
\bigg(\frac{z^{n_1} q^{\sum_{i=1}^{k-1} (n_i^2-n_im_i+m_i^2+m_i)+
\sum_{i=s}^{k-1} n_i}}{(q)_{n_1}} \\[-1mm]
& \qquad\qquad\qquad\qquad\qquad\quad\times \qbin{2n_{k-1}}{m_{k-1}}
\prod_{i=1}^{k-2} \qbin{n_i}{n_{i+1}} \qbin{n_i-n_{i+1}+m_{i+1}}{m_i}\bigg).
\end{align*}
\end{conjecture}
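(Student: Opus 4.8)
The plan is to prove both identities by the route used for Theorem~\ref{Thm_k12}, namely the system of functional equations for the two-variable cylindric generating functions due to Corteel and Welsh \cite{CW19}, combined with a uniqueness argument. Setting $z=1$ and invoking \eqref{Eq_AG-GK} collapses both families to the one-variable Andrews--Gordon identities of Conjecture~\ref{Con_A2-two-b} and \eqref{Eq_A2-vac-b}; thus for the values of $s$ covered by Theorem~\ref{Thm_threecases-b} the $z=1$ specialisation is already established, and the genuine new content is the full dependence on $z$. Write $G_c(z,q)$ for the conjectured right-hand side attached to a profile $c$ at level $d=3k-2$ (note that both families have level $3k-2$, consistent with modulus $d+r=3k+1$). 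The goal is to show $G_c=\GK_c$ for every $c$ in the two stated families.

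First I would fix the level $d=3k-2$ and assemble the complete Corteel--Welsh system at that level. For rank $3$ these functional equations express each $\GK_c(z,q)$ in terms of the functions $\GK_{c'}(zq,q)$ for the finitely many profiles $c'$ of the same level, and together with the boundary value $\GK_c(0,q)=1$ (only the all-zero partition has $\max=0$) they determine every $\GK_c(z,q)$ uniquely. Hence it suffices to exhibit candidates $G_c(z,q)$ for \emph{all} profiles at level $d$, to check $G_c(0,q)=1$ (immediate, since the prefactor gives $1$ and the $z^{n_1}$ weight forces $n_1=\dots=m_{k-1}=0$), and to verify that the $G_c$ satisfy the same functional equations; uniqueness then forces $G_c=\GK_c$. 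A preliminary step is therefore bookkeeping: enumerate the rank-$3$ profiles of level $3k-2$ up to the cyclic symmetry $\GK_{(c_0,c_1,c_2)}=\GK_{(c_1,c_2,c_0)}$, observe that the two families in the conjecture do not exhaust them, and supply candidate sum-sides for the remaining profiles. Such candidates can be generated by feeding the known families into the functional equations and reading off the closed forms they force.

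The heart of the proof is the verification that the candidates satisfy the functional equations. After clearing the prefactor $1/(zq)_\infty$ and matching powers of $z$, each equation becomes a finite identity among the multisums, which I would prove by the standard manipulations: the $q$-Pascal recurrences for the $q$-binomials $\qbin{n}{m}$, shifts of the summation indices, and telescoping in the variables $n_{k-1},m_{k-1}$ carrying the factor $\qbin{2n_{k-1}}{m_{k-1}}$ that distinguishes the modulus-$(3k+1)$ case. Since the functional equations send $z\mapsto zq$, the two sides differ by controlled shifts in the exponents of $q$, and the task reduces to showing that the induced recurrences for the coefficient polynomials $Q_{n,c}(q)$ of \eqref{Eq_Qnc} coincide; Welsh's polynomiality result \cite{Welsh21} and the evaluation \eqref{Eq_CDU-Q}, together with the specialisation $Q_{n,c}(1)=\big(\tfrac{1}{6}(d+1)(d+2)-1\big)^n$, furnish consistency checks that the candidates must pass.

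The main obstacle is closing the system uniformly in $k$. For $k=1$ and $k=2$ the level is small, only a handful of profiles occur, and the functional equations reduce to a few explicitly checkable $q$-series identities, so the argument goes through exactly as in Theorem~\ref{Thm_k12}. For general $k$, however, the number of profiles grows, the candidate sum-sides for the profiles lying outside the two stated families are not pinned down in closed form, and the multisum identities encoding the functional equations become increasingly intricate nested recurrences that I do not expect to resolve uniformly by elementary telescoping alone. This is precisely why the statement is recorded as a conjecture rather than a theorem, with a rigorous proof in reach only for the small cases.
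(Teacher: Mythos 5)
You were asked to prove Conjecture~\ref{Con_cylindric-b}, and the paper contains no proof of it: the statement is recorded as open, the only settled cases being $k=1$ (trivial, since the multisum collapses to $1$ and $\GK_{(1,0,0)}(z,q)=1/(zq)_{\infty}$) and $k=2$, which the paper does not prove itself but attributes to Corteel and Welsh \cite[Theorem 3.2]{CW19}. So there is no paper proof to compare against, and your proposal --- which is a strategy outline rather than a proof --- does not establish the result; your concluding admission that the general case is out of reach is accurate and coincides with the paper's own assessment.

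That said, your outline identifies the right framework and, more importantly, the right obstruction. The Corteel--Welsh functional-equation system at fixed level, together with the initial condition at $z=0$ and the fact that the equations relate $z$ to $zq$ (so the coefficients of $z^n$ are determined recursively), does pin down all $\GK_c(z,q)$ of level $d=3k-2$ uniquely; this is exactly the mechanism the paper uses in Proposition~\ref{Prop_remainingcases} and, in bounded form, in Section~\ref{Sec_keen}, and it is how the $k=2$ case was proved in \cite{CW19}. Your bookkeeping is also correct: both families have level $3k-2$ (modulus $3k+1$), and the $z=1$ specialisations reduce via \eqref{Eq_AG-GK} to \eqref{Eq_A2-vac-b} and Conjecture~\ref{Con_A2-two-b}, hence are known exactly in the cases covered by Theorems~\ref{Thm_A2-vac-bothmoduli} and \ref{Thm_threecases-b}. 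The two missing ingredients you name --- closed-form candidates for the level-$(3k-2)$ profiles outside the two stated families, without which the system cannot be closed, and a uniform-in-$k$ verification of the resulting multisum identities --- are precisely why the statement remains a conjecture; the paper's ``Note added'' observes that Kanade and Russell \cite{KR22} have since conjectured sum sides for all profiles, which would supply your missing candidates, but their conjecture is likewise unproven. One small correction: your claim that the small cases ``go through exactly as in Theorem~\ref{Thm_k12}'' conflates two different mechanisms. The paper's proof of Theorem~\ref{Thm_k12} for $k=2$ (Theorem~\ref{Thm_GK-mod8}) proceeds by transforming the triple sums into the computer-proved quadruple sums of \cite{CDU20} via Proposition~\ref{Prop_threeisfour}, not by solving the functional equations directly, whereas the $k=2$ case of the present conjecture rests on \cite{CW19}, where the functional-equation route was actually carried out.
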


The $k=1$ case, corresponding to $\GK_{(1,0,0)}(z,q)=1/(zq)_{\infty}$,
is trivial.\footnote{Also the more general
$\GK_{(L^r)/(0^r)/1}(z,q)=1/(zq)_{rL}$ is trivially true,
where we refer to Section~\ref{Sec_Cylindric} for the definition
of $\GK_{\lambda/\mu/d}(z,q)$.
This implies that $\GK_{(1,0^{r-1})}(z,q)=1/(zq)_{\infty}$ for arbitrary $r$.} 
The $k=2$ case of Conjecture~\ref{Con_cylindric-b} is 
\cite[Theorem 3.2]{CW19} by Corteel and Welsh. 
Their theorem also includes manifestly positive double-sum expressions
for $\GK_{(3,0,1)}(z,q)$ and $\GK_{(2,2,0)}(z,q)$, which are not
included in the $k=2$ case of Conjecture~\ref{Con_cylindric-b}.

\section{Cylindric partitions}\label{Sec_Cylindric}

Let $\mathbb{N}$ and $\mathbb{N}_0$ denote the set of positive integers and 
nonnegative integers respectively.
Then an integer partition of size $n$ and length $r$ is a weakly decreasing
sequence $\la=(\la_1,\dots,\la_r)\in\mathbb{N}_0^r$ such that
$\abs{\la}:=\la_1+\dots+\la_r=n$.
Here the $\la_i$ are referred to as the parts of $\la$.
The above differs slightly from the standard definitions of part and
length \cite{Macdonald95} in that $0$ can be a part.
For example, in this paper the partitions $(7,6,6,4,0)$ and $(7,6,6,4)$ of 
$23$ will be viewed as distinct, having length $5$ and $4$ respectively.
We will alternatively use the multiplicities as exponent to represent 
a partition, leaving out the exponent $1$.
Hence $(7,6,6,4,0)=(7,6^2,4,0)$.

A Young diagram $Y$ is a configuration of (unit) squares or boxes that are 
arranged in left-justified rows such that row-lengths are weakly decreasing
from top to bottom, as in 

\medskip

\begin{center}
\begin{tikzpicture}[scale=0.25,line width=0.3pt]
\draw (0,0)--(4,0)--(4,2)--(6,2)--(6,3)--(7,3)--(7,4)--(0,4)--cycle;
\draw (0,1)--(4,1);
\draw (0,2)--(4,2);
\draw (0,3)--(6,3);
\draw (1,0)--(1,4);
\draw (2,0)--(2,4);
\draw (3,0)--(3,4);
\draw (4,2)--(4,4);
\draw (5,2)--(5,4);
\draw (6,3)--(6,4);
\end{tikzpicture}
\end{center}

\smallskip
If $\la=(\la_1,\dots,\la_r)$ is a partition and $Y$ a Young diagram 
of at most $r$ rows such that the $i$th row of $Y$ contains $\la_i$
squares, we say that $Y$ is the Young diagram of $\la$.
For example, the above diagram is the Young diagram of $(7,6^2,4,0^k)$
for arbitrary nonnegative integer $k$.
If $\nu$ is a partition such that its Young diagram is the transpose
of the Young diagram of $\la$, we say that $\nu$ is a conjugate of
$\la$ and write $\nu=\la'$. 
Of course, the conjugate of $\la$ is not unique and 
$(7,6^2,4,0^k)'=(4^3,2^2,1,0^{\ell})$ for $k,\ell$ nonnegative integers.
We note that the multiplicity $m_i=m_i(\la)$ of parts of size $i$ of the 
partition $\la=(\la_1,\dots,\la_r)$ is given by $m_0=r-\la'_1$ and
$m_i(\la)=\la'_i-\la'_{i+1}$ for $i\geq 1$.

\noindent
Given two partitions $\la,\mu$ of length $r$, write $\mu\subseteq\la$
if $\mu_i\leq\la_i$ for all $1\leq i\leq r$.
Then the skew (Young) diagram of $\la/\mu$ is obtained from the 
Young diagram of $\la$ by deleting all squares contained in the Young 
diagram of $\mu$.
Hence the skew diagram of $(7,6,4,4)/(3,1,1,0)$ is given by

\medskip

\begin{center}
\begin{tikzpicture}[scale=0.25,line width=0.3pt]
\draw (0,0)--(4,0)--(4,2)--(6,2)--(6,3)--(7,3)--(7,4)--(3,4)--(3,3)--(1,3)--(1,1)--(0,1)--cycle;
\draw (1,1)--(4,1);
\draw (1,2)--(4,2);
\draw (2,3)--(6,3);
\draw (1,0)--(1,1);
\draw (2,0)--(2,3);
\draw (3,0)--(3,4);
\draw (4,2)--(4,4);
\draw (5,2)--(5,4);
\draw (6,3)--(6,4);
\end{tikzpicture}
\end{center}

\smallskip

\noindent
The skew diagram of $\la/\mu$ is connected if for all positive $\la_i$
(such that $i\neq 1$), $\mu_{i-1}<\la_i$.
Conversely, it is disconnected if there exists an $i$ such that
$\mu_i\geq\la_{i+1}>0$.

\medskip

Cylindric partitions were first introduced by Gessel and Krattenthaler in
\cite{GK97} as an affine analogue of (skew) plane partitions.
Fix a positive integer $r$, which we will refer to as the rank, and
let $\la,\mu$ be two partitions of length $r$ such that 
$\mu\subseteq\la$.\footnote{Gessel and Krattenthaler consider
more general integer sequences $\la$ and $\mu$ of length $r$, 
but we have no need for these here.}
Let $\mathscr{A}\subseteq \mathbb{N}_0$.
Then a plane partition $\pi$ on $\mathscr{A}$
of shape $\la/\mu$ and size $n$ is a filling of the diagram of 
$\la/\mu$ with elements from $\mathscr{A}$ such that the rows
and columns are both weakly decreasing and such that $\abs{\pi}$,
the sum of the entries of $\pi$, is equal to $n$.
For example,  

\smallskip

\usetikzlibrary{math}
\tikzmath{\s=0.5;}
\qquad
\begin{minipage}{0.5\textwidth}
\begin{center}
\begin{tikzpicture}[scale=0.45,line width=0.3pt]
\draw (0,0)--(4,0)--(4,2)--(6,2)--(6,3)--(7,3)--(7,4)--(3,4)--(3,3)--(1,3)--(1,1)--(0,1)--cycle;
\draw (1,1)--(4,1);
\draw (1,2)--(4,2);
\draw (2,3)--(6,3);
\draw (1,0)--(1,1);
\draw (2,0)--(2,3);
\draw (3,0)--(3,4);
\draw (4,2)--(4,4);
\draw (5,2)--(5,4);
\draw (6,3)--(6,4);
\begin{scope}[color=red]
\draw (\s,\s) node {$9$};
\draw (1+\s,\s) node {$8$};
\draw (2+\s,\s) node {$5$};
\draw (3+\s,\s) node {$5$};
\draw (1+\s,1+\s) node {$10$};
\draw (2+\s,1+\s) node {$8$};
\draw (3+\s,1+\s) node {$8$};
\draw (1+\s,2+\s) node {$11$};
\draw (2+\s,2+\s) node {$11$};
\draw (3+\s,2+\s) node {$8$};
\draw (4+\s,2+\s) node {$7$};
\draw (5+\s,2+\s) node {$4$};
\draw (3+\s,3+\s) node {$9$};
\draw (4+\s,3+\s) node {$7$};
\draw (5+\s,3+\s) node {$6$};
\draw (6+\s,3+\s) node {$2$};
\end{scope}
\end{tikzpicture}
\end{center}
\end{minipage}
\begin{minipage}{0.3\textwidth}
\begin{center}
\begin{tikzpicture}[scale=0.20,transform shape]
\foreach \h in {3,5,...,19}{\pic at ({-1*\sq},\h/2) {bluesquare};}
\foreach \h in {1,2,...,8}{\pic at ({0*\sq},\h) {bluesquare};}
\foreach \h in {1,3,...,9,19,21}{\pic at ({1*\sq},\h/2) {bluesquare};}
\foreach \h in {0,1,...,4,6,7,8,12}{\pic at ({2*\sq},\h) {bluesquare};}
\foreach \h in {11,13,15,19,21,23}{\pic at ({3*\sq},\h/2) {bluesquare};}
\foreach \h in {1,3,...,13,19}{\pic at ({5*\sq},\h/2) {bluesquare};}
\foreach \h in {0,1,...,3}{\pic at ({6*\sq},\h) {bluesquare};}
\foreach \h in {9,11}{\pic at ({7*\sq},\h/2) {bluesquare};}
\foreach \h in {0,1}{\pic at ({8*\sq},\h) {bluesquare};}
\pic at ({0*\sq},9) {redsquare};
\foreach \h in {11,13,15}{\pic at ({1*\sq},\h/2) {redsquare};}
\foreach \h in {9,10}{\pic at ({2*\sq},\h) {redsquare};}
\foreach \h in {-1,1,...,7}{\pic at ({3*\sq},\h/2) {redsquare};}
\foreach \h in {0,1,...,7,9,10,11}{\pic at ({4*\sq},\h) {redsquare};}
\pic at ({5*\sq},15/2) {redsquare};
\foreach \h in {4,5,6,8,9}{\pic at ({6*\sq},\h) {redsquare};}
\foreach \h in {-1,1,...,5,13}{\pic at ({7*\sq},\h/2) {redsquare};}
\foreach \h in {2,3,...,5}{\pic at ({8*\sq},\h) {redsquare};}
\foreach \h in {-1,1}{\pic at ({9*\sq},\h/2) {redsquare};}
\pic at ({-1*\sq},21/2) {greensquare};
\pic at ({0*\sq},9) {greensquare};
\foreach \h in {11,23}{\pic at ({1*\sq},\h/2) {greensquare};}
\foreach \h in {5,9,13}{\pic at ({2*\sq},\h) {greensquare};}
\foreach \h in {17,25}{\pic at ({3*\sq},\h/2) {greensquare};}
\pic at ({4*\sq},9) {greensquare};
\foreach \h in {15,21}{\pic at ({5*\sq},\h/2) {greensquare};}
\foreach \h in {4,8}{\pic at ({6*\sq},\h) {greensquare};}
\pic at ({7*\sq},13/2) {greensquare};
\pic at ({8*\sq},2) {greensquare};
\end{tikzpicture}
\end{center}
\end{minipage}

\bigskip

\noindent
is a plane partition on $\mathbb{N}$ of shape $(7,6,4,4)/(3,1,1,0)$ 
and size $118$ (which we blatantly copied from \cite{GK97}).
The representation on the right corresponds to the usual stacking of
unit cubes so that $\abs{\pi}$ corresponds to the volume of $\pi$.
When $\mu=0$ and $\mathscr{A}=\mathbb{N}$ one obtains an ordinary plane 
partition of shape $\la$ in the sense of MacMahon \cite{MacMahon97}.
Each row and column of a plane partition $\pi$ is an ordinary integer 
partition (with parts in $\mathscr{A}$).
It will be convenient to encode the rows as a multipartition
\[
\nub=\big(\nu^{(1)},\nu^{(2)},\dots,\nu^{(r)}\big),
\]
where the partition 
$\nu^{(i)}=\big(\nu^{(i)}_1,\dots,\nu^{(i)}_{\la_i-\mu_i}\big)$
corresponds to the filling of the $i$th row of $\pi$.
The condition that each of the $\la_1-\mu_r$ columns of $\pi$ also form a 
partition then translates to
\begin{equation}\label{Eq_nu-ineq}
\nu^{(i)}_j\geq \nu^{(i+1)}_{j+\mu_i-\mu_{i+1}}\quad
\text{for $1\leq i\leq r-1$ and $1\leq j\leq \la_{i+1}-\mu_i$}.
\end{equation}
In the above example
\[
\nub=\big((9,7,6,2),(11,11,8,7,4),(10,8,8),(9,8,5,5)\big).
\]
Mostly one is interested in enumerating plane partitions with fixed shape,
in which case it is natural to consider connected shapes only.

Let $d$ be an integer, which we will call the level, such that
\[
d\geq \max\{\mu_1-\mu_r,\la_1-\la_r\}.
\]
Then a plane partition $\pi$ of shape $\la/\mu$ is said to be a cylindric 
partition of shape $\la/\mu/d$ if it is also a plane partition of shape
\begin{equation}\label{Eq_c-shape}
(d+\la_r,\la_1,\dots,\la_r)/(d+\mu_r,\mu_1,\dots,\mu_r)
\end{equation}
with multipartition given by
\[
\big(\nu^{(r)},\nu^{(1)},\nu^{(2)},\dots,\nu^{(r)}\big).
\]
In other words, on top of \eqref{Eq_nu-ineq} one also has the cyclic 
conditions
\begin{equation}\label{Eq_cyclic-ineq}
\nu^{(r)}_j\geq \nu^{(1)}_{j-\mu_1+\mu_r+d}\quad
\text{for $1\leq j\leq \la_1-\mu_r-d$}.
\end{equation}
If one is interested in cylindric partitions of fixed shape
$\la/\mu/d$ it is natural to further restrict the level to
\[
d<\la_1-\mu_r
\]
to avoid \eqref{Eq_cyclic-ineq} from trivialising.

The cylindric condition should really be viewed as the wrapping of the plane 
partition around a semi-infinite cylinder. 
Our earlier example of a plane partition is a non-trivial cylindric
partition of shape $(7,6,4,4)/(3,1,1,0)/d$ on $\mathbb{N}$ for
$d=5,6,7$, and a trivial one for $d\geq 8$.
For example, for $d=5$ either of the two diagrams below represents this
cylindric partition:

\smallskip \bigskip

\tikzmath{\x=5;\y=4;\z=15;}
\begin{center}
\begin{tikzpicture}[scale=0.45,line width=0.3pt]

\shadedraw[left color=white!98!blue,right color=white!90!blue,draw=none] 
(7.56+\z,1.482)--(4.04+\z,5.882)--(-0.965+\z,1.878)--(2.5551+\z,-2.522)--cycle;

\draw[thick,->] (2.04+\z,5.882)--(4.00+\z,5.882);
\draw[thick,->] (1.04+\z,5.882)--(4.08+\z-5,5.882);
\draw (4.02-2.5+\z,5.882) node {$d$};

\draw[thick,->] (-0.965+\z,1.878+1.5)--(-0.965+\z,1.878);
\draw[thick,->] (-0.965+\z,1.878+2.5)--(-0.965+\z,1.87+4);
\draw (-0.965+\z,1.878+2) node {$r$};

\draw (\x-2.5,\y+\s) node {$d$};
\draw[thick,->] (\x-2,\y+\s)--(\x,\y+\s);
\draw[thick,->] (\x-3,\y+\s)--(0,\y+\s);

\draw (-0.1,\y+\s-2) node {$r$};
\draw[thick,->] (-0.1,\y+\s-1.5)--(-0.1,\y+\s);
\draw[thick,->] (-0.1,\y+\s-2.5)--(-0.1,\y+\s-4);

\draw (\x+2,\y)--(4+\x,\y)--(4+\x,1+\y)--(\x,1+\y)--(\x,\y);
\draw (1+\x,0+\y)--(1+\x,1+\y);
\draw (2+\x,0+\y)--(2+\x,1+\y);
\draw (3+\x,0+\y)--(3+\x,1+\y);

\draw[blue] (\s+\x-1.5,\s+3.7) node {$\scriptstyle{2}$};
\draw[blue] (\s+\x-3.5,\s+2.7) node {$\scriptstyle{2}$};
\draw[blue] (\s+\x-4.7,\s+1.5) node {$\scriptstyle{0}$};
\draw[blue] (\s+\x-5,\s+0.72) node {$\scriptstyle{1}$};

\draw (\s+\x,\s+\y) node {$9$};
\draw (1+\s+\x,\s+\y) node {$8$};
\draw (2+\s+\x,\s+\y) node {$5$};
\draw (3+\s+\x,\s+\y) node {$5$};

\draw (0,0)--(4,0)--(4,2)--(6,2)--(6,3)--(7,3)--(7,4)--(3,4)--(3,3)--(1,3)--(1,1)--(0,1)--cycle;
\draw (1,1)--(4,1);
\draw (1,2)--(4,2);
\draw (2,3)--(6,3);
\draw (1,0)--(1,1);
\draw (2,0)--(2,3);
\draw (3,0)--(3,4);
\draw (4,2)--(4,4);
\draw (5,2)--(5,4);
\draw (6,3)--(6,4);
\begin{scope}[color=red]
\draw (\s,\s) node {$9$};
\draw (1+\s,\s) node {$8$};
\draw (2+\s,\s) node {$5$};
\draw (3+\s,\s) node {$5$};
\draw (1+\s,1+\s) node {$10$};
\draw (2+\s,1+\s) node {$8$};
\draw (3+\s,1+\s) node {$8$};
\draw (1+\s,2+\s) node {$11$};
\draw (2+\s,2+\s) node {$11$};
\draw (3+\s,2+\s) node {$8$};
\draw (4+\s,2+\s) node {$7$};
\draw (5+\s,2+\s) node {$4$};
\draw (3+\s,3+\s) node {$9$};
\draw (4+\s,3+\s) node {$7$};
\draw (5+\s,3+\s) node {$6$};
\draw (6+\s,3+\s) node {$2$};
\end{scope}

\shadedraw[left color=white!98!blue,right color=white!90!blue,draw=none] (7.56+\z,1.482)--(4.04+\z,5.882)--(-0.965+\z,1.878)--(2.5551+\z,-2.522)--cycle;
\draw (1+\z,-1)--(2+\z,-1)--(2+\z,0);
\draw (1+\z,0)--(1+\z,-1);
\draw[red] (\s+\z+1,\s-1) node {$2$};

\draw (\x+\z,\y)--(0.6+\x+\z,\y);
\draw (\x+\z,\y)--(\x+\z,\y+0.9);

\draw (\z,0)--(4+\z,0)--(4+\z,2)--(6+\z,2)--(6+\z,3)--(7+\z,3)--(7+\z,4)--(3+\z,4)--(3+\z,3)--(1+\z,3)--(1+\z,1)--(0+\z,1)--cycle;
\draw (1+\z,1)--(4+\z,1);
\draw (1+\z,2)--(4+\z,2);
\draw (2+\z,3)--(6+\z,3);
\draw (1+\z,0)--(1+\z,1);
\draw (2+\z,0)--(2+\z,3);
\draw (3+\z,0)--(3+\z,4);
\draw (4+\z,2)--(4+\z,4);
\draw (5+\z,2)--(5+\z,4);
\draw (6+\z,3)--(6+\z,4);
\begin{scope}[color=red]
\draw (\s+\z,\s) node {$9$};
\draw (1+\s+\z,\s) node {$8$};
\draw (2+\s+\z,\s) node {$5$};
\draw (3+\s+\z,\s) node {$5$};
\draw (1+\s+\z,1+\s) node {$10$};
\draw (2+\s+\z,1+\s) node {$8$};
\draw (3+\s+\z,1+\s) node {$8$};
\draw (1+\s+\z,2+\s) node {$11$};
\draw (2+\s+\z,2+\s) node {$11$};
\draw (3+\s+\z,2+\s) node {$8$};
\draw (4+\s+\z,2+\s) node {$7$};
\draw (5+\s+\z,2+\s) node {$4$};
\draw (3+\s+\z,3+\s) node {$9$};
\draw (4+\s+\z,3+\s) node {$7$};
\draw (5+\s+\z,3+\s) node {$6$};
\end{scope}

\filldraw[white] (-0.965+\z,1.878)--(2.5551+\z,-2.522)--(1+\z,-2)--cycle;
\filldraw[white] (4.04+\z,5.882)--(7.56+\z,1.482)--(7+\z,5)--cycle;
\draw[blue] (7.56+\z,1.482)--(4.04+\z,5.882)--(-0.965+\z,1.878)--(2.5551+\z,-2.522);

\end{tikzpicture}
\end{center}

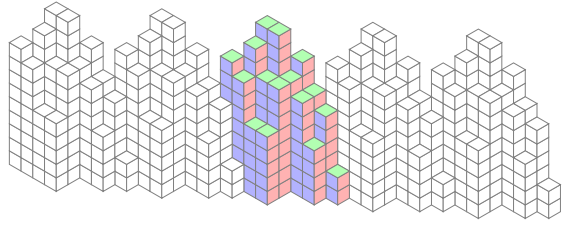
\begin{figure}[t]
\begin{center}
\begin{tikzpicture}[scale=0.18,transform shape]
\foreach \h in {5,7,...,19,21}{\pic at ({-19*\sq},\h/2) {bluesquarew};}
\foreach \h in {2,3,...,9}{\pic at ({-18*\sq},\h) {bluesquarew};}
\foreach \h in {3,5,...,11,21,23}{\pic at ({-17*\sq},\h/2) {bluesquarew};}
\foreach \h in {1,2,...,5,7,8,9,13}{\pic at ({-16*\sq},\h) {bluesquarew};}
\foreach \h in {13,15,17,21,23,25}{\pic at ({-15*\sq},\h/2) {bluesquarew};}
\foreach \h in {3,5,...,15,21}{\pic at ({-13*\sq},\h/2) {bluesquarew};}
\foreach \h in {1,2,...,4}{\pic at ({-12*\sq},\h) {bluesquarew};}
\foreach \h in {11,13}{\pic at ({-11*\sq},\h/2) {bluesquarew};}
\foreach \h in {1,2}{\pic at ({-10*\sq},\h) {bluesquarew};}
\foreach \h in {8,9,10}{\pic at ({-10*\sq},\h) {bluesquarew};}
\foreach \h in {7,9,...,17}{\pic at ({-9*\sq},\h/2) {bluesquarew};}
\foreach \h in {1,2,...,5,10,11}{\pic at ({-8*\sq},\h) {bluesquarew};}
\foreach \h in {1,3,...,9,13,15,17,25}{\pic at ({-7*\sq},\h/2) {bluesquarew};}
\foreach \h in {6,7,8,10,11,12}{\pic at ({-6*\sq},\h) {bluesquarew};}
\foreach \h in {1,2,...,7,10}{\pic at ({-4*\sq},\h) {bluesquarew};}
\foreach \h in {1,3,...,7}{\pic at ({-3*\sq},\h/2) {bluesquarew};}
\foreach \h in {5,6}{\pic at ({-2*\sq},\h) {bluesquarew};}
\foreach \h in {1,3}{\pic at ({-1*\sq},\h/2) {bluesquarew};}
\foreach \h in {15,17,19}{\pic at ({-1*\sq},\h/2) {bluesquare};}
\foreach \h in {3,4,...,8}{\pic at ({0*\sq},\h) {bluesquare};}
\foreach \h in {1,3,...,9,19,21}{\pic at ({1*\sq},\h/2) {bluesquare};}
\foreach \h in {0,1,...,4,6,7,8,12}{\pic at ({2*\sq},\h) {bluesquare};}
\foreach \h in {11,13,15,19,21,23}{\pic at ({3*\sq},\h/2) {bluesquare};}
\foreach \h in {1,3,...,13,19}{\pic at ({5*\sq},\h/2) {bluesquare};}
\foreach \h in {0,1,...,3}{\pic at ({6*\sq},\h) {bluesquare};}
\foreach \h in {9,11}{\pic at ({7*\sq},\h/2) {bluesquare};}
\foreach \h in {0,1}{\pic at ({8*\sq},\h) {bluesquare};}
\foreach \h in {7,8,9}{\pic at ({8*\sq},\h) {bluesquarew};}
\foreach \h in {5,7,...,15}{\pic at ({9*\sq},\h/2) {bluesquarew};}
\foreach \h in {0,1,...,4,9,10}{\pic at ({10*\sq},\h) {bluesquarew};}
\foreach \h in {-1,1,...,7,11,13,15,23}{\pic at ({11*\sq},\h/2) {bluesquarew};}
\foreach \h in {5,6,7,9,10,11}{\pic at ({12*\sq},\h) {bluesquarew};}
\foreach \h in {0,1,...,6,9}{\pic at ({14*\sq},\h) {bluesquarew};}
\foreach \h in {-1,1,...,5}{\pic at ({15*\sq},\h/2) {bluesquarew};}
\foreach \h in {4,5}{\pic at ({16*\sq},\h) {bluesquarew};}
\foreach \h in {-1,1}{\pic at ({17*\sq},\h/2) {bluesquarew};}
\foreach \h in {13,15,17}{\pic at ({17*\sq},\h/2) {bluesquarew};}
\foreach \h in {2,3,...,7}{\pic at ({18*\sq},\h) {bluesquarew};}
\foreach \h in {-1,1,...,7,17,19}{\pic at ({19*\sq},\h/2) {bluesquarew};}
\foreach \h in {-1,0,...,3,5,6,7,11}{\pic at ({20*\sq},\h) {bluesquarew};}
\foreach \h in {9,11,13,17,19,21}{\pic at ({21*\sq},\h/2) {bluesquarew};}
\foreach \h in {-1,1,...,11,17}{\pic at ({23*\sq},\h/2) {bluesquarew};}
\foreach \h in {-1,0,...,2}{\pic at ({24*\sq},\h) {bluesquarew};}
\foreach \h in {7,9}{\pic at ({25*\sq},\h/2) {bluesquarew};}
\foreach \h in {-1,0}{\pic at ({26*\sq},\h) {bluesquarew};}
\pic at ({-18*\sq},10) {redsquarew};
\foreach \h in {13,15,17}{\pic at ({-17*\sq},\h/2) {redsquarew};}
\foreach \h in {10,11}{\pic at ({-16*\sq},\h) {redsquarew};}
\foreach \h in {1,3,...,9}{\pic at ({-15*\sq},\h/2) {redsquarew};}
\foreach \h in {1,2,...,8,10,11,12}{\pic at ({-14*\sq},\h) {redsquarew};}
\pic at ({-13*\sq},17/2) {redsquarew};
\foreach \h in {5,6,7,9,10}{\pic at ({-12*\sq},\h) {redsquarew};}
\foreach \h in {1,3,...,7,15}{\pic at ({-11*\sq},\h/2) {redsquarew};}
\foreach \h in {3,4,...,6}{\pic at ({-10*\sq},\h) {redsquarew};}
\foreach \h in {1,3}{\pic at ({-9*\sq},\h/2) {redsquarew};}
\pic at ({-9*\sq},19/2) {redsquarew};
\foreach \h in {6,7,8}{\pic at ({-8*\sq},\h) {redsquarew};}
\foreach \h in {19,21}{\pic at ({-7*\sq},\h/2) {redsquarew};}
\foreach \h in {0,1,...,4}{\pic at ({-6*\sq},\h) {redsquarew};}
\foreach \h in {1,3,...,15,19,21,23}{\pic at ({-5*\sq},\h/2) {redsquarew};}
\pic at ({-4*\sq},8) {redsquarew};
\foreach \h in {9,11,13,17,19}{\pic at ({-3*\sq},\h/2) {redsquarew};}
\foreach \h in {0,1,...,3,7}{\pic at ({-2*\sq},\h) {redsquarew};}
\foreach \h in {5,7,...,11}{\pic at ({-1*\sq},\h/2) {redsquarew};}
\foreach \h in {0,1}{\pic at ({0*\sq},\h) {redsquarew};}
\pic at ({0*\sq},9) {redsquare};
\foreach \h in {11,13,15}{\pic at ({1*\sq},\h/2) {redsquare};}
\foreach \h in {9,10}{\pic at ({2*\sq},\h) {redsquare};}
\foreach \h in {-1,1,...,7}{\pic at ({3*\sq},\h/2) {redsquare};}
\foreach \h in {0,1,...,7,9,10,11}{\pic at ({4*\sq},\h) {redsquare};}
\pic at ({5*\sq},15/2) {redsquare};
\foreach \h in {4,5,6,8,9}{\pic at ({6*\sq},\h) {redsquare};}
\foreach \h in {-1,1,...,5,13}{\pic at ({7*\sq},\h/2) {redsquare};}
\foreach \h in {2,3,...,5}{\pic at ({8*\sq},\h) {redsquare};}
\foreach \h in {-1,1}{\pic at ({9*\sq},\h/2) {redsquare};}
\pic at ({9*\sq},17/2) {redsquarew};
\foreach \h in {5,6,7}{\pic at ({10*\sq},\h) {redsquarew};}
\foreach \h in {17,19}{\pic at ({11*\sq},\h/2) {redsquarew};}
\foreach \h in {-1,0,...,3}{\pic at ({12*\sq},\h) {redsquarew};}
\foreach \h in {-1,1,...,13,17,19,21}{\pic at ({13*\sq},\h/2) {redsquarew};}
\pic at ({14*\sq},7) {redsquarew};
\foreach \h in {7,9,11,15,17}{\pic at ({15*\sq},\h/2) {redsquarew};}
\foreach \h in {-1,0,...,2,6}{\pic at ({16*\sq},\h) {redsquarew};}
\foreach \h in {3,5,...,9}{\pic at ({17*\sq},\h/2) {redsquarew};}
\foreach \h in {-1,0}{\pic at ({18*\sq},\h) {redsquarew};}
\pic at ({18*\sq},8) {redsquarew};
\foreach \h in {9,11,13}{\pic at ({19*\sq},\h/2) {redsquarew};}
\foreach \h in {8,9}{\pic at ({20*\sq},\h) {redsquarew};}
\foreach \h in {-3,-1,...,5}{\pic at ({21*\sq},\h/2) {redsquarew};}
\foreach \h in {-1,0,...,6,8,9,10}{\pic at ({22*\sq},\h) {redsquarew};}
\pic at ({23*\sq},13/2) {redsquarew};
\foreach \h in {3,4,5,7,8}{\pic at ({24*\sq},\h) {redsquarew};}
\foreach \h in {-3,-1,...,3,11}{\pic at ({25*\sq},\h/2) {redsquarew};}
\foreach \h in {1,2,...,4}{\pic at ({26*\sq},\h) {redsquarew};}
\foreach \h in {-3,-1}{\pic at ({27*\sq},\h/2) {redsquarew};}
\pic at ({-19*\sq},23/2) {greensquarew};
\pic at ({-18*\sq},10) {greensquarew};
\foreach \h in {13,25}{\pic at ({-17*\sq},\h/2) {greensquarew};}
\foreach \h in {6,10,14}{\pic at ({-16*\sq},\h) {greensquarew};}
\foreach \h in {19,27}{\pic at ({-15*\sq},\h/2) {greensquarew};}
\pic at ({-14*\sq},10) {greensquarew};
\foreach \h in {17,23}{\pic at ({-13*\sq},\h/2) {greensquarew};}
\foreach \h in {5,9}{\pic at ({-12*\sq},\h) {greensquarew};}
\pic at ({-11*\sq},15/2) {greensquarew};
\pic at ({-10*\sq},3) {greensquarew};
\pic at ({-10*\sq},11) {greensquarew};
\pic at ({-9*\sq},19/2) {greensquarew};
\foreach \h in {6,12}{\pic at ({-8*\sq},\h) {greensquarew};}
\foreach \h in {11,19,27}{\pic at ({-7*\sq},\h/2) {greensquarew};}
\foreach \h in {9,13}{\pic at ({-6*\sq},\h) {greensquarew};}
\pic at ({-5*\sq},19/2) {greensquarew};
\foreach \h in {8,11}{\pic at ({-4*\sq},\h) {greensquarew};}
\foreach \h in {9,17}{\pic at ({-3*\sq},\h/2) {greensquarew};}
\pic at ({-2*\sq},7) {greensquarew};
\pic at ({-1*\sq},5/2) {greensquarew};
\pic at ({-1*\sq},21/2) {greensquare};
\pic at ({0*\sq},9) {greensquare};
\foreach \h in {11,23}{\pic at ({1*\sq},\h/2) {greensquare};}
\foreach \h in {5,9,13}{\pic at ({2*\sq},\h) {greensquare};}
\foreach \h in {17,25}{\pic at ({3*\sq},\h/2) {greensquare};}
\pic at ({4*\sq},9) {greensquare};
\foreach \h in {15,21}{\pic at ({5*\sq},\h/2) {greensquare};}
\foreach \h in {4,8}{\pic at ({6*\sq},\h) {greensquare};}
\pic at ({7*\sq},13/2) {greensquare};
\pic at ({8*\sq},2) {greensquare};
\pic at ({8*\sq},10) {greensquarew};
\pic at ({9*\sq},17/2) {greensquarew};
\foreach \h in {5,11}{\pic at ({10*\sq},\h) {greensquarew};}
\foreach \h in {9,17,25}{\pic at ({11*\sq},\h/2) {greensquarew};}
\foreach \h in {8,12}{\pic at ({12*\sq},\h) {greensquarew};}
\pic at ({13*\sq},17/2) {greensquarew};
\foreach \h in {7,10}{\pic at ({14*\sq},\h) {greensquarew};}
\foreach \h in {7,15}{\pic at ({15*\sq},\h/2) {greensquarew};}
\pic at ({16*\sq},6) {greensquarew};
\pic at ({17*\sq},3/2) {greensquarew};
\pic at ({17*\sq},19/2) {greensquarew};
\pic at ({18*\sq},8) {greensquarew};
\foreach \h in {9,21}{\pic at ({19*\sq},\h/2) {greensquarew};}
\foreach \h in {4,8,12}{\pic at ({20*\sq},\h) {greensquarew};}
\foreach \h in {15,23}{\pic at ({21*\sq},\h/2) {greensquarew};}
\pic at ({22*\sq},8) {greensquarew};
\foreach \h in {13,19}{\pic at ({23*\sq},\h/2) {greensquarew};}
\foreach \h in {3,7}{\pic at ({24*\sq},\h) {greensquarew};}
\pic at ({25*\sq},11/2) {greensquarew};
\pic at ({26*\sq},1) {greensquarew};
\end{tikzpicture}
\end{center}
\caption{Geometric representation of a cylindric partition, showing
five of the infinitely many copies of a fundamental domain.
The drift is due to the fact that $d\neq r$.}
\label{figure}
\end{figure}

\smallskip

\noindent
where the blue labels on the left correspond to the profile of the 
cylindric partition as explained below.
For this same cylindric partition represented in terms of stacked 
unit cubes, see Figure~\ref{figure}.

Let $\pi$ be cylindric partition of shape $\la/\mu/d$ and rank $r$.
Then the profile $c=(c_0,\dots,c_{r-1})$ of $\pi$ is a sequence 
of $r$ nonnegative integers that sums to $d$, given by
\[
c_0=d-\mu_1+\mu_r \quad \text{and} \quad c_i=\mu_i-\mu_{i+1} \quad
\text{for $1\leq i\leq r-1$}.
\]
The profile of the cylindric partition in our example is $(2,2,0,1)$.

Given a cylindric partition $\pi$ of rank $r$, 
its size $\abs{\pi}$ (Gessel and Krattenthaler use the term norm)
is once again defined as the sum of its entries, i.e.,
\[
\abs{\pi}=\abs{\nub}:=\sum_{i=1}^r \abs{\nu^{(i)}},
\]
where $\nub$ is the multipartition corresponding to $\pi$.
Note that this corresponds to the volume of a fundamental 
domain as shown in Figure~\ref{figure}.
Also,
\[
\max(\pi):=\max\big\{\nu^{(i)}_j: 1\leq i\leq r,
1\leq j\leq \la_i-\mu_i\big\}
\]
denotes the value of the maximal entry (or entries) of $\pi$
(or the height of $\pi$ in its geometric representation).
The above example of a cylindric partition of shape
$(7,6,4,4)/(3,1,1,0)/5$ has size $118$, rank $4$, level $5$ and maximal 
entry $11$.

Let $\mathscr{C}_{\la/\mu/d}(\mathscr{A})$ denote the set of cylindric
partitions of shape $\la/\mu/d$ on $\mathscr{A}$. 
We are interested in the generating functions
\begin{subequations}
\begin{align}
\GK_{\la/\mu/d}(z,q;\mathscr{A})
&:=\sum_{\pi\in \mathscr{C}_{\la/\mu/d}(\mathscr{A})}
z^{\max(\pi)} q^{\abs{\pi}}
\intertext{and}
\GK_{\la/\mu/d;n}(q;\mathscr{A})
&:=\sum_{\substack{\pi\in \mathscr{C}_{\la/\mu/d}(\mathscr{A}) \\[1pt]
\max(\pi)\leq n}} q^{\abs{\pi}}
=\sum_{m=0}^n [z^m] \GK_{\la/\mu/d}(z,q;\mathscr{A}),
\label{Eq_GF-max}
\end{align}
where $[z^m] f(z)$ stands for the coefficient of $z^m$ in the
polynomial or formal power series $f(z)$.
\end{subequations}

For later convenience we also set
\begin{equation}\label{Eq_zero}
\GK_{\la/\mu/d}(z,q;\mathscr{A})=0
\end{equation}
if $\mu\not\subseteq\la$ or if $\la$ and $\mu$ are not both partitions.

The two-variable generating function for cylindric partitions has
translation symmetry
\begin{equation}\label{Eq_trans}
\GK_{(\la_1,\dots,\la_r)/(\mu_1,\dots,\mu_r)/d}(z,q;\mathscr{A})=
\GK_{(\la_1-k,\dots,\la_r-k)/
(\mu_1-k,\dots,\mu_r-k)/d}(z,q;\mathscr{A})
\end{equation}
for $0\leq k\leq\mu_r$, cyclic symmetry 
\begin{equation}
\label{Eq_cyc}
\GK_{(\la_1,\dots,\la_r)/(\mu_1,\dots,\mu_r)/d}(z,q;\mathscr{A})
=\GK_{(\la_r+d,\la_1,\dots,\la_{r-1})/
(\mu_r+d,\mu_1,\dots,\mu_{r-1})/d}(z,q;\mathscr{A}),
\end{equation}
and conjugation symmetry \cite[page 462]{GK97}
\begin{equation}\label{Eq_levelrank}
\GK_{(\la_1+dL,\dots,\la_r+dL)/\mu/d}(z,q;\mathscr{A})=
\GK_{(\la_1'+rL,\dots,\la_d'+rL)/\mu'/r}(z,q;\mathscr{A})
\end{equation}
for $\la,\mu\subseteq (d^r)$, $\mu',\la'\subseteq (r^d)$ 
and $L$ a nonnegative integer.
Following \cite{FW16}, we will refer to \eqref{Eq_levelrank}
as level-rank duality for cylindric partitions, in analogy with
level-rank duality in representation theory.

By a special case of \cite[Theorem 2]{GK97} due to Gessel and Krattenthaler,
\begin{multline}\label{Eq_GK-thm3}
\GK_{\la/\mu/d;n}(q;\mathbb{N}_0) =
\sum_{\substack{k_1,\dots,k_r\in\mathbb{Z} \\[1pt] k_1+\dots+k_r=0}}
\det_{1\leq i,j\leq r}\bigg(
q^{r(d+r)\binom{k_i}{2}+(d+r)ik_i+(\mu_j-j)(rk_i+i-j)} \\[-2mm]
\times \qbin{n+\la_i-\mu_j-dk_i}{\la_i-\mu_j-(d+r)k_i+j-i}\bigg).
\end{multline}
Taking the large-$n$ limit and then specialising
$\la_1=\dots=\la_r=L\geq \mu_1$, this simplifies to \cite[Theorem 5]{GK97}
\begin{align}\label{Eq_Lr}
&\GK_{(L^r)/\mu/d}(1,q;\mathbb{N}_0) \\ 
&\qquad=\sum_{\substack{k_1,\dots,k_r\in\mathbb{Z} \\[1pt] k_1+\dots+k_r=0}}
\prod_{i=1}^r \frac{q^{r(d+r)\binom{k_i}{2}+(di+r\mu_i)k_i}}
{(q)_{L-\mu_i-(d+r)k_i+i-1}}
\prod_{1\leq i<j\leq r} \Big(1-q^{(d+r)(k_i-k_j)+\mu_i-\mu_j+j-i}\Big).
\notag
\end{align}

Let $\nu$ and $\la$ be partitions of length $r$ such that $\nu\subseteq\la$.
Any $\bar{\pi}\in\mathscr{C}_{\nu/\mu/d}(\mathbb{N})$ maps to a unique
$\pi\in\mathscr{C}_{\la/\mu/d}(\mathbb{N}_0)$ such that 
$\abs{\pi}=\abs{\bar{\pi}}$ and $\max(\pi)=\max(\bar{\pi})$,
by simply filling those squares of $\pi$ not contained in $\bar{\pi}$ by
$0$ and leaving the other squares unchanged.
Conversely, by deleting all squares with filling $0$, any
$\pi\in \mathscr{C}_{\la/\mu/d}(\mathbb{N}_0)$ maps to a
unique $\bar{\pi}\in\mathscr{C}_{\nu/\mu/d}(\mathbb{N})$ for
some fixed $\nu\subseteq\la$.
Hence, if for a fixed profile $c\in\mathbb{N}_0^r$ such that
$c_0+\dots+c_{r-1}=d$, we set
\[
\mu(c):=(c_1+\dots+c_{r-1},c_2+\dots+c_{r-1},\dots,c_{r-1},0),
\]
then the generating function $\GK_c(z,q)$ of all cylindric partitions 
on $\mathbb{N}$ of rank $r$, level $d$ and profile $c$ is given by
\begin{equation}\label{Eq_Gc}
\GK_c(z,q):=\sum_{\substack{\la\supseteq\mu(c)\\[1pt] l(\la)=r}} 
\GK_{\la/\mu(c)/d}(z,q;\mathbb{N})=
\lim_{L\to\infty} \GK_{(L^r)/\mu(c)/d}(z,q;\mathbb{N}_0).
\end{equation}
From the cyclic symmetry \eqref{Eq_cyc},
\begin{equation}\label{Eq_cyc-profiles}
\GK_{(c_0,c_1,\dots,c_{r-1})}(z,q)=
\GK_{(c_{r-1},c_0,c_1,\dots,c_{r-2})}(z,q).
\end{equation}
Hence the number of inequivalent profiles $c=(c_0,\dots,c_{r-1})$ 
such that $c_0+\dots+c_{r-1}=d$ is given by
\begin{equation}\label{Eq_number-profiles}
\frac{1}{r}\, \big[x^d\big] \, \sum_{k\mid r} \phi(k) 
\bigg(\frac{1}{1-x^k}\bigg)^{r/k},
\end{equation}
where $\phi$ is Euler's totient function.

When $z=1$ it follows from \eqref{Eq_Lr} and \eqref{Eq_Gc} that
\[
\GK_c(1,q)=\frac{1}{(q)_{\infty}^r}
\sum_{\substack{k_1,\dots,k_r\in\mathbb{Z} \\[1pt] k_1+\dots+k_r=0}}
\prod_{i=1}^r q^{r(d+r)\binom{k_i}{2}+(di+r\mu_i)k_i}
\prod_{1\leq i<j\leq r} \Big(1-q^{(d+r)(k_i-k_j)+\mu_i-\mu_j+j-i}\Big),
\]
where $\mu=\mu(c)$ and $d=c_0+\dots+c_{r-1}$.
By the $\mathrm{A}_{r-1}^{(1)}$ Macdonald identity \cite{Macdonald72}
\begin{equation}\label{Eq_Macdonald}
\sum_{\substack{k_1,\dots,k_r\in\mathbb{Z} \\[1pt] k_1+\dots+k_r=0}}
\prod_{i=1}^r x_i^{rk_i} q^{r\binom{k_i}{2}+ik_i}
\prod_{1\leq i<j\leq r} \big(1-q^{k_i-k_j}x_i/x_j\big) =
(q)_{\infty}^{r-1} \prod_{1\leq i<j\leq r} \theta(x_i/x_j;q)
\end{equation}
with $(q,x_i)\mapsto (q^{d+r},q^{\mu_i-i})$,
this yields Borodin's product formula \cite{Borodin07} 
\begin{equation}\label{Eq_Bor}
\GK_c(1,q)
=\frac{(q^{d+r};q^{d+r})_{\infty}^{r-1}}{(q)_{\infty}^r}
\prod_{1\leq i<j\leq r} \theta(q^{\mu_i-\mu_j+j-i};q^{d+r}).
\end{equation}
(The reader is referred to 
\cite{CSV11,Koshida20,Krattenthaler08,Langer12a,Langer12b,Tingley08} for 
alternative derivations and/or generalisations.)
Comparing this with the definition of $\AG_{\la;r}(q)$ given in the
introduction, we thus have
\begin{equation}\label{Eq_GK-AG}
\GK_{(c_0,\dots,c_{r-1})}(1,q)=\frac{1}{(q)_{\infty}}\,
\AG_{c_0\La_0+\dots+c_{r-1}\La_{r-1};r}(q),
\end{equation}
which is \eqref{Eq_AG-GK}.

Due to the additional reversal symmetry
\begin{equation}\label{Eq_reversal}
\GK_{(c_0,c_1,\dots,c_{r-1})}(1,q)=\GK_{(c_{r-1},\dots,c_1,c_0)}(1,q),
\end{equation}
\eqref{Eq_Bor} yields a smaller number of distinct infinite products than 
predicted by \eqref{Eq_number-profiles}.
For rank $3$ and $d=3k+i-3$ with $i=0,1,2$ and $k\geq 1$ (such that $d\geq 1$),
the cyclic symmetry plus \eqref{Eq_reversal} leads to a total of
\begin{equation}\label{Eq_rank3-count}
\binom{k+1}{2}+\Big\lfloor\frac{1}{4}(k+i-1)^2\Big\rfloor
\end{equation} 
distinct infinite products for the modulus $3k+i$.

For later reference we also note that the level-rank duality 
\eqref{Eq_levelrank} implies that
\begin{equation}\label{Eq_levelrank-c}
\GK_c(z,q)=\GK_{c'}(z,q),
\end{equation}
where $c=(c_0,\dots,c_{r-1})$ such that $c_0+\dots+c_{r-1}=d$ and 
$c'=(c'_0,\dots,c'_{d-1})$
such that $c'_1+\dots+c'_{d-1}=r$ are related by $\mu(c)=(\mu(c'))'$,
where $l(\mu(c))=r$ and $l(\mu(c'))=d$.
For example,
\begin{subequations}\label{Eq_RR-rank3}
\begin{equation}
\GK_{(1,1,0)}(z,q)=\GK_{(2,1)}(z,q)
\end{equation}
since $\mu(1,1,0)=(1,0,0)$ and $\mu(2,1)=(1,0)$, and
\begin{equation}
\GK_{(2,0,0)}(z,q)=\GK_{(3,0)}(z,q)
\end{equation}
\end{subequations}
since $\mu(2,0,0)=(0,0,0)$ and $\mu(3,0)=(0,0)$.
In particular, the expression \eqref{Eq_number-profiles} is symmetric in 
$d$ and $r$.

\section{Connections to representation theory}\label{Sec_Characters}

Let $\mathfrak{h}$ and $\mathfrak{h}^{\ast}$ be the Cartan subalgebra
and its dual of the affine Lie algebra $\mathrm{A}_{r-1}^{(1)}$, with
pairing $\langle \cdot,\cdot\rangle$, see \cite{Kac90,Wakimoto01}.
As usual we use the non-degenerate bilinear form $\bil{\cdot}{\cdot}$ on 
$\mathfrak{h}$ to identify $\mathfrak{h}$ and $\mathfrak{h}^{\ast}$.
For $I:=\{0,1,\dots,r-1\}$,
let $\{\alpha_i: i\in I\}$ be the set of simple roots, 
$\{\La_i: i\in I\}$ the set of fundamental weights
and $\delta=\sum_{i\in I}\alpha_i$ the null root.
Then $\mathfrak{h}^{\ast}=
\mathrm{Span}_{\mathbb{C}}\{\Lambda_0,\alpha_0,\dots,\alpha_{r-1}\}$.
By the above identification, and since we are considering 
$\mathrm{A}_{r-1}^{(1)}$ only, we do not need to distinguish between
roots and coroots, i.e., 
$\langle\alpha_i,\La_j\rangle=\bil{\alpha_i}{\La_j}=\delta_{ij}$ for all
$i,j\in I$.
We further denote the finite part of $\mathfrak{h}^{\ast}$
by $\bar{\mathfrak{h}}^{\ast}$, i.e.,
$\bar{\mathfrak{h}}^{\ast}=\mathrm{Span}_{\mathbb{C}}
\{\alpha_i: i\in\bar{I}\}$, where $\bar{I}:=\{1,2,\dots,r-1\}$, 
The level of $\la\in\mathfrak{h}^{\ast}$ is defined as 
$\lev(\la):=\langle \la,\delta\rangle$.
In particular, $\lev(\La_i)=1$ for all $i\in I$.
Let $P_{+}$ and $P_{+}^m$ denote the set of dominant integral weights,
and level-$m$ dominant integral weights, respectively:
\begin{align*}
P_{+}&:=\big\{ \la\in\mathfrak{h}^{\ast}: 
\langle\la,\alpha_i^{\vee}\rangle\in\mathbb{N}_0 \text{ for $i\in I$}\big\}
=\sum_{i\in I}\mathbb{N}_0\La_i+\mathbb{C}\delta, \\
P_{+}^m&:=\big\{\la\in P_{+}: \lev(\la)=m\big\}, 
\end{align*}
where $m$ is a nonnegative integer.
Since the $\delta$-part of $P_{+}^m$ does not play a role in what follows,
we will typically ignore it and parametrise weights in $P_{+}^m$ as
\begin{equation}\label{Eq_la-para}
\la=(m-\mu_1+\mu_r)\La_0+(\mu_1-\mu_2)\La_i+\dots+(\mu_{r-1}-\mu_r)\La_{r-1},
\end{equation}
where $\mu=(\mu_1,\dots,\mu_r)$ is a partition such that $\mu_1-\mu_r\leq m$.
This parametrisation only depends on the differences $\mu_i-\mu_r$ for 
$i\in\bar{I}$, and without loss of generality we may assume that $\mu_r=0$.
The Weyl vector $\rho$ is defined by $\langle \rho,\alpha_i\rangle=1$
for all $i\in I$. 
Although this fixes $\rho$ modulo $\mathbb{C}\delta$, we once again
ignore the $\delta$-part and simply take $\rho=\sum_{i\in I}\La_i$.
Note that $\lev(\rho)=r$. 
Similarly, we fix the fundamental weights $\La_i$ for $i\in\bar{I}$ as
$\La_i=\La_0+\sum_{j\in\bar{I}}(\min\{i,j\}-ij/r)\alpha_j$.

For $\la\in P_{+}$, let $L(\la)$ be the standard module of 
$\mathrm{A}_{r-1}^{(1)}$ of highest weight $\la$ with $V_{\mu}$ the
weight-space indexed by $\mu$ in the weight-space decomposition of $L(\la)$.
Then the character of $L(\la)$ is defined as
\[
\ch L(\la):=\sum_{\mu\in\mathfrak{h}^{\ast}} \dim(V_{\mu}) \eup^{\mu}.
\]
Since $\dim(V_{\mu})=0$ if $\la-\mu\not\in\sum_{i\in I}\mathbb{N}_0\alpha_i$,
\[
\eup^{-\la} \ch L(\la)\in 
\mathbb{Z}[[\eup^{-\alpha_0},\dots,\eup^{-\alpha_{r-1}}]].
\]
Let $W=\overline{W}\ltimes \overline{Q}$ be the Weyl group of 
$\mathrm{A}_{r-1}^{(1)}$, with $\overline{W}\cong S_r$ the classical
part of $W$ and
$\overline{Q}:=\sum_{i\in\bar{I}}\mathbb{Z}\alpha_i$.
Then, by the Weyl--Kac character formula \cite{Kac74,Kac90},
\begin{equation}\label{Eq_WK}
\ch L(\la)=
\frac{\sum_{w\in W} \sgn(w) \eup^{w(\la+\rho)-\rho}}
{\prod_{\alpha>0} (1-\eup^{-\alpha})^{\mult(\alpha)}},
\end{equation}
where the product in the denominator is over the positive roots in the
root system of $\mathrm{A}_{r-1}^{(1)}$ and $\mult(\alpha)$ is the
multiplicity of the root $\alpha$.
If $\la\in P_{+}^1$, we have the alternative simpler expression
(see e.g., \cite{Kac78,KP84} or \cite[Eq.~(12.13.6)]{Kac90}),
\begin{equation}\label{Eq_level1}
\eup^{-\La_{\ell}} \ch L(\La_{\ell})
=\frac{1}{\prod_{n=1}^{\infty}(1-\eup^{-n\delta})^{r-1}}
\sum_{\alpha\in\overline{Q}} 
\eup^{\alpha-\frac{1}{2}\|\alpha\|^2\delta-\bil{\alpha}{\La_{\ell}}\delta},
\end{equation}
where $\overline{Q}:=\sum_{i\in \bar{I}}\mathbb{Z}\alpha_i$.

Define the graded or $q$-dimension of $L(\la)$ as
\[
\dim_q L(\la):=F_{\mathbbm{1}}\big(\eup^{-\la} \ch L(\la)\big),
\]
where $F_{\mathbbm{1}}$ is the principal specialisation \cite{Lepowsky79}:
\begin{gather*}
F_{\mathbbm{1}}:~\mathbb{C}[[\eup^{-\alpha_0},\dots,\eup^{-\alpha_{r-1}}]]
\to \mathbb{C}[[q]] \\
\eup^{-\alpha_i}\mapsto q\quad\text{for all $i\in I$}.
\end{gather*}
Then \cite{Kac78,Lepowsky82},
\begin{align*}
\dim_q L(\la)&=\prod_{\alpha>0} 
\bigg(\frac{1-q^{\ip{\la+\rho}{\alpha}}}
{1-q^{\ip{\rho}{\alpha}}}\bigg)^{\mult(\alpha)} \\[2mm]
&=\frac{(q^{d+r};q^{d+r})_{\infty}^{r-1}(q^r;q^r)_{\infty}}
{(q)_{\infty}^r} \prod_{1\leq i<j\leq r} 
\theta\big(q^{\mu_i-\mu_j+j-i};q^{d+r}\big),
\end{align*}
where in the second expression on the right it is assumed that
$\la\in P_{+}^d$ with $\la$ parametrised as in 
\eqref{Eq_la-para} with $m$ replaced by $d$.
For $\la=\La_{\ell}$ the above simplifies to 
$(q^r;q^r)_{\infty}/(q)_{\infty}$ for all $\ell\in I$.
By \eqref{Eq_Bor} and \eqref{Eq_GK-AG} we thus have
\begin{align*}
\GK_{(c_0,\dots,c_{r-1})}(1,q)&=
\frac{1}{(q)_{\infty}}\,\AG_{c_0\La_0+\dots+c_{r-1}\La_{r-1};r}(q) \\[1mm]
&=\frac{1}{(q)_{\infty}}\cdot\frac{\dim_q L(c_0\La_0+\cdots+c_{r-1}\La_{r-1})}
{\dim_q L(\La_{\ell})},
\end{align*}
see also \cite{FW16,Tingley08}.

Alternatively, we can identify $\GK_{(c_0,\dots,c_{r-1})}(1,q)$ as an
$\mathrm{A}_{r-1}^{(1)}$ branching function.
To this end we require the characters of certain admissible
representations of $\mathrm{A}_{r-1}^{(1)}$~\cite{KW88,KW89}.
Let $d,m$ be a pair of relatively prime integers such that $d\geq 1$ 
and $m+(d-1)r\geq 0$.
For $\mu=(\mu_1,\dots,\mu_r)$ a partition such that
$\mu_1-\mu_r\leq m+(d-1)r$, let
\begin{equation}\label{Eq_Lam}
\la(\mu;m/d):=(m/d-\mu_1+\mu_r)\La_0+(\mu_1-\mu_2)\La_1+\dots+
(\mu_{r-1}-\mu_r)\La_{r-1},
\end{equation}
(so that $\lev(\la(\mu;m/d))=m/d$), and let
$P^{m/d}$ denote the set of all such weights.
For $d=1$ this is just $P_{+}^m$ but for $d\geq 2$ the weights in 
$P^{m/d}$ are not integral.
The set $P^{m/d}$ is a subset of the set of (principal) admissible
weights of $\mathrm{A}_{r-1}^{(1)}$ defined by Kac and Wakimoto
in \cite{KW88,KW89,KW90}.
As follows from their work (see e.g., \cite[Proposition 3]{KW88} or 
\cite[Theorem 3.1]{KW90}),
the generating function for cylindric partitions of rank $r$ and level
$d$ (such that $d$ and $r$ are relatively prime)
arises as a branching function $b^{\la\otimes\la'}_{\la''}(q)$ corresponding
to the decomposition of $\ch L(\la) \ch L(\la')$ in terms of $\ch L(\la'')$,
where $\la\in P^{r/d-r}$, $\la'\in P_{+}^1$ and $\la''\in P^{r/d-r+1}$.
For the benefit of those readers not familiar with the theory of admissible
representations of affine Kac--Moody algebras, we will translate 
the details of \cite[Proposition 3]{KW88} pertaining to the case of 
cylindric partitions and $\mathrm{A}_{r-1}^{(1)}$ Andrews--Gordon $q$-series
into the language of formal power series.
First of all we note that by setting $\eup^{-\alpha_i}=x_i/x_{i+1}$
for $i\in\bar{I}$ and $\eup^{-\delta}=q$, and by using the semi-direct
product structure of $W$, we can write \eqref{Eq_WK} as
\begin{align}\label{Eq_WK2}
\eup^{-\la} \ch L(\la)&=\frac{1}{(q)_{\infty}^{r-1}
\prod_{1\leq i<j\leq r} (-x_j) \theta(x_i/x_j;q)} \\
&\qquad\times
\sum_{\substack{k_1,\dots,k_r\in\mathbb{Z}\\[1pt] k_1+\cdots+k_r=0}}
\prod_{i=1}^r x_i^{(m+r)k_i-\mu_i} q^{(m+r)\binom{k_i}{2}}
\det_{1\leq i,j\leq r}\Big( \big(x_iq^{k_i})^{\mu_j+r-j}\Big), \notag
\end{align}
where $\la$ is parametrised as in \eqref{Eq_la-para}.
Similarly, \eqref{Eq_level1} becomes
\begin{equation}\label{Eq_WKlevel1}
\eup^{-\La_{\ell}} \ch L(\La_{\ell})
=\frac{1}{(q)_{\infty}^{r-1}}
\sum_{\substack{k_1+\dots+k_r\in\mathbb{Z} \\[1pt] k_1+\dots+k_r=0}}
\prod_{i=1}^r x_i^{k_i} q^{\binom{k_i}{2}+\chi(i\leq\ell) k_i}.
\end{equation}
For the characters of the admissible representations indexed by
$\la=\La(\mu;m/d)\in P^{m/d}$, the Weyl--Kac formula \eqref{Eq_WK2}
generalises to
\begin{align*}
\eup^{-\la} \ch L(\la)&=\frac{1}{(q)_{\infty}^{r-1}
\prod_{1\leq i<j\leq r} (-x_j) \theta(x_i/x_j;q)} \\
&\qquad\times
\sum_{\substack{k_1,\dots,k_r\in\mathbb{Z}\\[1pt] k_1+\cdots+k_r=0}}
\prod_{i=1}^r x_i^{(m+dr)k_i-\mu_i} q^{d(m+dr)\binom{k_i}{2}}
\det_{1\leq i,j\leq r}\Big( \big(x_iq^{dk_i})^{\mu_j+r-j}\Big).
\end{align*}
According to \cite[Proposition 3]{KW88} we then have the following 
branching rule.

\begin{proposition}[$\mathrm{A}_{r-1}^{(1)}$ branching formula]
\label{Prop_branching}
Let $d$ be a positive integer relatively prime to $r$, and let $\ell\in I$.
Then
\begin{equation}\label{Eq_branching}
\ch L\big((r/d-r)\La_0\big) \cdot \ch L(\La_{\ell})= 
\sum_{\substack{ \la\in P^{r/d-r+1}\\[1pt] 
\la-(r/d-r)\La_0-\La_{\ell}\in\overline{Q}}}
q^{\frac{1}{2}\|\la\|^2-\frac{1}{2}\|\La_{\ell}\|^2}
b_{\la}(q)\, \ch L(\la),
\end{equation}
where, assuming that $\la$ is parametrised as in \eqref{Eq_Lam}
with $m=d+r-dr$,
\[
b_{\la}(q)=
\frac{(q^{d+r};q^{d+r})_{\infty}^{r-1}}{(q)_{\infty}^{r-1}}
\prod_{1\leq i<j\leq r} \theta(q^{\mu_i-\mu_j+j-i};q^{d+r}).
\]
\end{proposition}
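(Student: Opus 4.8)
The plan is to reduce \eqref{Eq_branching} to one identity of formal power series and then evaluate both sides. Write $\la^{(0)}:=(r/d-r)\La_0$ and use the substitution $\eup^{-\alpha_i}=x_i/x_{i+1}$, $\eup^{-\delta}=q$ of \eqref{Eq_WK2}. Both $\eup^{-\la^{(0)}}\ch L(\la^{(0)})$ and every target $\eup^{-\la}\ch L(\la)$ with $\la\in P^{r/d-r+1}$ carry the common affine Weyl denominator $D:=(q)_\infty^{r-1}\prod_{1\le i<j\le r}(-x_j)\theta(x_i/x_j;q)$ of the admissible Weyl--Kac formula stated just before the proposition, whereas the level-$1$ factor is $\eup^{-\La_\ell}\ch L(\La_\ell)=N_1/(q)_\infty^{r-1}$ with $N_1$ the Gaussian lattice sum in \eqref{Eq_WKlevel1}. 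Denoting by $N_0$ and $N_\la$ the respective numerators, multiplying \eqref{Eq_branching} through by $D$ reduces it to
\[
\frac{N_0\,N_1}{(q)_\infty^{r-1}}=\sum_{\la}q^{\frac12\|\la\|^2-\frac12\|\La_\ell\|^2}\,b_\la(q)\,\eup^{\la-\la^{(0)}-\La_\ell}\,N_\la .
\]

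First I would put the left factor in closed form. For $\la^{(0)}$ one has $\mu=0$ and level $r/d-r$, so that $m+dr=r$ in the admissible Weyl--Kac formula and the determinant in $N_0$ degenerates to the Vandermonde $\prod_{i<j}(x_iq^{dk_i}-x_jq^{dk_j})$. Extracting $\prod_{i<j}(-x_jq^{dk_j})=(-1)^{\binom{r}{2}}\prod_j x_j^{j-1}q^{d(j-1)k_j}$ and invoking $\sum_i k_i=0$ to align the linear term, the resulting $k$-sum is exactly the left-hand side of the $\mathrm{A}_{r-1}^{(1)}$ Macdonald identity \eqref{Eq_Macdonald} under $(q,x_i)\mapsto(q^d,x_i)$. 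After the sign and $\prod_j x_j^{j-1}$ cancel against $\prod_{i<j}(-x_j)$ in $D$ this gives the clean product
\[
\eup^{-\la^{(0)}}\ch L\big(\la^{(0)}\big)=\frac{N_0}{D}=\frac{(q^d;q^d)_\infty^{r-1}}{(q)_\infty^{r-1}}\prod_{1\le i<j\le r}\frac{\theta(x_i/x_j;q^d)}{\theta(x_i/x_j;q)},
\]
which already pins down the first tensor factor explicitly.

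The crux is the reorganisation of $N_0\,N_1$. Viewing $N_0$ and $N_1$ as lattice sums over the root lattice $\overline{Q}$ (in variables $k$ and $k'$ respectively), I would pass to total and relative momenta and complete the square in the joint Gaussian exponent: the inverse moduli $r/d$ (from $N_0$) and $1$ (from $N_1$) add to $(r+d)/d$, so the modulus attached to the total momentum is $d/(d+r)$, precisely that of the target numerators $N_\la$ at level $r/d-r+1$, while the relative momentum decouples. The relative-momentum sum, carrying the Vandermonde from $N_0$, is itself an $\mathrm{A}_{r-1}^{(1)}$ Macdonald sum, which \eqref{Eq_Macdonald} with $(q,x_i)\mapsto(q^{d+r},q^{\mu_i-i})$ evaluates to $\frac{(q^{d+r};q^{d+r})_\infty^{r-1}}{(q)_\infty^{r-1}}\prod_{i<j}\theta(q^{\mu_i-\mu_j+j-i};q^{d+r})=b_\la(q)$ --- the very evaluation that yields Borodin's formula \eqref{Eq_Bor}. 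The total-momentum part reassembles $N_\la$, and the Gaussian remainder of the square completion supplies the prefactor $q^{\frac12\|\la\|^2-\frac12\|\La_\ell\|^2}$; the surviving cosets are indexed exactly by the $\la\in P^{r/d-r+1}$ with $\la-\la^{(0)}-\La_\ell\in\overline{Q}$.

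The delicate step is this last reorganisation --- tracking the signs, the Weyl symmetrisation hidden in the determinant of $N_\la$, and the precise coset bookkeeping that turns a product of two alternating lattice sums into a single sum over dominant $\la$ with coefficient $b_\la(q)$. This is exactly the content of \cite[Proposition~3]{KW88}, and it is where errors are most likely. As a global consistency check I would apply the principal specialisation $\eup^{-\alpha_i}\mapsto q$, under which the identity must collapse to the graded-dimension factorisation resting on Borodin's formula \eqref{Eq_Bor}.
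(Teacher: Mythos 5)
Your proposal is correct and matches the paper's own treatment: the paper likewise proves the proposition by invoking \cite[Proposition 3]{KW88} for the branching rule and the Macdonald identity \eqref{Eq_Macdonald} for the product forms of $\eup^{-(r/d-r)\La_0}\ch L\big((r/d-r)\La_0\big)$ and of $b_{\la}(q)$, and the Remark following the proposition carries out exactly your reduction to a cleared-denominator formal power series identity, leaving the final lattice-sum reorganisation to standard methods. Your completing-the-square sketch (total/relative momenta, with the relative Macdonald sum producing $b_{\la}(q)$ and the total part reassembling the admissible Weyl--Kac numerators) is a correct outline of that deferred Kac--Wakimoto step, so your route and the paper's are essentially the same.
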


\begin{remark}
A number of comments are in order.
(i) The weight $(r/d-r)\La_0$ is the unique element in $P^{(1/d-1)r}$,
and by the Macdonald identity \eqref{Eq_Macdonald},
\[
\eup^{-(r/d-r)\La_0} \ch L\big((r/d-r)\La_0\big)=
\frac{(q^d;q^d)_{\infty}^{r-1}}{(q)_{\infty}^{r-1}}
\prod_{1\leq i<j\leq r} 
\frac{\theta(x_i/x_j;q^d)}{\theta(x_i/x_j;q)}.
\]
(ii) Up to an overall power of $q$, the branching function
$b_{\la}(q)$ is $b^{(r/d-r)\La_0\otimes\La_0}_{\la}(q)$ of
\cite{KW88}.
In \cite[Proposition 3]{KW88} $b_{\la}(q)$ is not given in product form
since it is stated there as special case of a more general branching 
rule for which the branching functions typically do not admit product forms.
The product form given here once again follows from the Macdonald 
identity.
(iii) In terms of the partition $\mu$ parametrising the elements of
$P^{r/d-r+1}$, the condition
$\la-(r/d-r)\La_0-\La_{\ell}\in\overline{Q}$ corresponds to
$\abs{\mu}\equiv\ell\pmod{r}$. 
Moreover, 
\[
\|\la\|^2-\|\La_{\ell}\|^2=
\sum_{i=1}^r \mu_i^2-\frac{\abs{\mu}^2}{r}-
\frac{\ell(r-\ell)}{r}.
\]
It thus follows that the identity \eqref{Eq_branching} boils down to
\begin{align*}
&\prod_{1\leq i<j\leq r} (-x_j)\theta(x_i/x_j;q^d) 
\sum_{\substack{k_1+\dots+k_r\in\mathbb{Z} \\[1pt] k_1+\dots+k_r=0}}
\prod_{i=1}^r x_i^{k_i} q^{\binom{k_i}{2}+\chi(i\leq\ell) k_i} \\
&\quad=
\frac{(q^{d+r};q^{d+r})_{\infty}^{r-1}}{(q^d;q^d)_{\infty}^{r-1}}
\sum_{\mu} \Bigg( 
q^{\frac{1}{2}\sum_{i=1}^r (\mu_i^2-\ell)-\frac{\abs{\mu}^2-\ell^2}{2r}}
\prod_{1\leq i<j\leq r} \theta(q^{\mu_i-\mu_j+j-i};q^{d+r}) \\
&\qquad\qquad\times
\sum_{\substack{k_1,\dots,k_r\in\mathbb{Z}\\[1pt] k_1+\cdots+k_r=0}}
\prod_{i=1}^r x_i^{(d+r)k_i-\chi(i\leq\ell)-\frac{\abs{\mu}-\ell}{r}} 
q^{d(d+r)\binom{k_i}{2}}
\det_{1\leq i,j\leq r}\Big( \big(x_iq^{dk_i})^{\mu_j+r-j}\Big)\Bigg),
\end{align*}
where the outer sum on the right is over partitions
$\mu=(\mu_1,\dots,\mu_r)$ such that $\mu_r=0$, $\mu_1\leq d$ and
$\abs{\mu}\equiv \ell \pmod{r}$.
This identity, which can be proved using standard $q$-series
methods, is true for all positive integers $d$ and $r$
and does not require $d$ and $r$ to be coprime.
For $d=1$, which fixes $\mu=(1^{\ell},0^{r-\ell})$,
the above identity simplifies to the equality between
\eqref{Eq_WK} for $\la=\La_{\ell}$ and \eqref{Eq_WKlevel1}.
\end{remark}

By comparing Proposition~\ref{Prop_branching} with \eqref{Eq_Bor} 
and \eqref{Eq_GK-AG} we conclude that
\begin{align*}
\GK_{(c_0,\dots,c_{r-1})}(1,q)&=\frac{1}{(q)_{\infty}}\,
\AG_{c_0\La_0+\dots+c_{r-1}\La_{r-1};r}(q) \\[1mm]
&=\frac{1}{(q)_{\infty}}\,
b_{(r/d-d-r+1+c_0)\La_0+c_1\La_1+\dots+c_{r-1}\La_{r-1}}(q),
\end{align*}
where $d=c_0+\dots+c_{r-1}$.

A third representation theoretic interpretation in terms of the 
$W_r$ algebra of Zamolodchikov \cite{Zamolodchikov85} and
Fateev and Lukyanov \cite{FL88} is very closely related to the above.
It is well known that the modulus-$(2k+1)$ Andrews--Gordon 
$q$-series, $\AG_{k,s}(q)$, correspond to the (normalised) characters 
of the certain non-unitary representations of the Virasoro 
algebra.
Specifically, let $\Vir$ denote the Virasoro algebra \cite{DFMS97}
with generators $L_n$ ($n\in\mathbb{Z}$), central element $c$
(not to be confused with the profile of a cylindric partition)
and commutation relations
\[
[L_m,L_n]=(m-n)L_{m+n}+\frac{c}{12}(m^3-m)\delta_{m+n,0}.
\]
Furthermore, let $\chi_{\Vir(c,h)}(q)=\Tr q^{L_0-c/24}$ be the character
of the highest weight representation of $\Vir$
with central charge and conformal weight given by
\[
c=-\frac{2(k-1)(6k-1)}{2k+1}\quad\text{and}\quad
h=-\frac{(s-1)(2k-s)}{2(2k+1)},
\]
where $1\leq s\leq k$.
Then (see e.g., \cite{CLM06,SF94}) 
\[
\chi_{\Vir(c,h)}(q)=q^{h-c/24}\,\AG_{k,s}(q).
\]
Based on a much more general character formula for $W_r$, see e.g., 
\cite{Mizoguchi91,Nakanishi90}, it was observed in \cite{ASW99}
that the above has a direct generalisation to $W_r$ as follows.
Fix a positive integer $d$, relatively prime to $r$, and let
$\mu=(\mu_1,\dots,\mu_r)$ be a partition such that $\mu_1-\mu_r\leq d$.
Let $\chi_{W_r(c,h)}(q)=\Tr q^{L_0-c/24}$ be the character
of the (non-unitary) highest weight representation of $W_r$ of central
charge
\[
c=-\frac{(d-1)(r-1)(d+r+dr)}{d+r}
\]
and conformal weight
\[
h=\frac{r}{2(d+r)}\bigg(\sum_{i=1}^r\mu_i^2-\frac{\abs{\mu}^2}{r}\bigg)
-\frac{d}{d+r}\sum_{i=1}^r \Big(\frac{r+1}{2}-i\Big)\mu_i.
\]
Then
\begin{align*}
\chi_{W_r(c,h)}(q)&=q^{h-c/24}\,
\AG_{\la;r}(q) \\
&=q^{h-c/24}\,(q)_{\infty}\GK_{(c_0,\dots,c_{r-1})}(q),
\end{align*}
where $\la$ is parametrised as in \eqref{Eq_la-mu} and
$c_i=\langle\alpha_i^{\vee},\la\rangle$.

\section{Hypergeometric preliminaries}\label{Sec_hyper}

In this section we prove two transformation formulas for sums over
$q$-binomial coefficients.
These transformations are needed in the proof of Theorem~\ref{Thm_k12} 
for $k=2$ and to transform the modulus-$8$ Andrews--Gordon identities
stated in next section into the modulus-$8$ identities discovered previously 
by Corteel, Dousse and Uncu.
Readers not particularly interested in identities for $q$-binomial
coefficients and in basic hypergeometric series may wish to skip this
section.

Throughout we use standard notation from the theory of basic hypergeometric
functions and $q$-series, see e.g., \cite{Andrews76,GR04}.
In general we view identities such as the $\mathrm{A}_2$ Andrews--Gordon
identities of Theorem~\ref{Thm_A2-vac-bothmoduli} from the point of view
of formal power series, with $q$ a formal variable.
In some of our proofs, however, we take an analytic approach, 
requiring complex $q$ such that $\abs{q}<1$.
The $q$-shifted factorials $(a)_{\infty}$ and $(a)_n$ are defined as
\[
(a)_{\infty}=(a;q)_{\infty}:=\prod_{i=0}^{\infty}(1-aq^i)
\quad\text{and}\quad
(a)_n=(a;q)_n:=\frac{(a)_{\infty}}{(aq^n)_{\infty}},
\]
where $n$ is an arbitrary integer.
In particular,
\[
(a)_n=\prod_{i=0}^{n-1}(1-aq^i)
\]
for $n$ a nonnegative integer, and
\begin{equation}\label{Eq_nul}
\frac{1}{(q)_n}=0
\end{equation}
for $n$ a negative integer.
We also adopt the usual condensed notation
\[
(a_1,\dots,a_k)_n = (a_1)_n\cdots (a_k)_n
\]
for $n\in\mathbb{Z}\cup\{\infty\}$.
For $n,m\in\mathbb{Z}$, the $q$-binomial coefficient $\qbin{n}{m}$ is 
given by
\begin{equation}\label{Eq_qbinomial}
\qbin{n}{m}=\qbin{n}{m}_q:=
\begin{cases}
\displaystyle \frac{(q)_n}{(q)_m(q)_{n-m}}
&\text{if $0\leq m\leq n$}, \\[3mm]
0 & \text{otherwise}.
\end{cases}
\end{equation}
The ${_r\phi_s}$ basic hypergeometric series is defined as \cite{GR04}
\begin{equation}\label{Eq_bhs}
\qHyp{r}{s}{a_1,\dots,a_r}{b_1,\dots,b_s}{q,z}:=
\sum_{k=0}^{\infty} \frac{(a_1,\dots,a_r)_k}{(q,b_1,\dots,b_s)_k}
\Big((-1)^k q^{\binom{k}{2}}\Big)^{s-r+1} z^k,
\end{equation}
where it is assumed that none of the $b_i$ is of the form $q^{-n}$ for 
some nonnegative integer $n$.
The series \eqref{Eq_bhs} is said to be terminating if one of the
$a_i$ is of the form $q^{-n}$ for $n$ a nonnegative integer, in which
case we may assume all the variables to be indeterminates.
If the series is non-terminating, we typically assume that 
the $a_i$, $b_j$ as well as $z$ and $q$ are complex such that
$\abs{z},\abs{q}<1$.

\medskip

We begin with a simple transformation formula for terminating basic 
hypergeometric series that we failed to find in the literature.

\begin{lemma}\label{Lem_trafo}
For $n$ a nonnegative integer,
\begin{equation}\label{Eq_been}
\sum_{k=0}^n  \frac{(q^{-n})_k (c)_{2k}}{(q,aq,c/a)_k}\, q^k
=\frac{(c)_n}{(aq^{1-n}/c)_n}\, 
\sum_{k=0}^n \frac{(q^{-n})_k(aq^{1-n}/c)_{2k}}
{(q,aq,q^{1-n}/c)_k}\, q^k.
\end{equation}
\end{lemma}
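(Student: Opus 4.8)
The plan is to read \eqref{Eq_been} as a symmetry of a single terminating series under an involution of the parameter $c$. Writing
\[
f_n(a,c):=\sum_{k=0}^n \frac{(q^{-n})_k(c)_{2k}}{(q,aq,c/a)_k}\,q^k,
\]
the right-hand side of \eqref{Eq_been} is exactly $\tfrac{(c)_n}{(aq^{1-n}/c)_n}\,f_n\bigl(a,\,aq^{1-n}/c\bigr)$, since replacing $c$ by $c'=aq^{1-n}/c$ sends $c/a\mapsto q^{1-n}/c$ while leaving $a,q,n$ untouched. The map $c\mapsto aq^{1-n}/c$ is an involution, and the prefactor satisfies $(c)_n/(c')_n\cdot (c')_n/(c)_n=1$, so the claimed identity is self-consistent under iteration; this is a good internal check and also pins down the expected ``dual'' variable. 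The essential structural point is that $(c)_{2k}=\bigl(c^{1/2},-c^{1/2},(cq)^{1/2},-(cq)^{1/2};q\bigr)_k$, i.e.\ the factor of length $2k$ makes \eqref{Eq_been} a \emph{quadratic} transformation (a passage between base $q$ and base $q^2$), rather than an ordinary contiguous relation.

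First I would reverse the order of summation, $k\mapsto n-k$, using the standard reflection $(x;q)_{n-k}=(x;q)_n\,(q^{1-n}/x;q)_k^{-1}(-x)^{-k}q^{\binom{k}{2}-(n-1)k}$ applied to each of $(q^{-n})_{n-k}$, $(q)_{n-k}$, $(aq)_{n-k}$, $(c/a)_{n-k}$, and the same with $n\mapsto 2n$, $k\mapsto 2k$ applied to $(c)_{2(n-k)}$. After collecting the powers of $q$ and of $c$, the top term $k=n$ factors out as $t_n=(-1)^n q^{-\binom n2}(c)_{2n}/\bigl((aq)_n(c/a)_n\bigr)$, and the remaining sum acquires numerator parameters $q^{-n}$, $q^{-n}/a$, $aq^{1-n}/c$ together with the quadratic denominator factor $(q^{1-2n}/c)_{2k}$ and a quadratic $q$-power $q^{k^2}$. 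The aim is then to recognise this reversed series, after regrouping $(q^{1-2n}/c)_{2k}$ into base-$q^2$ shifted factorials, as one summable or transformable by the $q$-Pfaff--Saalsch\"utz theorem or by one of the quadratic transformations of Gasper and Rahman \cite{GR04}, yielding precisely $\tfrac{(c)_n}{(aq^{1-n}/c)_n}\,f_n(a,aq^{1-n}/c)$. Equivalently, one may try to apply such a quadratic transformation directly to $f_n(a,c)$ and read off its $c\mapsto aq^{1-n}/c$ image.

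The \textbf{main obstacle} is the parameter bookkeeping in this last step: because of the length-$2k$ factor one is genuinely moving between a base-$q$ and a base-$q^2$ series, so matching $f_n$ to the precise known quadratic transformation is delicate, and it is quite possible that \eqref{Eq_been} is a new special case that must be assembled from a chain of elementary transformations rather than quoted outright. For this reason I would keep in reserve a fully mechanical fallback: both summands $t_k$ and $s_k:=\tfrac{(q^{-n})_k(aq^{1-n}/c)_{2k}}{(q,aq,q^{1-n}/c)_k}q^k$ are proper $q$-hypergeometric terms in $(n,k)$ once $(\cdot)_{2k}$ is split as above, so $q$-Zeilberger's algorithm produces a common linear recurrence in $n$ for $\sum_k t_k$ and for $\sum_k s_k$ with prefactor; verifying that both sides obey it and checking the base cases $n=0$ (both sides equal $1$) and $n=1$ then completes a rigorous, if unilluminating, proof.
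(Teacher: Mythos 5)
Your primary route stalls exactly where you predict it might, and the obstacle is not mere bookkeeping. After the reversal $k\mapsto n-k$ you obtain a series with quadratic power $q^{k^2}$, numerator parameters $q^{-n},q^{-n}/a,aq^{1-n}/c$ and a denominator factor $(q^{1-2n}/c)_{2k}$; this is not Saalsch\"utzian (so $q$-Pfaff--Saalsch\"utz does not apply), and none of the quadratic transformations in \cite{GR04} covers it as stated --- indeed the paper introduces \eqref{Eq_been} precisely as a transformation it ``failed to find in the literature'', so there is nothing to quote outright. Worse, your reversed left-hand side is (up to elementary prefactors) exactly the $c\mapsto aq^{1-n}/c$ image of the intermediate series $\sum_{k=0}^n \frac{(q^{-n},q^{-n}/a,c)_k}{(q)_k(cq^{-n}/a)_{2k}}(c/a)^k q^{k^2}$ through which the paper's own proof passes, so identifying it with the prefactored $f_n(a,aq^{1-n}/c;q)$ is equivalent to the lemma itself: the reversal repackages the problem without reducing it. The idea you are missing is an independent handle on that intermediate series. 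The paper obtains it by a generating-function device: take the $b\to 0$ limit of the terminating $\qhyp{2}{1}$ transformation \cite[Eq.~(III.8)]{GR04} to get \eqref{Eq_limit}, specialise $(c,z)\mapsto(aq^{1-i},q^{i+1})$, multiply by $(1/a)_i\hspace{1pt}c^i/(q)_i$ and sum over $i\geq 0$, evaluate the two resulting $\qhyp{1}{0}$ series by the $q$-binomial theorem \eqref{Eq_qbt}, normalise by $(c)_\infty/(c/a)_\infty$, and only then reverse the order of summation. Nothing in your proposal supplies an equivalent of this step.

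Your $q$-Zeilberger fallback is legitimate in principle: both summands are proper $q$-hypergeometric once $(c)_{2k}$ is split into four base-$q$ factors, the algorithm is guaranteed to terminate, and computer-assisted proofs of this kind are accepted in the area (the paper itself cites such proofs). But as written it is a promissory note rather than a proof: no recurrence or certificate is exhibited, and your claim that checking $n=0$ and $n=1$ suffices silently assumes a common recurrence of order at most two whose leading coefficient does not vanish, uniformly in the free parameters $a$ and $c$ --- the number of initial values one must verify is dictated by the order of the recurrence actually produced, which you have not determined. So the proposal, as it stands, does not prove the lemma; to complete it you would either have to carry out the creative-telescoping computation and exhibit the certificates, or replace the missing transformation step by something like the paper's generating-function argument.
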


Note that the transformation \eqref{Eq_been} corresponds to the symmetry 
\begin{equation}\label{Eq_fn-symmetry}
f_n(a,c;q)=f_n(a,aq^{1-n}/c;q)
\end{equation}
for the basic hypergeometric function
\begin{subequations}
\begin{align}
f_n(a,c;q)&:=(aq,aq^{1-n}/c)_n
\sum_{k=0}^n  \frac{(q^{-n})_k (c)_{2k}}{(q,aq,c/a)_k}\,q^k \\
&\hphantom{:}=\Big({-}\frac{a}{c}\Big)^n q^{-\binom{n}{2}}
\sum_{k=0}^n (-1)^k q^{\binom{k}{2}+(1-n)k} 
(aq^{k+1},cq^k/a)_{n-k}(c)_{2k} \qbin{n}{k}.
\label{Eq_regularised}
\end{align}
\end{subequations}
Since
\[
(c)_{2k}=\big(c^{1/2},-c^{1/2},(cq)^{1/2},-(cq)^{1/2}\big),
\]
\eqref{Eq_been} corresponds to a transformation formula for a
terminating $\qhyp{5}{4}$ series in which two of the denominator 
parameters are equal to zero.

\begin{remark}
More generally we have
\begin{equation}\label{Eq_twocases}
\sum_{k=0}^n \frac{(q^{-n})_k(bc)_{2k}}{(q,abq,bc/a)_k}\,q^k
=\frac{(c)_n}{(aq^{1-n}/c)_n}\, 
\sum_{k=0}^n \frac{(q^{-n})_k(abq^{1-n}/c)_{2k}}
{(q,abq,bq^{1-n}/c)_k}\,q^k,
\end{equation}
provided one of $a,b$ is equal to $1$. 
Since our proof of the two cases is very different and we only require
the $b=1$ case, we leave the proof of \eqref{Eq_twocases} for $a=1$ to
the reader.
\end{remark}

The two identities required in Section~\ref{Sec_Mod8} follow from 
Lemma~\ref{Lem_trafo} through specialisation.

\begin{corollary}\label{Cor_qbinomials}
For integers $\ell,m,n$ such that $0\leq n\leq\ell$,
\begin{equation}\label{Eq_ksum}
\sum_{k=0}^n q^{(k-n)(2k+\ell+2m-n)} \qbin{n}{k}\qbin{\ell-k}{k+\ell+m-n}
=\sum_{k=0}^n q^{k(2k-\ell-2m-n)} \qbin{n}{k}\qbin{\ell-k}{k-m}
\end{equation}
and
\begin{align}\label{Eq_ksum2}
\sum_{k=0}^n & q^{(k-n)(2k+\ell+2m-n+1)}
\qbin{n}{k}\qbin{\ell-k}{k+\ell+m-n} \\
&\qquad\qquad=q^{-m-n} \sum_{k=0}^n q^{k(2k-\ell-2m-n+1)}
\qbin{n}{k}\qbin{\ell-k}{k-m} \notag \\
&\qquad\qquad\quad+\big(1-q^{\ell-n}\big)\sum_{k=0}^n q^{k(2k-\ell-2m-n-1)}
\qbin{n}{k}\qbin{\ell-k-1}{k-m-1}. \notag
\end{align}
\end{corollary}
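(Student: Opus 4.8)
The plan is to deduce both identities of Corollary~\ref{Cor_qbinomials} from Lemma~\ref{Lem_trafo} by specialising the free parameters $a,c$ to powers of $q$ and then rewriting every $q$-shifted factorial as a $q$-binomial coefficient. For \eqref{Eq_ksum} I would set
\[
a=q^{\ell+m-n},\qquad c=q^{m-n}
\]
in \eqref{Eq_been}. With this choice $c$ is a non-positive power of $q$, so $(c)_{2k}$ truncates and, together with $(aq)_k=(q^{\ell+m-n+1})_k$ and $(c/a)_k=(q^{-\ell})_k$, collapses the summand $(q^{-n})_k(c)_{2k}q^k/(q,aq,c/a)_k$ into $q^{2k^2+k(\ell+2m-3n)}\qbin{n}{k}\qbin{\ell-k}{k+\ell+m-n}$ up to a $k$-independent constant (a ratio of finite $q$-factorials). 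Matching the quadratic exponent $2k^2+k(\ell+2m-3n)$ against $(k-n)(2k+\ell+2m-n)$, which differ by the $k$-independent term $n^2-n\ell-2nm$, then identifies the left-hand side of \eqref{Eq_been}, after clearing the overall constant, with the left-hand side of \eqref{Eq_ksum}.

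The right-hand side of \eqref{Eq_ksum} I would obtain from the \emph{same} function under the companion specialisation $a=q^{-m}$, $c=q^{-\ell-m}$, which is exactly the image of $(a,c)=(q^{\ell+m-n},q^{m-n})$ under the involution $(a,c)\mapsto(q^{-n}/c,\,q^{-n}/a)$. That involution is a symmetry of the left-hand side of \eqref{Eq_been}: it lies in the group generated by the symmetry \eqref{Eq_fn-symmetry} of Lemma~\ref{Lem_trafo} (which swaps $c$ and $aq^{1-n}/c$) and the elementary interchange of the denominator factors $(aq)_k$ and $(c/a)_k$, and it relates the two sums by the explicit constant $(c)_n/(q^{-n}/a)_n$. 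Under $a=q^{-m}$, $c=q^{-\ell-m}$ the truncating factor is now $(q^{-\ell-m})_{2k}$ and the summand becomes $q^{k(2k-\ell-2m-n)}\qbin{n}{k}\qbin{\ell-k}{k-m}$, i.e.\ exactly the right-hand summand of \eqref{Eq_ksum}. Thus \eqref{Eq_ksum} reduces to the symmetry of Lemma~\ref{Lem_trafo} together with the routine verification that the two constant prefactors (both monomials in $q$ times ratios of finite $q$-factorials) agree.

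For \eqref{Eq_ksum2} I would run the identical scheme but starting from the $q^{2k}$-analogue of \eqref{Eq_been}: a two-term evaluation of $\sum_k(q^{-n})_k(c)_{2k}q^{2k}/(q,aq,c/a)_k$ in which the leading term again carries $(aq^{1-n}/c)_{2k}$ while a trailing correction term carries the shifted parameters $aq^2$ and $q^{-n}/c$, with prefactor proportional to $\tfrac{q^{-n}}{c}\,\tfrac{1-aq^{n+1}}{1-aq}\,(c)_{n+1}/(aq^{1-n}/c)_n$. Such a relation follows from \eqref{Eq_been} by one contiguity step (inserting the extra $q^k$ and re-expanding via a shift of the parameters). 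Specialising this two-term identity at $a=q^{\ell+m-n}$, $c=q^{m-n}$, and using the involution of the previous paragraph for the leading piece, converts the two terms into the two sums on the right of \eqref{Eq_ksum2}; in particular the correction prefactor collapses to the factor $1-q^{\ell-n}$ and the shifted parameters produce the binomial $\qbin{\ell-k-1}{k-m-1}$.

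The main obstacle I anticipate is twofold. First, for \eqref{Eq_ksum2} the contiguity step must be executed carefully: the exact shape of the correction term, and the emergence of precisely $1-q^{\ell-n}$ rather than some other factor, depends on tracking several competing powers of $q$ together with the ratios $(c)_{n+1}/(c)_n$ and $(aq^{1-n}/c)_n$ under the specialisation, and a sign or exponent slip here would only surface on a numerical check. Second, the specialised parameters $q^{m-n}$, $q^{-\ell-m}$, $q^{-m}$ are frequently non-positive (or zero) powers of $q$, so the individual factorial-to-binomial rewritings, and the constant $(c)_n/(q^{-n}/a)_n$, are literally valid only on the generic locus where no factor vanishes. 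Since every sum in \eqref{Eq_ksum} and \eqref{Eq_ksum2} terminates and each side is a Laurent polynomial in $q$, I would close the argument by polynomiality: the two sides agree as rational functions of an auxiliary continuous parameter and hence at every admissible integer triple $(\ell,m,n)$ with $0\le n\le\ell$.
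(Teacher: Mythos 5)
Your proof of \eqref{Eq_ksum} is, up to trivial repackaging, the paper's own. The paper specialises $(a,c)=(q^{-\ell-1},q^{m-n})$ in the symmetry \eqref{Eq_fn-symmetry}; your parameters $(q^{\ell+m-n},q^{m-n})$ differ from these only by the elementary interchange of the roles of $(aq)_k$ and $(c/a)_k$, and your involution $(a,c)\mapsto(q^{-n}/c,q^{-n}/a)$, once unpacked, is exactly one application of \eqref{Eq_fn-symmetry} conjugated by that interchange. One point must be sharpened, though: the danger you flag at the end is precisely why the paper refuses to substitute into \eqref{Eq_been} itself and instead runs the argument on the regularised polynomial form \eqref{Eq_regularised}, producing \eqref{Eq_lmn}. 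At your specialisation both sides of \eqref{Eq_been} can have genuine poles --- for instance $(aq)_n=(q^{\ell+m-n+1})_n=0$ whenever $0\leq\ell+m<n$, which happens inside the admissible range $-\ell\leq m\leq n$ --- so ``agreement as rational functions of an auxiliary parameter'' gives \eqref{Eq_ksum} only after the denominators have been cleared \emph{before} specialising; that clearing is exactly the regularisation, and with it your argument for \eqref{Eq_ksum} coincides with the paper's.

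For \eqref{Eq_ksum2} your route is genuinely different from the paper's, and it contains a gap. The two-term $q^{2k}$-transformation you posit is itself correct (it can indeed be obtained from \eqref{Eq_been} by one contiguity step, with exactly the correction prefactor you give), but the step ``using the involution of the previous paragraph for the leading piece'' is invalid: that involution is a symmetry of the $q^k$-weighted sum only, because it is manufactured from Lemma~\ref{Lem_trafo}, which is a statement about $\sum_k(q^{-n})_k(c)_{2k}q^k/(q,aq,c/a)_k$; it is not a symmetry of the $q^{2k}$-weighted sum. Concretely, at your specialisation the leading piece has numerator $(q^{\ell+1-n})_{2k}$ and denominators $(q,q^{\ell+m-n+1},q^{1-m})_k$, and (in regularised form) it rewrites to an \emph{alternating} sum involving $\qbin{n}{k}\qbin{\ell-n+2k}{k-m}$, not to $\sum_kq^{k(2k-\ell-2m-n+1)}\qbin{n}{k}\qbin{\ell-k}{k-m}$; the discrepancy between these two is a further nonzero correction term (the price of applying your transformation to a $q^{2k}$-sum a second time), so the two terms of a single application cannot equal the two sums on the right of \eqref{Eq_ksum2}. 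Within your framework the repair is to specialise instead at the paper's parameters $(a,c)=(q^{-\ell-1},q^{m-n})$, for which one application of the two-term transformation does connect the two binomial shapes, all of it carried out in regularised form. The paper itself sidesteps hypergeometrics entirely at this stage: writing the summand of \eqref{Eq_ksum2} as $q^{k-n}$ times that of \eqref{Eq_ksum}, it uses $(1-q^k)\qbin{n}{k}=(1-q^n)\qbin{n-1}{k-1}$ to express the left-hand side as $q^{-n}L_{\ell,m,n}(q)-q^{-n}(1-q^n)L_{\ell-1,m+1,n-1}(q)$, applies the already-proved \eqref{Eq_ksum} to both terms, and finishes with two further elementary $q$-binomial recurrences, after which two of the four resulting sums cancel; no $q^{2k}$-analogue of Lemma~\ref{Lem_trafo} is ever needed.
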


Since $0\leq n\leq \ell$, both identities trivialise to 
$0=0$ unless $-\ell\leq m\leq n$, in which case each side is a
(non-zero) polynomial in $q$.

\begin{proof}[Proof of Lemma~\ref{Lem_trafo}] 
Since \eqref{Eq_been} is a transformation formula for terminating basic 
hypergeometric series, it suffices to give a proof for $c,q\in\mathbb{C}$ 
such that $\abs{c}<1$ and $\abs{q}<1$.

Our starting point for such a proof is the transformation
\begin{equation}\label{Eq_limit}
\qHyp{2}{1}{q^{-n},0}{c}{q,z}
=\frac{1}{(q^{1-n}/c)_n}\,
\qHyp{2}{0}{q^{-n},q/z}{\text{--}\,}{q,\frac{z}{c}}
\end{equation}
which follows by taking the $b\to 0$ limit in 
\cite[Equation (III.8)]{GR04}
\[
\qHyp{2}{1}{q^{-n},b}{c}{q,z}
=\frac{(c/b)_n}{(c)_n}\,b^n\,
\qHyp{3}{1}{q^{-n},b,q/z}{bq^{1-n}/c}{q,\frac{z}{c}}.
\]
Let $i$ be an arbitrary nonnegative integer.
Making the substitution $(c,z)\mapsto (aq^{1-i},q^{i+1})$ in
\eqref{Eq_limit}, then multiplying both sides by $(1/a)_i/(q)_i$ and 
manipulating some of the $q$-shifted factorials, gives
\[
\sum_{k=0}^n 
\frac{(q^{-n})_k(q^{-k}/a)_i}{(q,aq)_k(q)_i}\, q^{(2i+1)k}
=\frac{1}{(q^{-n}/a)_n} \,
\sum_{k=0}^{\min\{n,i\}} 
\frac{(q^{-n},q^{-n}/a)_k (q^{-(n-k)}/a)_{i-k}}
{(q)_k(q)_{i-k}}\, a^{-k} q^{ik}.
\]
Next we multiply both sides by $c^i$ and sum $i$ over the nonnegative 
integers.
After an interchange in the order of the sums and a shift $i\mapsto i+k$
on the right, this yields
\begin{align*}
\sum_{k=0}^n & \frac{(q^{-n})_k}{(q,aq)_k}\, q^k \,
\qHyp{1}{0}{q^{-k}/a}{\text{--}\,}{q,cq^{2k}} \\
&\quad=\frac{1}{(q^{-n}/a)_n} \, 
\sum_{k=0}^n \frac{(q^{-n},q^{-n}/a)_k}{(q)_k}
\Big(\frac{c}{a}\Big)^k q^{k^2}\,
\qHyp{1}{0}{q^{-(n-k)}/a}{\text{--}\,}{q,cq^k}
\end{align*}
for $\abs{c},\abs{q}<1$.
By the $q$-binomial theorem \cite[Equation (II.3)]{GR04}
\begin{equation}\label{Eq_qbt}
\qHyp{1}{0}{a}{\text{--}\,}{q,z}=\frac{(az)_{\infty}}{(z)_{\infty}}
\end{equation}
for $\abs{q},\abs{z}<1$, both $\qhyp{1}{0}$ series may be evaluated,
resulting in
\[
\sum_{k=0}^n \frac{(q^{-n})_k}{(q,aq)_k}\, q^k \,
\frac{(cq^k/a)_{\infty}}{(cq^{2k})_{\infty}}
=\frac{1}{(q^{-n}/a)_n} \, 
\sum_{k=0}^n \frac{(q^{-n},q^{-n}/a)_k}{(q)_k}
\Big(\frac{c}{a}\Big)^k q^{k^2}\,
\frac{(cq^{2k-n}/a)_{\infty}}{(cq^k)_{\infty}}.
\]
Multiplying both sides by $(c)_{\infty}/(c/a)_{\infty}$
and carrying out some simplification leads to
\[
\sum_{k=0}^n \frac{(q^{-n})_k (c)_{2k}}{(q,aq,c/a)_k}\, q^k \,
=\frac{(aq/c)_n}{(aq)_n} \, c^n \,
\sum_{k=0}^n \frac{(q^{-n},q^{-n}/a,c)_k}{(q)_k(cq^{-n}/a)_{2k}}
\Big(\frac{c}{a}\Big)^k q^{k^2}.
\]
Replacing $k\mapsto n-k$ on the right, and making a few more simplifications
yields \eqref{Eq_been}.
\end{proof}

\begin{proof}[Proof of Corollary~\ref{Cor_qbinomials}]
Let $\ell,m$ be integers and $n$ a nonnegative integer, and
specialise
\begin{equation}\label{Eq_nac}
(a,c)=\big(q^{-\ell-1},q^{m-n}\big)
\end{equation}
in the symmetry relation \eqref{Eq_fn-symmetry},
where we assume the regularised form for $f_n(a,c;q)$ given in
\eqref{Eq_regularised}.
This leads to
\begin{align}\label{Eq_lmn}
&\sum_{k=0}^n (-1)^k q^{\binom{k}{2}+(1-n)k}
(q^{k-\ell},q^{k+\ell+m-n+1})_{n-k}(q^{m-n})_{2k} \qbin{n}{k} \\
&\qquad=q^{n(\ell+2m-n)} \sum_{k=0}^n (-1)^k q^{\binom{k}{2}+(1-n)k}
(q^{k-\ell},q^{k-m+1})_{n-k}(q^{-\ell-m})_{2k} \qbin{n}{k}.
\notag
\end{align}
The reason for not directly using \eqref{Eq_been} is that the
specialisation \eqref{Eq_nac} gives
\[
\frac{1}{(aq)_k}=\frac{1}{(q^{-\ell})_k},\quad
\frac{1}{(c/a)_k}=\frac{1}{(q^{\ell+m-n+1})_k}\quad\text{and}\quad
\frac{1}{(q^{1-n}/c)_k}=\frac{1}{(q^{-m})_k},
\]
which all have the potential to lead to vanishing denominators for 
$1\leq k\leq n$.
If we impose the restrictions $n\leq \ell$ and $-\ell\leq m\leq n$
then \eqref{Eq_lmn} may be simplified to \eqref{Eq_ksum} by standard 
manipulations of $q$-shifted factorials.
As remarked previously, both sides of \eqref{Eq_ksum} trivially vanish
if $-\ell\leq m\leq n$ does not hold, so that we may again drop
this restriction.

To prove \eqref{Eq_ksum2}, we denote the left- and right-hand side of 
\eqref{Eq_ksum} by $L_{\ell,m,n}(q)$ and $R_{\ell,m,n}(q)$ respectively.
By
\[
\qbin{n}{k}=q^{-k}\qbin{n}{k}-q^{-k}(1-q^n)\qbin{n-1}{k-1}
\]
(which holds for all $k,n\in\mathbb{Z}$) it follows that
\begin{align*}
&\sum_{k=0}^n q^{(k-n)(2k+\ell+2m-n+1)}
\qbin{n}{k}\qbin{\ell-k}{k+\ell+m-n} \\
&\qquad=q^{-n}\sum_{k=0}^n q^{(k-n)(2k+\ell+2m-n)}
\qbin{n}{k}\qbin{\ell-k}{k+\ell+m-n} \\
&\quad\qquad-q^{-n}(1-q^n)\sum_{k=0}^n q^{(k-n)(2k+\ell+2m-n)}
\qbin{n-1}{k-1}\qbin{\ell-k}{k+\ell+m-n} \\
&\qquad=q^{-n}L_{\ell,m,n}(q)-q^{-n}(1-q^n)L_{\ell-1,m+1,n-1}(q) \\[2mm]
&\qquad=q^{-n}R_{\ell,m,n}(q)-q^{-n}(1-q^n)R_{\ell-1,m+1,n-1}(q) \\[2mm]
&\qquad=q^{-n} \sum_{k=0}^n q^{k(2k-\ell-2m-n)} 
\qbin{n}{k}\qbin{\ell-k}{k-m} \\
&\quad\qquad-q^{-n}(1-q^n)
\sum_{k=0}^n q^{k(2k-\ell-2m-n)} \qbin{n-1}{k}\qbin{\ell-k-1}{k-m-1}.
\end{align*}
Next we use
\[
\qbin{\ell-k}{k-m}=q^{k-m}\qbin{\ell-k}{k-m}+(1-q^{\ell-k})
\qbin{\ell-k-1}{k-m-1}
\]
in the first sum on the right and
\[
(1-q^n)\qbin{n-1}{k}=
q^{n-k}(1-q^{\ell-n})\qbin{n}{k}-(1-q^{\ell-k})\qbin{n}{k}
\]
in the second sum.
This leads to four sums on the right, two of which cancel.
The resulting identity is exactly \eqref{Eq_ksum2}.
\end{proof}

\section{The modulus-$8$ case}\label{Sec_Mod8}

In this section we focus on the $\mathrm{A}_2$ Andrews--Gordon identities
for modulus $8$, and the corresponding two-variable generating function for 
cylindric partitions of rank $3$ and level $5$.
This is also the modulus considered by Corteel, Dousse and Uncu in
\cite{CDU20}, and we make a comparison between their identities and ours.
Relating our sum sides to those in \cite{CDU20} is surprisingly intricate
and relies on the new transformation formulas \eqref{Eq_ksum} and
\eqref{Eq_ksum2}.

We begin by listing a total of $11$ modulus-$8$ identities.
The first six entries in our list are \eqref{Eq_A2-vac} and 
\eqref{Eq_ks-con} for $k=2$ as well as the 
companions of \eqref{Eq_ks-con} for $k=2$ and $s=2$ or $3$
given in Remark~\ref{Rem_alt-sums}:
\begin{subequations}
\begin{align}
\sum_{n_1,m_1,n_2=0}^{\infty} 
\frac{q^{n_1^2-n_1m_1+m_1^2+n_2^2}}{(q)_{n_1}}\,
\qbin{n_1}{n_2}\qbin{n_1+n_2}{m_1}
&=\frac{1}{(q,q,q^2,q^4,q^4,q^6,q^7,q^7;q^8)_{\infty}}, \notag \\[2mm]
\sum_{n_1,m_1,n_2=0}^{\infty} 
\frac{q^{n_1^2-n_1m_1+m_1^2+n_2^2+n_1+m_1+n_2}}{(q)_{n_1}}\, 
\qbin{n_1}{n_2}\qbin{n_1+n_2}{m_1}
&=\frac{1}{(q^2,q^3,q^3,q^4,q^4,q^5,q^5,q^6;q^8)_{\infty}}, \notag \\[2mm]
\label{Eq_pair1} 
\sum_{n_1,m_1,n_2=0}^{\infty} 
\frac{q^{n_1^2-n_1m_1+m_1^2+n_2^2+m_1+n_2}}{(q)_{n_1}}\,
\qbin{n_1}{n_2}\qbin{n_1+n_2}{m_1} &= \\
\label{Eq_pair2}
\sum_{n_1,m_1,n_2=0}^{\infty} 
\frac{q^{n_1^2-n_1m_1+m_1^2+n_2^2+n_1+n_2}}{(q)_{n_1}}\,
\qbin{n_1}{n_2}\qbin{n_1+n_2+1}{m_1} 
&=\frac{1}{(q,q^2,q^3,q^4,q^4,q^5,q^6,q^7;q^8)_{\infty}} 
\intertext{and}
\label{Eq_122}
\sum_{n_1,m_1,n_2=0}^{\infty} 
\frac{q^{n_1^2-n_1m_1+m_1^2+n_2^2+m_1}}{(q)_{n_1}}\,
\qbin{n_1}{n_2}\qbin{n_1+n_2}{m_1}& = \\ 
\sum_{n_1,m_1,n_2=0}^{\infty} 
\frac{q^{n_1^2-n_1m_1+m_1^2+n_2^2+n_1}}{(q)_{n_1}}\,
\qbin{n_1+1}{n_2}\qbin{n_1+n_2}{m_1}
&=\frac{1}{(q,q^2,q^2,q^3,q^5,q^6,q^6,q^7;q^8)_{\infty}}.
\end{align}
\end{subequations}
Next, the $k=s=2$ case of Theorem~\ref{Thm_A2-three} is
\begin{align*}
\sum_{n_1,m_1,n_2=0}^{\infty} &
\frac{q^{n_1^2-n_1m_1+m_1^2+n_2^2+n_2}}{(q)_{n_1}}\,
\qbin{n_1}{n_2}\qbin{n_1+n_2+1}{m_1} \\
&=\frac{1}{(q,q,q^3,q^3,q^5,q^5,q^7,q^7;q^8)_{\infty}}
+\frac{q}{(q^2,q^3,q^3,q^4,q^4,q^5,q^5,q^6;q^8)_{\infty}}.
\end{align*}
According to \eqref{Eq_rank3-count} with $k=1$ and $i=2$ there are
a total of five infinite products for modulus $8$, so that one
product is missing from the above.
By setting $z=1$ in the identity for $\GKt_{(3,1,1)}(z,q)$ given in 
Proposition~\ref{Prop_remainingcases} below and using
\[
\GKt_{(3,1,1)}(1;q)=\frac{1}{(q,q,q^3,q^3,q^5,q^5,q^7,q^7;q^8)_{\infty}},
\]
it follows after some standard manipulations of $q$-binomial coefficients
that 
\begin{align*}
\sum_{n_1,m_1,n_2=0}^{\infty} 
& \frac{q^{n_1^2-n_1m_1+m_1^2+n_2^2+m_1}}{(q)_{n_1}}\,
\qbin{n_1+1}{n_2}\qbin{n_1+n_2}{m_1} \\
&=\sum_{n_1,m_1,n_2=0}^{\infty} 
\frac{q^{n_1^2-n_1m_1+m_1^2+n_2^2+n_1}}{(q)_{n_1}}\,
\qbin{n_1+1}{n_2}\qbin{n_1+n_2+1}{m_1} \\
&=\frac{1}{(q,q,q^3,q^3,q^5,q^5,q^7,q^7;q^8)_{\infty}}. 
\end{align*}
Similarly, with a bit of work one may transform \eqref{Eq_122} to yield our 
final two modulus-$8$ identities:
\begin{align*}
\sum_{n_1,m_1,n_2=0}^{\infty} 
& \frac{q^{n_1^2-n_1m_1+m_1^2+n_2^2+m_1+n_2}}{(q)_{n_1}}\,
\qbin{n_1}{n_2}\qbin{n_1+n_2+1}{m_1} \\
&=\sum_{n_1,m_1,n_2=0}^{\infty}
\frac{q^{n_1^2-n_1m_1+m_1^2+n_2^2+n_1+n_2}}{(q)_{n_1}}\,
\qbin{n_1+1}{n_2}\qbin{n_1+n_2+1}{m_1} \\
&=\frac{1}{(q,q^2,q^2,q^3,q^5,q^6,q^6,q^7;q^8)_{\infty}}.
\end{align*}

\medskip
The five distinct infinite products should be compared with
the seven inequivalent profiles as counted by \eqref{Eq_number-profiles}
for $r=3$ and $d=5$.
For all seven of the corresponding two-variable generating functions 
$\GK_c(z,q)$ for cylindric partitions of rank $3$ and $5$ there is a 
manifestly positive triple-sum expression, generalising some of the
triple sums stated above.
In the next theorem we state the first four of these results,
corresponding to the $k=2$ case of Theorem~\ref{Thm_k12}.

\begin{theorem}\label{Thm_GK-mod8}
We have
\begin{align*}
\GK_{(2,2,1)}(z,q)&=\frac{1}{(zq)_{\infty}}
\sum_{n_1,m_1,n_2=0}^{\infty}
\frac{z^{n_1}q^{n_1^2-n_1m_1+m_1^2+n_2^2}}{(q)_{n_1}}\,
\qbin{n_1}{n_2}\qbin{n_1+n_2}{m_1}, \\
\GK_{(3,2,0)}(z,q)&=\frac{1}{(zq)_{\infty}}
\sum_{n_1,m_1,n_2=0}^{\infty} 
\frac{z^{n_1}q^{n_1^2-n_1m_1+m_1^2+n_2^2+m_1}}{(q)_{n_1}}\,
\qbin{n_1}{n_2}\qbin{n_1+n_2}{m_1}, \\
\GK_{(4,1,0)}(z,q)&=\frac{1}{(zq)_{\infty}}
\sum_{n_1,m_1,n_2=0}^{\infty}
\frac{z^{n_1}q^{n_1^2-n_1m_1+m_1^2+n_2^2+m_1+n_2}}{(q)_{n_1}}\,
\qbin{n_1}{n_2}\qbin{n_1+n_2}{m_1}, \\
\GK_{(5,0,0)}(z,q)&=\frac{1}{(zq)_{\infty}}
\sum_{n_1,m_1,n_2=0}^{\infty} 
\frac{z^{n_1}q^{n_1^2-n_1m_1+m_1^2+n_2^2+n_1+m_1+n_2}}{(q)_{n_1}}\,
\qbin{n_1}{n_2}\qbin{n_1+n_2}{m_1}.
\end{align*}
\end{theorem}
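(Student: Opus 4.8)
The plan is to prove all four identities simultaneously with the three companion formulas of Proposition~\ref{Prop_remainingcases} (covering the remaining level-$5$ profiles $(4,0,1)$, $(3,0,2)$, $(3,1,1)$), by showing that the seven conjectured right-hand sides solve the Corteel--Welsh system of functional equations for rank $3$ and level $5$. Recall from \cite{CW19} that this system expresses each $\GK_c(z,q)$ as a $\mathbb{Z}[z,q]$-linear combination of the $\GK_{c'}(zq,q)$, with $c'$ running over the seven profiles, and that it admits a \emph{unique} solution in $1+z\,\mathbb{Z}[[q]][[z]]$ once the normalisation $\GK_c(0,q)=1$ is imposed. Writing each candidate as $G_c(z,q)=S_c(z,q)/(zq)_{\infty}$, with $S_c$ the displayed triple sum, the normalisation is immediate: setting $z=0$ forces $n_1=0$ and hence $m_1=n_2=0$, so $G_c(0,q)=1$. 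It therefore remains only to check that the $G_c$ satisfy the functional equations.

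Since $(zq)_{\infty}^{-1}=(1-zq)^{-1}(zq^2)_{\infty}^{-1}$, each functional equation $\GK_c(z)=\sum_{c'}T_{c,c'}(z,q)\,\GK_{c'}(zq)$ becomes, after clearing the common denominator $(zq^2)_{\infty}^{-1}$, the polynomial-coefficient identity
\[
S_c(z,q)=(1-zq)\sum_{c'}T_{c,c'}(z,q)\,S_{c'}(zq,q)
\]
among the numerator sums. I would verify these by extracting the coefficient of $z^{N}$ from both sides. On the left this is a finite double sum over $(m_1,n_2)$ with $n_1=N$; on the right the substitution $z\mapsto zq$ sends $z^{n_1}$ to $q^{n_1}z^{n_1}$ while the factor $(1-zq)$ couples the $z^{N}$- and $z^{N-1}$-coefficients, so that one again obtains finite sums in which the outer index of each $S_{c'}$ equals $N$ or $N-1$. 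For each fixed $N$ this reduces the claim to a polynomial identity in $q$ between finite sums of products of two $q$-binomial coefficients, which I would attack through the two $q$-Pascal recurrences $\qbin{n}{m}=\qbin{n-1}{m}+q^{n-m}\qbin{n-1}{m-1}=q^{m}\qbin{n-1}{m}+\qbin{n-1}{m-1}$, applied to split the binomials $\qbin{n_1}{n_2}$ and $\qbin{n_1+n_2}{m_1}$ along the index shifts dictated by the profile moves $c\mapsto c'$.

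I expect the main obstacle to be the coupling itself: the Corteel--Welsh equations entangle all seven profiles with $z$-dependent coefficients, so the seven candidate sums must be handled as a single linear system, and tracking the shifted summation indices through the cyclic reindexing \eqref{Eq_cyc-profiles} of the profiles is delicate. In particular, the moves acting on the $\qbin{n_1+n_2}{m_1}$ factor re-express the summand rather than merely splitting a binomial, and there the elementary $q$-Pascal rules no longer suffice.

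This is exactly where the new transformation formulas of Section~\ref{Sec_hyper} are the crucial ingredient, and it is also what makes the comparison with Corteel, Dousse and Uncu possible. An alternative, and for the profiles they already treat arguably cleaner, route is to bypass the functional equations entirely: since \cite{CDU20} solved the full rank-$3$, level-$5$ system, it suffices to prove that our triple sums coincide with their (longer) multisums. For $\GK_{(2,2,1)}(z,q)$ this amounts to collapsing the Corteel--Dousse--Uncu quadruple sum recorded in the introduction to our triple sum by carrying out one of the inner summations, a step governed precisely by \eqref{Eq_ksum} and \eqref{Eq_ksum2} of Corollary~\ref{Cor_qbinomials}. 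Either way, the transformation formulas of Lemma~\ref{Lem_trafo} and Corollary~\ref{Cor_qbinomials} are what bridge the gap between sums whose summands differ by a genuine hypergeometric transformation rather than a simple $q$-Pascal split.
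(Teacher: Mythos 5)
You end up in the right place, but via your fallback rather than your headline plan: the paper's proof is exactly your ``alternative route''. It makes no use of the Corteel--Welsh functional equations for this theorem; instead it takes Theorem~\ref{Thm_CDU} (proved in \cite{CDU20}) as given and establishes Proposition~\ref{Prop_threeisfour}, a two-parameter refinement whose specialisations $w=1$, $w=q$ and $(z,w)\mapsto(zq,q)$ in \eqref{Eq_34a} handle the profiles $(2,2,1)$, $(4,1,0)$ and $(5,0,0)$, while $w=1$ in \eqref{Eq_34b} handles $(3,2,0)$. Two points of precision about how the collapse from quadruple to triple sums actually runs: one equates coefficients of $z^{n_1}w^{n_2}q^{n_1^2-n_1n_2+n_2^2}/(q)_{n_1}$, carries out the inner $n_4$-sum with the $q$-Chu--Vandermonde summation \eqref{Eq_qCV} (not with Corollary~\ref{Cor_qbinomials}), and only then is the equality of the surviving double sums exactly \eqref{Eq_ksum}, respectively --- after a further round of $q$-binomial recurrences in which several terms must be combined and cancelled --- \eqref{Eq_ksum2}. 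So Lemma~\ref{Lem_trafo} and its corollary finish the identification; they do not perform the summation. As for your primary route (substituting all seven candidates into the full rank-$3$, level-$5$ system and invoking uniqueness): that is precisely what the remark following the proof of Proposition~\ref{Prop_remainingcases} says can be done ``with a bit of extra work'', and what it buys is a proof independent of the computer-assisted Theorem~\ref{Thm_CDU}; the paper itself uses only the three equations \eqref{Eq_functional-eqn} to derive Proposition~\ref{Prop_remainingcases} once the present theorem is known. But your description of that route stops exactly where it gets hard --- you concede that $q$-Pascal splitting fails and that the Section~\ref{Sec_hyper} transformations must enter, without saying how --- so on its own it is a plan rather than a proof. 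Judged as a whole your proposal is sound because the fallback coincides with the paper's argument; just be aware that the real burden sits in Proposition~\ref{Prop_threeisfour}, i.e.\ in the coefficient-wise reduction to \eqref{Eq_ksum} and \eqref{Eq_ksum2}, not in functional-equation bookkeeping.
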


For the remaining three cases we have more complicated series.
To shorten the expressions below, we introduce the normalisation
\[
\GKt_c(z,q):=(zq)_{\infty}\GK_c(z,q).
\]
We also recall that $1/(q)_n=0$ when $n$ is a negative integer.

\begin{proposition}\label{Prop_remainingcases}
We have
\begin{align*}
&\GKt_{(3,1,1)}(z,q) \\
&\quad =
1+\sum_{n_1,m_1,n_2=0}^{\infty} 
\frac{z^{n_1} q^{n_1^2-n_1m_1+m_1^2+n_2^2+m_1}(1+q^{m_1-n_1+n_2+1})}
{(q)_{n_1}}\,\qbin{n_1}{n_2}\qbin{n_1+n_2-1}{m_1},
\end{align*}
\begin{align*}
&\GKt_{(4,0,1)}(z,q) \\
&\quad=1+\sum_{n_1,m_1,n_2=0}^{\infty} 
\frac{z^{n_1} q^{n_1^2-n_1m_1+m_1^2+n_2^2+n_1+m_1}
(1+q^{m_1-n_1+n_2+1})}
{(q)_{n_1}}\,\qbin{n_1}{n_2}\qbin{n_1+n_2-1}{m_1}\\
&\qquad+\sum_{n_1,m_1,n_2=0}^{\infty} 
\frac{z^{n_1}q^{n_1^2-n_1m_1+m_1^2+n_2^2+n_1+n_2-1}}
{(q)_{n_1-1}} \, \qbin{n_1-1}{n_2}\qbin{n_1+n_2-1}{m_1-1},
\end{align*}
and
\begin{align*}
&\GKt_{(3,0,2)}(z,q) \\
&\quad=zq+
\sum_{n_1,m_1,n_2=0}^{\infty} \frac{z^{n_1}q^{n_1^2-n_1m_1+m_1^2+n_2^2+n_1}}
{(q)_{n_1}} \, \qbin{n_1}{n_2}\qbin{n_1+n_2}{m_1} \\
&\qquad+\sum_{n_1,m_1,n_2=0}^{\infty} 
\frac{z^{n_1} q^{n_1^2-n_1m_1+m_1^2+n_2^2+n_1-1}
(1+q^{m_1-n_1+n_2+1})}
{(q)_{n_1-1}}\,\qbin{n_1-1}{n_2}\qbin{n_1+n_2-2}{m_1-1}\\
&\qquad+\sum_{n_1,m_1,n_2=0}^{\infty} \frac{z^{n_1}q^{n_1^2-n_1m_1+m_1^2
+n_2^2+2n_1+n_2-1}}
{(q)_{n_1-1}} \, \qbin{n_1-1}{n_2}\qbin{n_1+n_2-1}{m_1-1}.
\end{align*}
\end{proposition}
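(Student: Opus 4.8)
The plan is to use the Corteel--Welsh system of functional equations for cylindric partitions of rank $r=3$ and level $d=5$. For a fixed level this is a finite coupled system that expresses each two-variable generating function $\GK_c(z,q)$ as an explicit $\mathbb{Z}[q]$-linear combination of the functions $\GK_{c'}(zq,q)$ ranging over profiles $c'$ of the same level; together with the normalisation $\GK_c(0,q)=1$ it determines every $\GK_c(z,q)$ uniquely as a formal power series in $z$. For $(r,d)=(3,5)$ there are seven inequivalent profiles, and the four profiles $(2,2,1)$, $(3,2,0)$, $(4,1,0)$, $(5,0,0)$ have already been evaluated in Theorem~\ref{Thm_GK-mod8}. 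I would therefore regard the three remaining profiles $(3,1,1)$, $(4,0,1)$, $(3,0,2)$ as the unknowns of the subsystem that survives once the four known functions are fed in.

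First I would write out the three functional equations whose left-hand sides carry the target profiles, using the cyclic symmetry \eqref{Eq_cyc-profiles} to rewrite any rotated profiles appearing on the right (such as $(1,3,1)$ or $(2,3,0)$) in terms of the seven chosen representatives. Substituting the closed forms of Theorem~\ref{Thm_GK-mod8} then reduces the system to three equations in the unknowns $\GKt_{(3,1,1)}$, $\GKt_{(4,0,1)}$, $\GKt_{(3,0,2)}$, each appearing at argument $z$ on the left and at argument $zq$ on the right. Matching coefficients of $z^N$ turns each order into the linear problem $(I-q^{N}A)\vec v_N=\vec b_N$, where $A$ records the coupling among the three targets and $\vec b_N$ is supplied by the four known series; since $\det(I-q^{N}A)=1+O(q)$ for $N\geq1$ this is formally invertible order by order, the $z^{0}$ data being pinned down by $\GK_c(0,q)=1$. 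I expect the coupling matrix $A$ to be (or to be conjugate to) a triangular matrix, so that $(3,1,1)$ is obtained first, then $(4,0,1)$, then $(3,0,2)$, in order of increasing complexity of the stated formulas.

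The heart of the argument is the simplification step. Solving the system leaves each target expressed through the triple sums of Theorem~\ref{Thm_GK-mod8} with $z$ shifted to $zq$, and converting these back into the compact forms of the statement is where the transformation formulas \eqref{Eq_ksum} and \eqref{Eq_ksum2} of Corollary~\ref{Cor_qbinomials} are exactly the required tool: applied in a suitable specialisation of $(\ell,m,n)$ they collapse the inner sum and are responsible for the characteristic factor $(1+q^{m_1-n_1+n_2+1})$, the index-shifted binomials $\qbin{n_1+n_2-1}{m_1}$ and $\qbin{n_1+n_2-2}{m_1-1}$, and the auxiliary sums carrying $1/(q)_{n_1-1}$. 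The low-order terms $1$, $1$ and $zq$ in the three formulas arise as the boundary contributions fixed by $\GK_c(0,q)=1$, and the convention $1/(q)_n=0$ for $n<0$ must be tracked throughout so that the shifted summation ranges match.

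The main obstacle is the bookkeeping in this last step: unlike the four profiles of Theorem~\ref{Thm_GK-mod8}, the targets do not collapse to a single clean triple sum, so matching them demands applying \eqref{Eq_ksum} and \eqref{Eq_ksum2} in the right specialisations and reconciling several shifted ranges, with delicate cancellations among the boundary terms. A cleaner alternative, which I would fall back on if the forward solve proves unwieldy, is to verify directly that the three proposed series satisfy their functional equations: this replaces the linear solve by a check but still rests on the same applications of Corollary~\ref{Cor_qbinomials}, and by the uniqueness of the solution it would establish the proposition equally well.
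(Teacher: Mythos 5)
Your fallback option is precisely the paper's proof: the paper quotes the three functional equations \cite[Equations~(3.17)--(3.19)]{CDU20} (reproduced as \eqref{Eq_functional-eqn}), notes that once $\GKt_{(2,2,1)}$, $\GKt_{(3,2,0)}$ and $\GKt_{(4,1,0)}$ are known from Theorem~\ref{Thm_GK-mod8} these equations determine the three remaining generating functions uniquely, and then checks that the claimed expressions satisfy them. Two points in your account need correcting, though neither is fatal. First, the system the paper uses is not ``the three equations whose left-hand sides carry the target profiles'': its third equation has the \emph{known} profile $(3,2,0)$ on the left, with the unknown $\GKt_{(3,1,1)}$ appearing only on the right (at arguments $zq$ and $zq^3$), so the system is effectively triangular --- that equation pins down $(3,1,1)$, and the other two, e.g.\ $\GKt_{(4,0,1)}(z,q)=\GKt_{(3,1,1)}(zq;q)+zq\,\GKt_{(4,1,0)}(zq^2;q)$, then hand you $(4,0,1)$ and $(3,0,2)$ outright; no $(I-q^NA)$ inversion is required beyond the trivial coefficient-by-coefficient recursion. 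Second, and more substantively, your claim that the verification ``rests on the same applications of Corollary~\ref{Cor_qbinomials}'' is wrong: after substituting the proposed triple sums, the functional equations are checked by elementary reindexing ($n_1\mapsto n_1-1$, $m_1\mapsto m_1-1$) together with, at most, the $q$-Pascal recurrence \eqref{Eq_qbin-rec}; indeed these shifts are exactly what produce the factor $(1+q^{m_1-n_1+n_2+1})$, the binomials $\qbin{n_1+n_2-1}{m_1-1}$, $\qbin{n_1+n_2-2}{m_1-1}$ and the $1/(q)_{n_1-1}$ terms. The transformations \eqref{Eq_ksum} and \eqref{Eq_ksum2} play no role here; they are needed only in the proof of Theorem~\ref{Thm_GK-mod8} itself (to match the triple sums against the quadruple sums of \cite{CDU20}), which you are taking as given. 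With these adjustments your fallback goes through verbatim, and your primary forward-solve, while workable in principle, is unnecessary labour.
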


\begin{remark}
Simpler expressions for $\GKt_{(3,1,1)}(z,q)$, $\GKt_{(4,0,1)}(z,q)$
and $\GKt_{(3,0,2)}(z,q)$ may be given that are not manifestly positive.
For example,
\begin{align*}
\GKt_{(4,0,1)}(z,q) 
&=\sum_{n_1,m_1,n_2=0}^{\infty} 
\frac{z^{n_1} q^{n_1^2-n_1m_1+m_1^2+n_2^2+n_1+n_2}}
{(q)_{n_1}}\,\qbin{n_1}{n_2}\qbin{n_1+n_2+1}{m_1}\\
&\quad+(z-1)\sum_{n_1,m_1,n_2=0}^{\infty} 
\frac{z^{n_1}q^{n_1^2-n_1m_1+m_1^2+n_2^2+2n_1+m_1+n_2+1}}
{(q)_{n_1}} \, \qbin{n_1}{n_2}\qbin{n_1+n_2}{m_1}.
\end{align*}
By the equality between \eqref{Eq_pair1} and \eqref{Eq_pair2} this
immediately implies that
\[
\GK_{(4,1,0)}(1,q)=\GK_{(4,0,1)}(1,q).
\]
\end{remark}

As mentioned in the introduction, the expressions of
Corteel, Dousse and Uncu for $\GK_c(z,q)$ and the corresponding
Rogers--Ramanujan-type identities for $\GK_c(1,q)$ 
take the form of quadruple sums instead of triple sums.
We will not state the complete list of seven generating functions
from their paper, and instead focus our attention on the four cases 
related to Theorem~\ref{Thm_GK-mod8}.
For the remaining three identities, see \cite[Theorem 3.2]{CDU20}.

\begin{theorem}[Corteel, Dousse, Uncu]\label{Thm_CDU}
There holds
\begin{align*}
\GKt_{(2,2,1)}(z,q)&=\sum_{n_1,n_2,n_3,n_4=0}^{\infty}
\frac{z^{n_1} q^{n_1^2+n_2^2+n_3^2+n_4^2-n_1n_2+n_2n_4}}
{(q;q)_{n_1}}\,\qbin{n_1}{n_2}\qbin{n_1}{n_4}\qbin{n_2}{n_3}, \\
\GKt_{(4,1,0)}(z,q)&=\sum_{n_1,n_2,n_3,n_4=0}^{\infty}
\frac{z^{n_1} q^{n_1^2+n_2^2+n_3^2+n_4^2-n_1n_2+n_2n_4+n_2+n_3+n_4}}
{(q;q)_{n_1}}\,\qbin{n_1}{n_2}\qbin{n_1}{n_4}\qbin{n_2}{n_3}, \\
\GKt_{(5,0,0)}(z,q)&=\sum_{n_1,n_2,n_3,n_4=0}^{\infty}
\frac{z^{n_1} q^{n_1^2+n_2^2+n_3^2+n_4^2-n_1n_2+n_2n_4+n_1+n_2+n_3+n_4}}
{(q;q)_{n_1}}\,\qbin{n_1}{n_2}\qbin{n_1}{n_4}\qbin{n_2}{n_3}
\end{align*}
and
\begin{align*}
&\GKt_{(3,2,0)}(z,q) \\
&\quad =\sum_{n_1,n_2,n_3,n_4=0}^{\infty} 
\frac{z^{n_1} q^{n_1^2+n_2^2+n_3^2+n_4^2-n_1n_2+n_2n_4+n_1}}
{(q;q)_{n_1}}\,\qbin{n_1}{n_2}\qbin{n_1}{n_4}\qbin{n_2}{n_3} \\
&\qquad +\sum_{n_1,n_2,n_3,n_4=0}^{\infty} \bigg( 
\frac{z^{n_1} q^{n_1^2+n_2^2+n_3^2+n_4^2-n_1n_2+n_2n_4+n_2}
(1+q^{n_1+n_3}+q^{2n_1+n_2+n_3+n_4})}{(q;q)_{n_1-1}}\\
&\qquad\qquad\qquad\qquad\qquad\times
\qbin{n_2}{n_3}\qbin{n_1-1}{n_2}\qbin{n_1-1}{n_4}\bigg).
\end{align*}
\end{theorem}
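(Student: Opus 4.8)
The plan is to derive Theorem~\ref{Thm_CDU} from the triple-sum evaluations of Theorem~\ref{Thm_GK-mod8}, which I take as given. Since $\GKt_c(z,q)=(zq)_{\infty}\GK_c(z,q)$, the four identities of Theorem~\ref{Thm_GK-mod8} assert exactly that $\GKt_c(z,q)$ equals the displayed triple sum (the prefactor $1/(zq)_{\infty}$ cancels), so it suffices to prove, profile by profile, that the Corteel--Dousse--Uncu quadruple sum coincides with the Theorem~\ref{Thm_GK-mod8} triple sum for the same $c$. Both are formal power series in $z$, so I would compare coefficients of $z^{n_1}$. Writing $N:=n_1$ and clearing the common factor $1/(q)_N$, this reduces each of the three generic profiles $(2,2,1)$, $(4,1,0)$, $(5,0,0)$ to a polynomial identity of the shape
\[
\sum_{m_1,n_2} q^{m_1^2-Nm_1+n_2^2+L}\qbin{N}{n_2}\qbin{N+n_2}{m_1}
=\sum_{n_2,n_3,n_4} q^{n_2^2+n_3^2+n_4^2-Nn_2+n_2n_4+L'}\qbin{N}{n_2}\qbin{N}{n_4}\qbin{n_2}{n_3},
\]
where $(L,L')$ equals $(0,0)$ for $(2,2,1)$ and $(m_1+n_2,\,n_2+n_3+n_4)$ for $(4,1,0)$ and $(5,0,0)$ (the last profile additionally carrying an overall factor $q^{N}$ on both sides).

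The bridge between the two sides is to introduce the fourth summation variable. Applying the $q$-Chu--Vandermonde convolution to $\qbin{N+n_2}{m_1}$ and setting $m_1=n_3+n_4$ reproduces exactly the three CDU $q$-binomials $\qbin{N}{n_2}\qbin{N}{n_4}\qbin{n_2}{n_3}$ together with the quadratic $n_2^2+n_3^2+n_4^2$; under this substitution the linear datum $L=m_1+n_2$ becomes $n_2+n_3+n_4=L'$, so the $(4,1,0)$ and $(5,0,0)$ cases should reduce to $(2,2,1)$, modulo checking that the linear factor survives the step below. However, the convolution produces the cross terms $n_3n_4-Nn_4$, whereas CDU requires $n_2n_4-Nn_2$; these two trinomials do not agree term-by-term in $n_2$ (a short check at $N=1$ already shows this), so an elementary convolution cannot finish the argument and the discrepancy must be absorbed by a genuine rearrangement of the $n_2$-summation. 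This is precisely what the transformation \eqref{Eq_ksum} supplies, applied---after a reflection $n_2\mapsto N-n_2$ and a shift---to the pair of $q$-binomials sharing the relevant summation index.

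The profile $(3,2,0)$ is the hard case and the reason \eqref{Eq_ksum2} is needed in addition to \eqref{Eq_ksum}. Here the CDU expression is not a single clean quadruple sum but carries the three-term factor $1+q^{n_1+n_3}+q^{2n_1+n_2+n_3+n_4}$, a shifted denominator $1/(q)_{n_1-1}$, and shifted $q$-binomials $\qbin{n_1-1}{n_2}\qbin{n_1-1}{n_4}$. At fixed $N$ the identity therefore reads (our double sum) $=$ (first group) $+\,(1-q^{N})\times$(second group), using $1/(q)_{N-1}=(1-q^{N})/(q)_N$. I would generate the correction term and the shift $N\mapsto N-1$ exactly as in the derivation of \eqref{Eq_ksum2} from \eqref{Eq_ksum}, namely by feeding the recurrence $\qbin{n}{k}=q^{-k}\qbin{n}{k}-q^{-k}(1-q^{n})\qbin{n-1}{k-1}$ into the transformed sum; the two-term right-hand side of \eqref{Eq_ksum2}, with its factor $1-q^{\ell-n}$, is tailored to produce exactly this split, and iterating it should assemble the three monomials $1$, $q^{n_1+n_3}$, $q^{2n_1+n_2+n_3+n_4}$.

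The main obstacle is the exponent reconciliation in the generic cases: passing from the cross terms $n_3n_4-Nn_4$ produced by the convolution to the CDU cross terms $n_2n_4-Nn_2$ is non-termwise and is the sole step that forces the new transformation \eqref{Eq_ksum}. Verifying that the linear factors of $(4,1,0)$ and $(5,0,0)$ are carried unchanged through the reflection and reindexing built into \eqref{Eq_ksum}, and---most delicately---reconstructing the exact three-term factor and the $n_1\mapsto n_1-1$ shift of the $(3,2,0)$ case from \eqref{Eq_ksum2}, is where essentially all of the bookkeeping lies, and is the source of the ``surprisingly intricate'' nature of the comparison.
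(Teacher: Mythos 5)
Your transformation machinery is, in substance, exactly Proposition~\ref{Prop_threeisfour} of the paper: a $q$-Chu--Vandermonde step to pass between three and four summation variables (the paper runs it in the opposite direction, collapsing $n_4$ on the quadruple-sum side after extracting coefficients of both $z^{n_1}$ and a refining variable $w$), the rearrangement supplied by \eqref{Eq_ksum} for the three generic profiles, and the recurrence-driven split that produces the factor $1-q^{n_1}$ and the three-term factor for profile $(3,2,0)$, which is precisely the content of \eqref{Eq_ksum2}. That bookkeeping is sound, and your observation that the linear data $m_1+n_2$ and $n_2+n_3+n_4$ correspond under the change of variables is exactly what the paper's $w$-refinement encodes.

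The gap is your starting point. You take Theorem~\ref{Thm_GK-mod8} as given, but in this paper Theorem~\ref{Thm_GK-mod8} (the $k=2$ case of Theorem~\ref{Thm_k12}) is itself proved \emph{from} Theorem~\ref{Thm_CDU}, by exactly the transformations you describe; the triple-sum evaluations are new results of this paper with no other proof given here. Your argument is therefore circular: it presupposes a statement whose only available proof passes through the theorem you are trying to establish. The paper's own ``proof'' of Theorem~\ref{Thm_CDU} is a citation: the result is due to Corteel, Dousse and Uncu \cite{CDU20}, who establish it by solving the Corteel--Welsh functional equations for cylindric partitions \cite[Proposition 3.1]{CW19} with heavy computer-algebra support (\texttt{qFunctions}, \texttt{HolonomicFunctions}). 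To make your route legitimate you would first need an independent proof of Theorem~\ref{Thm_GK-mod8}, for instance by verifying that the seven expressions of Theorem~\ref{Thm_GK-mod8} and Proposition~\ref{Prop_remainingcases} satisfy the full system of functional equations of \cite{CDU20} together with the initial conditions, as sketched in the remark closing Section~\ref{Sec_Mod8}; only then would your transformation argument yield Theorem~\ref{Thm_CDU} rather than assume it.
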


To prove Theorem~\ref{Thm_GK-mod8} it suffices to transform each
of the triple-sums for $\GK_c(z,q)$ into the corresponding
quadruple sum given in Theorem~\ref{Thm_CDU}.
We will prove two stronger results as follows.

\begin{proposition}\label{Prop_threeisfour}
We have
\begin{subequations}
\begin{align}\label{Eq_34a}
&\sum_{n_1,n_2,m_1=0}^{\infty}
\frac{z^{n_1} w^{m_1+n_2} q^{n_1^2-n_1m_1+m_1^2+n_2^2}}
{(q)_{n_1}}\,\qbin{n_1}{n_2}\qbin{n_1+n_2}{m_1} \\
&\quad=\sum_{n_1,n_2,n_3,n_4=0}^{\infty}
\frac{z^{n_1} w^{n_2+n_3+n_4} q^{n_1^2+n_2^2+n_3^2+n_4^2-n_1n_2+n_2n_4}}
{(q)_{n_1}}\,\qbin{n_1}{n_2}\qbin{n_1}{n_4}\qbin{n_2}{n_3}, \notag
\end{align}
and
\begin{align}\label{Eq_34b}
&\sum_{n_1,n_2,m_1=0}^{\infty}
\frac{z^{n_1} w^{m_1+n_2} q^{n_1^2-n_1m_1+m_1^2+n_2^2+m_1}}
{(q)_{n_1}}\,\qbin{n_1}{n_2}\qbin{n_1+n_2}{m_1} \\
&\quad=\sum_{n_1,n_2,n_3,n_4=0}^{\infty} 
\frac{z^{n_1}w^{n_2+n_3+n_4}q^{n_1^2+n_2^2+n_3^2+n_4^2-n_1n_2+n_2n_4+n_1}}
{(q;q)_{n_1}}\,
\qbin{n_1}{n_2}\qbin{n_1}{n_4}\qbin{n_2}{n_3} \notag \\
&\qquad+\sum_{n_1,n_2,n_3,n_4=0}^{\infty}
\bigg( \frac{z^{n_1}w^{n_2+n_3+n_4} 
q^{n_1^2+n_2^2+n_3^2+n_4^2-n_1n_2+n_2n_4+n_2}}{(q;q)_{n_1-1}} \notag \\
&\qquad\qquad\qquad\qquad\quad\times
(1+wq^{n_1+n_3}+w^2q^{2n_1+n_2+n_3+n_4})
\qbin{n_1-1}{n_2}\qbin{n_1-1}{n_4}\qbin{n_2}{n_3}\bigg). \notag
\end{align}
\end{subequations}
\end{proposition}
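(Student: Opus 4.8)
The plan is to prove both identities coefficient-wise in $z$. Since $n_1$ enters each side with the common weight $z^{n_1}/(q)_{n_1}$, it suffices to fix $n_1$ and verify the resulting polynomial identity in $w$ and $q$; I will carry out the argument for \eqref{Eq_34a} and then indicate the changes needed for \eqref{Eq_34b}. First I would perform the $m_1$-summation on the left of \eqref{Eq_34a} using the $q$-Chu--Vandermonde convolution $\qbin{n_1+n_2}{m_1}=\sum_{j}q^{(n_1-j)(m_1-j)}\qbin{n_1}{j}\qbin{n_2}{m_1-j}$. Setting $n_4=j$ and $n_3=m_1-j$, the exponent $m_1^2-n_1m_1+(n_1-n_4)n_3$ collapses to $n_3^2+n_4^2+n_3n_4-n_1n_4$, so the left-hand side turns into
\[
\frac{z^{n_1}}{(q)_{n_1}}\sum_{n_2,n_3,n_4\geq 0} w^{n_2+n_3+n_4}\,
q^{n_1^2+n_2^2+n_3^2+n_4^2+n_3n_4-n_1n_4}\qbin{n_1}{n_2}\qbin{n_1}{n_4}\qbin{n_2}{n_3}.
\]
This already matches the right-hand side of \eqref{Eq_34a} in every factor except the two cross-terms: the split left side carries $q^{n_3n_4-n_1n_4}$, while the right side carries $q^{n_2n_4-n_1n_2}$.

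Thus, for each fixed $n_1$, the identity \eqref{Eq_34a} is equivalent to the cross-term exchange
\[
\sum_{n_2,n_3,n_4\geq 0} w^{n_2+n_3+n_4}\, q^{n_2^2+n_3^2+n_4^2}
\Big(q^{n_3n_4-n_1n_4}-q^{n_2n_4-n_1n_2}\Big)\qbin{n_1}{n_2}\qbin{n_1}{n_4}\qbin{n_2}{n_3}=0.
\]
This exchange is the crux: it genuinely couples $n_2,n_3,n_4$ (it fails term-by-term in any single index) and is the step that consumes the transformation \eqref{Eq_ksum}. To expose the two-binomial pattern $\qbin{n}{k}\qbin{\ell-k}{\cdots}$ of \eqref{Eq_ksum}, I would apply the revision $\qbin{n_1}{n_2}\qbin{n_2}{n_3}=\qbin{n_1}{n_3}\qbin{n_1-n_3}{n_2-n_3}$ and sum out one of the free indices, so that the remaining single-index sum is precisely of the form appearing in \eqref{Eq_ksum}; identifying $(\ell,m,n)$ with the surviving indices then converts the $q^{n_3n_4-n_1n_4}$ weighting into the $q^{n_2n_4-n_1n_2}$ weighting.

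\textbf{Main obstacle.} The delicate point is exactly this parameter identification. One must track the negative-index convention $1/(q)_n=0$ for $n<0$ together with the vanishing of $q$-binomials in \eqref{Eq_qbinomial}, so that the finite summation ranges on the two sides of \eqref{Eq_ksum} line up and no spurious boundary contributions are introduced; everything else is routine manipulation of $q$-shifted factorials (and, where an inner sum must be evaluated in closed form, the $q$-binomial theorem \eqref{Eq_qbt}).

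For \eqref{Eq_34b} the skeleton is identical, but the additional factor $q^{m_1}$ on the left produces, after the same $q$-Chu--Vandermonde split, a weighting whose reorganisation is governed not by the symmetric transformation \eqref{Eq_ksum} but by its less symmetric companion \eqref{Eq_ksum2}. I expect the two summands on the right of \eqref{Eq_34b} to correspond exactly to the two-term right-hand side of \eqref{Eq_ksum2}: the split of $1/(q)_{n_1}$ into a $1/(q)_{n_1}$ piece and a $1/(q)_{n_1-1}$ piece mirrors the $(1-q^{\ell-n})$ correction and the shifted binomials $\qbin{\ell-k-1}{k-m-1}$ there, while the trinomial factor $1+wq^{n_1+n_3}+w^2q^{2n_1+n_2+n_3+n_4}$ should emerge from combining these shifted contributions with the correct powers of $w$ and $q$. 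Assembling the three resulting pieces while keeping the boundary cases honest is the only genuinely intricate part of the second identity.
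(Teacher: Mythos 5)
Your approach is essentially the paper's: both proofs extract coefficients (you fix $z^{n_1}$; the paper also fixes the $w$-degree), both hinge on $q$-Chu--Vandermonde --- you expand $\qbin{n_1+n_2}{m_1}$ into pairs $\qbin{n_1}{n_4}\qbin{n_2}{n_3}$, while the paper runs the same identity \eqref{Eq_qCV} in reverse to collapse the $n_4$-sum of the quadruple sum after the rewriting \eqref{Eq_rewrite} --- and both then rest on \eqref{Eq_ksum}, which the paper applies with $(\ell,m,n)=(2n_1,n_2-2n_1,n_1)$, exactly the parameter identification you defer. The only place your sketch underestimates the work is \eqref{Eq_34b}: the trinomial factor spawns four collapsed single sums, and merging three of them into the single correction term of \eqref{Eq_ksum2} requires the $q$-binomial recurrences \eqref{Eq_qbin-rec} and \eqref{Eq_qbin22}, not just careful bookkeeping of boundary cases.
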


Setting $w=1$ in \eqref{Eq_34a} proves the equality of the two
expressions for $\GK_{(2,2,1)}(z,q)$,
setting $w=q$ in \eqref{Eq_34a} proves the equality of the two
expressions for $\GK_{(4,1,1)}(z,q)$ and
setting $w=q$ and replacing $z\mapsto zq$ in \eqref{Eq_34a} proves 
the equality of the two expressions for $\GK_{(5,0,0)}(z,q)$.
Finally, setting $w=1$ in \eqref{Eq_34b} proves the
equality of the two expressions for $\GK_{(3,2,0)}(z,q)$.

\begin{proof}[Proof of Proposition~\ref{Prop_threeisfour}]
Equating coefficients of
\[
\frac{z^{n_1} w^{n_2} q^{n_1^2-n_1n_2+n_2^2}}{(q)_{n_1}},
\]
in \eqref{Eq_34a}, we need to show that
\begin{align*}
&\sum_{m_1=0}^{n_2} q^{(m_1-n_2)(2m_1-n_1)}
\qbin{n_1}{n_2-m_1}\qbin{n_1+n_2-m_1}{m_1} \\
&\quad=\sum_{n_3,n_4=0}^{n_1}
q^{n_3(n_1-2n_2+2n_3)+n_4(n_1-n_2+n_3+n_4)}
\qbin{n_1}{n_2-n_3-n_4}\qbin{n_1}{n_4}\qbin{n_2-n_3-n_4}{n_3}
\end{align*}
for arbitrary nonnegative integers $n_1,n_2$.
Since 
\begin{equation}\label{Eq_rewrite}
\qbin{n_1}{n_2-n_3-n_4}\qbin{n_2-n_3-n_4}{n_3}=
\qbin{n_1}{n_3}\qbin{n_1-n_3}{n_1-n_2+n_3+n_4},
\end{equation}
we can carry out the sum over $n_4$ using the $q$-Chu--Vandermonde
summation \cite[Equation (II.7)]{GR04}
\begin{equation}\label{Eq_qCV}
\sum_{k=0}^{N_1} q^{k(k+m)}\qbin{N_1}{k}\qbin{N_2}{k+m}=
\qbin{N_1+N_2}{N_1+m}
\end{equation}
with $(k,N_1,N_2,m)\mapsto (n_4,n_1,n_1-n_3,n_1-n_2+n_3)$.
Also replacing $m_1\mapsto  k-n_1+n_2$ on the left and
$n_3\mapsto k$ on the right we obtain \eqref{Eq_ksum} 
(with $(\ell,m,n)=(2n_1,n_2-2n_1,n_1)$), completing the proof of
\eqref{Eq_34a}.

To prove \eqref{Eq_34b} we proceed in the exact same manner as before,
which leaves us to show the following somewhat unwieldy polynomial
identity:
\begin{align*}
&\sum_{m_1=0}^{n_2} q^{(m_1-n_2)(2m_1-n_1)+m_1}
\qbin{n_1}{n_2-m_1}\qbin{n_1+n_2-m_1}{m_1} \\
&\quad=q^{n_1} \sum_{n_3,n_4=0}^{n_1} 
q^{\varphi_{1,0}(n_1,n_2)}
\qbin{n_1}{n_3}\qbin{n_1-n_3}{n_1-n_2+n_3+n_4}\qbin{n_1}{n_4} \\
&\qquad+(1-q^{n_1})q^{n_2}\sum_{n_3,n_4=0}^{n_1-1} 
q^{\varphi_{-1,-1}(n_1,n_2)}
\qbin{n_1-1}{n_3}\qbin{n_1-n_3-1}{n_1-n_2+n_3+n_4-1}\qbin{n_1-1}{n_4} \\
&\qquad+(1-q^{n_1})q^{2n_1-n_2}\sum_{n_3,n_4=0}^{n_1-1}
q^{\varphi_{2,0}(n_1,n_2)}
\qbin{n_1-1}{n_3}\qbin{n_1-n_3-1}{n_1-n_2+n_3+n_4}\qbin{n_1-1}{n_4} \\
&\qquad+(1-q^{n_1})q^{4n_1-2n_2} \sum_{n_3,n_4=0}^{n_1-1} 
q^{\varphi_{3,1}(n_1,n_2)}
\qbin{n_1-1}{n_3}\qbin{n_1-n_3-1}{n_1-n_2+n_3+n_4+1}\qbin{n_1-1}{n_4},
\end{align*}
where
\[
\varphi_{a,b}(n_1,n_2):=n_3(n_1-2n_2+2n_3+a)+n_4(n_1-n_2+n_3+n_4+b).
\] 
To obtain the above identity we have not just extracted coefficients
of $z^{n_1}w^{n_2} q^{n_1^2-n_1n_2+n_2}/(q)_{n_1}$ as before, but also
applied \eqref{Eq_rewrite} (and its variants obtained by
shifting $n_1\mapsto n_1-1$ or $(n_1,n_2)\mapsto (n_1-1,n_2-1)$ or
$(n_1,n_2)\mapsto (n_1-1,n_2-2)$) to rewrite the products of $q$-binomial
coefficients on the right.
In all four double sums on the right the sum over $n_4$
can be carried out by the $q$-Chu--Vandermonde summation \eqref{Eq_qCV}.
Also replacing $m_1\mapsto k-n_1+n_2$ on the left and $n_3\mapsto k$ 
on the right, this results in
\begin{align*}
&\sum_{k=0}^{n_1} q^{(k-n_1)(2k-3n_1+2n_2+1)}
\qbin{n_1}{k}\qbin{2n_1-k}{k-n_1+n_2} \\
&\qquad=q^{n_1-n_2} \sum_{k=0}^{n_1} 
q^{k(2k+n_1-2n_2+1)} \qbin{n_1}{k}\qbin{2n_1-k}{k+2n_1-n_2} \\
&\quad\qquad+(1-q^{n_1})\sum_{k=0}^{n_1-1} 
q^{k(2k+n_1-2n_2-1)} \qbin{n_1-1}{k}\qbin{2n_1-k-2}{k+2n_1-n_2-2} \\
&\quad\qquad+(1-q^{n_1})q^{2n_1-2n_2}\sum_{k=0}^{n_1-1}
q^{k(2k+n_1-2n_2+2)} \qbin{n_1-1}{k}\qbin{2n_1-k-2}{k+2n_1-n_2-1} \\
&\quad\qquad+(1-q^{n_1})q^{4n_1-3n_2} \sum_{k=0}^{n_1-1} 
q^{k(2k+n_1-2n_2+3)} \qbin{n_1-1}{k}\qbin{2n_1-k-2}{k+2n_1-n_2}.
\end{align*}
Denote the four terms on the right by $s_1,\dots,s_4$ respectively.
By 
\begin{equation}\label{Eq_qbin-rec}
q^n \qbin{n+m}{n}+\qbin{n+m}{n-1}=\qbin{n+m+1}{n}, \quad
(n,m)\in\mathbb{Z}^2\setminus\{(0,-1)\},
\end{equation}
with $(n,m)=(k+2n_1-n_2,n_2-2k-2)$
it follows that 
\[
s_3+s_4=(1-q^{n_1})q^{2n_1-2n_2}\sum_{k=0}^{n_1-1}
q^{k(2k+n_1-2n_2+2)} \qbin{n_1-1}{k}\qbin{2n_1-k-1}{k+2n_1-n_2}=:s_3'.
\]
Note that the potentially problematic case 
$(n,m)=(k+2n_1-n_2,n_2-2k-2)=(0,-1)$
does not arise since it implies $k=2n_1-1$ which would give
$\qbin{n_1-1}{k}=\qbin{n_1-1}{2n_1-1}=0$.
Next we use
\begin{align}\label{Eq_qbin22}
\qbin{n+m-1}{n-2}&+q^{n-m-1}\qbin{n+m}{n} \\
&\quad=\qbin{n+m}{n-1}+q^{n-m-1} \qbin{n+m-1}{n}, \quad
(n,m)\in\mathbb{Z}^2\setminus\{(0,0),(1,-1)\}, \notag
\end{align}
with $(n,m)=(k+2n_1-n_2,n_2-2k-1)$ to find that
\begin{align}\label{Eq_s2s3p}
s_2+s_3'&=
(1-q^{n_1})\sum_{k=0}^{n_1-1} q^{k(2k+n_1-2n_2-1)}
\qbin{n_1-1}{k}\qbin{2n_1-k-1}{k+2n_1-n_2-1} \\
&\quad+(1-q^{n_1})q^{2n_1-2n_2}
\sum_{k=0}^{n_1-1} q^{k(2k+n_1-2n_2+2)}
\qbin{n_1-1}{k}\qbin{2n_1-k-2}{k+2n_1-n_2}. \notag
\end{align}
Again the two exceptional cases in \eqref{Eq_qbin22}
do not pose a problem as they both correspond to $k=2n_1-2$.
If we now replace $k\mapsto k-1$ in the second sum on the right of
\eqref{Eq_s2s3p} we get
\begin{align*}
s_2+s_3'&=(1-q^{n_1})\sum_{k=0}^{n_1-1} 
q^{k(2k+n_1-2n_2-1)} \qbin{n_1-1}{k}\qbin{2n_1-k-1}{k+2n_1-n_2-1} \\
&\quad+(1-q^{n_1})q^{n_1}\sum_{k=1}^{n_1}
q^{k(2k+n_1-2n_2-2)} \qbin{n_1-1}{k-1}
\qbin{2n_1-k-1}{k+2n_1-n_2-1} \\
&=(1-q^{n_1})\sum_{k=0}^{n_1} 
q^{k(2k+n_1-2n_2-1)} \qbin{n_1}{k}\qbin{2n_1-k-1}{k+2n_1-n_2-1},
\end{align*}
where the second equality follows from \eqref{Eq_qbin-rec}
with $(n,m)=(n_1-k,k-1)$.
The upshot of the above manipulations is that it only remains to be
shown that
\begin{align*}
&\sum_{k=0}^{n_1} q^{(k-n_1)(2k-3n_1+2n_2+1)}
\qbin{n_1}{k}\qbin{2n_1-k}{k-n_1+n_2} \\
&\quad=q^{n_1-n_2} \sum_{k=0}^{n_1} 
q^{k(2k+n_1-2n_2+1)} \qbin{n_1}{k}\qbin{2n_1-k}{k+2n_1-n_2} \\
&\qquad +(1-q^{n_1})\sum_{k=0}^{n_1}
q^{k(2k+n_1-2n_2-1)} \qbin{n_1}{k}\qbin{2n_1-k-1}{k+2n_1-n_2-1},
\end{align*}
for integers $n_1,n_2$.
Since this is \eqref{Eq_ksum2} with $(\ell,m,n)=(2n_1,n_2-2n_1,n_1)$
we are done.
\end{proof}

\begin{proof}[Proof of Proposition~\ref{Prop_remainingcases}]
We can either proceed as in the proof of Theorem~\ref{Thm_GK-mod8}
or, more simply, use
the three functional equations \cite[Equations (3.17)--(3.19)]{CDU20}
\begin{subequations}\label{Eq_functional-eqn}
\begin{align}
\GKt_{(4,0,1)}(z,q)&=\GKt_{(3,1,1)}(zq;q)+zq\GKt_{(4,1,0)}(zq^2;q) \\
\GKt_{(3,0,2)}(z,q)&=\GKt_{(2,2,1)}(zq;q)+zq\GKt_{(3,1,1)}(zq^2;q)
+zq^2 \GKt_{(4,1,0)}(zq^3;q) \\
\GKt_{(3,2,0)}(z,q)&=\GKt_{(3,1,1)}(zq;q)+zq\GKt_{(2,2,1)}(zq^2;q) \\
&\quad +zq^2\GKt_{(3,1,1)}(zq^3;q)+zq^3\GKt_{(4,1,0)}(zq^4;q). \notag
\end{align}
\end{subequations}
Given $\GKt_{(4,1,0)}(z,q)$, $\GKt_{(3,2,0)}(z,q)$ and
$\GKt_{(2,2,1)}(z,q)$, these three equations uniquely determine
$\GKt_{(4,0,1)}(z,q)$, $\GKt_{(3,0,2)}(z,q)$ and
$\GKt_{(3,1,1)}(z,q)$.
Substituting the expressions for the six generating functions in question,
as given by Theorem~\ref{Thm_GK-mod8} and (the as yet to be proven) 
Proposition~\ref{Prop_remainingcases}, it immediately follows that
\eqref{Eq_functional-eqn} holds, thus proving the proposition.
\end{proof}

\begin{remark}
With a bit of extra work one can show that the seven generating functions
given by Theorem~\ref{Thm_GK-mod8} and Proposition~\ref{Prop_remainingcases}
satisfy the full set of functional equations obtained in \cite{CDU20}.
Together with some simple initial conditions this provides a proof of
the theorem and proposition independent of Theorem~\ref{Thm_CDU}, and hence
provides a non-computer assisted proof.
The proof of Theorem~\ref{Thm_CDU} given in \cite{CDU20} heavily uses
the Mathematica packages \texttt{qFunctions} \cite{AU19} and 
\texttt{HolonomicFunctions}~\cite{Koutschan10}.
\end{remark}

\section{Proofs of the main results}\label{Sec_Proofs}

\subsection{Proofs of Theorems~\ref{Thm_A2-vac-bothmoduli},
\ref{Thm_threecases}, \ref{Thm_A2-three}, \ref{Thm_threecases-b} and 
\ref{Thm_A2-three-b}}

In this section we prove the $\mathrm{A}_2$ Andrews--Gordon
identities claimed in the introduction and in Section~\ref{Sec_Main}.
In each case, our starting point is one of the Rogers--Ramanujan-type
identities of Andrews, Schilling and the author, which were proved in 
\cite{ASW99} using an $\mathrm{A}_2$ analogue of the Bailey lemma. 
(See also \cite{W06} for a proof based on Hall--Littlewood polynomials).
We also present conditional proofs of Conjectures~\ref{Con_A2-two} and 
\ref{Con_A2-two-b} for all $s$.

\subsubsection{Proof of Theorem~\ref{Thm_A2-vac-bothmoduli}}
We begin by defining $F^{(a)}_{n_0,m_0;k}(z,q)\in\mathbb{Q}(q)[z]$ as
\begin{equation}\label{Eq_F1}
F^{(a)}_{n_0,m_0;k}(z,q):=
\sum_{\substack{n_1,\dots,n_k\geq 0 \\[1pt] m_1,\dots, m_k\geq 0}}
\frac{z^{n_1}}{(q)_{n_k+m_k}} \prod_{i=1}^k 
q^{n_i^2-\sigma_i n_im_i+m_i^2}
\qbin{n_{i-1}}{n_i}\qbin{m_{i-1}}{m_i},
\end{equation}
where $n_0,m_0$ are nonnegative integers, $k$ is a positive integer,
$a\in\{-1,1\}$ and
\begin{equation}\label{Eq_sigma-a}
(\sigma_1,\dots,\sigma_{k-1},\sigma_k):=
(1,\dots,1,a).
\end{equation}
If we further define
\[
F^{(a)}_k(z;q):=\lim_{n_0,m_0\to\infty}
F^{(a)}_{n_0,m_0;k}(z;q),
\]
then, according to \cite[Equations (5.22)\&(5.28), $i=k$]{ASW99},
\[
F^{(a)}_{k}(1;q)
=\frac{(q^{3k+a+3};q^{3k+a+3})_{\infty}^2}{(q)_{\infty}^3}\,
\theta(q^{k+1},q^{k+1},q^{k+a+1};q^{3k+a+3}).
\]
Comparing this with \eqref{Eq_A2-vac} and \eqref{Eq_A2-vac-b}
(with $k\mapsto k+1$ in the latter), we must thus show that
\begin{align*}
F_k^{(-1)}(1;q)&=\frac{1}{(q;q)_{\infty}}
\sum_{\substack{n_1,\dots,n_k\geq 0 \\[1pt] m_1,\dots,m_{k-1}\geq 0}}
\frac{q^{n_k^2}}{(q)_{n_1}} 
\prod_{i=1}^{k-1} q^{n_i^2-n_im_i+m_i^2}
\qbin{n_i}{n_{i+1}}\qbin{n_i-n_{i+1}+m_{i+1}}{m_i},
\intertext{where $m_k:=2n_k$, and}
F_k^{(1)}(1;q)&=\frac{1}{(q;q)_{\infty}}
\sum_{\substack{n_1,\dots,n_k\geq 0 \\[1pt] m_1,\dots,m_k\geq 0}}
\frac{q^{\sum_{i=1}^k (n_i^2-n_im_i+m_i^2)}}{(q)_{n_1}}\,
\qbin{2n_k}{m_k} 
\prod_{i=1}^{k-1} \qbin{n_i}{n_{i+1}}\qbin{n_i-n_{i+1}+m_{i+1}}{m_i}.
\end{align*}
Instead we will prove the following stronger result.

\begin{proposition}\label{Prop_finiteform}
For a positive integer $k$ and nonnegative integers $n_0,m_0$,
\begin{align*}
&F_{n_0,m_0;k}^{(-1)}(z,q) \\
&\qquad=\sum_{\substack{n_1,\dots,n_k\geq 0 \\[1pt] m_1,\dots,m_{k-1}\geq 0}}
\frac{z^{n_1} q^{n_k^2}}{(q)_{m_0-n_1+m_1}}\, \qbin{n_0}{n_1}
\prod_{i=1}^{k-1} q^{n_i^2-n_im_i+m_i^2} 
\qbin{n_i}{n_{i+1}}\qbin{n_i-n_{i+1}+m_{i+1}}{m_i},
\intertext{where $m_k:=2n_k$, and}
&F_{n_0,m_0;k}^{(1)}(z,q) \\
&\qquad=\sum_{\substack{n_1,\dots,n_k\geq 0 \\[1pt] m_1,\dots,m_k\geq 0}}
\frac{z^{n_1}q^{\sum_{i=1}^k(n_i^2-n_im_i+m_i^2)}}
{(q)_{m_0-n_1+m_1}}\,\qbin{n_0}{n_1}\qbin{2n_k}{m_k} 
\prod_{i=1}^{k-1} \qbin{n_i}{n_{i+1}}\qbin{n_i-n_{i+1}+m_{i+1}}{m_i}.
\end{align*}
\end{proposition}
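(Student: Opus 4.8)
The plan is to prove both finite forms simultaneously by induction on $k$, peeling off the outermost ($i=1$) layer of the defining sum \eqref{Eq_F1} and reducing the inductive step to a single summation over the outermost $m$-variable. Isolating the $i=1$ term in \eqref{Eq_F1} (for which $\sigma_1=1$ as soon as $k\geq 2$) gives the elementary recursion
\[
F^{(a)}_{n_0,m_0;k}(z,q)=\sum_{n_1,m_1\geq 0}
z^{n_1}q^{n_1^2-n_1m_1+m_1^2}\qbin{n_0}{n_1}\qbin{m_0}{m_1}\,
F^{(a)}_{n_1,m_1;k-1}(1,q).
\]
Because the innermost layer (the only place the parameter $a$ enters) is untouched by this step, the whole argument is uniform in $a\in\{-1,1\}$, the two values being distinguished only in the base case $k=1$.

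For the inductive step I would substitute the claimed formula for $F^{(a)}_{n_1,m_1;k-1}(1,q)$ into the recursion and relabel its summation indices by $(\nu_i,\mu_i)\mapsto(n_{i+1},m_{i+1})$. After this relabelling every factor depending on $n_1,\dots,n_k$ and on $m_2,\dots,m_{k-1}$ (together with the prefactor $q^{n_k^2}\qbin{n_0}{n_1}\qbin{n_1}{n_2}$ when $a=-1$, and its $\qbin{2n_k}{m_k}$ analogue when $a=1$) already coincides, term by term, with the corresponding factor in the conjectured formula for $F^{(a)}_{n_0,m_0;k}$. The entire discrepancy is confined to the two $m_1$-dependent factors, and closing the induction amounts to the summation identity
\[
\sum_{m_1\geq 0}\frac{q^{m_1^2-n_1m_1}}{(q)_{m_1-n_2+m_2}}\qbin{m_0}{m_1}
=\sum_{m_1\geq 0}\frac{q^{m_1^2-n_1m_1}}{(q)_{m_0-n_1+m_1}}\qbin{n_1-n_2+m_2}{m_1},
\]
which, writing $A=m_0$, $B=n_1$, $C=m_2-n_2$, reads
\[
\sum_{j\geq 0}q^{j^2-Bj}\qbin{A}{j}\frac{1}{(q)_{j+C}}
=\sum_{j\geq 0}q^{j^2-Bj}\frac{1}{(q)_{A-B+j}}\qbin{B+C}{j}.
\]
The base case $k=1$ is the same statement in disguise: for $a=1$ it is the specialisation $C=B$ (producing the $\qbin{2n_1}{m_1}$ on the right), while for $a=-1$ the constraint $m_k:=2n_k$ removes the $m_1$-sum entirely, and the left-hand sum evaluates to the single factor $1/(q)_{m_0+n_1}$ by the $N_2\to\infty$ limit of the $q$-Chu--Vandermonde summation \eqref{Eq_qCV}.

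The crux, and the step I expect to be the main obstacle, is therefore this master summation lemma. Writing each side as a terminating series via $\qbin{A}{j}=(-1)^jq^{Aj-\binom{j}{2}}(q^{-A})_j/(q)_j$ and $1/(q)_{j+C}=1/\bigl((q)_C\,(q^{C+1})_j\bigr)$, the identity becomes the confluent transformation
\[
(q^{C+1})_\infty\,\qHyp{1}{1}{q^{-A}}{q^{C+1}}{q,q^{A-B+1}}
=(q^{A-B+1})_\infty\,\qHyp{1}{1}{q^{-(B+C)}}{q^{A-B+1}}{q,q^{C+1}},
\]
that is, the involution $(b)_\infty\,{}_1\phi_1(a;b;q,z)=(z)_\infty\,{}_1\phi_1(az/b;z;q,b)$, in which the product $az$ is invariant and $b$ and $z$ are interchanged. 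This ${}_1\phi_1$ transformation is standard and can be derived from \eqref{Eq_qCV} by expanding one series, interchanging the order of summation and evaluating the inner sum; the essential difficulty is precisely the factor $q^{-Bj}$, which is exactly what prevents the master lemma from reducing to a single direct application of \eqref{Eq_qCV}. The only remaining point is bookkeeping: since $C=m_2-n_2$ may be negative, I would either shift $j\mapsto j-C$ to reduce to the case $C\geq 0$ before passing to the ${}_1\phi_1$ form, or simply keep all sums well defined throughout by the convention $1/(q)_n=0$ for $n<0$ from \eqref{Eq_nul}.
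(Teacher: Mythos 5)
Your proof is correct and is essentially the paper's own argument in different packaging: under the substitution $(A,B,C)=(m_0,n_1,m_2-n_2)$ your master summation lemma is precisely the paper's key transformation \eqref{Eq_qbintrafo} (whose validity condition $\min\{n_1,n_2\}\geq p$ becomes $A\geq 0$ and $B+C\geq 0$, automatically satisfied in your application since $\qbin{n_1}{n_2}$ forces $n_2\leq n_1$), and your induction on $k$ via the layer-peeling recursion unrolls into exactly the same inside-out sequence of applications of that transformation that the paper performs through Lemma~\ref{Lem_F-trafo}, which fixes the $n$-variables and transforms the whole chain of $m$-sums by downward induction on the position $\ell$ (the paper states that lemma for arbitrary nondecreasing weights $u$ because it is reused for Proposition~\ref{Prop_finiteform2}). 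The one blemish is your parenthetical claim that the underlying $\qhyp{1}{1}$ transformation can be derived from \eqref{Eq_qCV} by expanding, interchanging and evaluating an inner sum --- the $q^{j^2}$-type weight obstructs every such inner evaluation I can see, and the paper instead obtains it by confluence ($b\to\infty$) from Heine's $\qhyp{2}{1}$ transformation; since the $\qhyp{1}{1}$ transformation is standard this does not affect the correctness of your argument, but that is the derivation (or citation) you should use.
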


A crucial ingredient in the proof of Proposition~\ref{Prop_finiteform} 
is the next lemma.

\begin{lemma}\label{Lem_F-trafo}
For $k$ a positive integer, $m_0$ a nonnegative integer 
and $u=(u_1,\dots,u_{k+1})\in\mathbb{Z}^{k+1}$
such that
\begin{equation}\label{Eq_u-ineq}
u_1\leq u_2\leq \cdots\leq u_{k+1},
\end{equation}
define
\begin{equation}\label{Eq_F-def}
\mathcal{F}_{m_0;u}(q):=\sum_{m_1,\dots,m_k\geq 0}
\frac{q^{\sum_{i=1}^k m_i(m_i+u_i)}}{(q)_{m_k+u_{k+1}}} 
\prod_{i=1}^k \qbin{m_{i-1}}{m_i}.
\end{equation}
Then, for $\ell\in\{0,1,\dots,k\}$,
\begin{equation}\label{Eq_F-ell}
\mathcal{F}_{m_0;u}(q)=\sum_{m_1,\dots,m_k\geq 0}
\frac{q^{\sum_{i=1}^k m_i(m_i+u_i)}}
{(q)_{m_{\ell}+m_{\ell+1}+u_{\ell+1}}} 
\prod_{i=1}^{\ell} \qbin{m_{i-1}}{m_i}
\prod_{i=\ell+1}^k \qbin{m_{i+1}+u_{i+1}-u_i}{m_i},
\end{equation}
where $m_{k+1}:=0$.
\end{lemma}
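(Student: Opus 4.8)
The plan is to prove \eqref{Eq_F-ell} by showing its right-hand side is independent of $\ell$; since $\ell=k$ recovers the definition \eqref{Eq_F-def} (the second product being empty and $(q)_{m_k+m_{k+1}+u_{k+1}}=(q)_{m_k+u_{k+1}}$), it suffices to match the $\ell$-expression and the $(\ell+1)$-expression for each $\ell\in\{0,1,\dots,k-1\}$. First I would compare the two term by term: every factor coincides except those carrying the single index $m_{\ell+1}$. The $\ell$-expression contributes $q^{m_{\ell+1}(m_{\ell+1}+u_{\ell+1})}(q)_{m_\ell+m_{\ell+1}+u_{\ell+1}}^{-1}\qbin{m_{\ell+2}+u_{\ell+2}-u_{\ell+1}}{m_{\ell+1}}$, while the $(\ell+1)$-expression contributes $q^{m_{\ell+1}(m_{\ell+1}+u_{\ell+1})}(q)_{m_{\ell+1}+m_{\ell+2}+u_{\ell+2}}^{-1}\qbin{m_\ell}{m_{\ell+1}}$ (with $m_{k+1}:=0$ covering the boundary cases). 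Isolating the sum over $m_{\ell+1}$ with all other indices fixed reduces $S_\ell=S_{\ell+1}$ to a one-variable identity.

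Setting $c:=u_{\ell+1}$, $A:=m_\ell\ge0$, $N:=m_{\ell+2}+u_{\ell+2}-u_{\ell+1}$, and writing $L_c(A,N):=\sum_{j\ge0}q^{j^2+cj}(q)_{j+A+c}^{-1}\qbin{N}{j}$, the two isolated sums are exactly $L_c(A,N)$ and $L_c(N,A)$ (using $m_{\ell+2}+u_{\ell+2}=N+c$). Thus the whole lemma collapses to the symmetry $L_c(A,N)=L_c(N,A)$. The hypothesis \eqref{Eq_u-ineq} enters precisely here: it forces $N\ge0$, without which $\qbin{N}{j}$ would vanish identically by \eqref{Eq_qbinomial} while $L_c(N,A)$ would not, breaking the symmetry. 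Throughout I use the convention \eqref{Eq_nul} that $1/(q)_n=0$ for $n<0$, so everything is defined for arbitrary integers $c$.

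To prove the symmetry I would establish two recurrences, valid for all integers $c$ under that convention. Pascal's rule $\qbin{N}{j}=\qbin{N-1}{j}+q^{N-j}\qbin{N-1}{j-1}$ followed by the shift $j\mapsto j+1$ gives $L_c(A,N)=L_c(A,N-1)+q^{c+N}L_{c+1}(A,N-1)$ for $N\ge1$, while telescoping $1/(q)_{j+A+c}-1/(q)_{j+A+c-1}=q^{j+A+c}/(q)_{j+A+c}$ gives $L_c(A,N)=L_c(A-1,N)+q^{c+A}L_{c+1}(A-1,N)$ for $A\ge1$. The crucial point is that these two recurrences are interchanged by $A\leftrightarrow N$. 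Inducting on $A+N$ (simultaneously over all $c$, since the recurrences only raise $c$ by one at strictly smaller $A+N$), I would expand $L_c(A,N)$ by the first recurrence into $L_c(A,N-1)+q^{c+N}L_{c+1}(A,N-1)$ and $L_c(N,A)$ by the second into $L_c(N-1,A)+q^{c+N}L_{c+1}(N-1,A)$; these coincide term by term by the inductive hypothesis. The base case $\min(A,N)=0$ is the evaluation $L_c(0,N)=1/(q)_{N+c}$ (a terminating ${}_1\phi_1$, i.e.\ a $q$-Gauss summation), which follows from the first recurrence by a one-line induction on $N$, together with the trivial $L_c(N,0)=1/(q)_{N+c}$.

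The main obstacle is the bookkeeping of the first paragraph: verifying carefully that only the $m_{\ell+1}$-dependent factors differ between consecutive $\ell$, and that the two extracted one-variable sums are genuinely $L_c(A,N)$ and $L_c(N,A)$ with \emph{the same} $c$ and with $N\ge0$. Once the problem is distilled to the symmetry of $L_c$, the argument is essentially forced; its content is the fortunate coincidence of the Pascal and telescoping recurrences under $A\leftrightarrow N$, which lets the induction close with no leftover terms. I would therefore carry out the reduction first for general $\ell$ (handling $\ell=0$ and $\ell=k-1$ via the $m_{k+1}:=0$ convention), and then record the symmetry of $L_c$ as a standalone lemma.
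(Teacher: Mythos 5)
Your proposal is correct, and its skeleton coincides with the paper's proof: both establish \eqref{Eq_F-ell} by matching the expressions for consecutive values of $\ell$ (the paper phrases this as a downward induction on $\ell$, starting from the definition at $\ell=k$), and in both cases the matching reduces, after isolating the single summation variable whose factors differ, to the same one-variable transformation. Indeed, under $(p,n_1,n_2)=(c,A+c,N+c)$ your symmetry $L_c(A,N)=L_c(N,A)$ for $A,N\geq 0$ is exactly the paper's key identity \eqref{Eq_qbintrafo} together with its hypothesis $\min\{n_1,n_2\}\geq p$, and \eqref{Eq_u-ineq} enters both arguments identically, namely to make the relevant upper binomial entries nonnegative. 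The genuine difference is how this key identity is proved. The paper derives \eqref{Eq_qbintrafo} analytically: it is a specialisation of a $\qhyp{1}{1}$ transformation obtained from Heine's $\qhyp{2}{1}$ transformation in the limit $b\to\infty$, which requires $\abs{q}<1$ and some care with vanishing denominators when integer parameters are substituted. You prove the same identity purely formally: your $q$-Pascal recurrence in $N$ and telescoping recurrence in $A$ are interchanged by $A\leftrightarrow N$, so induction on $A+N$ (simultaneously in $c$ --- legitimate, since the recurrences raise $c$ only at strictly smaller $A+N$) closes with no leftover terms, the base case being the terminating evaluation $L_c(0,N)=1/(q)_{N+c}$, which is the paper's \eqref{Eq_1phi1} and which your first recurrence also yields. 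I checked that both recurrences, the base case, and the bookkeeping isolating $m_{\ell+1}$ are all correct under the convention \eqref{Eq_nul}. What each approach buys: yours is self-contained and formal, with no analytic input, no limiting arguments, and all integer-parameter edge cases (negative $u_i$) absorbed uniformly by \eqref{Eq_nul}, whereas the paper must verify the constraint $\min\{n_1,n_2\}\geq p$ when specialising; the paper's route is shorter given standard $q$-series technology and situates the identity among classical hypergeometric transformations. (One cosmetic point: your base-case evaluation is the terminating $\qhyp{1}{1}$ summation rather than the $q$-Gauss $\qhyp{2}{1}$ sum, but since you prove it by recurrence this affects nothing.)
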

Note that if $u_{\ell+1}$ is a negative integer then the summand of
\eqref{Eq_F-ell} vanishes unless $m_{\ell}+m_{\ell+1}\geq -u_{\ell+1}$.

\begin{proof}
We proceed by induction on $\ell$, with base case $\ell=k$ 
corresponding to \eqref{Eq_F-def}. 

For the induction step we begin by replacing $z\mapsto z/b$ in Heine's 
$\qhyp{2}{1}$ transformation \cite[Equation (III.2)]{GR04}
\[
\qHyp{2}{1}{a,b}{c}{q,z}=\frac{(c/b,bz)_{\infty}}{(c,z)_{\infty}}\,
\qHyp{2}{1}{abz/c,b}{bz}{q,\frac{c}{b}} \quad 
\text{ for $\abs{q},\abs{z},\abs{c/b}<1$},
\]
and then letting $b$ tend to infinity.
This yields the $\qhyp{1}{1}$ transformation
\[
\qHyp{1}{1}{a}{c}{q,z}=
\frac{(z)_{\infty}}{(c)_{\infty}}\,\qHyp{1}{1}{az/c}{z}{q,c}
\quad \text{ for $\abs{q}<1$},
\]
which we write in regularised form
\[
\sum_{k=0}^{\infty} (-z)^k q^{\binom{k}{2}}\,
\frac{(a)_k(cq^k)_{\infty}}{(q)_k} = 
\sum_{k=0}^{\infty} (-c)^k q^{\binom{k}{2}}\,
\frac{(az/c)_k(zq^k)_{\infty}}{(q)_k}.
\]
This allows us to specialise $(a,c,z)=(q^{-(n_2-p)},q^{n_1+1},q^{n_2+1})$,
where $n_1,n_2,p$ are arbitrary integers.
Imposing the conditions
\begin{equation}\label{Eq_n1n2p}
\min\{n_1,n_2\}\geq p,
\end{equation}
the resulting transformation can be expressed using $q$-binomial coefficients
as\footnote{The transformation \eqref{Eq_qbintrafo} also
holds if \eqref{Eq_n1n2p} is replaced by $p>n_1+n_2$.
Since both sides trivialise to zero we ignore this second case.}
\begin{equation}\label{Eq_qbintrafo}
\sum_{m\geq 0} \frac{q^{m(m+p)}}{(q)_{m+n_1}}\,\qbin{n_2-p}{m}=
\sum_{m\geq 0} \frac{q^{m(m+p)}}{(q)_{m+n_2}}\,\qbin{n_1-p}{m}.
\end{equation}

Now assume that \eqref{Eq_F-ell} holds for some fixed $1\leq\ell\leq k$.
Applying \eqref{Eq_qbintrafo} with
\[
(m,n_1,n_2,p)=(m_{\ell},m_{\ell+1}+r_{\ell+1},
m_{\ell-1}+u_{\ell},u_{\ell})
\]
results in
\begin{equation}\label{Eq_F-ellmineen}
\mathcal{F}_{m_0;u}(q)=\sum_{m_1,\dots,m_k\geq 0}
\frac{q^{\sum_{i=1}^k m_i(m_i+u_i)}}
{(q)_{m_{\ell-1}+m_{\ell}+u_{\ell}}} 
\prod_{i=1}^{\ell-1} \qbin{m_{i-1}}{m_i}
\prod_{i=\ell}^k \qbin{m_{i+1}+u_{i+1}-u_i}{m_i}.
\end{equation}
The conditions \eqref{Eq_n1n2p} translate to
\[
m_{\ell+1}+u_{\ell+1}\geq u_{\ell} \quad\text{and}\quad m_{\ell-1}\geq 0,
\]
which are both satisfied since $u_{\ell+1}\geq u_{\ell}$.
Since \eqref{Eq_F-ellmineen} is \eqref{Eq_F-ell} with $\ell$ replaced by 
$\ell-1$ our proof is done.
\end{proof}

Equipped with Lemma~\ref{Lem_F-trafo}, the proof of 
Proposition~\ref{Prop_finiteform} is not difficult.

\begin{proof}[Proof of Proposition~\ref{Prop_finiteform}]
Comparing \eqref{Eq_F1} with \eqref{Eq_F-def}, we have
\begin{equation}\label{Eq_Fn0m0}
F_{n_0,m_0;k}^{(a)}(z,q)=
\sum_{n_1\geq\cdots\geq n_k\geq 0} z^{n_1} \mathcal{F}_{m_0;u}(q)
\prod_{i=1}^k q^{n_i^2} \qbin{n_{i-1}}{n_i},
\end{equation}
where
\[
u=(-\sigma_1 n_1,\dots,-\sigma_k n_k,n_k)=(-n_1,\dots,-n_{k-1},-an_k,n_k).
\]
Since \eqref{Eq_u-ineq} holds for $n_1\geq\cdots\geq n_k\geq 0$,
we can apply Lemma~\ref{Lem_F-trafo} with $\ell=0$.
Therefore,
\[
\mathcal{F}_{m_0;u}(q)=\sum_{m_1,\dots,m_k\geq 0}
\frac{q^{\sum_{i=1}^k m_i(m_i-\sigma_i n_i)}}
{(q)_{m_0-\sigma_1 n_1+m_1}}\,\qbin{(a+1)n_k}{m_k}
\prod_{i=1}^{k-1} \qbin{n_i-\sigma_{i+1} n_{i+1}+m_{i+1}}{m_i},
\]
and thus
\begin{align*}
F_{n_0,m_0;k}^{(a)}(z,q)=
\sum_{\substack{n_1,\dots,n_k\geq 0 \\[1pt] m_1,\dots,m_k\geq 0}} \bigg( &
\frac{z^{n_1} q^{\sum_{i=1}^k (n_i^2-\sigma_i n_im_i+m_i^2)}}
{(q)_{m_0-\sigma_1 n_1+m_1}}\,
\qbin{(1+a)n_k}{m_k} \\ & \times
\prod_{i=1}^k \qbin{n_{i-1}}{n_i}
\prod_{i=1}^{k-1} \qbin{n_i-\sigma_{i+1} n_{i+1}+m_{i+1}}{m_i}\bigg).
\end{align*}
When $a=-1$ (so that $\sigma_k=-1$ and $\sigma_i=1$ for $1\leq i<k$)
the term $\qbin{(1+a)n_k}{m_k}$ in the summand forces $m_k=0$,
resulting in the first claim of the proposition.
When $a=1$ (so that $\sigma_i=1$ for all $1\leq i\leq k$) 
the second claim follows.
\end{proof}

\subsubsection{Proof of Theorems~\ref{Thm_threecases}, \ref{Thm_A2-three},
\ref{Thm_threecases-b} and \ref{Thm_A2-three-b}}
We apply the same method used to prove Theorem~\ref{Thm_A2-vac-bothmoduli}
to prove Theorems~\ref{Thm_threecases}, \ref{Thm_A2-three},
\ref{Thm_threecases-b} and \ref{Thm_A2-three-b}. 
In fact we will do more, and also give a conditional proof of 
Conjectures~\ref{Con_A2-two} and \ref{Con_A2-two-b}, 
assuming an identity that is missing from \cite{ASW99} but probably 
should have been in that paper.\footnote{The third author of 
\cite{ASW99} takes full responsibility for the omission and hopes to
prove Conjecture~\ref{Con_missing} in a future publication.}

Define $F_{n_0,m_0;k,s,t}^{(a)}(z,q)\in\mathbb{Q}(q)[z]$ as
\begin{equation}\label{Eq_F2}
F_{n_0,m_0;k,s,t}^{(a)}(z,q):=
\sum_{\substack{n_1,\dots,n_k\geq 0 \\[1pt] m_1,\dots, m_k\geq 0}}
\frac{z^{n_1}q^{\sum_{i=s}^k n_i+\sum_{i=t}^k m_i}}{(q)_{n_k+m_k+1}}
\prod_{i=1}^k q^{n_i^2-\sigma_i n_im_i+m_i^2}
\qbin{n_{i-1}}{n_i}\qbin{m_{i-1}}{m_i},
\end{equation}
where $k,s,t$ are positive integers such that $1\leq s,t\leq k+1$, 
$n_0,m_0$ are nonnegative integers, $a\in\{-1,1\}$ and the $\sigma_i$ 
for $1\leq i\leq k$ are again fixed as in \eqref{Eq_sigma-a}.
The following proposition, which complements
Proposition~\ref{Prop_finiteform}, lies at the heart of each of
the proofs given below.

\begin{proposition}\label{Prop_finiteform2}
Let $k,s,t$ be positive integers and $n_0,m_0$ nonnegative integers.
If $1\leq s\leq k+1$ and $1\leq t\leq k$, then
\begin{align}\label{Eq_mineen}
F_{n_0,m_0;k,s,t}^{(-1)}(z,q)=
\sum_{\substack{n_1,\dots,n_k\geq 0\\[1pt] m_1,\dots,m_{k-1}\geq 0}}\bigg( &
\frac{z^{n_1} q^{n_k^2+\sum_{i=s}^k n_i+\sum_{i=t}^{k-1} m_i}}
{(q)_{m_0-n_1+m_1+\delta_{t,1}}}\, \qbin{n_0}{n_1} \\
& \times\prod_{i=1}^{k-1} q^{n_i^2-n_im_i+m_i^2}
\qbin{n_i}{n_{i+1}}
\qbin{n_i-n_{i+1}+m_{i+1}+\delta_{i,t-1}}{m_i}\bigg), \notag
\end{align}
and if $1\leq s\leq k$, $t=k+1$ and $k\geq 2$, then
\begin{align}\label{Eq_mineen2}
F_{n_0,m_0;k,s,k+1}^{(-1)}(z,q)=
\sum_{\substack{n_1,\dots,n_k\geq 0 \\[1pt] m_1,\dots,m_{k-1}\geq 0}}\bigg( &
\frac{z^{n_1} q^{n_k^2-n_k+\sum_{i=s}^k n_i}}{(q)_{m_0-n_1+m_1}}\,
\qbin{n_0}{n_1} \\ 
& \times\prod_{i=1}^{k-1} q^{n_i^2-n_im_i+m_i^2}
\qbin{n_i+\delta_{i,k-1}}{n_{i+1}}\qbin{n_i-n_{i+1}+m_{i+1}}{m_i}\bigg), 
\notag
\end{align}
where $m_k:=2n_k$ in both \eqref{Eq_mineen} and \eqref{Eq_mineen2}.
Similarly, if $1\leq s\leq k+1$ and $1\leq t\leq k+1$, then
\begin{align}\label{Eq_mineen3}
F_{n_0,m_0;k,s,t}^{(1)}(z,q)& \\
=\sum_{\substack{n_1,\dots,n_k\geq 0\\[1pt] m_1,\dots,m_k\geq 0}}\bigg( &
\frac{z^{n_1} q^{\sum_{i=1}^k (n_i^2-n_im_i+m_i^2)+\sum_{i=s}^k n_i+
\sum_{i=t}^k m_i}}{(q)_{m_0-n_1+m_1+\delta_{t,1}}} 
\qbin{2n_k+\delta_{t,k+1}}{m_k} \qbin{n_0}{n_1} \notag \\ 
& \times 
\prod_{i=1}^{k-1} \qbin{n_i}{n_{i+1}} 
\qbin{n_i-n_{i+1}+m_{i+1}+\delta_{i,t-1}}{m_i}\bigg). \notag
\end{align}
\end{proposition}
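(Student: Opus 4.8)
The plan is to follow the proof of Proposition~\ref{Prop_finiteform} essentially verbatim, the only new bookkeeping being the linear terms $q^{\sum_{i=s}^k n_i+\sum_{i=t}^k m_i}$ and the shift by one in the tail factorial $(q)_{n_k+m_k+1}$. First I would factor out the $n$-summation exactly as in \eqref{Eq_Fn0m0}, writing
\[
F_{n_0,m_0;k,s,t}^{(a)}(z,q)=\sum_{n_1\geq\cdots\geq n_k\geq 0}
z^{n_1}\,q^{\sum_{i=s}^k n_i}\,\mathcal{F}_{m_0;u}(q)
\prod_{i=1}^k q^{n_i^2}\qbin{n_{i-1}}{n_i},
\]
where, comparing \eqref{Eq_F2} with \eqref{Eq_F-def}, the vector $u=(u_1,\dots,u_{k+1})$ is now
\[
u_i=-\sigma_i n_i+\chi(i\geq t)\quad(1\leq i\leq k),\qquad u_{k+1}=n_k+1.
\]
The linear $m$-term $\sum_{i=t}^k m_i$ is precisely what produces the $\chi(i\geq t)$ shift in the first $k$ entries, while the extra $+1$ in $(q)_{n_k+m_k+1}$ is what raises $u_{k+1}$ from $n_k$ to $n_k+1$. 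Since $n_1\geq\cdots\geq n_k\geq 0$ and $\chi(i\geq t)$ is non-decreasing in $i$, the monotonicity \eqref{Eq_u-ineq} is immediate in every case, so Lemma~\ref{Lem_F-trafo} applies with $\ell=0$.

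Applying \eqref{Eq_F-ell} with $\ell=0$, the denominator becomes $(q)_{m_0+m_1+u_1}=(q)_{m_0-n_1+m_1+\delta_{t,1}}$, and for $1\leq i\leq k-1$ the telescoping differences $u_{i+1}-u_i=n_i-n_{i+1}+\delta_{i,t-1}$ reproduce the $q$-binomials $\qbin{n_i-n_{i+1}+m_{i+1}+\delta_{i,t-1}}{m_i}$. Everything therefore hinges on the terminal factor $\qbin{u_{k+1}-u_k}{m_k}$, whose behaviour splits into exactly the three cases of the statement. For $a=1$ one has $u_k=-n_k+\chi(k\geq t)$, hence $u_{k+1}-u_k=2n_k+\delta_{t,k+1}$, and the factor is $\qbin{2n_k+\delta_{t,k+1}}{m_k}$; collecting exponents then yields \eqref{Eq_mineen3} with no further work. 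For $a=-1$ and $1\leq t\leq k$ one has $u_k=n_k+1=u_{k+1}$, so the terminal factor is $\qbin{0}{m_k}$, forcing $m_k=0$; rewriting the surviving $i=k-1$ factor $\qbin{n_{k-1}+n_k+\delta_{k-1,t-1}}{m_{k-1}}$ as $\qbin{n_{k-1}-n_k+m_k+\delta_{k-1,t-1}}{m_{k-1}}$ under the convention $m_k:=2n_k$ gives \eqref{Eq_mineen}, where one uses the elementary observation $1-\chi(k-1\geq t)=\delta_{k-1,t-1}$ valid for $t\leq k$.

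The one genuinely new case, and the step I expect to be the main obstacle, is $a=-1$ with $t=k+1$. Here $\sum_{i=t}^k m_i$ is empty, so $u_i=-n_i$ for $i<k$, $u_k=n_k$ and $u_{k+1}=n_k+1$; the terminal factor is now $\qbin{1}{m_k}$ rather than $\qbin{0}{m_k}$, so $m_k$ runs over $\{0,1\}$. The $i=k-1$ factor equals $\qbin{n_{k-1}+n_k}{m_{k-1}}$ when $m_k=0$ and $\qbin{n_{k-1}+n_k+1}{m_{k-1}}$, weighted by the extra $q^{n_k+1}$ from $q^{m_k(m_k+u_k)}$, when $m_k=1$. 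I would then reindex the $m_k=1$ contribution by $n_k\mapsto n_k-1$; after this shift both contributions carry the common binomial $\qbin{n_{k-1}+n_k}{m_{k-1}}$ and the common prefactor $q^{n_k^2}$, and the remaining bracket collapses by the recurrence \eqref{Eq_qbin-rec},
\[
q^{n_k}\qbin{n_{k-1}}{n_k}+\qbin{n_{k-1}}{n_k-1}=\qbin{n_{k-1}+1}{n_k}.
\]
This simultaneously converts $\qbin{n_{k-1}}{n_k}$ into the $\qbin{n_i+\delta_{i,k-1}}{n_{i+1}}$ of \eqref{Eq_mineen2} and produces exactly the net $n_k$-exponent $q^{n_k^2}$ that \eqref{Eq_mineen2} displays as $q^{n_k^2-n_k+\sum_{i=s}^k n_i}$ after one power of $n_k$ is absorbed from the linear $n$-sum. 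The only care needed is at the endpoint $n_k=0$, where the shifted ($m_k=1$) sum contributes nothing and the recurrence still holds since $\qbin{n_{k-1}}{-1}=0$. A final routine comparison of the quadratic-plus-linear exponents in each of the three cases confirms agreement with \eqref{Eq_mineen}, \eqref{Eq_mineen2} and \eqref{Eq_mineen3}, completing the argument.
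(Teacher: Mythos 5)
Your proposal is correct and follows essentially the same route as the paper's own proof: the same reduction to $\mathcal{F}_{m_0;u}(q)$ with $u_i=-\sigma_i n_i+\chi(i\geq t)$ and $u_{k+1}=n_k+1$, the same application of Lemma~\ref{Lem_F-trafo} with $\ell=0$, the same case split on the terminal binomial, and for $a=-1$, $t=k+1$ the same shift $n_k\mapsto n_k-1$ combined with the recurrence \eqref{Eq_qbin-rec} to produce $\qbin{n_{k-1}+1}{n_k}$. The only cosmetic caveat is that your intermediate formulas (the telescoping difference $u_{i+1}-u_i=n_i-n_{i+1}+\delta_{i,t-1}$ and the denominator $(q)_{m_0-n_1+m_1+\delta_{t,1}}$) silently assume $\sigma_{i+1}=1$ resp.\ $\sigma_1=1$, but your subsequent case analysis handles the exceptional $a=-1$ instances exactly as the paper does, so nothing is lost.
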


\begin{proof}
Recalling \eqref{Eq_F-def}, we have 
\begin{equation}\label{Eq_F4}
F_{n_0,m_0;k,s,t}^{(a)}(z,q)=\sum_{n_1\geq\cdots\geq n_k\geq 0}
z^{n_1} q^{\sum_{i=s}^k n_i} \mathcal{F}_{m_0;u}(q)
\prod_{i=1}^k q^{n_i^2} \qbin{n_{i-1}}{n_i},
\end{equation}
where $u\in\mathbb{Z}^{k+1}$ is given by
\[
u=\big(\underbrace{-\sigma_1n_1,\dots,-\sigma_{t-1}n_{t-1}}_
{t-1 \text{ terms}},
\underbrace{1-\sigma_t n_t,\dots,1-\sigma_k n_k}_{k-t+1 \text{ terms}},
1+n_k\big).
\]
Since for $n_1\geq\cdots\geq n_k\geq 0$ the inequalities \eqref{Eq_u-ineq}
hold, we may apply Lemma~\ref{Lem_F-trafo} with $\ell=0$.
Hence
\begin{align}\label{Eq_Fut}
F_{m_0;u}(z,q)=\sum_{m_1,\dots,m_k\geq 0} \bigg( &
\frac{q^{\sum_{i=1}^k m_i(m_i-\sigma_i n_i)+\sum_{i=t}^k m_i}}
{(q)_{m_0-\sigma_1 n_1+m_1+\delta_{t,1}}} \\
& \times
\qbin{(a+1)n_k+\delta_{t,k+1}}{m_k}
\prod_{i=1}^{k-1} \qbin{n_i-\sigma_i n_{i+1}+m_{i+1}+
\delta_{i,t-1}}{m_i}\bigg). \notag
\end{align}
If $a=-1$ and $1\leq t\leq k$ then the summand vanishes unless $m_k=0$.
Substituting the resulting expression for $F_{m_0;u}(q)$ into \eqref{Eq_F4}
yields \eqref{Eq_mineen}.
If $a=1$, so that $\sigma_i=1$ for all $i$, the substitution of
\eqref{Eq_Fut} into \eqref{Eq_F4} immediately gives \eqref{Eq_mineen3}.
The case requiring more work corresponds to $a=-1$ and $t=k+1$.
Then the summand of \eqref{Eq_Fut} vanishes unless $m_k=0$ or $m_k=1$, 
so that
\begin{align*}
F_{m_0;u}(z,q)&=\sum_{m_1,\dots,m_{k-1}\geq 0}
\frac{q^{\sum_{i=1}^{k-1} m_i(m_i-n_i)}}{(q)_{m_0-n_1+m_1}}
\prod_{i=1}^{k-1}\qbin{n_i-n_{i+1}+m_{i+1}}{m_i} \\
&\quad+\sum_{m_1,\dots,m_{k-1}\geq 0}
\frac{q^{n_k+1+\sum_{i=1}^{k-1} m_i(m_i-n_i)}}
{(q)_{m_0-n_1+m_1+\delta_{k,1}}}
\prod_{i=1}^{k-1}\qbin{n_i-n_{i+1}+m_{i+1}+\delta_{i,k-1}}{m_i},
\end{align*}
where now $m_k:=2n_k$.
Substituting this into \eqref{Eq_F4} gives
\begin{align*}
F_{n_0,m_0;k,s,k+1}^{(-1)}(z,q)&=
\sum_{\substack{n_1,\dots,n_k\geq 0 \\[1pt] m_1,\dots,m_{k-1}\geq 0}}
\bigg( \frac{z^{n_1} q^{n_k^2+\sum_{i=s}^k n_i}}{(q)_{m_0-n_1+m_1}}\,
\qbin{n_0}{n_1} \\ 
& \qquad\qquad\qquad\quad\times\prod_{i=1}^{k-1} q^{n_i^2-n_im_i+m_i^2}
\qbin{n_i}{n_{i+1}}\qbin{n_i-n_{i+1}+m_{i+1}}{m_i}\bigg) \\
&\quad+
\sum_{\substack{n_1,\dots,n_k\geq 0 \\[1pt] m_1,\dots,m_{k-1}\geq 0}}
\bigg(\frac{z^{n_1} q^{n_k^2+n_k+1+\sum_{i=s}^k n_i}} 
{(q)_{m_0-n_1+m_1+\delta_{k,1}}}\,\qbin{n_0}{n_1} \\
& \qquad\qquad\qquad\qquad\times
\prod_{i=1}^{k-1} q^{n_i^2-n_im_i+m_i^2}
\qbin{n_i}{n_{i+1}}\qbin{n_i-n_{i+1}+m_{i+1}+\delta_{i,k-1}}{m_i}\bigg).
\end{align*}
Assuming $1\leq s\leq k$ and $k\geq 2$, and replacing $n_k\mapsto n_k-1$
(so that also $m_k\mapsto m_k-2$) in the second multisum on the right,
this yields
\begin{align*}
F_{n_0,m_0;k,s,k+1}^{(-1)}(z,q)&=
\sum_{\substack{n_1,\dots,n_k\geq 0 \\[1pt] m_1,\dots,m_{k-1}\geq 0}}
\bigg( \frac{z^{n_1} q^{n_k^2+\sum_{i=s}^k n_i}}{(q)_{m_0-n_1+m_1}}\,
\bigg(\qbin{n_{k-1}}{n_k}+q^{-n_k} \qbin{n_{k-1}}{n_k-1}\bigg) \\ 
& \qquad\qquad\qquad\quad\times\prod_{i=1}^{k-1} q^{n_i^2-n_im_i+m_i^2}
\qbin{n_{i-1}}{n_i}\qbin{n_i-n_{i+1}+m_{i+1}}{m_i}\bigg) \\
&=\sum_{\substack{n_1,\dots,n_k\geq 0 \\[1pt] m_1,\dots,m_{k-1}\geq 0}}
\bigg( \frac{z^{n_1} q^{n_k^2-n_k+\sum_{i=s}^k n_i}}{(q)_{m_0-n_1+m_1}}\,
\qbin{n_{k-1}+1}{n_k} \\ 
& \qquad\qquad\qquad\quad\times\prod_{i=1}^{k-1} q^{n_i^2-n_im_i+m_i^2}
\qbin{n_{i-1}}{n_i}\qbin{n_i-n_{i+1}+m_{i+1}}{m_i}\bigg),
\end{align*}
where the final equality follows from \eqref{Eq_qbin-rec}
with $(n,m)=(n_k,n_{k-1}-n_k)$. 
\end{proof}

First we use Proposition~\ref{Prop_finiteform2} to prove
Theorems~\ref{Thm_threecases} and \ref{Thm_threecases-b},
and to give a conditional proof of Conjectures~\ref{Con_A2-two}
and \ref{Con_A2-two-b}.
Our starting point is the aforementioned identity missing from \cite{ASW99}.

\begin{conjecture}\label{Con_missing}
For integers $k,s$ such that $1\leq s\leq k+1$ and $a\in\{-1,1\}$,
let $\sigma_1,\dots,\sigma_k$ be fixed as in \eqref{Eq_sigma-a}.
Then
\begin{align}\label{Eq_missing}
\sum_{\substack{n_1,\dots,n_k\geq 0 \\[1pt] m_1,\dots, m_k\geq 0}}
& \frac{q^{\sum_{i=1}^k (n_i^2-\sigma_in_im_i+m_i^2+m_i)+\sum_{i=s}^k n_i}}
{(q)_{n_1}(q)_{m_1}(q)_{n_k+m_k+1}} 
\prod_{i=1}^{k-1} \qbin{n_i}{n_{i+1}}\qbin{m_i}{m_{i+1}} \\
&=\frac{(q^{3k+a+3};q^{3k+a+3})_{\infty}^2}{(q)_{\infty}^3}\,
\theta(q,q^s,q^{s+1};q^{3k+a+3}).  \notag
\end{align}
\end{conjecture}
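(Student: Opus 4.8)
The plan is to recognise the left-hand side of \eqref{Eq_missing} as the evaluation $F^{(a)}_{\infty,\infty;k,s,1}(1,q)$ in the notation of \eqref{Eq_F2}: letting $n_0,m_0\to\infty$ sends $\qbin{n_0}{n_1}\qbin{m_0}{m_1}\to 1/\big((q)_{n_1}(q)_{m_1}\big)$, setting $z=1$ removes the $z^{n_1}$, and taking $t=1$ turns $\sum_{i=t}^k m_i$ into the global shift $\sum_{i=1}^k m_i$, so that the exponent and the factors match \eqref{Eq_missing} verbatim. The conjecture is therefore the unbounded evaluation of the iterated sum from \eqref{Eq_F2}, and I would attack it with the $\mathrm{A}_2$ Bailey machinery of \cite{ASW99}. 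Each summation layer $q^{n_i^2-\sigma_i n_im_i+m_i^2}\qbin{n_{i-1}}{n_i}\qbin{m_{i-1}}{m_i}$ is exactly the contribution produced by one application of the $\mathrm{A}_2$ Bailey lemma, so the multisum arises by iterating that lemma $k$ times on a suitable seed $\mathrm{A}_2$ Bailey pair. The lemma carries two free parameters, one for each root direction, and the task is to specialise them so that the first contributes $q^{m_i}$ at every level and the second contributes $q^{n_i}$ at the levels $i\geq s$, reproducing the exponent $\sum_{i=1}^k(n_i^2-\sigma_in_im_i+m_i^2+m_i)+\sum_{i=s}^k n_i$ precisely.

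After fixing the seed and the parameter specialisations, I would iterate the Bailey lemma $k$ times and then pass to the limit $n_0,m_0\to\infty$. In this limit the boundary of the chain should collapse to an $\mathrm{A}_2$ theta sum, which I expect to evaluate by the $\mathrm{A}_2^{(1)}$ Macdonald identity \eqref{Eq_Macdonald}. Indeed, the factor $\theta(q,q^s,q^{s+1};q^{3k+a+3})$ on the right is exactly the specialisation of \eqref{Eq_Macdonald} obtained by putting $(x_1,x_2,x_3)=(q^{s+1},q^s,1)$ and replacing $q$ by $q^{3k+a+3}$, since then the three quotients $x_i/x_j$ equal $q,q^s,q^{s+1}$; this specialisation also produces the prefactor $(q^{3k+a+3};q^{3k+a+3})_{\infty}^2/(q)_{\infty}^3$. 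Because the target is so cleanly a Macdonald specialisation, I do not expect the terminal step to be the source of difficulty.

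The main obstacle is the construction and verification of the correct seed for \emph{intermediate} values of $s$. The three cases $s\in\{1,k,k+1\}$ settled in Theorem~\ref{Thm_threecases} correspond to switching on the $n$-direction shift at an endpoint of the chain (at all levels, at the top level only, or not at all), for which a seed pair is already available in \cite{ASW99}; the general case requires switching on this shift partway up the chain. In the classical $\mathrm{A}_1$ theory such a change of base is handled not by the Bailey chain alone but by the Bailey \emph{lattice} of Agarwal, Andrews and Bressoud \cite{AAB87}, and I expect the genuine work here to be the development of the analogous $\mathrm{A}_2$ Bailey lattice, together with a direct check that its change-of-base step inserts the factor $q^{n_i}$ with no spurious corrections. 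As a consistency test before committing to this route, the near-symmetry of Remark~\ref{Rem_alt-sums} already predicts the companion form of \eqref{Eq_missing}, and specialising to small $k$ and to $q=1$ should confirm both the exponents and the normalising constants.
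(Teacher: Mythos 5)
You were asked to prove what is, in the paper, an open conjecture: the paper contains no proof of \eqref{Eq_missing} for general $s$. Its only result in this direction is Proposition~\ref{Prop_missing}, covering $s\in\{1,k,k+1\}$, and the proof of that proposition is pure citation: for $a=-1$ the three cases are Equations (5.28) ($i=1$) and (5.29) ($\sigma\in\{0,1\}$) of \cite{ASW99}, and for $a=1$ they are Equations (5.22) ($i=1$) and (5.23) ($\sigma\in\{0,1\}$) of \cite{ASW99}. Your opening reformulation is correct and coincides with the paper's own: the left-hand side of \eqref{Eq_missing} is exactly $F^{(a)}_{k,s,1}(1,q)$ in the notation of \eqref{Eq_F2}, which is how the paper restates the conjecture in \eqref{Eq_Fks1}. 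Your identification of the theta product as the specialisation $(x_1,x_2,x_3)=(q^{s+1},q^s,1)$, $q\mapsto q^{3k+a+3}$ of the $\mathrm{A}_2$ Macdonald identity \eqref{Eq_Macdonald} is also correct, up to the small slip that the factor $1/(q)_{\infty}^3$ is not produced by \eqref{Eq_Macdonald} itself but must emerge from the limit of the iterated sum.

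As a proof, however, your proposal has a genuine and decisive gap, which to your credit you flag yourself: the construction of an $\mathrm{A}_2$ Bailey lattice --- equivalently, of seed data that switches on the shift $q^{n_i}$ at an \emph{interior} level $s$ of the chain --- is postulated, not carried out. That construction is precisely the content of the conjecture. It is why only the endpoint cases $s\in\{1,k,k+1\}$ are provable from \cite{ASW99} (where the required identities already exist), why the paper labels the statement a conjecture with a footnote promising a future proof, and why even the later work \cite{KR22} mentioned in the paper's closing note proposes a generalisation rather than supplying a proof. In the rank-one setting the analogous interpolation between moduli genuinely required the new Bailey-lattice mechanism of \cite{AAB87}; nothing in your proposal establishes that an $\mathrm{A}_2$ analogue exists, or that its change-of-base step inserts the factor $q^{n_i}$ ``with no spurious corrections'' --- that is an expectation, not an argument. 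So what you have written is a sensible research programme that correctly locates the difficulty, but it proves nothing beyond what Proposition~\ref{Prop_missing} and Theorem~\ref{Thm_threecases} already record, and the conjecture remains open after it.
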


\begin{proposition}\label{Prop_missing}
Conjecture~\ref{Con_missing} holds for $s\in\{1,k,k+1\}$.
\end{proposition}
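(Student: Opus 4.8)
The plan is first to identify the summand in \eqref{Eq_missing} with a specialization of the finitized function $F^{(a)}_{n_0,m_0;k,s,t}(z,q)$ from \eqref{Eq_F2}. Sending $n_0,m_0\to\infty$ replaces $\qbin{n_0}{n_1}$ and $\qbin{m_0}{m_1}$ by $1/(q)_{n_1}$ and $1/(q)_{m_1}$, while the choice $z=1$, $t=1$ produces precisely the global shift $\sum_{i=1}^k m_i$ and the denominator $(q)_{n_k+m_k+1}$ occurring in \eqref{Eq_missing}. Hence, writing $F^{(a)}_{k,s,t}(1;q):=\lim_{n_0,m_0\to\infty}F^{(a)}_{n_0,m_0;k,s,t}(1,q)$, the left-hand side of \eqref{Eq_missing} equals $F^{(a)}_{k,s,1}(1;q)$, and it remains to evaluate this quantity for $s\in\{1,k,k+1\}$ and to check the product side.

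The evaluation of the unshifted ``vacuum'' sum $F^{(a)}_k(1;q)$ quoted from \cite[Eqs.~(5.22)\&(5.28)]{ASW99} is one member of the family of $\mathrm{A}_2$ Rogers--Ramanujan identities generated by the $\mathrm{A}_2$ Bailey chain of \cite{ASW99}. I would invoke the shifted members of this family: the three boundary values $s=1,k,k+1$ are exactly the ones for which a seed carrying the linear insertion $\sum_{i=s}^k n_i$ (together with the $t=1$ data) is recorded there, whereas for intermediate $s$ the required seed Bailey pair is absent, which is the reason only a conjecture can be asserted in general. For $a=1$ I would also use the $n_i\leftrightarrow m_i$ near-symmetry of the $z=1$ summand (the symmetry behind Remark~\ref{Rem_alt-sums}), which gives $F^{(1)}_{k,s,1}(1;q)=F^{(1)}_{k,1,s}(1;q)$ and so lets the roles of the $m$-shift and the $n$-shift be interchanged, bringing the sum into the shape of an available seed. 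Concretely, $s=k+1$ has an empty $n$-shift and is closest to the vacuum sum, $s=1$ carries a full shift along the $n$-chain, and $s=k$ is the single-term shift by $n_k$; in each case one reads off the product side directly from the corresponding \cite{ASW99} identity.

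The remaining, and main, task is to reconcile the product sides delivered by \cite{ASW99} with the normalized triple $\theta(q,q^s,q^{s+1};q^{3k+a+3})$. Setting $\kappa=3k+a+3$, this uses only the quasi-periodicity $\theta(q^{j+\kappa};q^\kappa)=-q^{-j}\theta(q^j;q^\kappa)$ and the reflection $\theta(q^j;q^\kappa)=\theta(q^{\kappa-j};q^\kappa)$ of the modified theta function; a short manipulation with these two relations converts the seed's theta arguments into $\{q,q^s,q^{s+1}\}$ and absorbs any prefactor of the form $-q^{-j}$ against the monomial shifts. The delicate points, and where I expect the real work to lie, are (a) tracking the extra factor $1/(1-q^{n_k+m_k+1})$ correctly through the $n_0,m_0\to\infty$ limit so that it reproduces exactly the $+1$ in $(q)_{n_k+m_k+1}$, and (b) pinning down which \cite{ASW99} identity supplies the seed for each of the three $s$, since those identities are not stated in the symmetric $\theta(q,q^s,q^{s+1};\cdot)$ form. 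No $q$-series input beyond \cite{ASW99} and these elementary theta relations should be needed for $s\in\{1,k,k+1\}$; the genuine barrier to all $s$ is simply the missing general seed deferred in the footnote.
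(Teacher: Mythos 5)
Your proposal takes essentially the same route as the paper: identify the left-hand side of \eqref{Eq_missing} as $F^{(a)}_{k,s,1}(1,q)$ and observe that for $s\in\{1,k,k+1\}$ (and $a=\pm1$) this is precisely one of six identities already proved in \cite{ASW99} via the $\mathrm{A}_2$ Bailey machinery — the paper simply cites Equations (5.28) with $i=1$ and (5.29) with $\sigma\in\{0,1\}$ for $a=-1$, and (5.22) with $i=1$ and (5.23) with $\sigma\in\{0,1\}$ for $a=1$. Your hedges are not needed: the denominator $(q)_{n_k+m_k+1}$ is built into the definition \eqref{Eq_F2} and is untouched by the $n_0,m_0\to\infty$ limit (only $\qbin{n_0}{n_1}$ and $\qbin{m_0}{m_1}$ are affected), and no theta-function renormalisation or $n_i\leftrightarrow m_i$ symmetry is required for these three cases.
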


\begin{proof}
Let $a=-1$. 
Then the $s=1$ case is \cite[Equation (5.28), $i=1$]{ASW99},
the $s=k$ case is \cite[Equation (5.29), $\sigma=1$]{ASW99} and
the $s=k+1$ case is \cite[Equation (5.29), $\sigma=0$]{ASW99}.
Next let $a=1$.
Then the $s=1$ case is \cite[Equation (5.22), $i=1$]{ASW99},
the $s=k$ case is \cite[Equation (5.23), $\sigma=1$]{ASW99} and
the $s=k+1$ case is \cite[Equation (5.23), $\sigma=0$]{ASW99}.
\end{proof}

If we define
\[
F^{(a)}_{k,s,t}(z;q):=\lim_{n_0,m_0\to\infty}
F^{(a)}_{n_0,m_0;k,s,t}(z;q),
\]
then \eqref{Eq_missing} can be stated succinctly as
\begin{equation}\label{Eq_Fks1}
F_{k,s,1}^{(a)}(1,q)
=\frac{(q^{3k+a+3};q^{3k+a+3})_{\infty}^2}{(q)_{\infty}^3}\,
\theta(q,q^s,q^{s+1};q^{3k+a+3}).
\end{equation}
Taking the large-$n_0,m_0$ limit of
\eqref{Eq_mineen} and \eqref{Eq_mineen3} for $z=t=1$ results in
Conjecture~\ref{Con_A2-two} and Conjecture~\ref{Con_A2-two-b} 
(with $k\mapsto k+1$) respectively.
Assuming $s\in\{1,k,k+1\}$ this proves 
Theorems~\ref{Thm_threecases} and \ref{Thm_threecases-b}.

Next we apply Proposition~\ref{Prop_finiteform2} to 
also prove Theorems~\ref{Thm_A2-three} and \ref{Thm_A2-three-b}.
First we note that according to \cite[Theorem 5.5]{ASW99} and 
\cite[Theorem 5.7]{ASW99} (with $k\mapsto k+1$)
\[
F_{k,s,s}^{(a)}(1;q)
=\frac{(q^{3k+a+3};q^{3k+a+3})_{\infty}^2}{(q)_{\infty}^3}
\sum_{i=1}^s q^{s-i}\,\theta(q^i,q^i,q^{2i};q^{3k+a+3})
\]
for integers $k,s$ such that $1\leq s\leq k$.
Hence, taking the large-$n_0,m_0$ limit of \eqref{Eq_mineen} and
\eqref{Eq_mineen3} for $z=1$ and $t=s$, yields Theorem~\ref{Thm_A2-three}
and Theorem~\ref{Thm_A2-three-b} (with $k\mapsto k+1$).

\subsubsection{Proof of \eqref{Eq_ks-alt}, \eqref{Eq_kk1-alt} and
\eqref{Eq_ks-alt-b}}
In this (sub)section we prove (or conditionally prove) the identities
\eqref{Eq_ks-alt}, \eqref{Eq_kk1-alt} and \eqref{Eq_ks-alt-b}.
As alluded to in Remark~\ref{Rem_alt-sums}, the idea is it to take
advantage of the near-symmetry exhibited by \eqref{Eq_missing}.
In particular, we exploit the fact that from \eqref{Eq_F2} it follows that
\begin{equation}\label{Eq_F-symmetry}
F_{n_0,m_0;k,s,t}^{(a)}(1,q)=F_{m_0,n_0;k,t,s}^{(a)}(1,q)
\end{equation}
but that this symmetry is not at all manifest in
Proposition~\ref{Prop_finiteform2}.
In fact, we only require \eqref{Eq_F-symmetry} in the limit of large $n_0$
and $m_0$:
\[
F_{k,s,t}^{(a)}(1,q)=F_{k,t,s}^{(a)}(1,q).
\]
By \eqref{Eq_Fks1} this implies
\begin{equation}\label{Eq_Fk1s}
F_{k,1,s}^{(a)}(1,q)
=\frac{(q^{3k+a+3};q^{3k+a+3})_{\infty}^2}{(q)_{\infty}^3}\,
\theta(q,q^s,q^{s+1};q^{3k+a+3}).
\end{equation}
Letting $n_0,m_0$ tend to infinity in the $(s,t,z)\mapsto (1,s,1)$ 
case of \eqref{Eq_mineen} and equating the resulting multisum for
$F_{k,1,s}^{(-1)}(1,q)$ with \eqref{Eq_Fk1s} yields \eqref{Eq_ks-alt}.
Similarly, letting $n_0,m_0$ tend to infinity in the $(s,z)=(1,1)$ case
of \eqref{Eq_mineen2} and equating the resulting multisum for 
$F_{k,1,k+1}^{(-1)}(1,q)$ with \eqref{Eq_Fk1s} yields \eqref{Eq_kk1-alt}
for $k\geq 2$.
The missing $k=1$ case of \eqref{Eq_kk1-alt} simply corresponds to
\eqref{Eq_ks-con} for $(k,s)=(1,2)$.
Finally, letting $n_0,m_0$ tend to infinity in the $(s,t,z)\mapsto (1,s,1)$
case of \eqref{Eq_mineen3} and equating the resulting multisum for
$F_{k,1,s}^{(1)}(1,q)$ with \eqref{Eq_Fk1s} yields \eqref{Eq_ks-alt-b}.

\subsection{Proof of Theorem~\ref{Thm_k12} for $k=1$}\label{Sec_keen}

To shorten some of our expressions, throughout this section
we write $\mathscr{C}_{\la/\mu/d}$, $\GK_{\la/\mu/d}(z,q)$ 
and $\GK_{\la/\mu/d;n}(q)$ 
instead of $\mathscr{C}_{\la/\mu/d}(\mathbb{N}_0)$,
$\GK_{\la/\mu/d}(z,q;\mathbb{N}_0)$ and
$\GK_{\la/\mu/d;n}(q;\mathbb{N}_0)$.

\medskip

The $k=1$ case of Theorem~\ref{Thm_k12} is given by
\begin{equation}\label{Eq_RR-rank3-unbounded}
\GK_{(a+1,1-a,0)}(z,q)=\frac{1}{(zq)_{\infty}}\,
\sum_{n=0}^{\infty} \frac{z^n q^{n(n+a)}}{(q)_n}
\end{equation}
for $a\in\{0,1\}$.
This is an immediate consequence of the level-rank duality 
\eqref{Eq_RR-rank3} and the rank-$2$ identity \cite[page 6]{CDU20}
\begin{equation}\label{Eq_RR-rank2-unbounded}
\GK_{(a+2,1-a)}(z,q)=\frac{1}{(zq)_{\infty}}\,
\sum_{n=0}^{\infty} \frac{z^n q^{n(n+a)}}{(q)_n}.
\end{equation}
Below we present an alternative proof based on a bounded analogue
of \eqref{Eq_RR-rank3-unbounded}.

\begin{proposition}\label{Prop_RRcase}
For $L$ a nonnegative integer and $a\in\{0,1\}$,
\begin{equation}\label{Eq_RR-rank3-bounded}
\GK_{(L+1,L,L)/(1-a,0,0)/2}(z,q)= 
\frac{1}{(zq)_{3L+a}}\,\sum_{n=0}^{\infty} z^n q^{n(n+a)} 
\qbin{3L-n}{n}.
\end{equation}
\end{proposition}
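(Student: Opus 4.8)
The plan is to first reduce the two-variable identity to a family of single-variable ($q$-only) identities and then prove those. Since $\GK_{\la/\mu/d;N}(q)=[z^N]\bigl((1-z)^{-1}\GK_{\la/\mu/d}(z,q)\bigr)$ by \eqref{Eq_GF-max}, and since $(1-z)(zq)_{3L+a}=(z;q)_{3L+a+1}$, substituting the conjectured right-hand side of \eqref{Eq_RR-rank3-bounded} and expanding $1/(z;q)_{3L+a+1}$ by the $q$-binomial theorem \eqref{Eq_qbt} shows that \eqref{Eq_RR-rank3-bounded} is equivalent to the purely finite identity
\[
\GK_{(L+1,L,L)/(1-a,0,0)/2;N}(q)=\sum_{n=0}^{N}q^{n(n+a)}\qbin{3L-n}{n}\qbin{3L+a+N-n}{N-n}
\]
for all nonnegative integers $L,N$ and $a\in\{0,1\}$. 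Proving this for every $N$ is equivalent to \eqref{Eq_RR-rank3-bounded}, and letting $L\to\infty$ (so that $\qbin{3L-n}{n}\to 1/(q)_n$ and $(zq)_{3L+a}\to(zq)_\infty$) recovers the unbounded identity \eqref{Eq_RR-rank3-unbounded}, delivering the promised alternative, self-contained proof.

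Next I would evaluate the left-hand side directly from the Gessel--Krattenthaler determinant \eqref{Eq_GK-thm3} specialised to rank $r=3$ and level $d=2$, where $r(d+r)=15$ and $d+r=5$. With $\la=(L+1,L,L)$ and $\mu=(1-a,0,0)$ this writes $\GK_{\la/\mu/2;N}(q)$ as a $3\times3$ determinant with $(i,j)$ entry
\[
q^{15\binom{k_i}{2}+5ik_i+(\mu_j-j)(3k_i+i-j)}\qbin{N+\la_i-\mu_j-2k_i}{\la_i-\mu_j-5k_i+j-i},
\]
summed over $k_1+k_2+k_3=0$. For bounded $L$ and $N$ the $q$-binomial coefficients confine $(k_1,k_2,k_3)$ to a small finite set; the term $k_1=k_2=k_3=0$ supplies the bulk contribution $\det_{1\le i,j\le3}\qbin{N+\la_i-\mu_j}{\la_i-\mu_j+j-i}$, while the remaining lattice points encode the cyclic (wrapping) corrections. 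I would then collapse this three-dimensional lattice sum to the target single sum by row and column operations on the $q$-binomial determinant, followed by the $q$-Chu--Vandermonde summation \eqref{Eq_qCV} and the $q$-Pascal recurrence \eqref{Eq_qbin-rec}.

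The main obstacle is exactly this determinant reduction. Unlike the rectangular case $\la=(L^r)$, where \eqref{Eq_Lr} follows cleanly from the Macdonald identity, here $\la$ is non-rectangular and $N$ is finite, so no product formula appears and the lattice sum must be resummed by hand. To hedge against this difficulty I would keep a second route available: prove the displayed finite identity by induction on $N$, noting that $\GK_{\la/\mu/2;N}(q)-\GK_{\la/\mu/2;N-1}(q)$ enumerates the level-$2$, rank-$3$ cylindric partitions of maximal entry exactly $N$, and that peeling off this top layer relates them to the same family with smaller parameters; on the right-hand side the matching recurrence follows from \eqref{Eq_qbin-rec} applied to $\qbin{3L+a+N-n}{N-n}$, and the base case $N=0$ reduces both sides to $1$. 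I expect the inductive route to be the more robust, its one delicate point being the combinatorial verification that removing the entries equal to $N$ from such a cylindric partition reproduces a generating function of the same shape, which is where the cyclic constraint must be handled carefully. Level-rank duality \eqref{Eq_levelrank} provides a useful consistency check throughout, since it identifies $\GK_{(L+1,L,L)/(1-a,0,0)/2}(z,q)$ with a rank-$2$ object whose finite Rogers--Ramanujan polynomial is classical.
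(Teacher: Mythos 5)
Your opening reduction is sound: extracting coefficients via \eqref{Eq_GF-max} and expanding $1/(z;q)_{3L+a+1}$ does show that \eqref{Eq_RR-rank3-bounded} is equivalent to the finite identity
\[
\GK_{(L+1,L,L)/(1-a,0,0)/2;N}(q)=\sum_{n=0}^{N}q^{n(n+a)}\qbin{3L-n}{n}\qbin{3L+a+N-n}{N-n},
\]
and the $L\to\infty$ limit statement is correct. From that point on, however, the proposal is a plan rather than a proof, and both routes you offer break down exactly where the real work lies. Route A asks for the collapse of the signed lattice sum \eqref{Eq_GK-thm3} (a $3\times 3$ determinant summed over $k_1+k_2+k_3=0$) into the manifestly positive single sum above; this is a doubly-bounded Rogers--Ramanujan identity of Burge type \cite{Burge93}, and ``row and column operations, then \eqref{Eq_qCV} and \eqref{Eq_qbin-rec}'' gives no indication of how the alternating contributions with $k_i\neq 0$ are to be absorbed. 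That cancellation \emph{is} the theorem; it is not a routine determinant manipulation.

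Route B contains a concrete error: peeling the maximal entries off a cylindric partition of shape $(L+1,L,L)/(1-a,0,0)/2$ does \emph{not} ``reproduce a generating function of the same shape''. Adding or removing a top layer moves you between different inner shapes $\mu$ (equivalently, different profiles), so the recursion in $N$ cannot close within the single family you wrote down. The same obstruction is visible on the sum side: applying \eqref{Eq_qbin-rec} to $\qbin{3L+a+N-n}{N-n}$ gives $S_N=S_{N-1}+\sum_{n} q^{n(n+a)+N-n}\qbin{3L-n}{n}\qbin{3L+a+N-1-n}{N-n}$, and the leftover sum has the ``wrong'' top-minus-bottom parameter, so it is not $S_{N'}$ for any choice of parameters; your claimed matching recurrence does not exist as stated. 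This coupling is precisely what the paper's proof confronts: it first uses level-rank duality \eqref{Eq_levelrank} --- which you relegate to a consistency check --- to replace the rank-$3$, level-$2$ statement by the rank-$2$, level-$3$ identity \eqref{Eq_GK1} of Proposition~\ref{Prop_RRcase-rank2}, then derives a \emph{closed, coupled system} of bounded functional equations by box-adding and inclusion-exclusion (the bounded analogues \eqref{Eq_FE-simple} and \eqref{Eq_Fun} of the Corteel--Welsh equations \cite{CW19}), verifies that the claimed formula solves the system via a short $q$-Pascal computation, and concludes by uniqueness from the initial conditions \eqref{Eq_initial}. To salvage your approach you would either have to carry out the determinant resummation in full, or identify the companion bounded families (in rank $3$ this means tracking all the inner shapes $\mu$ of level $2$ that the peeling produces) together with the coupled recurrences they satisfy --- at which point you would have reconstructed the paper's argument, minus the simplification that level-rank duality buys.
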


Letting $L$ tend to infinity using \eqref{Eq_Gc} yields 
\eqref{Eq_RR-rank3-unbounded}.
We remark that it is an open problem to find the
bounded analogues of Theorem~\ref{Thm_k12} for $k=2$ 
and (which should be easier) Conjecture~\ref{Con_cylindric-b} for 
$k=2$, i.e., \cite[Theorem 3.2]{CW19}.

Our first step in proving Proposition~\ref{Prop_RRcase} is to again use
level-rank duality; \eqref{Eq_RR-rank3-bounded} is implied by the
following rank-$2$ identity.

\begin{proposition}\label{Prop_RRcase-rank2}
For $L$ a nonnegative integer and $a,b\in\{0,1\}$,
\begin{equation}\label{Eq_GK1}
\GK_{(L+b+1,L)/(1-a,0)/3}(z,q)=\frac{1}{(zq)_{2L+a+b}}\,
\sum_{n=0}^{\infty} z^n q^{n(n+a)} \qbin{2L+b-n}{n}.
\end{equation}
\end{proposition}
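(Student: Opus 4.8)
The plan is to prove \eqref{Eq_GK1} for all $L\ge 0$ and $a,b\in\{0,1\}$ at once, by recognising it as a finite (polynomial) Rogers--Ramanujan identity in cylindric disguise and matching a single recurrence on the two sides. First I would reindex the four families by one parameter $M:=2L+b$, so that the shape in \eqref{Eq_GK1} becomes $(\ceil{M/2}+1,\floor{M/2})/(1-a,0)/3$ and the claim reads $G_a(M)=d_a(M;z)/(zq)_{M+a}$, where $G_a(M)$ abbreviates the left-hand side of \eqref{Eq_GK1} and $d_a(M;z):=\sum_{n\ge 0} z^n q^{n^2+an}\qbin{M-n}{n}$ is the relevant Schur polynomial. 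Applying the $q$-binomial recurrence \eqref{Eq_qbin-rec} to $\qbin{M-n}{n}$ and shifting $n\mapsto n+1$ gives the standard two-term recurrence $d_a(M;z)=d_a(M-1;z)+zq^{M+a-1}d_a(M-2;z)$. Since $(zq)_N=(1-zq^N)(zq)_{N-1}$, clearing the three denominators $(zq)_{M+a},(zq)_{M+a-1},(zq)_{M+a-2}$ shows that the right-hand side of \eqref{Eq_GK1} satisfies
\[
(1-zq^{M+a})(1-zq^{M+a-1})\,G_a(M)=(1-zq^{M+a-1})\,G_a(M-1)+zq^{M+a-1}\,G_a(M-2).
\]

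The heart of the proof is to show that the cylindric generating function obeys this same recurrence. Passing from $M-1$ to $M$ adjoins a single cell to the skew shape (alternately to the top and the bottom row as the parity of $M$ changes), and the factors $1-zq^{\bullet}$ are exactly the transfer weights that appear in the Corteel--Welsh functional-equation method when one conditions on the value of the largest entry. Concretely, I would use the cyclic and translation symmetries \eqref{Eq_cyc} and \eqref{Eq_trans} to normalise the shape, write out the interlacing inequalities \eqref{Eq_nu-ineq} and \eqref{Eq_cyclic-ineq} (which for rank $2$ and level $3$ couple $\nu^{(1)}$ and $\nu^{(2)}$ through a bounded staircase), and decompose $\mathscr{C}_{\la/\mu/3}$ according to the entry in the newly adjoined cell. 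Subtracting $1$ from every entry of the appropriate border and tracking how $\max$ and $\abs{\pi}$ change should reproduce the $z$- and $q$-weights above, the two $(1-zq^{\bullet})$ factors accounting for the freedom in the largest part. As an independent check on the bookkeeping I would specialise the Gessel--Krattenthaler determinant \eqref{Eq_GK-thm3}, which for $r=2$ is an explicit $2\times 2$ determinant summed over $k\in\mathbb{Z}$, and confirm that $\GK_{\la/\mu/3;n}(q)$, and hence via \eqref{Eq_GF-max} the $z$-coefficients of $G_a(M)$, satisfy the recurrence.

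Finally I would verify the base cases $M=0$ and $M=1$, where the shapes are small enough to enumerate by hand; here one must be slightly careful, because for small $L$ the level constraint $d<\la_1-\mu_r$ fails, so the cyclic inequality \eqref{Eq_cyclic-ineq} trivialises, the two rows decouple, and $G_a$ collapses to an elementary geometric product that is readily checked against $d_a(M;z)/(zq)_{M+a}$. The main obstacle I anticipate is the combinatorial step: peeling the correct border strip and proving that the weight bookkeeping yields precisely $(1-zq^{M+a-1})G_a(M-1)+zq^{M+a-1}G_a(M-2)$ rather than an off-by-a-power-of-$q$ variant. The delicate point is that adjoining the cell to the top row versus the bottom row (the two parities of $M$) interacts differently with the wrapping inequality \eqref{Eq_cyclic-ineq}, so the two induction steps must be analysed separately and then shown to assemble into the single uniform recurrence displayed above.
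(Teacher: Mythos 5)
Your reduction strategy (both sides of \eqref{Eq_GK1} satisfy the same three-term recurrence in $M=2L+b$, plus base cases $M=0,1$) is sound in outline, and your treatment of the right-hand side is essentially correct: the Schur-type polynomials $d_a(M;z)$ do satisfy $d_a(M;z)=d_a(M-1;z)+zq^{M+a-1}d_a(M-2;z)$ (via the mirror form of \eqref{Eq_qbin-rec}), and clearing denominators gives the recurrence you display. The gap is in the step you yourself call the heart of the proof: you never establish that the cylindric generating function $G_a(M)$ satisfies
\[
(1-zq^{M+a})(1-zq^{M+a-1})\,G_a(M)=(1-zq^{M+a-1})\,G_a(M-1)+zq^{M+a-1}\,G_a(M-2),
\]
and the sketch you offer rests on a mischaracterisation. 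Conditioning on the largest entry of a cylindric partition --- the Corteel--Welsh move, which is what box-adding actually proves --- yields functional equations of a structurally different kind: the variable $z$ is replaced by $zq$ or $zq^2$, the coefficients are the $M$-\emph{independent} factors $1/(1-zq)$ and $1/(1-zq^2)$, and it is the inner profile $\mu$ (not the outer shape) that changes; compare \eqref{Eq_FE-1}, \eqref{Eq_generals} and, in the present case, \eqref{Eq_FE-simple} and \eqref{Eq_Fun}. Your recurrence instead keeps $z$ fixed and carries shape-dependent coefficients $(1-zq^{M+a})$; those factors come from the denominator $(zq)_{M+a}$ of the closed form you are trying to prove, so invoking them as ``transfer weights'' of the combinatorial move assumes the conclusion. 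The naive alternative also fails: deleting the newly adjoined cell does map fillings of the $M$-shape onto fillings of the $(M-1)$-shape, but the fibre over a filling $\bar\pi$ is a sum over all admissible values of the new entry, whose upper bound depends on $\bar\pi$ (and can be absent altogether, in which case the statistic $\max$ changes), so one gets $\bar\pi$-dependent factors, not the uniform ones displayed. Equivalently, your recurrence asserts $Q(M)=Q(M-1)+zq^{M+a-1}Q(M-2)$ for the sieved polynomial $Q(M)=(zq)_{M+a}G_a(M)$, i.e.\ the rank-$2$ instance of $Q_{n,c}$ in \eqref{Eq_Qnc}; producing a cylindric-partition interpretation of that polynomial is precisely the hard content, and it is nowhere supplied.

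To close the gap you would need to do what the paper does: prove functional equations that genuinely follow from box-adding --- in bounded form, \eqref{Eq_FE-simple} together with \eqref{Eq_Fun}, obtained by inclusion--exclusion on adding boxes to the two rows --- observe that with the initial conditions \eqref{Eq_initial} these uniquely determine $\GK_{(L+b+1,L)/(1,0)/3}(z,q)$ for all $L\geq 0$ and $b\in\{0,1\}$, and then verify that the right-hand side of \eqref{Eq_GK1} with $a=0$ satisfies \eqref{Eq_Fun}; after clearing denominators this verification is a polynomial identity proved by exactly the kind of $q$-binomial manipulation you performed (note, though, that because \eqref{Eq_Fun} shifts $z\mapsto zq,zq^2$, the identity that actually arises mixes the parameters $a$, namely $d_0(m+1)=d_1(m)+zq\,d_2(m-1)$, rather than your fixed-$a$ recurrence), and the $a=1$ case then follows from \eqref{Eq_FE-simple}. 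Finally, your proposed ``independent check'' via \eqref{Eq_GK-thm3} does not substitute for the missing step: the determinant formula gives the max-bounded series $\GK_{\la/\mu/3;n}(q)$, and the $M$-dependent factors $(1-zq^{M+a})$ mix $z$-coefficients across different $n$ and different shapes, so confirming your recurrence there is a nontrivial determinant identity in its own right, not bookkeeping.
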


Replacing $L\mapsto 3L+b$ in \eqref{Eq_GK1} and applying \eqref{Eq_levelrank}
with $r=2$, $d=3$, $\la=(2b+1,b)$, $\la'=(b+1,b,b)$, $\mu=(1-a,0)$ and
$\mu'=(1-a,0,0)$ yields \eqref{Eq_RR-rank3-bounded}
with $L\mapsto 2L+b$.

\begin{remark}
More generally it may be shown that for integers $b,L,k,s$ such that 
$b\in\{0,1\}$, $L\geq 0$ and $1\leq s\leq k$,
\begin{align}\label{Eq_GK-rank2}
&\GK_{(L+b+k-1,L)/(s-1,0)/2k-1}(z,q) \\
&\quad=\frac{1}{(zq)_N}\,\sum_{n_1,\dots,n_{k-1}=0}^{\infty}
z^{n_1} q^{\sum_{i=s}^{k-1}n_i}\prod_{i=1}^{k-1} q^{n_i^2} 
\qbin{N-n_i-n_{i+1}-\varphi_{i-s+1}-2\sum_{j=1}^{i-1}n_j}{n_i-n_{i+1}},
\notag
\end{align}
where $n_k:=0$, $N:=2L+b+k-s$ and $\varphi_i:=\max\{0,i\}$.
Using \eqref{Eq_GF-max}, \eqref{Eq_GK-thm3} and
\[
\sum_{m=0}^{n_0} [z^m] \bigg( \sum_{n_1=0}^{\infty} \frac{z^{n_1}}{(zq)_N}\,
f_{n_1}(q)\bigg)=\sum_{n_1=0}^n \qbin{N+n_0-n_1}{n_0-n_1} f_{n_1}(q),
\]
it follows from \eqref{Eq_GK-rank2} that 
\begin{align*}
&\GK_{(L+b+k-1,L)/(s-1,0)/2k-1;n_0}(q) \\[1mm]
&\quad=\sum_{j=-n_0}^{n_0} \bigg((-1)^j 
q^{(2k+1)\binom{j}{2}+js}
\qbin{n_0+\floor{\frac{1}{2}(N+(2k-1)j-k+s)}}{n_0-j} \\
& \qquad\qquad\qquad\qquad\qquad\qquad\qquad\times
\qbin{n_0+\ceil{\frac{1}{2}(N-(2k-1)j+k-s)}}{n_0+j}\bigg) \\
&\quad=\sum_{n_1,\dots,n_{k-1}=0}^{\infty}
q^{\sum_{i=1}^{k-1} n_i^2+\sum_{i=s}^{k-1} n_i} 
\prod_{i=0}^{k-1} \qbin{2n_0+N-n_i-n_{i+1}-\varphi_{i-s+1}
-2\sum_{j=0}^{i-1}n_j}{n_i-n_{i+1}}.
\end{align*}
For $k=1$ and even $N$, this is equivalent to Burge's doubly-bounded
analogue of Euler's pentagonal number theorem \cite[page~216]{Burge93}.
Similarly, the case of odd $N$ is equivalent to Burge's second analogue
of the pentagonal number theorem \cite[page~221]{Burge93}.
For $k=2$ and even $N$, the above corresponds to identities for
the generating functions $P(N,M,2,3,1,1)$ and $P(N,M,1,4,1,1)$ 
of partition pairs (Burge's notation and terminology) \cite[page 218]{Burge93},
which may be viewed as doubly-bounded analogues of the 
Rogers--Ramanujan identities \eqref{Eq_RR}.
For $k\geq 3$ the above result corresponds to doubly-bounded analogues
of the Andrews--Gordon identities, which for even $N$ are implicit
in \cite{FLW98}.

As an even level counterpart of \eqref{Eq_GK-rank2}, it may further be shown
that for integers $L,k,s$ such that $L\geq 0$ and $1\leq s\leq k+1$,
\begin{multline}\label{Eq_GK-rank2-even}
\GK_{(L+k,L)/(s-1,0)/2k}(z,q)
=\frac{1}{(zq)_N}\,\sum_{n_1,\dots,n_k=0}^{\infty}\bigg(
z^{n_1} q^{\sum_{i=1}^k n_i^2+\sum_{i=s}^kn_i} 
\qbin{L-\sum_{j=1}^{k-1}n_j}{n_k}_{q^2} \\
\times
\prod_{i=1}^{k-1} 
\qbin{N-n_i-n_{i+1}-\varphi_{i-s+1}-2\sum_{j=1}^{i-1}n_j}{n_i-n_{i+1}}\bigg),
\end{multline}
where, now, $N:=2L+k-s+1$.
As per the above, this implies
\begin{align*}
&\GK_{(L+k,L)/(s-1,0)/2k;n_0}(q) \\[1mm]
&\quad=\sum_{j=-n_0}^{n_0} (-1)^j 
q^{2(k+1)\binom{j}{2}+js}
\qbin{n_0+kj+\frac{1}{2}(N-k+s-1)}{n_0-j} 
\qbin{n_0-kj+\frac{1}{2}(N+k-s+1)}{n_0+j} \\
&\quad=\sum_{n_1,\dots,n_k=0}^{\infty}\bigg(
q^{\sum_{i=1}^k n_i^2+\sum_{i=s}^kn_i} 
\prod_{i=0}^{k-1} 
\qbin{N+2n_0-n_i-n_{i+1}-\varphi_{i-s+1}-2\sum_{j=0}^{i-1}n_j}{n_i-n_{i+1}} \\
&\qquad\qquad\qquad\qquad\qquad\qquad\quad \times
\qbin{n_0+\frac{1}{2}(N-k+s-1)-\sum_{j=0}^{k-1}n_j}{n_k}_{q^2}\bigg).
\end{align*}
For $k=0$ (and hence $s=1$ and $N$ even) this is equivalent to an 
identity for $P(N,M,1,1,1,1)$ due to Burge \cite[page 217]{Burge93}.
For $k=1$ and $s=2$ (and hence $N$ even) the above corresponds to
Burge's identity for $P(N,M,2,2,1,1)$ \cite[page 218]{Burge93}, and
for $k\geq 2$ the above gives doubly-bounded analogues of Bressoud's
Rogers--Ramanujan-type identities for even moduli
\cite{Bressoud80,Bressoud80b}.
\end{remark}

\begin{proof}[Proof of Proposition~\ref{Prop_RRcase-rank2}]
Our proof is a bounded version of the proof of \eqref{Eq_RR-rank2-unbounded}
given in \cite{CDU20}. 
The latter is based on a system of functional equations for $\GK_c(z,q)$ 
with fixed level $d:=c_0+\cdots+c_{r-1}$, due to Corteel and 
Welsh \cite[Proposition 3.1]{CW19}.
Below we consider a bounded version of these functional equations,
and although much of what we do can be generalised to arbitrary shapes
$\la/\mu/d$, we restrict considerations to the rank-$2$ case, which is the
only case we know how to solve.

Let 
\[
\mathscr{C}_{\la/\mu/d}(m):=
\big\{\pi\in\mathscr{C}_{\la/\mu/d}: \max(\pi)=m\}
\]
and
\[
\GK_{\la/\mu/d;m}(q):=\sum_{\pi\in\mathscr{C}_{\la/\mu/d}(m)} q^{\abs{\pi}}.
\]
Then
\[
\GK_{\la/\mu/d}(z,q)=\sum_{m=0}^{\infty} z^m \GK_{\la/\mu/d;m}(q).
\]

For integers $d,i,j$ such that $0\leq j\leq i\leq d+j$ and
$d,i\geq 1$, let $\pi\in\mathscr{C}_{(i,j)/(1,0)/d}(m)$.
By adding a box with filling $n\geq m$ in the first row of $\pi$ as in
\smallskip 

\begin{center}
\begin{tikzpicture}[scale=0.4,line width=0.3pt]
\draw (0,0)--(7,0)--(7,1)--(10,1)--(10,2)--(13,2)--(13,3)--(6,3)--(6,2)--(1,2)--(1,1)--(0,1)--cycle;
\foreach \i in {1,...,1}{\draw (\i,0)--(\i,1);}
\foreach \i in {2,...,6}{\draw (\i,0)--(\i,2);}
\foreach \i in {7,...,9}{\draw (\i,1)--(\i,3);}
\foreach \i in {10,...,12}{\draw (\i,2)--(\i,3);}
\draw (1,1)--(7,1);
\draw (6,2)--(10,2);
\draw[red] (1.5,1.5) node {$\sc \nu_2$};
\draw[red] (9.5,1.5) node {$\sc \nu_i$};
\draw[red] (0.5,0.5) node {$\sc \tau_1$};
\draw[red] (6.5,0.5) node {$\sc \tau_j$};
\draw (6.5,2.5) node {$\sc \tau_1$};
\draw (12.5,2.5) node {$\sc \tau_j$};
\draw (3,2.5) node {$\sc d$};
\draw[<-] (0,2.5)--(2.5,2.5);
\draw[->] (3.5,2.5)--(6,2.5);
\draw (14.5,1.5) node {$\longmapsto$};
\begin{scope}[xshift=16cm]
\filldraw[white,fill=red!10,very thin] (0,1) rectangle (1,2);
\draw (0,0)--(7,0)--(7,1)--(10,1)--(10,2)--(13,2)--(13,3)--(6,3)--(6,2)--(0,2)--cycle;
\foreach \i in {1,...,6}{\draw (\i,0)--(\i,2);}
\foreach \i in {7,...,9}{\draw (\i,1)--(\i,3);}
\foreach \i in {10,...,12}{\draw (\i,2)--(\i,3);}
\draw (0,1)--(7,1);
\draw (6,2)--(10,2);
\draw[red] (0.5,1.5) node {$\sc n$};
\draw[red] (1.5,1.5) node {$\sc \nu_2$};
\draw[red] (9.5,1.5) node {$\sc \nu_i$};
\draw[red] (0.5,0.5) node {$\sc \tau_1$};
\draw[red] (6.5,0.5) node {$\sc \tau_j$};
\draw (6.5,2.5) node {$\sc \tau_1$};
\draw (12.5,2.5) node {$\sc \tau_j$};
\draw (3,2.5) node {$\sc d$};
\draw[<-] (0,2.5)--(2.5,2.5);
\draw[->] (3.5,2.5)--(6,2.5);
\end{scope}
\end{tikzpicture}
\end{center}

\noindent
(where $m=\max\{\nu_2,\tau_1\}$)
we obtain a cylindric partition in $\mathscr{C}_{(i,j)/(0,0)/d}(n)$.
Since every cylindric partition in $\mathscr{C}_{(i,j)/(0,0)/d}(n)$ 
is uniquely obtained in this way from a cylindric partition in 
$\mathscr{C}_{(i,j)/(1,0)/d}(m)$ for some $m$, it follows that
\[
\GK_{(i,j)/(0,0)/d;n}(q)
=q^n \sum_{m=0}^n \GK_{(i,j)/(1,0)/d;m}(q).
\]
Multiplying both sides by $z^n$ and summing over $n$, this gives
\begin{equation}\label{Eq_FE-1}
\GK_{(i,j)/(0,0)/d}(z,q)=\frac{\GK_{(i,j)/(1,0)/d}(zq;q)}{1-zq}.
\end{equation}

Next, for integers $d,i,j,s$ such that $0\leq j\leq i\leq d+j$ and
$d,i\geq s\geq 2$, let $\pi\in\mathscr{C}_{(i,j)/(s,0)/d}(m)$.
Adding a box with filling $n$ in the first row of $\pi$ as in
\begin{equation}\label{Eq_fig1}
\raisebox{-5mm}{
\begin{tikzpicture}[scale=0.4,line width=0.3pt]
\draw (0,0)--(7,0)--(7,1)--(10,1)--(10,2)--(13,2)--(13,3)--(6,3)--(6,2)--(4,2)--(4,1)--(0,1)--cycle;
\foreach \i in {1,...,4}{\draw (\i,0)--(\i,1);}
\foreach \i in {5,...,6}{\draw (\i,0)--(\i,2);}
\foreach \i in {7,...,9}{\draw (\i,1)--(\i,3);}
\foreach \i in {10,...,12}{\draw (\i,2)--(\i,3);}
\draw (4,1)--(7,1);
\draw (6,2)--(10,2);
\draw[red] (4.5,1.5) node {$\hspace{1pt}\sc \nu_{\hspace{-1pt} s'}$};
\draw[red] (9.5,1.5) node {$\sc \nu_i$};
\draw[red] (0.5,0.5) node {$\sc \tau_1$};
\draw[red] (3.5,0.5) node {$\sc \tau_s$};
\draw[red] (6.5,0.5) node {$\sc \tau_j$};
\draw (6.5,2.5) node {$\sc \tau_1$};
\draw (12.5,2.5) node {$\sc \tau_j$};
\draw (3,2.5) node {$\sc d$};
\draw[<-] (0,2.5)--(2.5,2.5);
\draw[->] (3.5,2.5)--(6,2.5);
\draw (14.5,1.5) node {$\longmapsto$};
\begin{scope}[xshift=16cm]
\filldraw[white,fill=red!10,very thin] (3,1) rectangle (4,2);
\draw (0,0)--(7,0)--(7,1)--(10,1)--(10,2)--(13,2)--(13,3)--(6,3)--(6,2)--(3,2)--(3,1)--(0,1)--cycle;
\foreach \i in {1,...,3}{\draw (\i,0)--(\i,1);}
\foreach \i in {4,...,6}{\draw (\i,0)--(\i,2);}
\foreach \i in {7,...,9}{\draw (\i,1)--(\i,3);}
\foreach \i in {10,...,12}{\draw (\i,2)--(\i,3);}
\draw (3,1)--(7,1);
\draw (6,2)--(10,2);
\draw[red] (3.5,1.5) node {$\sc n$};
\draw[red] (4.5,1.5) node {$\hspace{1pt}\sc \nu_{\hspace{-1pt}s'}$};
\draw[red] (9.5,1.5) node {$\sc \nu_i$};
\draw[red] (0.5,0.5) node {$\sc \tau_1$};
\draw[red] (3.5,0.5) node {$\sc \tau_s$};
\draw[red] (6.5,0.5) node {$\sc \tau_j$};
\draw (6.5,2.5) node {$\sc \tau_1$};
\draw (12.5,2.5) node {$\sc \tau_j$};
\draw (3,2.5) node {$\sc d$};
\draw[<-] (0,2.5)--(2.5,2.5);
\draw[->] (3.5,2.5)--(6,2.5);
\end{scope}
\end{tikzpicture}}
\end{equation}

\noindent
where $s':=s+1$ and $m=\max\{\nu_{s'},\tau_1\}$,
yields a cylindric partition in $\mathscr{C}_{(i,j)/(s-1,0)/d}(n)$.
Similarly, adding a box with filling $n$ in the second row 
of a cylindric partition in
$\mathscr{C}_{(i,j)/(s-1,1)/d}(m)$ (with $j\geq 1$) as in
\begin{equation}\label{Eq_fig2}
\raisebox{-5mm}{
\begin{tikzpicture}[scale=0.4,line width=0.3pt]
\draw[white](0,0)--(1,0);
\draw (1,0)--(7,0)--(7,1)--(10,1)--(10,2)--(13,2)--(13,3)--(7,3)--(7,2)--(3,2)--(3,1)--(1,1)--cycle;
\foreach \i in {2,...,3}{\draw (\i,0)--(\i,1);}
\foreach \i in {4,...,6}{\draw (\i,0)--(\i,2);}
\foreach \i in {7,...,9}{\draw (\i,1)--(\i,3);}
\foreach \i in {10,...,12}{\draw (\i,2)--(\i,3);}
\draw (3,1)--(7,1);
\draw (7,2)--(10,2);
\draw[red] (3.5,1.5) node {$\sc \nu_s$};
\draw[red] (9.5,1.5) node {$\sc \nu_i$};
\draw[red] (1.5,0.5) node {$\sc \tau_2$};
\draw[red] (3.5,0.5) node {$\sc \tau_s$};
\draw[red] (6.5,0.5) node {$\sc \tau_j$};
\draw (7.5,2.5) node {$\sc \tau_2$};
\draw (12.5,2.5) node {$\sc \tau_j$};
\draw (4,2.5) node {$\sc d$};
\draw[<-] (1,2.5)--(3.5,2.5);
\draw[->] (4.5,2.5)--(7,2.5);
\draw (14.5,1.5) node {$\longmapsto$};
\begin{scope}[xshift=16cm]
\filldraw[white,fill=red!10,very thin] (0,0) rectangle (1,1);
\draw (0,0)--(7,0)--(7,1)--(10,1)--(10,2)--(13,2)--(13,3)--(6,3)--(6,2)--(3,2)--(3,1)--(0,1)--cycle;
\foreach \i in {1,...,3}{\draw (\i,0)--(\i,1);}
\foreach \i in {4,...,6}{\draw (\i,0)--(\i,2);}
\foreach \i in {7,...,9}{\draw (\i,1)--(\i,3);}
\foreach \i in {10,...,12}{\draw (\i,2)--(\i,3);}
\draw (3,1)--(7,1);
\draw (6,2)--(10,2);
\draw[red] (3.5,1.5) node {$\sc \nu_s$};
\draw[red] (9.5,1.5) node {$\sc \nu_i$};
\draw[red] (0.5,0.5) node {$\sc n$};
\draw[red] (1.5,0.5) node {$\sc \tau_2$};
\draw[red] (3.5,0.5) node {$\sc \tau_s$};
\draw[red] (6.5,0.5) node {$\sc \tau_j$};
\draw (6.5,2.5) node {$\sc n$};
\draw (7.5,2.5) node {$\sc \tau_2$};
\draw (12.5,2.5) node {$\sc \tau_j$};
\draw (3,2.5) node {$\sc d$};
\draw[<-] (0,2.5)--(2.5,2.5);
\draw[->] (3.5,2.5)--(6,2.5);
\end{scope}
\end{tikzpicture}}
\end{equation}

\noindent
(where $m=\nu_s$) 
once again yields a cylindric partition in 
$\pi\in\mathscr{C}_{(i,j)/(s-1,0)/d}(n)$.
Clearly, those cylindric partitions in 
$\mathscr{C}_{(i,j)/(s-1,0)/d}(n)$ of the form
\begin{equation}\label{Eq_fig3}
\raisebox{-5mm}{
\begin{tikzpicture}[scale=0.4,line width=0.3pt]
\filldraw[white,fill=red!10,very thin] (0,0) rectangle (1,1);
\filldraw[white,fill=red!10,very thin] (3,1) rectangle (4,2);
\draw (0,0)--(7,0)--(7,1)--(10,1)--(10,2)--(13,2)--(13,3)--(6,3)--(6,2)--(3,2)--(3,1)--(0,1)--cycle;
\foreach \i in {1,...,3}{\draw (\i,0)--(\i,1);}
\foreach \i in {4,...,6}{\draw (\i,0)--(\i,2);}
\foreach \i in {7,...,9}{\draw (\i,1)--(\i,3);}
\foreach \i in {10,...,12}{\draw (\i,2)--(\i,3);}
\draw (3,1)--(7,1);
\draw (6,2)--(10,2);
\draw[red] (3.5,1.5) node {$\sc n$};
\draw[red] (4.5,1.5) node {$\hspace{1pt}\sc \nu_{\hspace{-1pt}s'}$};
\draw[red] (9.5,1.5) node {$\sc \nu_i$};
\draw[red] (0.5,0.5) node {$\sc n$};
\draw[red] (1.5,0.5) node {$\sc \tau_2$};
\draw[red] (3.5,0.5) node {$\sc \tau_s$};
\draw[red] (6.5,0.5) node {$\sc \tau_j$};
\draw (6.5,2.5) node {$\sc n$};
\draw (7.5,2.5) node {$\sc \tau_2$};
\draw (12.5,2.5) node {$\sc \tau_j$};
\draw (3,2.5) node {$\sc d$};
\draw[<-] (0,2.5)--(2.5,2.5);
\draw[->] (3.5,2.5)--(6,2.5);
\end{tikzpicture}}
\end{equation}
will be generated twice (take $\tau_1=n$ in \eqref{Eq_fig1} and 
$\nu_s=n$ in \eqref{Eq_fig2}), and a correction is required to
avoid double counting.
Hence
\begin{align}\label{Eq_case2}
\GK_{(i,j)/(s-1,0)/d;n}(q)
&=q^n \sum_{m=0}^n \GK_{(i,j)/(s,0)/d;m}(q)+
q^n \sum_{m=0}^n \GK_{(i,j)/(s-1,1)/d;m}(q) \\ & \quad -
q^{2n} \sum_{m=0}^n \GK_{(i,j)/(s,1)/d;m}(q). \notag
\end{align}
Indeed, by adding a box with filling $n\geq m$ to both rows of a cylindric
partition

\smallskip

\begin{center}
\begin{tikzpicture}[scale=0.4,line width=0.3pt]
\draw (1,0)--(7,0)--(7,1)--(10,1)--(10,2)--(13,2)--(13,3)--(7,3)--(7,2)--(4,2)--(4,1)--(1,1)--cycle;
\foreach \i in {2,...,4}{\draw (\i,0)--(\i,1);}
\foreach \i in {5,...,6}{\draw (\i,0)--(\i,2);}
\foreach \i in {7,...,7}{\draw (\i,1)--(\i,2);}
\foreach \i in {8,...,9}{\draw (\i,1)--(\i,3);}
\foreach \i in {10,...,12}{\draw (\i,2)--(\i,3);}
\draw (3,1)--(7,1);
\draw (6,2)--(10,2);
\draw[red] (4.5,1.5) node {$\hspace{1pt}\sc \nu_{\hspace{-1pt}s'}$};
\draw[red] (9.5,1.5) node {$\sc \nu_i$};
\draw[red] (1.5,0.5) node {$\sc \tau_2$};
\draw[red] (3.5,0.5) node {$\sc \tau_s$};
\draw[red] (6.5,0.5) node {$\sc \tau_j$};
\draw (7.5,2.5) node {$\sc \tau_2$};
\draw (12.5,2.5) node {$\sc \tau_j$};
\draw (4,2.5) node {$\sc d$};
\draw[<-] (1,2.5)--(3.5,2.5);
\draw[->] (4.5,2.5)--(7,2.5);
\draw (18,1.5) node {($m:=\max\{\nu_{s'},\tau_s\}$)};
\end{tikzpicture}
\end{center}

\noindent
in $\mathscr{C}_{(i,j)/(s,1)/d}(m)$
results in a cylindric partition in $\mathscr{C}_{(i,j)/(s-1,0)/d}(n)$ 
of the form \eqref{Eq_fig3}.
Multiplying both sides of \eqref{Eq_case2} by $z^n$ and summing
over $n$ gives
\begin{align*}
\GK_{(i,j)/(s-1,0)/d}(z,q)
&=\frac{\GK_{(i,j)/(s,0)/d}(zq,q)}{1-zq} \\ 
&\quad + \frac{\GK_{(i,j)/(s-1,1)/d}(zq,q)}{1-zq}
- \frac{\GK_{(i,j)/(s,1)/d}(zq^2,q)}{1-zq^2}.
\end{align*}
Here we recall that by \eqref{Eq_zero} the last two terms on the right 
vanish unless $j\geq 1$.
Applying the translation symmetry \eqref{Eq_trans} to the second and 
third term on the right yields
\begin{align}\label{Eq_generals}
\GK_{(i,j)/(s-1,0)/d}(z,q)
&=\frac{\GK_{(i,j)/(s,0)/d}(zq,q)}{1-zq} \\ 
&\quad + \frac{\GK_{(i-1,j-1)/(s-2,0)/d}(zq,q)}{1-zq} 
- \frac{\GK_{(i-1,j-1)/(s-1,0)/d}(zq^2,q)}{1-zq^2}. \notag
\end{align}
The integer $s$ in \eqref{Eq_generals} is restricted to $2\leq s\leq d$, but by the
cyclic symmetry we can in fact limit the range of $s$ to 
$2\leq s\leq\floor{d/2}+1$. 
To this end we take \eqref{Eq_generals} with $s=\floor{d/2}+1$ and
apply the cyclic symmetry \eqref{Eq_cyc} followed by the
translation symmetry \eqref{Eq_trans} to the first term on the right.
This leads to
\begin{multline}\label{Eq_sboundary}
\GK_{(i,j)/(\floor{d/2},0)/d}(z,q)
=\frac{\GK_{(j+\ceil{d/2}-1,i-\floor{d/2}-1)/(\ceil{d/2}-1,0)/d}(zq,q)}{1-zq} \\
+\frac{\GK_{(i-1,j-1)/(\floor{d/2}-1,0)/d}(zq,q)}{1-zq} 
-\frac{\GK_{(i-1,j-1)/(\floor{d/2},0)/d}(zq^2,q)}{1-zq^2}.
\end{multline}
Depending on the parity of $d$, \eqref{Eq_FE-1}, \eqref{Eq_generals} for
$2\leq s\leq \floor{d/2}+1$ and \eqref{Eq_sboundary} yield a closed
system of equations. 
In particular, for $d=2k-1$ and $(i,j)=(L+k+b-1,L)$,
with $b\in\{0,1\}$ and $L$ a nonnegative integer, the following
system of equations (subject to the initial conditions 
$\GK_{(b+k-2,-1)/(s-1,0)/2k-1}(z,q)=0$ for $1\leq s\leq k$ and 
$\GK_{(k-1,0)/(k-1,0)/2k-1}(z,q)=1$) uniquely determines 
$\GK_{(L+k+b-1,L)/(s-1,0)/2k-1}(z,q)$ for all $b\in\{0,1\}$,
$1\leq s\leq k$ and $L\geq 0$:
\[
\GK_{(L+k+b-1,L)/(0,0)/2k-1}(z,q)=
\frac{\GK_{(L+k+b-1,L)/(1,0)/2k-1}(zq;q)}{1-zq}
\]
and
\begin{multline*}
\GK_{(L+k+b-1,L)/(s-1,0)/2k-1}(z,q)
=\frac{\GK_{(L+k+b-1,L)/(s,0)/2k-1}(q)(zq,q)}{1-zq} \\ 
+\frac{\GK_{(L+k+b-2,L-1)/(s-2,0)/2k-1}(zq,q)}{1-zq}
-\frac{\GK_{(L+k+b-2,L-1)/(s-1,0)/2k-1}(zq^2,q)}{1-zq^2},
\end{multline*}
for $2\leq s\leq k-1$, and
\begin{multline*}
\GK_{(L+k+b-1,L)/(k-1,0)/2k-1}(z,q)
=\frac{\GK_{(L+k-1,L+b-1)/(k-1,0)/2k-1}(zq,q)}{1-zq} \\
+\frac{\GK_{(L+k+b-2,L-1)/(k-2,0)/2k-1}(zq,q)}{1-zq} 
-\frac{\GK_{(L+k+b-2,L-1)/(k-1,0)/2k-1}(zq^2,q)}{1-zq^2},
\end{multline*}
where in the last equation $L\geq 1-b$.
One can show, using telescopic expansions for $q$-binomial
coefficients in the spirit of \cite{Berkovich94,Schilling96},
that the above system of equations is solved by \eqref{Eq_GK-rank2}.
Restricting to the $k=2$ case, we get the following bounded analogues of
\cite[Equations~(2.4)\&(2.5)]{CDU20}:
\begin{equation}\label{Eq_FE-simple}
\GK_{(L+b+1,L)/(0,0)/3}(z,q)=
\frac{\GK_{(L+b+1,L)/(1,0)/3}(zq;q)}{1-zq},
\end{equation}
for $L\geq 0$, and
\begin{align*}
\GK_{(L+b+1,L)/(1,0)/3}(z,q)
&=\frac{\GK_{(L+1,L+b-1)/(1,0)/3}(zq,q)}{1-zq}
+ \frac{\GK_{(L+b,L-1)/(0,0)/3}(zq,q)}{1-zq} \\
&\quad - \frac{\GK_{(L+b,L-1)/(1,0)/3}(zq^2,q)}{1-zq^2},
\end{align*}
for $L\geq 1-b$.
Using the first equation to eliminate 
$\GK_{(L+b,L-1)/(0,0)/3}(zq,q)$ from the second equation,
we further obtain the a bounded analogue of 
\cite[Equation~(2.6)]{CDU20}:
\begin{align}\label{Eq_Fun}
&\GK_{(L+b+1,L)/(1,0)/3}(z,q) \\
&\qquad=\frac{\GK_{(L+1,L+b-1)/(1,0)/3}(zq,q)}{1-zq}
+ \frac{zq\GK_{(L+b,L-1)/(1,0)/3}(zq^2,q)}{(1-zq)(1-zq^2)}. \notag
\end{align}
Together with the initial conditions
\begin{equation}\label{Eq_initial}
\GK_{(1,-1)/(1,0)/3}(z,q)=0
\quad\text{and}\quad
\GK_{(1,0)/(1,0)/3}(z,q)=1
\end{equation}
this uniquely determines
$\GK_{(L+b+1,L)/(1,0)/3}(z,q)$ for all $L\geq 0$ and $b\in\{0,1\}$.
Since for negative integers values of $L$ the right-hand side of
\eqref{Eq_GK1} trivially vanishes, 
the claim of Proposition~\ref{Prop_RRcase-rank2} holds for all integers $L$.
Assuming this extended range of $L$, 
\eqref{Eq_GK1} for $a=0$ satisfies the initial 
conditions \eqref{Eq_initial}.
Moreover, substituting the $a=0$ case of \eqref{Eq_GK1}
into \eqref{Eq_Fun} and clearing common denominators yields
(after replacing $2L+1-b$ by $m$)
\[
\sum_{n=0}^{\infty} z^n q^{n^2} \qbin{m-n+1}{n}=
\sum_{n=0}^{\infty} z^n q^{n^2+n} \qbin{m-n}{n}+
\sum_{n=0}^{\infty} z^{n+1} q^{(n+1)^2} \qbin{m-n-1}{n}
\]
for $m$ a nonnegative integer.
This polynomial identity is readily proved by substituting 
$n\mapsto n-1$ in the second sum on the right and by then applying 
the $q$-binomial recurrence
\[
q^n \qbin{m-n}{n}+\qbin{m-n}{n-1}=\qbin{m-n+1}{n}
\]
for $m,n\geq 0$.

To complete the proof of Proposition~\ref{Prop_RRcase-rank2}
we note that the $a=1$ case of the proposition is a direct
consequence of the $a=0$ case and \eqref{Eq_FE-simple}.

\medskip
For completeness we mention that the system of equations
\eqref{Eq_FE-1}, \eqref{Eq_generals} and \eqref{Eq_sboundary} for
$d=2k$ is closed for $(i,j)=(L+k,L)$ for $L$ a nonnegative integer, 
eliminating the need for the parameter $b$. 
In this even-$d$ case we thus get
\[
\GK_{(L+k,L)/(0,0)/2k}(z,q)=\frac{\GK_{(L+k,L)/(1,0)/2k}(zq;q)}{1-zq}
\]
and
\begin{multline*}
\GK_{(L+k,L)/(s-1,0)/2k}(z,q)
=\frac{\GK_{(L+k,L)/(s,0)/2k}(q)(zq,q)}{1-zq} \\ 
+ \frac{\GK_{(L+k-1,L-1)/(s-2,0)/2k}(zq,q)}{1-zq} 
- \frac{\GK_{(L+k-1,L-1)/(s-1,0)/2k}(zq^2,q)}{1-zq^2},
\end{multline*}
for $2\leq s\leq k$, and
\begin{multline*}
\GK_{(L+k,L)/(k,0)/2k}(z,q)
=\frac{\GK_{(L+k-1,L-1)/(k-1,0)/2k}(zq,q)}{1-zq} \\
+\frac{\GK_{(L+k-1,L-1)/(k-1,0)/2k}(zq,q)}{1-zq} 
-\frac{\GK_{(L+k-1,L-1)/(k,0)/d}(zq^2,q)}{1-zq^2},
\end{multline*}
where in the last equation $L\geq 1$.
It is again not difficult to show that this system of equations
is solved by \eqref{Eq_GK-rank2-even}.
\end{proof}

\section{Towards the Andrews--Gordon identities for $\mathrm{A}_{r-1}$}
\label{Sec_Ar}

An important open problem is to find the $\mathrm{A}_{r-1}$ 
(or $\mathrm{A}_{r-1}^{(1)}$) Rogers--Ramanujan and Andrews--Gordon
identities for arbitrary rank $r$.
By level-rank duality, a number of low-level results
for $\mathrm{A}_{r-1}$ are implied by those for $r=2$ and $r=3$.
Specifically, the classical or $\mathrm{A}_1$ Andrews--Gordon identities
\eqref{Eq_AG} for modulus $2k+1$ imply identities for 
$\mathrm{A}_{2k-2}$ at level $2$,
and the $\mathrm{A}_2$ Andrews--Gordon identities for modulus 
$3k+\sigma+1$ ($\sigma=0,1$)
imply identities for $\mathrm{A}_{3k+\sigma-3}$ at level $3$.
However, to be able to see the full $\mathrm{A}_{r-1}$-structure of the 
sum-side of an arbitrary-level $\mathrm{A}_{r-1}$ Andrews--Gordon identity
one needs at least $r-1$ summation variables, and none of the available 
low-level identities suffices.

At the opposite end of the spectrum are the $q$-series identities
corresponding to the infinite-level limit of the $\mathrm{A}_{r-1}$
Andrews--Gordon identities.
Before considering the case of arbitrary rank, we briefly discuss this
limit for $r=2$ and $r=3$.

In the case of $r=2$ we take the large-$k$ limit of the Andrews--Gordon
identity \eqref{Eq_AG} for $s=k$ and $s=1$.
The resulting pair of identities are the $z=1$ and $z=q$ instances of
\begin{equation}\label{Eq_A1-infinitelevel}
\sum_{n_1,n_2,\dots\geq 0}
\frac{1}{(q)_{n_1}}\, \prod_{i\geq 1} z^{n_i} q^{n_i^2} 
\qbin{n_i}{n_{i+1}}=\frac{1}{(zq)_{\infty}}.
\end{equation}
This is the $n_0\to\infty$ limit of the rational function identity
\begin{equation}\label{Eq_A1-rational}
\sum_{n_1,n_2,\ldots=0}^{n_0} \, \prod_{i\geq 1} z^{n_i} q^{n_i^2} 
\qbin{n_{i-1}}{n_i}=\frac{1}{(zq)_{n_0}},
\end{equation}
where $n_0$ is a nonnegative integer.
The easiest way to understand this last identity is 
to note that the terminating form of the $\qhyp{1}{1}$
summation \cite[Equation (II.5)]{GR04} may be written as
\begin{equation}\label{Eq_1phi1}
\sum_{k=0}^n \frac{a^k q^{k^2}}{(aq)_k}\,
\qbin{n}{k}=\frac{1}{(aq)_n}, \quad n\in\mathbb{N}_0.
\end{equation}
Hence, if
\[
\Phi_n(z;q):=\frac{1}{(zq)_n}
\]
for $n$ a nonnegative integer, then
\begin{equation}\label{Eq_invariance_A1}
\sum_{n=0}^{n_0} 
z^n q^{n^2}\qbin{n_0}{n} \Phi_n(z;q)=\Phi_{n_0}(z;q).
\end{equation}
Iterating \eqref{Eq_invariance_A1} yields
\[
\sum_{n_1,\dots,n_k=0}^{n_0} 
\bigg( \prod_{i=1}^k z^{n_i} q^{n_i^2} \qbin{n_{i-1}}{n_i}\bigg)
\Phi_{n_k}(z;q)=\Phi_{n_0}(z;q),
\]
from which \eqref{Eq_A1-rational} follows by letting $k$ tend to infinity.

Next let $r=3$.
Then the large-$k$ limit of Theorem~\ref{Thm_A2-vac-bothmoduli}, 
Theorem~\ref{Thm_threecases} for $s=1$ or $s=k+1$ and 
Theorem~\ref{Thm_threecases-b} for $s=1$ or $s=k$ all follow from
\begin{equation}\label{Eq_A2-infinitelevel}
\sum_{\substack{n_1,n_2,\dots\geq 0 \\[1pt] m_1,m_2,\dots\geq 0}} 
\frac{1}{(q)_{n_1}}\,
\prod_{i\geq 1} z^{n_i-m_i} w^{m_i} q^{n_i^2-m_in_i+m_i^2} 
\qbin{n_i}{n_{i+1}}\qbin{n_i-n_{i+1}+m_{i+1}}{m_i}
=\frac{1}{(zq)_{\infty}(wq)_{\infty}}.
\end{equation}
Again this is the $n_0\to\infty$ limit of a rational function identity:
\begin{align}\label{Eq_A2-rational}
\sum_{\substack{n_1,n_2,\dots\geq 0 \\[1pt] m_1,m_2,\dots\geq 0}}\,
&\prod_{i\geq 1} z^{n_i-m_i} w^{m_i} q^{n_i^2-m_in_i+m_i^2} 
\qbin{n_{i-1}}{n_i}\qbin{n_{i-1}-n_i+m_i}{m_{i-1}} \\ 
&=\frac{1}{(zq)_{n_0-m_0}(wq)_{n_0}}\,\qbin{n_0}{m_0}
\notag
\end{align}
for $n_0,m_0$ nonnegative integers.
Defining
\[
\Phi_{n,m}(z,w;q):=
\frac{1}{(zq)_{n-m}(wq)_n}\,\qbin{n}{m},
\]
the identity \eqref{Eq_A2-rational} is a consequence of the 
following $\mathrm{A}_2$-analogue of \eqref{Eq_invariance_A1}:
\begin{equation}\label{Eq_invariance_A2}
\sum_{0\leq m\leq n\leq n_0}
z^{n-m} w^m q^{n^2-mn+m^2} \qbin{n_0}{n}\qbin{n_0-n+m}{m_0} 
\Phi_{n,m}(z,w;q)=\Phi_{n_0,m_0}(z,w;q).
\end{equation}

It is not difficult to generalise \eqref{Eq_invariance_A1} and
\eqref{Eq_invariance_A2} to $\mathrm{A}_{r-1}$ and to then use this to
prove the $\mathrm{A}_{r-1}$ analogues of \eqref{Eq_A1-infinitelevel} 
and \eqref{Eq_A2-infinitelevel}.
First we prove an $\mathrm{A}_{r-1}$ rational function identity.

\begin{proposition}
\label{Prop_Ar-invariance}
Let $n_1,\dots,n_{r-1}$ be integers such that
\begin{equation}\label{Eq_order-n}
0\leq n_1\leq n_2\leq \cdots\leq n_{r-1}.
\end{equation}
Then
\begin{equation}\label{Eq_Ar}
\sum_{m_1,\dots,m_{r-1}\geq 0}\,
\prod_{i=1}^{r-1} \frac{z_i^{m_i-m_{i+1}} 
q^{m_i^2-m_im_{i+1}}}{(z_iq)_{m_1-m_{i+1}}}\,
\qbin{n_i-m_1+m_i}{m_i-m_{i+1}}
=\prod_{i=1}^{r-1} \frac{1}{(z_iq)_{n_i}},
\end{equation}
where $m_r:=0$.
\end{proposition}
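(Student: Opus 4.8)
The plan is to prove \eqref{Eq_Ar} by induction on the rank $r$, peeling off one summation variable at each step by means of the terminating $\qhyp{1}{1}$ summation \eqref{Eq_1phi1}; this is the same mechanism that underlies the $\mathrm{A}_1$ and $\mathrm{A}_2$ invariance relations \eqref{Eq_invariance_A1} and \eqref{Eq_invariance_A2}. The base case $r=1$ is the trivial identity $1=1$ between empty products (equivalently, $r=2$ is \eqref{Eq_1phi1} itself). All sums are finite since the $q$-binomial coefficients force $p_1+\cdots+p_i\leq n_i$, so throughout one works with rational functions of $z_1,\dots,z_{r-1}$ and $q$ and no convergence issue arises. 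For the inductive step I would single out the innermost variable $m_{r-1}$ and sum over it first, carrying the remaining data in the differences $p_i:=m_i-m_{i+1}$ for $1\leq i\leq r-2$, so that $m_i=p_i+\cdots+p_{r-2}+m_{r-1}$ and $P:=p_1+\cdots+p_{r-2}=m_1-m_{r-1}$.

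The heart of the argument is a short bookkeeping computation. Writing the total $q$-exponent as $\sum_{i=1}^{r-1}(m_i^2-m_im_{i+1})=\sum_{i=1}^{r-1}m_ip_i$ (with $p_{r-1}=m_{r-1}$), one checks that it splits as $Q+Pm_{r-1}+m_{r-1}^2$, where $Q=\sum_{i=1}^{r-2}(p_i+\cdots+p_{r-2})p_i$ is independent of $m_{r-1}$. Among the remaining factors, only $z_{r-1}^{m_{r-1}}$, the binomial $\qbin{n_{r-1}-P}{m_{r-1}}$, and the denominator $(z_{r-1}q)_{m_1}=(z_{r-1}q)_{P+m_{r-1}}$ depend on $m_{r-1}$, the latter because $m_1-m_r=m_1$ while $m_1-m_{i+1}=p_1+\cdots+p_i$ is constant in $m_{r-1}$ for $i\leq r-2$. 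Splitting $(z_{r-1}q)_{P+m_{r-1}}=(z_{r-1}q)_P\,(z_{r-1}q^{P+1})_{m_{r-1}}$, the inner sum over $m_{r-1}$ becomes exactly \eqref{Eq_1phi1} with $a=z_{r-1}q^{P}$ and upper index $n_{r-1}-P$, and hence collapses to $1/(z_{r-1}q)_{n_{r-1}}$.

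What remains is a sum over $p_1,\dots,p_{r-2}$ whose summand, upon introducing the reduced variables $m_i':=p_i+\cdots+p_{r-2}$ (with $m_{r-1}':=0$), is literally the summand of \eqref{Eq_Ar} for rank $r-1$ with parameters $n_1,\dots,n_{r-2}$: indeed $Q=\sum_{i=1}^{r-2}\bigl((m_i')^2-m_i'm_{i+1}'\bigr)$, the denominators read $(z_iq)_{m_1'-m_{i+1}'}$, and the binomials read $\qbin{n_i-m_1'+m_i'}{m_i'-m_{i+1}'}$. Since $0\leq n_1\leq\cdots\leq n_{r-2}$ holds, the induction hypothesis applies and yields $\prod_{i=1}^{r-2}1/(z_iq)_{n_i}$, completing the proof.

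The step I expect to require the most care is the interplay between the domain of validity of \eqref{Eq_1phi1} and the vanishing conventions \eqref{Eq_qbinomial}. The summation \eqref{Eq_1phi1} demands that the upper index $n_{r-1}-P$ be a nonnegative integer, i.e.\ $P\leq n_{r-1}$, which does not follow from $P\geq 0$ alone. This is precisely where the ordering hypothesis $0\leq n_1\leq\cdots\leq n_{r-1}$ is used: the factor $\qbin{n_{r-2}-(p_1+\cdots+p_{r-3})}{p_{r-2}}$ appearing in the reduced summand forces $P\leq n_{r-2}\leq n_{r-1}$ whenever that summand is nonzero, so the collapse to $1/(z_{r-1}q)_{n_{r-1}}$ is valid termwise. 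In the complementary range $P>n_{r-1}$ both the inner sum and the reduced summand vanish by \eqref{Eq_qbinomial}, so no spurious terms survive and the replacement of the inner sum by $1/(z_{r-1}q)_{n_{r-1}}$ is legitimate for every term in the outer sum.
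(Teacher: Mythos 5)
Your proof is correct and follows essentially the same route as the paper's: induction on the rank, collapsing the last summation variable via the terminating $\qhyp{1}{1}$ summation \eqref{Eq_1phi1} after a change of variables that makes the remaining factors independent of it, recognising the leftover sum as the rank-one-lower instance, and invoking the ordering hypothesis through the vanishing of the second-to-last $q$-binomial coefficient to legitimise the collapse where the upper index would otherwise be negative. The only cosmetic difference is that you parametrise by consecutive differences $p_i=m_i-m_{i+1}$, whereas the paper substitutes $m_i\mapsto m_i+m_r$ (differences from the last variable); the two changes of variables are equivalent.
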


As follows from the proof, the condition \eqref{Eq_order-n} is necessary.
We also note that if $C=(C_{ij})_{i,j=1}^{r-1}$ is the Cartan matrix of 
$\mathrm{A}_{r-1}$, i.e.,
\[
C_{ij}=2\delta_{i,j}-\delta_{i,j+1}-\delta_{i,j-1},
\]
then 
\[
\sum_{i=1}^{r-1} (m_i^2-m_im_{i+1})=
\frac{1}{2}\sum_{i,j=1}^{r-1} m_i C_{ij} m_j,
\]
where, as in the proposition, $m_r:=0$.
Finally, if 
$\varphi^{\mathrm{A}_{r-1}}_{n_1,\dots,n_{r-1}}(z_1,\dots,z_{r-1};q)$
denotes either side of \eqref{Eq_Ar}, then
\[
\varphi^{\mathrm{A}_r}_{n_1,\dots,n_r}
(z_1,\dots,z_{j-1},0,z_{j+1},\dots,z_r;q)=
\varphi^{\mathrm{A}_{r-1}}_{n_1,\dots,n_{j-1},n_{j+1},\dots,n_r}
(z_1,\dots,z_{j-1},z_{j+1},\dots,z_r;q).
\]

\begin{proof}[Proof of Proposition~\eqref{Prop_Ar-invariance}]
Assume that \eqref{Eq_order-n} holds, and denote the left-hand side
of \eqref{Eq_Ar} by 
$\varphi^{\mathrm{A}_{r-1}}_{n_1,\dots,n_{r-1}}(z_1,\dots,z_{r-1};q)$.
Clearly, 
\begin{equation}\label{Eq_phi-A1}
\varphi^{\mathrm{A}_1}_{n_1}(z_1;q)=
\sum_{m_1=0}^{n_1} \frac{z^{m_1} q^{m_1^2}}
{(z_1q)_{m_1}}\,\qbin{n_1}{m_1}
=\frac{1}{(z_1q)_{n_1}}
\end{equation}
by \eqref{Eq_1phi1}.
Now assume that $r\geq 2$ and replace 
$m_i\mapsto m_i+m_r$ for all $1\leq i\leq r-1$ in the
expression for $\varphi^{\mathrm{A}_r}_{n_1,\dots,n_r}$.
Then
\begin{align}\label{Eq_towards-recurrence}
\varphi^{\mathrm{A}_r}_{n_1,\dots,n_r}(z_1,\dots,z_r;q)
=\sum_{m_1,\dots,m_{r-1}\geq 0} &\bigg(
\prod_{i=1}^{r-1} \frac{z_i^{m_i-m_{i+1}} 
q^{m_i^2-m_im_{i+1}}}{(z_iq)_{m_1-m_{i+1}}}\,
\qbin{n_i-m_1+m_i}{m_i-m_{i+1}} \\
&\quad \times
\frac{1}{(z_rq)_{m_1}}\,
\phi_{n_r-m_1}^{\mathrm{A}_1}(z_rq^{m_1};q) \bigg), \notag
\end{align}
where, now, $m_r:=0$.
We would like to use \eqref{Eq_phi-A1} (or, equivalently \eqref{Eq_1phi1})
to simplify the second line on the right of \eqref{Eq_towards-recurrence}
to
\[
\frac{1}{(z_rq)_{m_1}(z_rq^{m_1+1})_{n_r-m_1}}=\frac{1}{(z_rq)_{n_r}},
\]
resulting in
\begin{equation}\label{Eq_rec}
\varphi^{\mathrm{A}_r}_{n_1,\dots,n_r}(z_1,\dots,z_r;q) 
=\frac{1}{(zq)_{n_r}}\,
\varphi^{\mathrm{A}_{r-1}}_{n_1,\dots,n_{r-1}}(z_1,\dots,z_{r-1};q).
\end{equation}
Caution is required, however, since \eqref{Eq_1phi1} is false if
$n$ is a negative integer, with the left-but not the right-hand side
vanishing for such $n$.
We thus need to verify that the summand in \eqref{Eq_towards-recurrence}
still vanishes for $m_1>n_r$ after replacing the second line by
$1/(z_rq)_{n_r}$.
To this end we note that the $q$-binomial coefficient corresponding to the
$i=r-1$ term in the product is given by 
\[
\qbin{n_{r-1}-m_1+m_{r-1}}{m_{r-1}}.
\]
The summand thus vanishes if $m_1>n_{r-1}$.
Since $n_{r-1}\leq n_r$ this implies the desired vanishing for
$m_1>n_r$, so that the recursion relation \eqref{Eq_rec} is valid.
By induction this completes the proof.
\end{proof}

We are now ready to state the $\mathrm{A}_{r-1}$-analogue
of \eqref{Eq_invariance_A1} and \eqref{Eq_invariance_A2}.
Let $\boldsymbol{n}=(n_1,\dots,n_{r-1})\in\mathbb{N}_0^{r-1}$,
and define the rational function
\[
\Phi^{\mathrm{A}_{r-1}}_{\boldsymbol{n}}(z_1,\dots,z_{r-1};q):=
\prod_{i=1}^{r-1} \frac{1}{(z_iq)_{n_1-n_{i+1}}}\,\qbin{n_i}{n_{i+1}},
\]
where $n_r:=0$.
Further define the polynomial
\[
\mathcal{K}_{\boldsymbol{n},\boldsymbol{m}}(z_1,\dots,z_{r-1};q):=
\qbin{n_1}{m_1} 
\prod_{i=1}^{r-1} z_i^{m_i-m_{i+1}} q^{m_i^2-m_im_{i+1}}
\qbin{n_1-n_{i+1}-m_1+m_i}{n_i-n_{i+1}},
\]
where $n_r=m_r:=0$.

\begin{theorem}\label{Thm_Ar-invariance}
For $\boldsymbol{n}\in\mathbb{N}_0^{r-1}$,
\begin{equation}\label{Eq_invariance_Ar}
\sum_{\boldsymbol{m}}
\mathcal{K}_{\boldsymbol{n},\boldsymbol{m}}(z_1,\dots,z_{r-1};q)
\Phi^{\mathrm{A}_{r-1}}_{\boldsymbol{m}}(z_1,\dots,z_{r-1};q)
=\Phi^{\mathrm{A}_{r-1}}_{\boldsymbol{n}}(z_1,\dots,z_{r-1};q),
\end{equation}
where $\boldsymbol{m}$ is summed over $\mathbb{N}_0^{r-1}$.
\end{theorem}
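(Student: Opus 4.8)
The plan is to obtain \eqref{Eq_invariance_Ar} \emph{termwise} from the rational function identity of Proposition~\ref{Prop_Ar-invariance}. First I would split the target into its $q$-binomial part and its Pochhammer part,
\[
\Phi^{\mathrm{A}_{r-1}}_{\boldsymbol{n}}(z_1,\dots,z_{r-1};q)=
\Bigl(\prod_{i=1}^{r-1}\qbin{n_i}{n_{i+1}}\Bigr)
\prod_{i=1}^{r-1}\frac{1}{(z_iq)_{n_1-n_{i+1}}},
\qquad n_r:=0.
\]
Since $\Phi^{\mathrm{A}_{r-1}}_{\boldsymbol{n}}$ vanishes unless $n_1\geq n_2\geq\dots\geq n_{r-1}\geq 0$, and since the same violation $n_{i}<n_{i+1}$ forces the factor $\qbin{\,\cdot\,}{n_i-n_{i+1}}$ in $\mathcal{K}_{\boldsymbol{n},\boldsymbol{m}}$ to vanish for every $\boldsymbol{m}$ (so that the left side of \eqref{Eq_invariance_Ar} is $0$ as well), I may assume that $\boldsymbol{n}$ is weakly decreasing.

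With $\boldsymbol{n}$ weakly decreasing the indices $N_i:=n_1-n_{i+1}$ satisfy $0\leq N_1\leq\dots\leq N_{r-1}$, which is exactly the admissibility condition \eqref{Eq_order-n}. I would therefore apply Proposition~\ref{Prop_Ar-invariance} with $n_i\mapsto N_i$ to the Pochhammer part, giving
\[
\prod_{i=1}^{r-1}\frac{1}{(z_iq)_{n_1-n_{i+1}}}
=\sum_{\boldsymbol{m}}\prod_{i=1}^{r-1}
\frac{z_i^{m_i-m_{i+1}}q^{m_i^2-m_im_{i+1}}}{(z_iq)_{m_1-m_{i+1}}}\,
\qbin{n_1-n_{i+1}-m_1+m_i}{m_i-m_{i+1}},
\]
with $m_r:=0$. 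Substituting this into the factorisation and pulling $\prod_i\qbin{n_i}{n_{i+1}}$ inside the sum, and noting that the factors $\prod_i z_i^{m_i-m_{i+1}}q^{m_i^2-m_im_{i+1}}/(z_iq)_{m_1-m_{i+1}}$ occur identically in $\mathcal{K}_{\boldsymbol{n},\boldsymbol{m}}\Phi^{\mathrm{A}_{r-1}}_{\boldsymbol{m}}$, the whole theorem reduces to the purely $q$-binomial identity, valid for each fixed $\boldsymbol{m}$,
\[
\Bigl(\prod_{i=1}^{r-1}\qbin{n_i}{n_{i+1}}\Bigr)
\prod_{i=1}^{r-1}\qbin{n_1-n_{i+1}-m_1+m_i}{m_i-m_{i+1}}
=\qbin{n_1}{m_1}
\prod_{i=1}^{r-1}\qbin{n_1-n_{i+1}-m_1+m_i}{n_i-n_{i+1}}\qbin{m_i}{m_{i+1}}.
\]

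To prove this last identity I would clear everything into $q$-factorials. The two chains of $q$-binomials telescope, $\prod_i\qbin{n_i}{n_{i+1}}=(q)_{n_1}/\prod_i(q)_{n_i-n_{i+1}}$ and $\prod_i\qbin{m_i}{m_{i+1}}=(q)_{m_1}/\prod_i(q)_{m_i-m_{i+1}}$, and after cancelling the factors $(q)_{n_1}$, $\prod_i(q)_{n_i-n_{i+1}}$, $\prod_i(q)_{m_i-m_{i+1}}$ and $\prod_i(q)_{n_1-n_{i+1}-m_1+m_i}$ common to both sides, the identity collapses to the equality of two residual products,
\[
(q)_{n_1-m_1}\prod_{i=1}^{r-1}(q)_{b_i}=\prod_{i=1}^{r-1}(q)_{c_i},
\qquad b_i:=(n_1-m_1)-(n_i-m_i),\quad c_i:=(n_1-m_1)-(n_{i+1}-m_{i+1}).
\]
Here $c_i=b_{i+1}$, so the right-hand product equals $\prod_{j=2}^{r}(q)_{b_j}=(q)_{n_1-m_1}\prod_{j=2}^{r-1}(q)_{b_j}$ because $b_r=n_1-m_1$; the two sides then agree precisely because the extra left-hand factor is $(q)_{b_1}$ with $b_1=(n_1-m_1)-(n_1-m_1)=0$, i.e.\ $(q)_{b_1}=1$. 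This closes the argument.

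The step I expect to require the most care is the application of Proposition~\ref{Prop_Ar-invariance}: one must verify that the substitution $N_i=n_1-n_{i+1}$ really lands in the weakly increasing, nonnegative range demanded by \eqref{Eq_order-n}, and that the degenerate cases (non-decreasing $\boldsymbol{n}$, or $q$-binomials with out-of-range arguments) annihilate both sides of \eqref{Eq_invariance_Ar} simultaneously. Everything else is the routine $q$-factorial bookkeeping sketched above, whose only genuinely structural input is the shift $c_i=b_{i+1}$ together with the vanishing $b_1=0$.
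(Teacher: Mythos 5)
Your proposal is correct and follows essentially the same route as the paper's proof: substitute $(n_1,\dots,n_{r-1})\mapsto(n_1-n_2,\dots,n_1-n_r)$ in Proposition~\ref{Prop_Ar-invariance}, multiply by $\prod_{i=1}^{r-1}\qbin{n_i}{n_{i+1}}$ (which also handles the non-decreasing $\boldsymbol{n}$ case by vanishing), and invoke the $q$-binomial product identity relating the two summands. The only difference is that you verify that product identity by explicit $q$-factorial bookkeeping, whereas the paper simply asserts it.
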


\begin{proof}
In \eqref{Eq_Ar} we carry out the simultaneous substitutions
\[
(n_1,n_2,\dots,n_{r-1})\mapsto(n_1-n_2,n_1-n_3,\dots,n_1-n_r),
\]
where $n_r:=0$. 
Then
\[
\sum_{m_1,\dots,m_{r-1}\geq 0}\,
\prod_{i=1}^{r-1} \frac{z_i^{m_i-m_{i+1}} 
q^{m_i^2-m_im_{i+1}}}{(z_iq)_{m_1-m_{i+1}}}
\qbin{n_1-n_{i+1}-m_1+m_i}{m_i-m_{i+1}}
=\prod_{i=1}^{r-1} \frac{1}{(z_iq)_{n_1-n_{i+1}}},
\]
where $n_r=m_r:=0$ and 
$n_1\geq n_2\geq\cdots\geq n_{r-1}\geq 0$.
Multiplying both sides by
\begin{equation}\label{Eq_factor}
\prod_{i=1}^{r-1}\qbin{n_i}{n_{i+1}}
\end{equation}
and using that
\[
\prod_{i=1}^{r-1} \qbin{n_i}{n_{i+1}}
\qbin{n_1-n_{i+1}-m_1+m_i}{m_i-m_{i+1}}
=\qbin{n_1}{m_1} \prod_{i=1}^{r-1} 
\qbin{m_i}{m_{i+1}}\qbin{n_1-n_{i+1}-m_1+m_i}{n_i-n_{i+1}}
\]
yields \eqref{Eq_invariance_Ar}.
Since \eqref{Eq_factor} vanishes unless 
$n_1\geq n_2\geq\cdots\geq n_{r-1}\geq 0$ holds, the claim
is true for all $\boldsymbol{n}\in\mathbb{N}_0^{r-1}$.
\end{proof}

Iterating \eqref{Eq_invariance_Ar} immediately leads to
\[
\sum_{\boldsymbol{n}^{(1)},\dots,\boldsymbol{n}^{(k)}}
\bigg( \prod_{i=1}^k
\mathcal{K}_{\boldsymbol{n}^{(i-1)},\boldsymbol{n}^{(i)}}
(z_1,\dots,z_{r-1};q)\bigg)
\Phi^{\mathrm{A}_{r-1}}_{\boldsymbol{n}^{(k)}}(z_1,\dots,z_{r-1};q)
=\Phi^{\mathrm{A}_{r-1}}_{\boldsymbol{n}^{(0)}}(z_1,\dots,z_{r-1};q)
\]
for $k$ a positive integer.
Taking the large-$k$ limit this yields
\[
\sum_{\boldsymbol{n}^{(1)},\boldsymbol{n}^{(2)},\dots}\:
\prod_{i\geq 1} \mathcal{K}_{\boldsymbol{n}^{(i-1)},\boldsymbol{n}^{(i)}}
(z_1,\dots,z_{r-1};q)
=\Phi^{\mathrm{A}_{r-1}}_{\boldsymbol{n}^{(0)}}(z_1,\dots,z_{r-1};q),
\]
generalising \eqref{Eq_A1-rational} and \eqref{Eq_A2-rational}.
As a final step we let $n^{(0)}_1$ tend to infinity, resulting in our
final theorem.
\begin{theorem}\label{Thm_infinite}
We have
\begin{align*}
\sum \frac{1}{(q)_{n^{(1)}_1}} \prod_{i\geq 1} 
\Bigg( & \prod_{j=1}^{r-1} z_j^{n^{(i)}_j-n^{(i)}_{j+1}} 
q^{n^{(i)}_j(n^{(i)}_j-n^{(i)}_{j+1})} \\
& \times \qbinbig{n^{(i)}_1}{n^{(i+1)}_1} 
\prod_{j=2}^{r-1} 
\qbinbig{n^{(i)}_1-n^{(i)}_{j+1}-n^{(i+1)}_1+n^{(i+1)}_j}
{n^{(i)}_j-n^{(i)}_{j+1}} \Bigg)
=\prod_{j=1}^{r-1} \frac{1}{(z_jq)_{\infty}},
\end{align*}
where the sum is over nonnegative integers $n^{(i)}_j$
for all $i\geq 1$ and $1\leq j\leq {r-1}$, and where
$n_r^{(1)}=n_r^{(2)}=\dots:=0$.
\end{theorem}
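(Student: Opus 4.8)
The plan is to obtain Theorem~\ref{Thm_infinite} as a limiting case of the iterated invariance identity
\[
\sum_{\boldsymbol{n}^{(1)},\boldsymbol{n}^{(2)},\dots}\:
\prod_{i\geq 1} \mathcal{K}_{\boldsymbol{n}^{(i-1)},\boldsymbol{n}^{(i)}}
(z_1,\dots,z_{r-1};q)
=\Phi^{\mathrm{A}_{r-1}}_{\boldsymbol{n}^{(0)}}(z_1,\dots,z_{r-1};q)
\]
established just above the theorem, by specialising the free seed to $\boldsymbol{n}^{(0)}=(N,0,\dots,0)$ and letting $N\to\infty$. First I would note that this seed satisfies the ordering needed for the right-hand side to be nontrivial, and that $\boldsymbol{n}^{(0)}$ enters the left-hand side only through the single factor $\mathcal{K}_{\boldsymbol{n}^{(0)},\boldsymbol{n}^{(1)}}$, since every later kernel $\mathcal{K}_{\boldsymbol{n}^{(i-1)},\boldsymbol{n}^{(i)}}$ with $i\geq 2$ is independent of $\boldsymbol{n}^{(0)}$.

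On the right-hand side, substituting $\boldsymbol{n}^{(0)}=(N,0,\dots,0)$ into the definition of $\Phi^{\mathrm{A}_{r-1}}$ gives $n_1^{(0)}-n_{i+1}^{(0)}=N$ and $\qbin{n_i^{(0)}}{n_{i+1}^{(0)}}=1$ for every $i$, whence $\Phi^{\mathrm{A}_{r-1}}_{(N,0,\dots,0)}=\prod_{j=1}^{r-1}1/(z_jq)_N$, which tends to $\prod_{j=1}^{r-1}1/(z_jq)_\infty$ as $N\to\infty$ — exactly the claimed right-hand side. On the left-hand side the same substitution collapses $\mathcal{K}_{\boldsymbol{n}^{(0)},\boldsymbol{n}^{(1)}}$: every $q$-binomial in the product over $i$ trivialises (for $i\geq2$ its lower index $n_i^{(0)}-n_{i+1}^{(0)}$ vanishes, and the $i=1$ factor is identically $1$), leaving only the prefactor $\qbin{n_1^{(0)}}{n_1^{(1)}}=\qbin{N}{n_1^{(1)}}$, which tends to $1/(q)_{n_1^{(1)}}$ as $N\to\infty$.

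The one bookkeeping step is a relabelling. Writing $P_i$ for the monomial $\prod_j z_j^{n_j^{(i)}-n_{j+1}^{(i)}}q^{n_j^{(i)}(n_j^{(i)}-n_{j+1}^{(i)})}$ (which depends only on $\boldsymbol{n}^{(i)}$) and $B_i$ for the block of $q$-binomials in $\mathcal{K}_{\boldsymbol{n}^{(i-1)},\boldsymbol{n}^{(i)}}$ (which couples $\boldsymbol{n}^{(i-1)}$ and $\boldsymbol{n}^{(i)}$), the full product factors as $\prod_{i\geq1}P_i\cdot\prod_{i\geq1}B_i$. After the limit $B_1=1/(q)_{n_1^{(1)}}$, and shifting $i\mapsto i+1$ in $\prod_{i\geq2}B_i$ turns it into $\prod_{i\geq1}B_{i+1}$, where $B_{i+1}$ now couples $\boldsymbol{n}^{(i)}$ and $\boldsymbol{n}^{(i+1)}$ precisely as in the summand of Theorem~\ref{Thm_infinite}. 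Regrouping $\prod_{i\geq1}P_i\cdot\prod_{i\geq1}B_{i+1}=\prod_{i\geq1}(P_iB_{i+1})$ then reproduces the stated summand, with the factor $1/(q)_{n_1^{(1)}}$ pulled out front.

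The main point requiring care — and really the only nontrivial step — is justifying the termwise passage to the limit $N\to\infty$ under the infinite multisum. I would handle this by viewing both sides as formal power series in $z_1,\dots,z_{r-1}$ and $q$: because the nonvanishing of the $q$-binomials forces $n_1^{(i)}\geq\cdots\geq n_{r-1}^{(i)}\geq 0$ and the exponent of $q$ in $P_i$ grows with the $n_j^{(i)}$, only finitely many index arrays $(\boldsymbol{n}^{(i)})_{i\geq1}$ contribute to any fixed monomial. Hence every coefficient stabilises for large $N$, the interchange of limit and summation is legitimate coefficientwise, and the identity follows. As a consistency check, $r=2$ recovers \eqref{Eq_A1-infinitelevel} and $r=3$ recovers \eqref{Eq_A2-infinitelevel}.
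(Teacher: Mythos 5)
Your proposal is correct and follows essentially the same route as the paper: specialise the seed of the iterated invariance identity (the paper takes $n_1^{(0)}\to\infty$, which amounts to your choice $\boldsymbol{n}^{(0)}=(N,0,\dots,0)$ with $N\to\infty$), collapse $\mathcal{K}_{\boldsymbol{n}^{(0)},\boldsymbol{n}^{(1)}}$ to $\qbin{N}{n_1^{(1)}}\to 1/(q)_{n_1^{(1)}}$, and reindex the remaining kernels. Your coefficientwise justification of the termwise limit is a detail the paper leaves implicit, but it is not a different method.
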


For $z_1=\dots=z_{r-1}=1$ this should correspond to
\[
\lim_{k\to\infty} \AG_{k\La_0+\dots+k\La_{r-2}+(k-1)\La_{r-1};r}(q)=
\lim_{k\to\infty} \AG_{k\La_0+(k-1)\La_1+\dots+(k-1)\La_{r-1};r}(q).
\]
For example, if for $r=4$ we define
\begin{align*}
S_k(q)&:=\AG_{k\La_0+k\La_1+k\La_2+(k-1)\La_3;4}(q) \\
&\hphantom{:}=\frac{(q^{4k+3};q^{4k+3})_{\infty}^3}{(q)_{\infty}^3}\,
\theta(q^k,q^{k+1},q^{k+1},q^{k+1},q^{2k+1},q^{2k+1};q^{4k+3})
\end{align*}
and let $S_{\infty}(q):=\lim_{k\to\infty} S_k(q)$,
then we have the two extremal cases
\[
S_1(q)=
\sum_{n,m=0}^{\infty} \frac{q^{n^2-nm+m^2}}{(q)_n}\,
\qbin{2n}{m}
\]
and 
\begin{align*}
S_{\infty}(q)=
\sum_{\substack{n_1,n_2,\dots\geq 0 \\[1pt] 
m_1,m_2,\dots\geq 0 \\[1pt] 
p_1,p_2,\dots\geq 0}} 
\frac{1}{(q)_{n_1}} \prod_{i\geq 1} 
\bigg( & q^{n_i^2-n_im_i+m_i^2-m_ip_i+p_i^2} 
\qbin{n_i}{n_{i+1}}  \\[-2mm] & \times
\qbin{n_i-p_i-n_{i+1}+m_{i+1}}{m_i-p_i} 
\qbin{n_i-n_{i+1}+p_{i+1}}{p_i} \bigg).
\end{align*}
It is not yet clear to us how to interpolate between these two results,
but one further piece of the puzzle follows from the generalisation of
Conjecture~\ref{Con_CDU-Q} to arbitrary rank $r$.
Recall that by Kummer's theorem $(d+r)/\gcd(d,r)$ divides $\binom{d+r}{r}$.

\begin{conjecture}
Fix $d,r\geq 1$ and let $c=(c_0,\dots,c_{r-1})$ be a profile of level $d$, 
i.e., $c_0+\dots+c_{r-1}=d$. 
Then the formal power series $Q_{n,c}(q)$ defined in \eqref{Eq_Qnc} is 
a polynomial in $q$ with nonnegative coefficients, such that
\[
Q_{n,c}(1)=\bigg(\frac{\gcd(d,r)}{d+r}\,\binom{d+r}{r}-\gcd(d,r)\bigg)^n.
\]
\end{conjecture}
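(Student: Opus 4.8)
The plan is to handle all three assertions --- polynomiality, the value at $q=1$, and nonnegativity --- through the functional equations for the two-variable generating functions $\GK_c(z,q)$ of Corteel and Welsh \cite{CW19}, which hold for arbitrary rank $r$ and level $d$ and close up over the (finitely many) profiles of level $d$. In the renormalised functions $\GKt_c(z,q):=(zq)_\infty\GK_c(z,q)$ these equations become polynomial, of the schematic shape
\[
\GKt_c(z,q)=\sum_{c'}z^{a_{c,c'}}q^{b_{c,c'}}\,\GKt_{c'}\bigl(zq^{e_{c,c'}},q\bigr),\qquad a_{c,c'}\in\{0,1\},
\]
as is already visible in the modulus-$8$ instances \eqref{Eq_functional-eqn}. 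Following Welsh's strategy for the rank-$3$ case of Conjecture~\ref{Con_CDU-Q} \cite{Welsh21}, the first step is to extract the coefficient of $z^n$. Writing $g_c(n;q):=[z^n]\GKt_c(z,q)$, so that $Q_{n,c}(q)=(q^{\ell};q^{\ell})_n\,g_c(n;q)$ with $\ell=\gcd(d,r)$ by \eqref{Eq_Qnc}, the functional equations turn into a finite linear recursion
\[
\bigl(I-S(q^n;q)\bigr)\,\mathbf{g}(n)=R(q^n;q)\,\mathbf{g}(n-1),
\]
where $\mathbf{g}(n)=(g_c(n;q))_c$, the matrix $S$ collects the box-adding moves that leave the $z$-degree unchanged ($a_{c,c'}=0$) and $R$ those that raise it by one, and both have entries in $\mathbb{Z}[t^{\pm1},q^{\pm1}]$ with $t=q^n$. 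The initial data is $\mathbf{g}(0)=(1,\dots,1)$.

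For polynomiality, inverting the same-level block gives $\mathbf{g}(n)=(I-S(q^n;q))^{-1}R(q^n;q)\,\mathbf{g}(n-1)$, so the only denominators produced are powers of $\det\bigl(I-S(q^j;q)\bigr)$ for $1\le j\le n$. The matrix $S(t;q)$ is cyclic-shift-like: its combinatorial origin (adding a box and re-reading the profile around the cylinder of circumference $d+r$) forces its eigenvalues to be governed by $(d+r)$-th roots of unity, with $\ell=\gcd(d,r)$ fixing the relevant period. The technical heart of this part is therefore a determinant lemma asserting that, after the cancellations between $\det(I-S(q^j;q))$ and the numerator, the accumulated denominator of $\mathbf{g}(n)$ is exactly $(q^{\ell};q^{\ell})_n$; granting this, an induction on $n$ from $\mathbf{g}(0)$ yields $(q^{\ell};q^{\ell})_n\,g_c(n;q)\in\mathbb{Z}[q]$, i.e.\ the polynomiality of $Q_{n,c}(q)$. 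The single simplest check $g_{(1,1,0)}(n;q)=q^{n^2}/(q)_n$ already exhibits the denominator $1-q^n$ per step accumulating to $(q;q)_n$, matching $\ell=1$. As an independent route one may instead start from the Gessel--Krattenthaler determinant \eqref{Eq_GK-thm3} and Borodin's product \eqref{Eq_Bor}, reading off the pole structure at roots of unity directly.

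For the evaluation at $q=1$, once polynomiality is known one has $Q_{n,c}(1)=\lim_{q\to1}(q^{\ell};q^{\ell})_n\,g_c(n;q)$. Dividing the recursion by $(q^{\ell};q^{\ell})_{n-1}$ and letting $q\to1$ --- where $(q^{\ell};q^{\ell})_n/(q^{\ell};q^{\ell})_{n-1}=1-q^{\ell n}$ and $\det(I-S(q^n;q))$ both vanish while their quotient stays finite --- yields a numerical recursion $Q_{n,c}(1)=\sum_{c'}w_{c,c'}\,Q_{n-1,c'}(1)$ with $Q_{0,c}(1)=1$. The decisive point is that the row sums $\sum_{c'}w_{c,c'}$ are independent of $c$ and equal to
\[
N-\ell,\qquad N=\frac{\ell}{d+r}\binom{d+r}{r}=\frac{\ell}{r}\binom{d+r-1}{r-1},
\]
the number of box-adding moves counted with their $q=1$ multiplicities, the subtraction of $\ell$ reflecting the self-referential (diagonal) terms of $S$. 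This is a finite combinatorial identity that I would verify from the cyclic description of the moves together with Kummer's theorem, the very integrality statement recalled just before the conjecture. The constant row-sum property then forces $Q_{n,c}(1)=(N-\ell)^n$ by induction, \emph{uniformly in} $c$, consistent with the rank-$3$ value $\tfrac16(d+1)(d+2)-1$ of Conjecture~\ref{Con_CDU-Q} and with the trivial profiles $\GK_{(1,0^{r-1})}$ and $\GK_{(1,1,0)}$.

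The genuine obstacle is nonnegativity, which is open already for $r=3$. The recursion above expresses $Q_{n,c}(q)$ as an alternating combination --- the minus signs in $S$ and $R$ arise from the inclusion--exclusion that corrects the double counting of boxes --- and so yields no positivity whatsoever. The only route I can see is to produce manifestly positive fermionic multisum forms for $\GKt_c(z,q)$, namely the sought-after $\mathrm{A}_{r-1}$ Andrews--Gordon identities, of which Conjectures~\ref{Con_cylindric} and~\ref{Con_cylindric-b} are the rank-$3$ prototypes and of which \cite{CDU20} supplies the level-$5$ rank-$3$ instances; any such form displays $Q_{n,c}(q)$ as a sum of monomials with nonnegative coefficients. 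Since finding these multisums for all profiles at all ranks is precisely the central open problem of the paper, I expect the nonnegativity to lie as deep as that problem and well beyond the reach of the functional-equation machinery that settles the polynomiality and the $q=1$ count.
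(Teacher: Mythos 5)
The first thing to say is that the statement you set out to prove is stated in the paper as a \emph{conjecture}, and the paper contains no proof of it: immediately after stating it, the author records that it was also proposed by Corteel \cite{Corteel}, that the polynomiality and the value at $q=1$ have been established by Welsh in unpublished work \cite{Welsh21}, and that the nonnegativity of the coefficients remains open (it is already open for $r=3$, cf.\ the remarks following Conjecture~\ref{Con_CDU-Q}). So there is no proof in the paper to compare yours against, and your closing assessment --- that positivity lies as deep as the missing $\mathrm{A}_{r-1}$ Andrews--Gordon identities and is beyond the functional-equation machinery --- is precisely the paper's own position.

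Judged, however, as a proof of the two assertions that \emph{are} provable, your proposal has genuine gaps: each assertion is reduced to an unproven lemma, and those lemmas are exactly where the difficulty sits. For polynomiality everything rests on the ``determinant lemma'' that the accumulated denominator of $\mathbf{g}(n)$ is exactly $(q^{\ell};q^{\ell})_n$; you offer only the heuristic that the eigenvalues of $S$ are ``governed by $(d+r)$-th roots of unity,'' and you do not even establish that $\det\bigl(I-S(q^n;q)\bigr)$ is nonzero as a rational function, which is needed before the same-level block can be inverted. Note also that your sign-free schematic form of the functional equations is not the general Corteel--Welsh shape: already the rank-$2$ equation \eqref{Eq_generals} in the paper, and \cite[Proposition 3.1]{CW19} in general, carry inclusion--exclusion minus signs (as you yourself concede later), while the sign-free displays \eqref{Eq_functional-eqn} are special recombinations at rank $3$, level $5$. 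For the value at $q=1$, the decisive constant-row-sum identity $\sum_{c'}w_{c,c'}=\frac{\ell}{d+r}\binom{d+r}{r}-\ell$ is again asserted (``a finite combinatorial identity that I would verify''), not verified; Kummer's theorem gives only the integrality of $\frac{\ell}{d+r}\binom{d+r}{r}$, not this identity, and the limiting argument with two vanishing factors whose quotient ``stays finite'' presupposes the polynomiality you have not yet established. In short, your outline maps the landscape correctly and is consistent with the one check $Q_{n,(1,1,0)}(q)=q^{n^2}$, but it proves none of the three claims: completing the first two would in effect be reconstructing Welsh's unpublished argument, and the third is genuinely open.
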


This Conjecture was also proposed by Corteel \cite{Corteel}, and the
polynomiality and value at $1$ have once again been established by
Welsh \cite{Welsh21}.
We in particular have
\[
S_2(q)=\sum_{n=0}^{\infty} \frac{Q_{n,(2,2,2,1)}(q)}{(q)_{n}},
\]
where $Q_{n,(2,2,2,1)}(1)=29^n$.
For small $n$ the polynomial $Q_{n,(2,2,2,1)}(q)$ can be computed explicitly,
and the first few polynomials are given by
\begin{align*}
Q_{0,(2,2,2,1)}(q)&=1, \\
Q_{1,(2,2,2,1)}(q)&=q(3+5q+5q^2+5q^3+4q^4+3q^5+2q^6+q^7+q^8), \\
Q_{2,(2,2,2,1)}(q)&=q^3(3+12q+20q^2+32q^3+39q^4+49q^5+52q^6+57q^7+56q^8
+58q^9+53q^{10} \\
&\qquad +53q^{11}+48q^{12}+46q^{13}+39q^{14}+38q^{15}+31q^{16}
+29q^{17}+24q^{18}+21q^{19} \\
& \qquad +16q^{20}+16q^{21}+11q^{22}+10q^{23}+7q^{24}+6q^{25}
+4q^{26}+4q^{27}+2q^{28} \\
& \qquad +2q^{29}+q^{30}+q^{31}+q^{33}) \\
&=3q^3+12q+\cdots+q^{34}+q^{36}.
\end{align*}
This rules out that
$Q_{n_1,(2,2,2,1)}(q)$ is of the form
\begin{align*}
Q_{n_1,(2,2,2,1)}(q)=\sum_{n_2,m_1,m_2,p_1\geq 0} \bigg(&
q^{n_1^2-n_1m_1+m_1^2-m_1p_1+p_1^2+n_2^2-n_2m_2+m_2^2} \\ &
\times
\qbin{n_1}{n_2} \qbin{2n_2}{m_2} \qbin{n_1-n_2-p_1+m_2}{m_1-p_1}
\qbin{\dots}{p_1}\bigg),
\end{align*}
since, regardless of the precise form of $\qbin{\dots}{p_1}$,
\[
n_1^2-n_1m_1+m_1^2-m_1p_1+p_1^2+n_2^2-n_2m_2+m_2^2\Big|_{n_1=2}\neq 36
\]
for any $n_2,m_1,m_2,p_1\in\mathbb{N}_0$ such that $n_2\leq 2$,
$m_2\leq 2n_2$, $m_1\geq p_1$ and $2-n_2+m_2\geq m_1$.

\medskip

\subsection*{Note added}
Recently Kanade and Russell \cite[Conjecture~5.1]{KR22} further 
generalised Conjecture~\ref{Con_missing}, proposing a multisum expression
for 
\[
\frac{(q^K;q^K)_{\infty}^2}{(q)_{\infty}^3}\,
\theta(q^r,q^s,q^{r+s};q^K)
\]
for all $1\leq r,s\leq \lfloor(K+1)/3\rfloor$.
Their conjecture also implies generalisations of 
Conjectures~\ref{Con_cylindric} and \ref{Con_cylindric-b}, extending
both to a much larger set of profiles of level $3k-1$ and $3k-2$
respectively while also allowing for profiles of level $3k$.

\subsection*{Acknowledgements}
Helpful discussions with Sylvie Corteel, Ali Uncu and Trevor Welsh are
gratefully acknowledged.

\bibliographystyle{alpha}

\end{document}